\numberwithin{equation}{section}
\numberwithin{figure}{section}
\theoremstyle{plain}
\newtheorem{thm}{\protect\theoremname}
\theoremstyle{plain}
\newtheorem{cor}[thm]{\protect\corollaryname}
\theoremstyle{definition}
\newtheorem{defn}[thm]{\protect\definitionname}
\theoremstyle{plain}
\newtheorem{prop}[thm]{\protect\propositionname}
\theoremstyle{remark}
\newtheorem{rem}[thm]{\protect\remarkname}
\theoremstyle{plain}
\newtheorem{lem}[thm]{\protect\lemmaname}
\providecommand{\corollaryname}{Corollary}
\providecommand{\definitionname}{Definition}
\providecommand{\lemmaname}{Lemma}
\providecommand{\propositionname}{Proposition}
\providecommand{\remarkname}{Remark}
\providecommand{\theoremname}{Theorem}
\begin{document}
\title{Massive Scaling Limit of the Ising Model: Subcritical Analysis and
Isomonodromy}
\author{S. C. Park}
\address{Ecole Polytechnique Fédérale de Lausanne, EPFL SB MATHAA CSFT, CH-1015
Lausanne, Switzerland}
\email{\textsf{sungchul.park@epfl.ch}}
\begin{abstract}
We study the spin $n$-point functions of the planar Ising model on
a simply connected domain $\Omega$ discretised by the square lattice
$\delta\mathbb{Z}^{2}$ under near-critical scaling limit. While the
scaling limit on the full-plane $\mathbb{C}$ has been analysed in
terms of a fermionic field theory, the limit in general $\Omega$
has not been studied. We will show that, in a massive scaling limit
wherein the inverse temperature is scaled $\beta\sim\beta_{c}-m_{0}\delta$
for a constant $m_{0}<0$, the renormalised spin correlations converge
to a continuous quantity determined by a boundary value problem set
in $\Omega$. In the case of $\Omega=\mathbb{C}$ and $n=2$, this
result reproduces the celebrated formula of \cite{wmtb} involving
the Painlevé III transcendent. To this end, we generalise the comprehensive
discrete complex analytic framework used in the critical setting to
the massive setting, which results in a perturbation of the usual
notions of analyticity and harmonicity.
\end{abstract}

\maketitle
\tableofcontents{}

\section{Introduction}

The Ising model is a classical model of ferromagnetism first introduced
by Lenz \cite{lenz}, whose simplicity and rich emergent structure
have allowed for applications in various areas of science. In two
dimensions, tt famously exhibits a continuous phase transition \cite{onsager,yang},
where the characteristic length of the model diverges and the model
becomes scale invariant. Consequently the model at the critical temperature
$\beta_{c}$ is expected to exhibit conformal symmetry under scaling
limit, a prediction which has been formalised in terms of the Conformal
Field Theory \cite{BPZ}.

Given the infinite dimensional array of 2D local conformal symmetry
\cite{di-francesco-mathieu-senechal}, it is natural to study the
scaling limit of the model not only on the full plane $\mathbb{C}$
but on an arbitrary simply connected domain $\Omega$. Accordingly,
recent research has focused on giving a rigorous description of the
interaction between various physical quantities of the model under
critical scaling limit and the conformal geometry of $\Omega$, which
results in explicit formulae for the limit of spin correlations at
the microscopic scale \cite{hosm2013,ghp}, at the macroscopic scale
\cite{chelkak-hongler}, or in some mixture of the two \cite{hon2010,chi18}
in terms of quantities such as the conformal radius and its derivatives.
Convergence of correlations also proves to be useful in proving convergence
in more general senses: \cite{camia15} proves that the discrete spin
field converges to a continuous random distribution using the convergence
of their correlations.

Central to such analyses of the critical regime are discrete fermion
correlations, which manifest themselves as discrete functions capable
of encoding relevant physical quantities. They are discrete counterparts
of the massless free fermion correlations in the continuous CFT, which
turn out to be explicit holomorphic functions thanks to conformal
symmetry. Discrete fermions instead enjoy a strong notion of discrete
analyticity \cite{smirnov-ii}, which, unlike some of its weaker variants,
readily lends itself to precompactness estimates that ultimately yield
convergence to the continuous fermion.

However, the free fermion is not an object unique to the massless
(conformal) field theory; indeed, the general field theory of the
free fermion specifies a mass parameter $m$, or equivalently a length
scale $\xi\propto1/\left|m\right|$. In general, the corresponding
regime in the Ising model is the \emph{near-critical} scaling limit,
where the deviation from criticality $\beta_{c}-\beta$ scales proportionally
to the lattice spacing $\delta$. Such scaling keeps the physical
correlation length $\xi$ asymptotically constant, allowing the limit
to be physically described by the massive fermion.

The discrete fermion survives in this near-critical setup, and discrete
analyticity persists albeit in a perturbed sense \cite{dgp,hkz,dt}.
While the continuous massive fermion is usually described by the two-dimensional
massive Dirac equation, our strong discrete analyticity in fact features
twice as many relations, resulting in (the discrete counterpart of)
a perturbation of the ordinary Cauchy-Riemann equations in 1D. Since
there are as many lattice equations in the near-critical limit as
in the critical limit, it is natural to attempt to carry out in the
former the analogues of analyses from the latter. We note here that
in addition to such a \emph{thermal} perturbation, one may also consider
a \emph{magnetic} perturbation to introduce mass (e.g. \cite{camia16}).
Another direction of research has recently focused on universality
with respect to general lattice, see \cite{Che18}.

In this paper, we undertake the analysis of macroscopic Ising spin
correlations on a simply connected domain $\Omega$ in the near-critical
scaling limit where $\beta_{c}-\beta$ is held equal to $m_{0}\delta$
for a fixed $m_{0}<0$ with $+$ boundary conditions. We establish
the existence of scaling functions to which renormalised spin correlations
converge, and show that their logarithmic derivatives are determined
by an explicit boundary value problem set in $\Omega$. This extends
the results of \cite{chelkak-hongler} to the massive regime (save
for the conformal covariance, which should not hold), and our proof
combines the strategies of that paper with a massive perturbation
of analytic function theory, both in the discrete and the continuous
settings. In the former, massive harmonic and holomorphic functions
can be studied via their relation to massive (extinguished) random
walk; in the latter, the perturbed Cauchy-Riemann equation is dubbed
Vekua equation and extensively treated in a theory established by
Carleman, Bers, and Vekua, among others \cite{bers,vek}.

In the full plane, the massive scaling limit of the spin correlations
was revealed to exhibit a surprising integrability property. Wu, McCoy,
Tracy, and Barouch \cite{wmtb} first demonstrated that the $2$-point
function on the plane can be described in terms of the Painlevé III
transcendent. Subsequently, Sato, Miwa, and Jimbo \cite{sato-miwa-jimbo}
recast the continuous analysis in terms of isomonodromic deformation
theory, where Painlevé equations are known to arise, and obtained
a closed set of differential equations for the $n$-point function.
Letting $\Omega=\mathbb{C}$, we reproduce the $2$-point scaling
limit in the case of full-plane (whose classical treatment is given
in, e.g., \cite{patr,palmer}), setting up the continuous analysis.
We explicitly carry out the isomonodromic analysis following the formulation
of \cite{kako80}.

\subsection{Main Results}

Let $\Omega$ be a bounded simply connected domain with smooth boundary.
We will treat the unbounded cases $\Omega=\mathbb{C},\mathbb{H}$
as well. Define the rotated square lattice $\Omega_{\delta}:=\Omega\cap\delta(1+i)\mathbb{Z}^{2}=\Omega\cap\mathbb{C}_{\delta}$.
We define the \emph{Ising probability measure} $\mathbb{P}=\mathbb{P}_{\Omega_{\delta},\beta}^{+}$
with $+$ boundary conditions at \emph{inverse temperature} $\beta>0$
on the space of \emph{spin configurations} $\left\{ \pm1\right\} ^{\Omega_{\delta}}$
by
\[
\mathbb{P}_{\Omega_{\delta},\beta}^{+}\left[\sigma:\Omega_{\delta}\to\left\{ \pm1\right\} \right]\propto\exp\sum_{i\sim j}\beta\sigma_{i}\sigma_{j},
\]
where the sum is over pairs $\left\{ i,j\right\} \subset\mathbb{C}_{\delta},i\in\Omega_{\delta}$
such that $\left|i-j\right|=\sqrt{2}\delta$ and we define $\sigma_{j}=1$
for $j\notin\Omega_{\delta}$ ($j\in\mathcal{F}\left[\Omega_{\delta}\right]$
in terms of detailed notation in Section \ref{subsec:Notation}).
If $a\in\Omega$, we understand by $\sigma_{a}$ the spin at a closest
point in $\Omega_{\delta}$ to $a$.

The planar Ising model on the square lattice undergoes a phase transition
at the critical temperature $\beta_{c}=\frac{1}{2}\ln\left(1+\sqrt{2}\right)$.
Henceforth we will fix a negative parameter $m$ and set $\beta=\beta(\delta)=\beta_{c}-\frac{m\delta}{2}$.
This is a \emph{subcritical} massive limit, where the spins stay in
the ordered phase while approaching criticality.
\begin{thm}
\label{thm:1}Let $\Omega$ be a bounded simply connected domain and
suppose $a_{1},\ldots,a_{n}\in\Omega$. Under $\delta\downarrow0,\beta=\beta_{c}-\frac{m\delta}{2}$,
the spin $n$-point function converges to a continuous function of
$a_{1},\ldots,a_{n}$,
\[
\delta^{-\frac{n}{8}}\mathbb{E}_{\Omega_{\delta},\beta(\delta)}^{+}\left[\sigma_{a_{1}}\cdots\sigma_{a_{n}}\right]\to\left\langle a_{1},\ldots,a_{n}\right\rangle _{\Omega,m}^{+},
\]
 and its logarithmic derivative $\partial_{a_{1}}\ln\left\langle a_{1},\ldots,a_{n}\right\rangle _{\Omega,m}^{+}=\mathcal{A}_{\Omega}^{1}+i\mathcal{A}_{\Omega}^{i}$,
where $\partial_{a_{1}}=\frac{1}{2}\left(\partial_{x_{1}}-i\partial_{y_{1}}\right)$,
is determined by the solution to the boundary value problem of Proposition
\ref{prop:contbvp} set on the domain $\Omega$. The functions $\left\langle a_{1},\ldots,a_{n}\right\rangle _{\Omega,m}^{+}$
are uniquely determined by their diagonal and boundary behaviours,
see Section \ref{subsec:intro-proof}. In particular, as $a_{1}\to\partial\Omega$,
\[
\left\langle a_{1}\right\rangle _{\Omega,m}^{+}\sim\left\langle a_{1}\right\rangle _{\Omega,0}^{+}:=\mathcal{C}\cdot2^{\frac{1}{4}}\text{rad}^{-\frac{1}{8}}(a_{1},\Omega),
\]
where $\mathcal{C}:=2^{\frac{1}{6}}e^{-\frac{3}{2}\zeta'(-1)}$, $\text{rad}(a_{1},\Omega)$
is the conformal radius of $\Omega$ as seen from $a_{1}$, and $\zeta$
is the Riemann zeta function.
\end{thm}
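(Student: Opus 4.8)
The plan is to follow the discrete-complex-analytic strategy of \cite{chelkak-hongler}, replacing every use of genuine discrete holomorphicity by its massive counterpart from \cite{dgp,hkz,dt}. The central object is a discrete fermionic (spinor) observable $F_\delta(z)$ carrying a fermion at $z$ together with the spins $\sigma_{a_1},\ldots,\sigma_{a_n}$, normalised by the renormalised correlation $\delta^{-n/8}\mathbb{E}^+[\sigma_{a_1}\cdots\sigma_{a_n}]$. First I would verify that $F_\delta$ is \emph{massive s-holomorphic} on $\Omega_\delta$ away from the insertion points, that it carries the spinor $(-1)$-monodromy and a square-root branch-type singularity at each $a_j$, and that it satisfies the discrete $+$ boundary condition; these are the defining data of the discrete analogue of the boundary value problem of Proposition \ref{prop:contbvp}. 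Crucially, the normalised coefficient of $F_\delta$ in its local expansion at $a_1$ equals the discrete logarithmic derivative $\partial_{a_1}\ln\left(\delta^{-n/8}\mathbb{E}^+[\sigma_{a_1}\cdots\sigma_{a_n}]\right)$, so the analytic content of the theorem is encoded in the behaviour of $F_\delta$ near $a_1$.

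Next I would establish uniform (in $\delta$) a priori bounds and precompactness for $F_\delta$. Here the massive setting forces a genuine departure from the critical argument: the mass term is controlled through the representation of massive harmonic and massive holomorphic functions via the extinguished random walk, whose killing rate is set by $m$. Combining the massive maximum principle and a massive Harnack estimate with the exponential ($\xi\sim 1/|m|$) decay furnished by the killing, one obtains equicontinuity of $F_\delta$ on compact subsets of $\Omega\setminus\{a_1,\ldots,a_n\}$ together with a uniform description of the singular part at each $a_j$. Extracting subsequential limits $F$, I would identify them as solutions of the continuous Vekua (massive Cauchy--Riemann) equation carrying the prescribed branch singularities at the $a_j$ and the spinor boundary condition on $\partial\Omega$---that is, as solutions of the boundary value problem of Proposition \ref{prop:contbvp}, whose expansion coefficient at $a_1$ is $\mathcal{A}^1_\Omega+i\mathcal{A}^i_\Omega$.

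To upgrade subsequential convergence to genuine convergence, I would invoke the Carleman--Bers--Vekua theory \cite{bers,vek} to show that the boundary value problem of Proposition \ref{prop:contbvp} has a unique solution; all subsequential limits then agree, so $\partial_{a_1}\ln\left(\delta^{-n/8}\mathbb{E}^+[\sigma_{a_1}\cdots\sigma_{a_n}]\right)\to\mathcal{A}^1_\Omega+i\mathcal{A}^i_\Omega$. Integrating this logarithmic derivative together with its analogues in the remaining variables recovers $\langle a_1,\ldots,a_n\rangle^+_{\Omega,m}$ up to a multiplicative constant, which is pinned down---yielding also the asserted uniqueness---by the diagonal behaviour as $a_i\to a_j$ and the boundary behaviour as $a_i\to\partial\Omega$, as detailed in Section \ref{subsec:intro-proof}. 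A separate tightness argument for the renormalised correlations themselves then promotes this to the stated convergence $\delta^{-n/8}\mathbb{E}^+[\sigma_{a_1}\cdots\sigma_{a_n}]\to\langle a_1,\ldots,a_n\rangle^+_{\Omega,m}$ as continuous functions of $a_1,\ldots,a_n$.

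Finally, for the boundary asymptotic I would argue that as $a_1\to\partial\Omega$ the separation $\mathrm{dist}(a_1,\partial\Omega)$ becomes negligible against the fixed correlation length $\xi\sim 1/|m|$, so the mass term in the Vekua problem drops out at leading order and the one-point function is governed by the massless ($m=0$) problem. The latter is conformally covariant and reduces, via the critical result of \cite{chelkak-hongler}, to $\langle a_1\rangle^+_{\Omega,0}=\mathcal{C}\cdot 2^{1/4}\,\mathrm{rad}^{-1/8}(a_1,\Omega)$, with the constant $\mathcal{C}=2^{1/6}e^{-\frac{3}{2}\zeta'(-1)}$ fixed by the half-plane normalisation; hence $\langle a_1\rangle^+_{\Omega,m}\sim\langle a_1\rangle^+_{\Omega,0}$. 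The hard part throughout is the precompactness-and-identification step: the mass perturbation destroys conformal invariance and the rigidity of holomorphic functions, so the holomorphic tools of the critical theory must be replaced by the softer Vekua/Carleman--Bers machinery while retaining enough control---uniformly in $\delta$ and up to both the boundary and the diagonal---to identify the limit uniquely.
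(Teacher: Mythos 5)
Your outline reproduces the paper's architecture for the analytic core---massive s-holomorphic fermion, precompactness via the extinguished-walk estimates, identification of subsequential limits with the Vekua boundary value problem of Proposition \ref{prop:contbvp}, uniqueness from Carleman--Bers--Vekua theory, and integration of the logarithmic derivative---but there is a genuine gap at the normalisation step. Integrating $\partial_{a_1}\ln$ (and its analogues in the other variables) only determines the ratios $\left\langle b_{1},\ldots,b_{n}\right\rangle _{\Omega,m}^{+}/\left\langle a_{1},\ldots,a_{n}\right\rangle _{\Omega,m}^{+}$; the convergence of $\delta^{-n/8}\mathbb{E}_{\Omega_{\delta}}^{\beta,+}\left[\sigma_{a_{1}}\cdots\sigma_{a_{n}}\right]$ itself, and the uniqueness of the limit, require an anchor, and your proposed ``separate tightness argument'' cannot supply it: precompactness of the renormalised correlations identifies no constant, and the diagonal and boundary behaviours of the \emph{massive} correlations are not a priori available from the continuum Vekua problem---in the paper they are themselves \emph{derived} by comparison with the critical case. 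Concretely, the paper proves $\mathbb{E}_{\Omega_{\delta}}^{\beta,+}/\mathbb{E}_{\Omega_{\delta}}^{\beta_{c},+}$ converges by a chain of correlation inequalities (monotonicity in $\beta$, FKG, GKS), using the two-point identity (\ref{eq:twopointcor}) which expresses $\left|F_{\left[\Omega_{\delta},a_{1},a_{2}\right]}\left(a_{2}+\frac{\delta}{2}\right)\right|$ as a ratio of free-dual to plus correlations converging to $\left|\mathcal{B}_{\Omega}\right|$ (Theorem \ref{thm:convergence}), Lemma \ref{lem:B} ($\left|\mathcal{B}_{\Omega}\right|\to1$ as the points merge), the quantitative estimate of Lemma \ref{lem:two-point-delicate} with the choice $\left|a_{1}-a_{2}\right|=\epsilon^{\kappa}$, $\gamma<\kappa<1$, which simultaneously forces $\left\langle a_{1},a_{2}\right\rangle _{\Omega,m}^{+}/\left\langle a_{1},a_{2}\right\rangle _{\Omega,0}^{+}\to1$ and decorrelation (growing hyperbolic distance, then \cite[(1.3)]{chelkak-hongler}), and finally GKS with subdomains sharing boundary arcs to handle $n\geq3$ points; the whole construction is anchored to the critical convergence of \cite{chelkak-hongler}. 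Without some such mechanism, your argument leaves an undetermined multiplicative constant and does not yield the stated convergence; the boundary asymptotic $\left\langle a_{1}\right\rangle _{\Omega,m}^{+}\sim\left\langle a_{1}\right\rangle _{\Omega,0}^{+}$ is likewise not a soft ``mass drops out'' statement but the quantitative, domain-uniform output of this comparison (via the H\"older regularity of the similarity factor $s$ in Lemma \ref{lem:obs-decomposition}).

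A second, more technical gap: your claim that the coefficient in the expansion of $F_{\delta}$ at $a_{1}$ converges to $\mathcal{A}_{\Omega}^{1}+i\mathcal{A}_{\Omega}^{i}$ does not follow from Harnack-type equicontinuity plus ``uniform description of the singular part.'' Bulk convergence controls $F_{\delta}$ on compacts away from $a_{1}$, whereas (\ref{eq:thm1}) is an $o(\delta)$ statement at lattice distance from the monodromy. The paper needs explicit discrete analogues of the formal powers---the full-plane spinor $F_{\left[\mathbb{C}_{\delta},a_{1}\right]}$, identified with a massive harmonic measure of a slit plane in Proposition \ref{prop:onepointspinor}, and the discretely integrated function $G_{\left[\mathbb{C}_{\delta},a_{1}\right]}$ of Proposition \ref{prop:squareroot} with weights $\Gamma^{j}=\tan^{2j}\left(\frac{\pi}{4}+2\Theta\right)$---together with a sharp discrete Beurling estimate and a reflection construction (Section \ref{subsec:Analysis-near-the}) to subtract the singularity and match the subleading coefficient. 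Any complete proof along your lines must include this coefficient-matching step, or an equivalent substitute.
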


If $\phi:\mathbb{D}\to\Omega$ is a conformal map with $\phi(0)=a_{1}$,
then $\text{rad}(a_{1},\Omega)=\left|\phi'(a_{1})\right|$. The normalisation
of our continuous functions $\left\langle \cdot\right\rangle _{\Omega,0}^{+}$
differ to that of \cite{chelkak-hongler} by a factor of $\mathcal{C}^{n}$.
The derivation of the following result in our analytical setup may
be of independent interest.
\begin{cor}[\cite{wmtb,sato-miwa-jimbo,kako80}]
\label{cor:2}The $2$-point function in the full-plane is given
by
\begin{alignat*}{1}
\left\langle -a,a\right\rangle _{\mathbb{C},m}^{+} & =cst\cdot\cosh h_{0}(am)\cdot\exp\left[\int_{-\infty}^{am}r\left[\left(h_{0}'(r)\right)^{2}-4\sinh^{2}2h_{0}(r)\right]dr\right],\\
\left\langle -a,a\right\rangle _{\mathbb{C},-m}^{\text{free}} & =cst\cdot\sinh h_{0}(am)\cdot\exp\left[\int_{-\infty}^{am}r\left[\left(h_{0}'(r)\right)^{2}-4\sinh^{2}2h_{0}(r)\right]dr\right],
\end{alignat*}
where $a>0$ and $\eta_{0}=-\frac{1}{2}\ln h_{0}$ is a solution to
the Painlevé III equation
\[
r\eta_{0}\eta''_{0}=r\left(\eta_{0}'\right)^{2}-\eta_{0}\eta_{0}'-4r+4r\eta_{0}^{4}.
\]

The constants are fixed by the condition that as $a\to0$
\[
\left\langle -a,a\right\rangle _{\mathbb{C},m}^{+}\sim\left\langle -a,a\right\rangle _{\mathbb{C},0}^{+}:=\frac{\mathcal{C}^{2}}{\left|2a\right|^{1/4}},
\]
and $\mathcal{C}:=2^{\frac{1}{6}}e^{-\frac{3}{2}\zeta'(-1)}$ as above.
\end{cor}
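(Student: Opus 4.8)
The plan is to specialise the boundary value problem of Proposition \ref{prop:contbvp} to the symmetric configuration $\Omega=\mathbb{C}$, $n=2$, $a_1=-a$, $a_2=a$ with $a>0$, and then to recognise the resulting object as an isomonodromic tau-function in the sense of \cite{kako80,sato-miwa-jimbo}. First I would record the continuous observables underlying the $2$-point function: they solve the massive Dirac equation on $\mathbb{C}\setminus\{\pm a\}$, carry monodromy $-1$ around each of $\pm a$, and decay at infinity (the subcritical $+$ regime forces the massive, square-integrable branch). The configuration is invariant under the involution $z\mapsto-z$ and under complex conjugation, so the two-variable problem collapses to an ordinary differential system in the single scaling variable $r=am$; this is the point at which the dependence of $\mathcal{A}_{\mathbb{C}}^{1}+i\mathcal{A}_{\mathbb{C}}^{i}$ on the geometry becomes a function of $r$ alone.

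Next I would carry out the radial reduction following \cite{kako80}. Expanding the fermionic observable near the insertion points produces coefficient functions of $a$ whose $a$-dependence, by the isomonodromy principle (the monodromy $-1$ being preserved as $a$ varies), is governed by a closed deformation system. I would introduce $h_0$ (equivalently $\eta_0=-\tfrac12\ln h_0$) through these coefficients and derive the radial massive Dirac system; eliminating one component yields the Painlev\'e III equation $r\eta_0\eta_0''=r(\eta_0')^2-\eta_0\eta_0'-4r+4r\eta_0^4$ stated in the corollary. The distinguished solution is pinned down by requiring the observable to be regular and decaying, which selects the physical connection data at $r=\infty$; the alternative short-distance channel, corresponding to the prefactor $\sinh h_0$ and to the supercritical free-boundary correlation $\langle -a,a\rangle_{\mathbb{C},-m}^{\text{free}}$, arises from the second solution of the radial system under the Kramers--Wannier duality $m\mapsto-m$.

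With the transcendent in hand, the remaining step is to integrate. By Theorem \ref{thm:1} the logarithmic derivative $\partial_a\ln\langle -a,a\rangle_{\mathbb{C},m}^{+}$ is determined by the boundary value problem; I would identify this quantity, through the coefficient functions above, with the isomonodromic Hamiltonian density $r[(h_0'(r))^2-4\sinh^2 2h_0(r)]$, so that integrating in $r$ reproduces the exponential factor $\exp\!\big[\int_{-\infty}^{am} r[(h_0')^2-4\sinh^2 2h_0]\,dr\big]$. The prefactor $\cosh h_0(am)$ (resp.\ $\sinh h_0(am)$) is then read off from the leading coincidence behaviour of the observable as the two insertions are brought together, distinguishing the ordered $+$ correlation from the disordered free one. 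Finally I would fix the multiplicative constant by short-distance matching: using $h_0(r)\to 0$ and the known behaviour of the transcendent as $r\to 0$, the correlation must agree with the critical normalisation $\langle -a,a\rangle_{\mathbb{C},0}^{+}=\mathcal{C}^2/\lvert 2a\rvert^{1/4}$ furnished by the boundary asymptotics of Theorem \ref{thm:1}, which determines $cst$, the constant $\mathcal{C}=2^{1/6}e^{-\frac32\zeta'(-1)}$ being already fixed there.

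I expect the main obstacle to be the precise identification in the third step: showing that the analytically defined logarithmic derivative coming from Proposition \ref{prop:contbvp} coincides \emph{exactly} with the isomonodromy Hamiltonian, rather than merely up to an additive total derivative, and controlling the ultraviolet ($r\to 0$) asymptotics of the Painlev\'e III solution finely enough to recover the normalisation. The delicate point is that the constant $\mathcal{C}$, with its $\zeta'(-1)$ contribution, enters through the regularised short-distance expansion of the tau-function rather than through the differential equation itself, so the matching must be carried out at the level of the subleading regularised constant and not only the leading power $\lvert 2a\rvert^{-1/4}$.
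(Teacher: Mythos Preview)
Your overall strategy---specialise Proposition~\ref{prop:contbvp} to $\Omega=\mathbb{C}$, $n=2$, exploit the symmetries to reduce everything to a function of $r=am$, and run the isomonodromic deformation of \cite{kako80} to obtain Painlev\'e~III---is exactly what the paper does in Section~\ref{sec:Continuum-Analysis:-Painlev}. But your third step contains a misconception, which you yourself flag as the ``main obstacle'': the logarithmic derivative is \emph{not} equal to the Hamiltonian density alone. The isomonodromic calculation in the paper (equation~\eqref{eq:painleve_A}) gives
\[
\mathcal{A}_0(r)=-\tfrac{1}{2}(\ln\cosh h_0)'-r\bigl[\tfrac{1}{2}(h_0')^2-2\sinh^2 2h_0\bigr],
\]
so the additive total derivative you worried about is genuinely present and is precisely $-\tfrac12(\ln\cosh h_0)'$. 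Upon integration this term produces the $\cosh h_0(am)$ prefactor directly; no separate ``coincidence behaviour'' argument is needed, and your proposal to extract the prefactor from short-distance analysis would be redundant.

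For the $\sinh h_0$ formula, the paper does not pass to a second solution of the radial system. It uses the definition $\tanh h_0:=\mathcal{B}_0$ together with \eqref{eq:twopointcor}, which identifies $|\mathcal{B}_\Omega|$ as the ratio of the dual free correlation to the $+$ correlation; multiplying the first displayed formula by $\tanh h_0$ gives the second in one line.

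Finally, your worry about the $\zeta'(-1)$ constant is misplaced. The constant $\mathcal{C}$ is not rederived in this corollary; it is inherited from the critical theory via Theorem~\ref{thm:1} and \cite[Remark~2.26]{chelkak-hongler}. The only matching needed is $\langle -a,a\rangle_{\mathbb{C},m}^{+}\sim\langle -a,a\rangle_{\mathbb{C},0}^{+}$ as $a\to 0$, and that follows from the same squeeze argument~\eqref{eq:Bto1} already used in the proof of Theorem~\ref{thm:1}, which carries over to the full plane.
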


\subsection{Notation\label{subsec:Notation}}

Following signs will be used throughout the paper:
\[
\lambda:=e^{\frac{i\pi}{4}},\beta_{c}=\frac{1}{2}\ln\left(1+\sqrt{2}\right),\mathbb{H}:=\left\{ z\in\mathbb{C}:\text{Im}z>0\right\} ,A\oplus B:=\left(A\cup B\right)\setminus\left(A\cap B\right).
\]

We will write partial derivatives in contracted form, i.e. $\partial_{x}=\frac{\partial}{\partial x}$,
etc. And denote by $\partial_{z},\partial_{\bar{z}}$ the Wirtinger
derivatives: where $z=x+iy$,
\[
\partial_{z}:=\frac{\partial_{x}-i\partial_{y}}{2},\partial_{\bar{z}}:=\frac{\partial_{x}+i\partial_{y}}{2}=\frac{e^{i\theta}\left(\partial_{r}+ir^{-1}\partial_{\theta}\right)}{2}.
\]

We will also use $\partial$ to denote directional derivatives; i.e.
$\partial_{1}=\partial_{x}$, $\partial_{i}=\partial_{y}$, and so
on. If $z\in\partial\Omega$, denote by $\nu_{\text{out}}(z)\in\mathbb{C}$
as the unit normal at $x$, i.e. the unit complex number which points
to the direction of outer normal vector at $z$. Then $\partial_{\nu_{\text{out}}}$
is the outer normal derivative in the direction of $\nu_{\text{out}}$.

We denote by $\partial_{\lambda}^{\delta},\Delta^{\delta}$ the following
discrete operators:
\begin{alignat*}{1}
\partial_{\lambda}^{\delta}f(z) & :=f(z+\sqrt{2}\lambda\delta)-f(z)\text{, etc.,}\\
\Delta^{\delta}f(z) & :=f(z+\sqrt{2}\lambda\delta)+f(z-\sqrt{2}\bar{\lambda}\delta)+f(z-\sqrt{2}\lambda\delta)+f(z+\sqrt{2}\bar{\lambda}\delta)-4f(z),
\end{alignat*}
wherever they make sense, if $z\notin\mathcal{V}\left[\Omega_{\delta}\right]$.
On $\mathcal{V}\left[\Omega_{\delta}\right]$, we make a small modification
in the coefficients in $\Delta^{\delta}$; see (\ref{eq:lap_vert}).
Note that $(\sqrt{2}\delta)^{-2}\Delta^{\delta}\to\Delta$.

\subsubsection*{Mass Parametrisation.}

There are various equivalent ways of parametrising the deviation $\beta_{c}-\beta$,
and we summarise the relation amongst them here at once, which hold
at all times. In this paper, $M,m,\Theta$ will be supposed to be
\textbf{negative} unless otherwise specified. We also assume $\delta$
is small enough to, e.g., have $\beta\in(0,\infty)$.
\begin{itemize}
\item \emph{Discrete mass} $M:=\beta_{c}-\beta$ is scaled $M=\frac{m\delta}{2}$
with the continuous mass $m$ being a constant.
\item Pure phase factor $e^{2i\Theta}:=\lambda^{-3}\frac{e^{-2\beta}+i}{e^{-2\beta}-i}$
with $\Theta\in\left]-\frac{\pi}{8},\frac{\pi}{8}\right[$. Equivalently
$e^{2\beta}=\cot\left(\frac{\pi}{8}+\Theta\right)$. $\Theta$ is
scaled $\Theta\sim\frac{m\delta}{2}$. Also define $M_{H}:=2\sin2\Theta\sqrt{\frac{2}{\cos4\Theta}}$
which is the mass coefficient in massive harmonicity.
\end{itemize}

\subsubsection*{Graph Notation.}

Recall that we work with the rotated square lattice $\mathbb{C}_{\delta}:=\delta(1+i)\mathbb{Z}^{2}$.
Our graph $\Omega_{\delta}$ comprises the following components (see
Figure \ref{fig:The-square-lattice.}):
\begin{itemize}
\item \emph{faces} $\mathcal{F}\left[\Omega_{\delta}\right]:=\Omega\cap\delta(1+i)\mathbb{Z}^{2}$,
\item \emph{vertices} $\mathcal{V}\left[\Omega_{\delta}\right]:=\left\{ f\pm\delta,f\pm i\delta:f\in\mathcal{F}\left[\Omega_{\delta}\right]\right\} \subset\mathbb{C}_{\delta}^{*}$,
\item \emph{edge} $\mathcal{E}\left[\Omega_{\delta}\right]:=\left\{ (ij)=(ji):i,j\in\mathcal{V}\left[\Omega_{\delta}\right],\left|i-j\right|=\sqrt{2}\delta\right\} $,
and
\item \emph{corners} $\mathcal{C}\left[\Omega_{\delta}\right]:=\left\{ (vf):v\in\mathcal{V}\left[\Omega_{\delta}\right],f\in\mathcal{F}\left[\Omega_{\delta}\right],\left|v-f\right|=\delta\right\} $.
\end{itemize}
For consistency with the low-temperature expansion of the model, we
prefer to visualise the lattice in its dual form. Note that just as
the faces are represented by their midpoints above, an edge $(ij)$
and a corner $(vf)$ will be identified with their midpoints $\frac{i+j}{2}$
and $\frac{v+f}{2}$, respectively. Additionally, we draw a \emph{half-edge}
between either an edge midpoint or a corner to a nearest vertex.

For $\tau=1,i,\lambda,\bar{\lambda}$, a corner $c\in\mathcal{C}\left[\Omega_{\delta}\right]$
is in $\mathcal{C}^{\tau}\left[\Omega_{\delta}\right]$ if the nearest
vertex is in the direction $-\tau^{-2}$. The edges in $\mathcal{C}^{1}\left[\Omega_{\delta}\right],\mathcal{C}^{i}\left[\Omega_{\delta}\right]$
are respectively called \emph{real} and \emph{imaginary corners}.

We will frequently denote union of various sets by concatenation,
e.g. $\mathcal{EC}\left[\Omega_{\delta}\right]:=\mathcal{E}\left[\Omega_{\delta}\right]\cup\mathcal{C}\left[\Omega_{\delta}\right]$.

\subsubsection*{Graph Boundary.}
\begin{itemize}
\item $\overline{\mathcal{E}}\left[\Omega_{\delta}\right]:=\left\{ (ij)=(ji):i\in\mathcal{V}\left[\Omega_{\delta}\right],j\in\mathcal{V}\left[\mathbb{C}_{\delta}\right],|i-j|=\sqrt{2}\delta\right\} $,
$\partial\mathcal{E}\left[\Omega_{\delta}\right]:=\overline{\mathcal{E}}\left[\Omega_{\delta}\right]\setminus\mathcal{E}\left[\Omega_{\delta}\right]$,
\item boundary vertices $\partial\mathcal{V}\left[\Omega_{\delta}\right]$
are the endpoints of edges in $\partial\mathcal{E}\left[\Omega_{\delta}\right]$
not in $\mathcal{V}\left[\Omega_{\delta}\right]$, 
\item boundary faces $\partial\mathcal{F}\left[\Omega_{\delta}\right]$
are faces in $\mathcal{F}\left[\mathbb{C}_{\delta}\right]\setminus\mathcal{F}\left[\Omega_{\delta}\right]$
which are $\delta$ away from a vertex in $\mathcal{V}\left[\Omega_{\delta}\right]$,
and
\item boundary corners $\partial\mathcal{C}\left[\Omega_{\delta}\right]=\left\{ (vf):v\in\mathcal{V}\left[\Omega_{\delta}\right],f\in\partial\mathcal{F}\left[\Omega_{\delta}\right],\left|v-f\right|=\delta\right\} $ 
\item $\nu_{\text{out}}(z)$ for $z\in\partial\mathcal{E}\left[\Omega_{\delta}\right]$
is the unit complex number corresponding to the orientation of $z$
pointing outwards from $\Omega$.
\end{itemize}

\subsubsection*{Double Cover.}

The fermions we introduce in forthcoming sections are functions defined
on the double cover of the continuous and discrete domains $\Omega,\Omega_{\delta}$.
Define $\left[\Omega,a_{1},\ldots,a_{n}\right]$ as the double cover
of $\Omega$ ramified at distinct interior points $a_{1},\ldots,a_{n}\in\Omega$;
in particular, it is a Riemann surface where $\sqrt{\left(z-a_{1}\right)\cdots\left(z-a_{n}\right)}$
is well-defined, smooth and single-valued. In the case where $\left\{ \bar{a_{1}},\ldots,\bar{a_{n}}\right\} =\left\{ a_{1},\ldots,a_{n}\right\} $,
conjugation on the double cover is defined by requiring that $\sqrt{\left(\bar{z}-a_{1}\right)\cdots\left(\bar{z}-a_{n}\right)}=\overline{\sqrt{\left(z-a_{1}\right)\cdots\left(z-a_{n}\right)}}$.
On $\left[\mathbb{C},a_{1}\right]$, we will refer to the slit domains
$\mathbb{X}^{+}:=\left\{ \text{Re}\sqrt{z}>0\right\} $ and $\mathbb{Y}^{+}:=\left\{ \text{Im}\sqrt{z}>0\right\} $.

When the choice of the lift of $z\in\Omega$ is clear, we will write
$z$ to denote the lift in $\left[\Omega,a_{1},\ldots,a_{n}\right]$.
Conversely, if $z\in\left[\Omega,a_{1},\ldots,a_{n}\right]$, we will
write $z$, or, for clarification, $\pi(z)\in\Omega\setminus\left\{ a_{1},\ldots,a_{n}\right\} $
for the projection onto the planar domain; $z^{\cdot}\in\left[\Omega,a_{1},\ldots,a_{n}\right]$
is the lift of $\pi(z)$ which is not $z$. A function which switches
sign under switching $z$ and $z^{\cdot}$ is called a \emph{spinor}.

We say that two points $z,w\in\left[\Omega,a_{1},\ldots,a_{n}\right]$
are \emph{on the same sheet} if we can draw a straight line segment
between them; i.e. the straight line segment on $\Omega$ which connects
$\pi(z),\pi(w)$ can be lifted to connect $z,w$ on $\left[\Omega,a_{1},\ldots,a_{n}\right]$.

For the discrete double cover $\left[\Omega_{\delta},a_{1},\ldots,a_{n}\right]$,
we will take closest faces in $\Omega_{\delta}$ to $a_{1},\ldots,a_{n}\in\Omega$,
and then lift components of $\Omega_{\delta}$ minus those $n$ faces.
Clearly, $\left[\Omega_{\delta},a_{1},\ldots,a_{n}\right]$ is a lattice
which is locally isomorphic to the planar lattice $\Omega_{\delta}$.
Given the first monodromy face $a_{1}$, we will fix a lift of $a_{1}+\frac{\delta}{2}$
and refer to it throughout this paper.

\begin{figure}
\includegraphics[width=0.9\columnwidth]{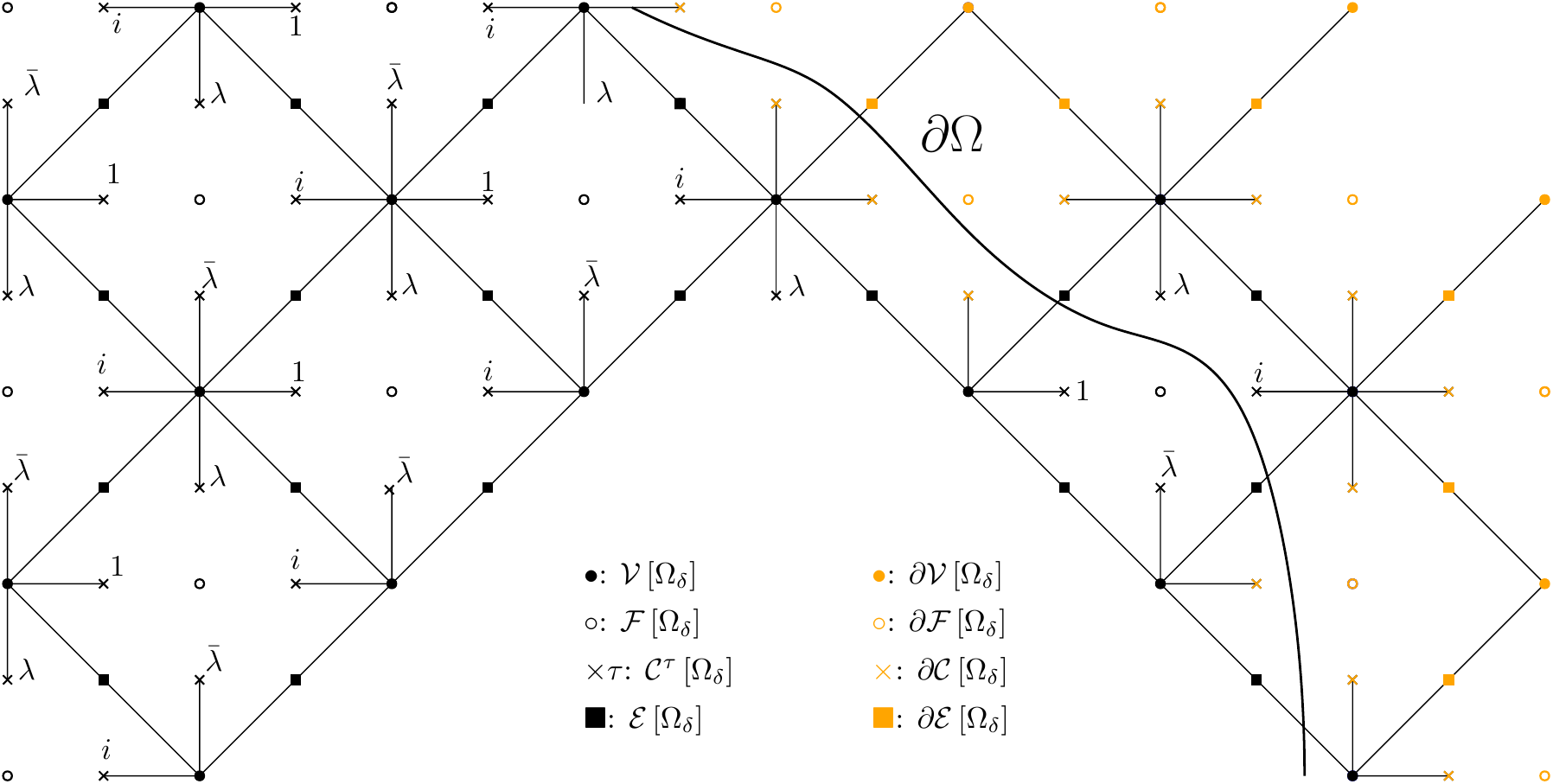}

\caption{\label{fig:The-square-lattice.}The square lattice. Proposition \ref{prop:ident_limit}:
$F_{\left[\Omega_{\delta},a_{1},\ldots,a_{n}\right]}=O(1)$ on boundary
edges and corners in orange, since $H_{\left[\Omega_{\delta},a_{1},\ldots,a_{n}\right]}^{\bullet}=O(\delta)$
on adjacent vertices in $\mathcal{V}\left[\Omega_{\delta},a_{1},\ldots,a_{n}\right]$.}
\end{figure}

\subsection{Proof of the Main Theorems\label{subsec:intro-proof}}
\begin{proof}[Proof of Theorem \ref{thm:1}.]
 Given the fermion convergence of Theorem \ref{thm:convergence}
and identification of Ising quantities in terms of the fermions of
Proposition \ref{prop:isingcor}, we can integrate the discrete logarithmic
derivative which converges in the scaling limit (see also \cite[Proposition 2.22, Remark 2.23]{chelkak-hongler})
\begin{alignat*}{1}
\frac{1}{2\delta}\left[\frac{\mathbb{E}_{\Omega_{\delta}}^{\beta,+}\left[\sigma_{a_{1}+2\delta}\sigma_{a_{2}}\cdots\sigma_{a_{n}}\right]}{\mathbb{E}_{\Omega_{\delta}}^{\beta,+}\left[\sigma_{a_{1}}\cdots\sigma_{a_{n}}\right]}-1\right] & \to\mathcal{A}_{\Omega}^{1}(a_{1},\ldots,a_{n}),\\
\frac{1}{2\delta}\left[\frac{\mathbb{E}_{\Omega_{\delta}}^{\beta,+}\left[\sigma_{a_{1}+2i\delta}\sigma_{a_{2}}\cdots\sigma_{a_{n}}\right]}{\mathbb{E}_{\Omega_{\delta}}^{\beta,+}\left[\sigma_{a_{1}}\cdots\sigma_{a_{n}}\right]}-1\right] & \to-\mathcal{A}_{\Omega}^{i}(a_{1},\ldots,a_{n}),
\end{alignat*}
 to get that $\frac{\mathbb{E}_{\Omega_{\delta}}^{\beta,+}\left[\sigma_{b_{1}}\cdots\sigma_{b_{n}}\right]}{\mathbb{E}_{\Omega_{\delta}}^{\beta,+}\left[\sigma_{a_{1}}\cdots\sigma_{a_{n}}\right]}$
scales to a continuous limit $\frac{\left\langle b_{1},\ldots,b_{n}\right\rangle _{\Omega,m}^{+}}{\left\langle a_{1},\ldots,a_{n}\right\rangle _{\Omega,m}^{+}}$
for $a_{1},\ldots,a_{n},b_{1},\ldots,b_{n}\in\Omega$. The convergence
for $\mathcal{A}_{\Omega}^{i}$ follows by considering the result
in a $-90^{\circ}$ rotated domain (see also \cite[Proof of Theorems 1.5 and 1.7]{chelkak-hongler}).

Now it remains to uniquely relate the massive convergence rate to
the massless convergence rate: $\frac{\mathbb{E}_{\Omega_{\delta}}^{\beta,+}\left[\sigma_{a_{1}}\cdots\sigma_{a_{n}}\right]}{\mathbb{E}_{\Omega_{\delta}}^{\beta_{c},+}\left[\sigma_{a_{1}}\cdots\sigma_{a_{n}}\right]}\to\frac{\left\langle a_{1},\ldots,a_{n}\right\rangle _{\Omega,m}^{+}}{\left\langle a_{1},\ldots,a_{n}\right\rangle _{\Omega,0}^{+}}$
for some $a_{1},\ldots,a_{n}\in\Omega$. Given the convergence of
$\delta^{-\frac{n}{8}}\mathbb{E}_{\Omega_{\delta}}^{\beta_{c},+}\left[\sigma_{a_{1}}\cdots\sigma_{a_{n}}\right]$
to a continuous limit $\left\langle a_{1},\ldots,a_{n}\right\rangle _{\Omega,0}^{+}$,
we have convergence of the massive correlation to unique $\left\langle a_{1},\ldots,a_{n}\right\rangle _{\Omega,m}^{+}$.

The procedure partially relies on the process in the massless case
of relating the bounded domain correlations to full-plane correlations
from \cite{chelkak-hongler}.

Note that $\beta>\beta_{c}$, and denote the dual temperature by $\beta^{*}<\beta_{c}$.
We always assume a scaling $\beta_{c}-\beta=\frac{m\delta}{2}$ for
$m<0$.
\begin{enumerate}
\item Relating two point functions: $\frac{\mathbb{E}_{\Omega_{\delta}}^{\beta,+}\left[\sigma_{a_{1}}\sigma_{a_{2}}\right]}{\mathbb{E}_{\Omega_{\delta}}^{\beta_{c},+}\left[\sigma_{a_{1}}\sigma_{a_{2}}\right]}$
tends to a continuous limit, since
\begin{equation}
1\leq\frac{\mathbb{E}_{\Omega_{\delta}}^{\beta,+}\left[\sigma_{a_{1}}\sigma_{a_{2}}\right]}{\mathbb{E}_{\Omega_{\delta}}^{\beta_{c},+}\left[\sigma_{a_{1}}\sigma_{a_{2}}\right]}\leq\frac{\mathbb{E}_{\Omega_{\delta}}^{\beta,+}\left[\sigma_{a_{1}}\sigma_{a_{2}}\right]}{\mathbb{E}_{\Omega_{\delta}}^{\beta^{*},+}\left[\sigma_{a_{1}}\sigma_{a_{2}}\right]}\leq\frac{\mathbb{E}_{\Omega_{\delta}}^{\beta,+}\left[\sigma_{a_{1}}\sigma_{a_{2}}\right]}{\mathbb{E}_{\Omega_{\delta}}^{\beta^{*},\text{free}}\left[\sigma_{a_{1}}\sigma_{a_{2}}\right]}\rightarrow\left|\mathcal{B}_{\Omega}(a_{1},a_{2}|m)\right|^{-1},\label{eq:Bto1}
\end{equation}
where we successively used the monotonicity of spin correlation in
inverse temperature (e.g. by coupling with FK-Ising) and in boundary
condition (FKG inequality, \cite[Theorem 3.21]{friedli-velenik}).
We also used the convergence of Theorem \ref{thm:convergence} of
the Ising ratio to $\left|\mathcal{B}_{\Omega}(a_{1},a_{2}|m)\right|$.
We conclude by noting that $\left|\mathcal{B}_{\Omega}(a_{1},a_{2}|m)\right|$
can be made arbitrarily close to $1$ (Lemma \ref{lem:B}) by merging
$a_{1},a_{2}$.
\item One point functions: note that by above, we may write
\begin{alignat}{1}
\frac{\left\langle a_{1},a_{2}\right\rangle _{\Omega,m}^{+}}{\left\langle a_{1},a_{2}\right\rangle _{\Omega,0}^{+}} & =\exp\left[\int_{a_{2}}^{a_{1}}\left[\mathcal{A}_{\Omega}^{1}(z,a_{2}|m)-\mathcal{A}_{\Omega}^{1}(z,a_{2}|0)\right]dx\right.\nonumber \\
 & -\left.\left[\mathcal{A}_{\Omega}^{i}(z,a_{2}|m)-\mathcal{A}_{\Omega}^{i}(z,a_{2}|0)\right]dy\right],\label{eq:comparability-massive}
\end{alignat}
along any line from $a_{2}$ to $a_{1}$ if the integral converges.
Choose $a_{1}$ and $a_{2}$ as in Lemma \ref{lem:two-point-delicate}
(see Figure \ref{fig:integration}), then we can bound the integral:
$\frac{\left\langle a_{1},a_{2}\right\rangle _{\Omega,m}^{+}}{\left\langle a_{1},a_{2}\right\rangle _{\Omega,0}^{+}}\leq e^{cst\cdot\left(\epsilon^{-\gamma}\left|a_{1}-a_{2}\right|+\left|a_{1}-a_{2}\right|^{1-\gamma}\right)}$
for some fixed $\gamma\in(0,1)$. Now choose $\left|a_{1}-a_{2}\right|=\epsilon^{\kappa}$
for some $\gamma<\kappa<1$. Then $\frac{\left\langle a_{1},a_{2}\right\rangle _{\Omega,m}^{+}}{\left\langle a_{1},a_{2}\right\rangle _{\Omega,0}^{+}}\to1$
as $\epsilon\to0$, while the hyperbolic distance between $a_{1}$
and $a_{2}$ grows; indeed, the hyperbolic distance is comparable
to $\ln\frac{\left|a_{1}-a_{2}\right|}{\text{dist}(\left\{ a_{1},a_{2}\right\} ,\partial\Omega)}\propto(1-\kappa)\left|\ln\epsilon\right|$.
Therefore, by \cite[(1.3)]{chelkak-hongler}, $\frac{\left\langle a_{1},a_{2}\right\rangle _{\Omega,0}^{+}}{\left\langle a_{1}\right\rangle _{\Omega,0}^{+}\left\langle a_{2}\right\rangle _{\Omega,0}^{+}}\to1$.
Now we may relate the massive one-point functions to the massless
ones, so that the former are uniquely identified in terms of the latter.
By the monotonicity of spin correlation, $\mathbb{E}_{\Omega_{\delta}}^{\beta_{c},+}\left[\sigma_{a_{1}}\right]\leq\mathbb{E}_{\Omega_{\delta}}^{\beta,+}\left[\sigma_{a_{1}}\right]$,
and by GKS inequality \cite[Theorem 3.20]{friedli-velenik}, $\mathbb{E}_{\Omega_{\delta}}^{\beta,+}\left[\sigma_{a_{1}}\right]\mathbb{E}_{\Omega_{\delta}}^{\beta,+}\left[\sigma_{a_{2}}\right]\leq\mathbb{E}_{\Omega_{\delta}}^{\beta,+}\left[\sigma_{a_{1}}\sigma_{a_{2}}\right]$,
so
\begin{equation}
1\leq\frac{\mathbb{E}_{\Omega_{\delta}}^{\beta,+}\left[\sigma_{a_{1}}\right]}{\mathbb{E}_{\Omega_{\delta}}^{\beta_{c},+}\left[\sigma_{a_{1}}\right]}\leq\frac{\mathbb{E}_{\Omega_{\delta}}^{\beta,+}\left[\sigma_{a_{1}}\sigma_{a_{2}}\right]}{\mathbb{E}_{\Omega_{\delta}}^{\beta_{c},+}\left[\sigma_{a_{1}}\right]\mathbb{E}_{\Omega_{\delta}}^{\beta_{c},+}\left[\sigma_{a_{2}}\right]}=\frac{\mathbb{E}_{\Omega_{\delta}}^{\beta_{c},+}\left[\sigma_{a_{1}}\sigma_{a_{2}}\right]}{\mathbb{E}_{\Omega_{\delta}}^{\beta_{c},+}\left[\sigma_{a_{1}}\right]\mathbb{E}_{\Omega_{\delta}}^{\beta_{c},+}\left[\sigma_{a_{2}}\right]}\cdot\frac{\mathbb{E}_{\Omega_{\delta}}^{\beta,+}\left[\sigma_{a_{1}}\sigma_{a_{2}}\right]}{\mathbb{E}_{\Omega_{\delta}}^{\beta_{c},+}\left[\sigma_{a_{1}}\sigma_{a_{2}}\right]}.\label{eq:one-point-decorrelation}
\end{equation}
By the above discussion, both factors on the right is made arbitrarily
close to $1$ by taking small enough $\epsilon$. Moreover, such choice
of $\epsilon$ is uniform in the smoothness of the domain (concretely,
the bound on the derivative of a conformal map to the disc).
\item More points: we again use the GKS inequality, in that
\begin{equation}
1\leq\frac{\mathbb{E}_{\Omega_{\delta}}^{\beta,+}\left[\sigma_{a_{1}}\cdots\sigma_{a_{n}}\right]}{\mathbb{E}_{\Omega_{\delta}}^{\beta,+}\left[\sigma_{a_{1}}\right]\cdots\mathbb{E}_{\Omega_{\delta}}^{\beta,+}\left[\sigma_{a_{n}}\right]}\leq\frac{\mathbb{E}_{\Omega_{\delta}^{1}}^{\beta,+}\left[\sigma_{a_{1}}\right]\cdots\mathbb{E}_{\Omega_{\delta}^{n}}^{\beta,+}\left[\sigma_{a_{n}}\right]}{\mathbb{E}_{\Omega_{\delta}}^{\beta,+}\left[\sigma_{a_{1}}\right]\cdots\mathbb{E}_{\Omega_{\delta}}^{\beta,+}\left[\sigma_{a_{n}}\right]},\label{eq:gks}
\end{equation}
where $\Omega^{1}\ni a_{1},\ldots,\Omega^{n}\ni a_{n}$ are choices
of disjoint smooth simply connected subdomains of $\Omega$, each
sharing a macroscopic boundary arc with $\Omega$. By FKG inequality,
requiring the spins in $\Omega\setminus\bigcup_{j=1}^{n}\Omega^{j}$
to be plus raises the correlation, which gives the second inequality
above. By the uniform identification of the one point functions, there
is $\epsilon>0$ such that if each of $a_{1},\ldots,a_{n}$ are $\epsilon$-close
to the boundary arc that $\Omega^{1},\ldots,\Omega^{n}$ shares with
$\Omega$, both $\mathbb{E}_{\Omega_{\delta}^{j}}^{\beta,+}\left[\sigma_{a_{j}}\right],\mathbb{E}_{\Omega_{\delta}}^{\beta,+}\left[\sigma_{a_{j}}\right]$
become close to their massless counterparts. Since the massless correlations
are Caratheodory stable (\cite[(1.3)]{chelkak-hongler}), as $\epsilon$
decreases, $\mathbb{E}_{\Omega_{\delta}^{j}}^{\beta_{c},+}\left[\sigma_{a_{j}}\right]/\mathbb{E}_{\Omega_{\delta}}^{\beta_{c},+}\left[\sigma_{a_{j}}\right]$
become close to $1$. Clearly, any ratio between $\mathbb{E}_{\Omega_{\delta}^{j}}^{\beta,+}\left[\sigma_{a_{j}}\right],\mathbb{E}_{\Omega_{\delta}}^{\beta,+}\left[\sigma_{a_{j}}\right],\mathbb{E}_{\Omega_{\delta}^{j}}^{\beta_{c},+}\left[\sigma_{a_{j}}\right],\mathbb{E}_{\Omega_{\delta}}^{\beta_{c},+}\left[\sigma_{a_{j}}\right]$
may be made arbitrarily close to $1$ by setting $\epsilon$ small
enough, and thus $\frac{\mathbb{E}_{\Omega_{\delta}}^{\beta,+}\left[\sigma_{a_{1}}\cdots\sigma_{a_{n}}\right]}{\mathbb{E}_{\Omega_{\delta}}^{\beta,+}\left[\sigma_{a_{1}}\right]\cdots\mathbb{E}_{\Omega_{\delta}}^{\beta,+}\left[\sigma_{a_{n}}\right]}$
as well; this fixes the normalisation of an arbitrary $n$-point function.
\end{enumerate}
\end{proof}
\begin{figure}
\includegraphics[width=0.5\textwidth]{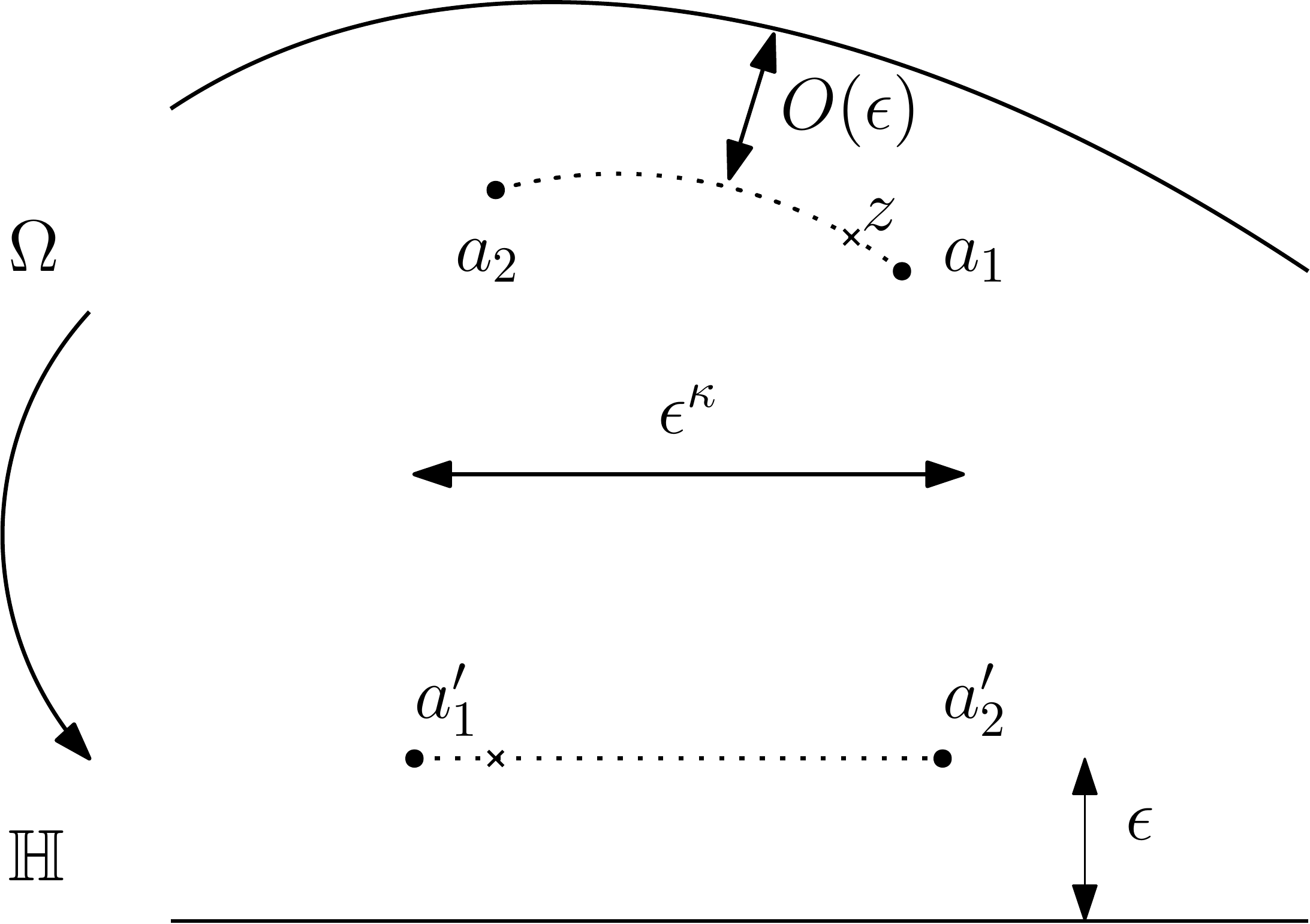}

\caption{Setup for applying Lemma \ref{lem:two-point-delicate} to have decorrelation
between $a_{1},a_{2}$.}
\label{fig:integration}

\end{figure}

\begin{proof}[Proof of Corollary \ref{cor:2}.]
 Note that the argument for the scaling limit of the two-point functions
in the proof of Theorem \ref{thm:1} apply in the full plane as well.
It also fixes the normalisation as $a\to0$ by \cite[Remark 2.26]{chelkak-hongler}.
From Theorem \ref{thm:convergence}, we need to integrate $\mathcal{A}_{\mathbb{C}}^{1}(-a,a)$.
Recall $r=am$, and define $\left\langle r\right\rangle _{\mathbb{C}}^{+}:=\left\langle -a,a\right\rangle _{\mathbb{C},m}^{+}$.

By (\ref{eq:painleve_A}), $-m^{-1}\left.\partial_{a}\ln\left\langle -a,b\right\rangle _{\mathbb{C},m}^{+}\right|_{b=a}=\left.-\frac{1}{2}\left(\ln\cosh h_{0}(r)\right)'-r\left[\frac{1}{2}\left(h_{0}'(r)\right)^{2}-2\sinh^{2}2h_{0}(r)\right]\right|_{r=am}$,
which can be rephrased as 
\[
\partial_{r}\ln\left\langle r\right\rangle _{\mathbb{C}}^{+}=m^{-1}\partial_{a}\left\langle -a,a\right\rangle _{\mathbb{C},m}^{+}=\left(\ln\cosh h_{0}(r)\right)'+r\left[\left(h_{0}'(r)\right)^{2}-4\sinh^{2}2h_{0}(r)\right].
\]
Then 
\[
\left\langle -a,a\right\rangle _{\mathbb{C},m}^{+}=\left\langle r\right\rangle _{\mathbb{C}}^{+}=cst\cdot\cosh h_{0}(am)\cdot\exp\left[\int_{-\infty}^{am}r\left[\left(h_{0}'(r)\right)^{2}-4\sinh^{2}2h_{0}(r)\right]dr\right].
\]
Then the definition $\tanh h_{0}:=\mathcal{B}_{0}$ gives the other
case.
\end{proof}

\subsection{Structure of the Paper}

This paper contains four sections and one appendix to which technical
calculations and estimates are deferred. Section \ref{sec:Massive-Discrete-Fermions}
defines our main analytical tool, the discrete fermions. The combinatorial
definition in \ref{subsec:Bounded-Domain-Fermions} involves contours
on the discrete bounded domain and is seen to naturally encode the
logarithmic derivative of the spin correlation. Its discrete complex
analytic properties are then established, which are exploited in Section
\ref{subsec:Infinite-Volume-Limit} to give a definition on the full
plane by an infinite volume limit.

Since analysis of the continuous fermions is needed for the scaling
limit process (for a unique characterisation of the continuous limit),
we carry out the continuum analysis first in Section \ref{subsec:Massive-Complex-Analysis:}.
We formulate the boundary value problem on $\Omega$ for our continuous
fermions, which will be a massive perturbation of holomorphic functions
treated in \cite{bers,vek}. We verify various properties we will
use: such as the expansion in terms of formal 'powers'. Analysis in
the continuum is continued in Section \ref{subsec:Analysis-of-the}.

Convergence of the discrete fermion under scaling limit is done in
Section \ref{sec:Discrete-Analysis:-Scaling}. The analysis is divided
into two parts: bulk convergence (Section \ref{subsec:convergence}),
where the discrete fermion evaluated on compact subsets of $\left[\Omega,a_{1},\ldots,a_{n}\right]$
is shown to uniformly converge to the continuous fermion, and analysis
near the singularity (Section \ref{subsec:Analysis-near-the}), where
the discrete fermion evaluated at a point in $\left[\Omega_{\delta},a_{1},\ldots,a_{n}\right]$
microscopically away from a monodromy face is identified from the
coefficients of a massive analytic version of power series expansion
of the continuous fermion. Bulk convergence is done in a standard
manner, by first showing that the set of discrete fermion correlations
are precompact and then uniquely identifying the limit. Analysis near
the singularity mainly uses ideas from \cite{chelkak-hongler}, where
the continuous power series expansion is modelled in the discrete
setting then the coefficients carefully matched.

In Section \ref{sec:Continuum-Analysis:-Painlev}, we collect the
analysis of the massive fermions necessary for the integration of
the logarithmic derivatives and isomonodromic analysis in Section
\ref{subsec:Analysis-of-the}, and finally in Section \ref{subsec:Derivation-of-Painlev}
we carry out the isomonodromic analysis and obtain the Painlevé III
transcendent, which can be identified in the logarithmic derivative
of spin correlations in $\mathbb{C}$ given the convergence results.

\subsection*{Acknowledgements}

The author would like to thank Clément Hongler for guidance, Konstantin
Izyurov, Kalle Kytölä, Julien Dubedat, Béatrice de Tilière, and Francesco
Spadaro for helpful discussions, and the ERC under grant SG CONSTAMIS
for financial support.

\section{Massive Fermions\label{sec:Massive-Discrete-Fermions}}

In this section, we introduce the main discrete tool of our analysis,
the massive discrete fermion correlations. While the object and the
terminology hearkens back to the physical analysis of the Ising model,
we shall give an explicit definition in Section \ref{subsec:Bounded-Domain-Fermions}
as a complex function which extrapolates the desired local physical
quantity to the entire domain. In the same subsection we also show
that, as a discrete function, it exhibits a notion in discrete complex
analysis called (massive) s-holomorphicity. At first we only define
the fermion in bounded discretised domains, i.e. finite sets; however
we will define them in the complex plane in Section \ref{subsec:Infinite-Volume-Limit}.
Then, we carry out analysis of the continuous spinors, to which the
discrete spinors presented in the previous section is shown to converge
in Section \ref{sec:Discrete-Analysis:-Scaling}. Since the proof
of scaling limit requires unique identification of the continuous
limit, we first give necessary analytic background in Section \ref{subsec:Massive-Complex-Analysis:}.

\subsection{Bounded Domain Fermions and Discrete Analysis\label{subsec:Bounded-Domain-Fermions}}

We introduce here the main object of our analysis, the discrete fermion
$F$. Note that this function is essentially the same object as in
\cite[Definition 2.1]{chelkak-hongler} albeit at general $\beta$,
and we try to keep the same normalisation and notation where appropriate.
The contents of this subsection are valid for any $\beta>0$.

In order to use the low-temperature expansion of the Ising model,
we first define $\Gamma_{\Omega_{\delta}}\subset2^{\mathcal{E}\left[\Omega_{\delta}\right]}$
as the collection of closed contours, i.e. set $\omega$ of edges
in $\Omega_{\delta}$ such that an even number of edges in $\omega$
meet at any given vertex. Given $+$ boundary condition, any $\omega$
is clearly in one-to-one correspondence with a spin configuration
$\sigma$ (where $\omega$ delineates clusters of identical spins),
and we can compute the partition function of the model and the correlation
\begin{alignat*}{1}
\mathcal{Z}_{\Omega_{\delta}}^{+,\beta} & :=\sum_{\omega\in\mathcal{C}_{\Omega_{\delta}}}e^{-2\beta|\omega|}\\
\mathbb{E}_{\Omega_{\delta}}^{+,\beta}\left[\sigma_{a_{1}}\cdots\sigma_{a_{n}}\right] & =\left|\mathcal{Z}_{\Omega_{\delta}}^{+,\beta}\right|^{-1}\sum_{\omega\in\Gamma_{\Omega_{\delta}}}e^{-2\beta|\omega|}(-1)^{\#\text{loops}_{a_{1},\ldots,a_{n}}\left(\omega\right)},
\end{alignat*}
where $\#\text{loops}_{a_{1},\ldots,a_{n}}$ denotes the parity of
loops in $\omega$ which separate the boundary spins from an odd number
of $a_{1},\ldots,a_{n}$. The unnormalised correlation $\mathcal{Z}_{\Omega_{\delta}}^{+,\beta}\left[\sigma_{a_{1}}\cdots\sigma_{a_{n}}\right]:=\mathcal{Z}_{\Omega_{\delta}}^{+,\beta}\cdot\mathbb{E}_{\Omega_{\delta}}^{+,\beta}\left[\sigma_{a_{1}}\cdots\sigma_{a_{n}}\right]$
will be used for normalisation below.
\begin{defn}
\label{def:bounded-fermion} For a bounded simply connected domain
$\Omega\subset\mathbb{C}$ with $n$ distinct interior points $a_{1},\ldots a_{n}$
and inverse temperature $\beta>0$, define for $z\in\mathcal{E}\mathcal{C}\left[\Omega_{\delta},a_{1},\ldots,a_{n}\right]$
which is not a lift of $a_{1}+\frac{\delta}{2}$ the \emph{discrete
massive fermion correlation}, or simply the \emph{discrete fermion}
\[
F_{\left[\Omega_{\delta},a_{1},\ldots,a_{n}\right]}\left(z|\beta\right)=\frac{1}{\mathcal{Z}_{\Omega_{\delta}}^{+,\beta}\left[\sigma_{a_{1}}\cdots\sigma_{a_{n}}\right]}\sum_{\gamma\in\Gamma_{\Omega_{\delta}}(a_{1}+\frac{\delta}{2},z)}c_{z}e^{-2\beta\left|\gamma\right|}\cdot\phi_{a_{1},\ldots,a_{n}}\left(\gamma,z\right)
\]
where
\begin{itemize}
\item $\Gamma_{\Omega_{\delta}}(a_{1}+\frac{\delta}{2},z)$ is the collection
of $\gamma=\omega\oplus\gamma_{0}$, where $\omega$ runs over elements
over $\Gamma_{\Omega_{\delta}}$, $\gamma_{0}$ is a fixed simple
lattice path from $a_{1}+\frac{\delta}{2}$ to $\pi(z)\in\mathcal{E}\mathcal{C}\left[\Omega_{\delta}\right]$,
and $\oplus$ refers to the XOR (symmetric difference) operation.
$|\gamma|$ is the number of full edges in $\gamma$. $c_{z}:=\cos\left(\frac{\pi}{8}+\Theta(\beta)\right)^{-1}$
if $z$ is an edge, and $1$ if it is a corner. Note that none of
these definitions refer to the double cover.
\item $\phi_{a_{1},\ldots,a_{n}}\left(\gamma,z\right)$ is a pure phase
factor, independent of $\beta$, defined by
\[
\phi_{a_{1},\ldots,a_{n}}\left(\gamma,z\right)=e^{-\frac{i}{2}\text{wind}(\text{p}(\gamma))}(-1)^{\#\text{loops}_{a_{1},\ldots,a_{n}}\left(\gamma\setminus\text{p}(\gamma)\right)}\text{sheet}_{a_{1},\ldots,a_{n}}(\text{p}(\gamma)),
\]
where $\text{p}(\gamma)$ is a simple path (we allow for self-touching,
as long as there is no self-crossing) from $a_{1}+\frac{\delta}{2}$
to $\pi(z)$ chosen in $\gamma$, $\text{wind}(\text{p}(\gamma))$
is the total turning angle of the tangent of $\text{p}(\gamma)$,
and $\text{sheet}_{a_{1},\ldots,a_{n}}(\text{p}(\gamma))\in\left\{ \pm1\right\} $
is defined to be $+1$ if the lift of $\text{p}(\gamma)$ to the double
cover starting from the fixed lift of $a_{1}+\frac{\delta}{2}$ (fixed
once forever in Section \ref{subsec:Notation}) ends at $z$ and $-1$
if it ends at $z^{\cdot}$. $\phi_{a_{1},\ldots,a_{n}}$ is well-defined;
see e.g. \cite{chelkak-izyurov,chelkak-hongler}. 
\end{itemize}
Note that $F_{\left[\Omega_{\delta},a_{1},\ldots,a_{n}\right]}$ is
naturally a spinor, i.e. $F_{\left[\Omega_{\delta},a_{1},\ldots,a_{n}\right]}\left(z^{\cdot}|\beta\right)=-F_{\left[\Omega_{\delta},a_{1},\ldots,a_{n}\right]}\left(z|\beta\right)$.
\end{defn}

The massive fermion satisfies a perturbed notion of discrete holomorphicity,
called \emph{massive s-holomorphicity}. First, define the projection
operator $\text{Proj}_{e^{i\theta}\mathbb{R}}x:=\frac{x+e^{2i\theta}\bar{x}}{2}$
to be the projection of the complex number $x$ to the line $e^{i\theta}\mathbb{R}$.
\begin{prop}
\label{prop:shol}The discrete massive fermion $F_{\left[\Omega_{\delta},a_{1},\ldots,a_{n}\right]}\left(\cdot|\beta\right)$
is \emph{massive s-holomorphic}, i.e. it satisfies
\begin{equation}
e^{\mp i\Theta}\text{Proj}_{e^{\pm i\Theta}\tau(c)\mathbb{R}}F_{\left[\Omega_{\delta},a_{1},\ldots,a_{n}\right]}\left(c\mp\frac{\tau(c)^{-2}i\delta}{2}|\beta\right)=F_{\left[\Omega_{\delta},a_{1},\ldots,a_{n}\right]}\left(c|\beta\right),\label{eq:s-holom}
\end{equation}
between $c\mp\frac{\tau(c)^{-2}i\delta}{2}\in\mathcal{E}\left[\Omega_{\delta},a_{1},\ldots,a_{n}\right]$
and $c\in\mathcal{C}\left[\Omega_{\delta},a_{1},\ldots,a_{n}\right]$
which is not a lift of $a_{1}+\frac{\delta}{2}$. At (the fixed lift
of) $a_{1}+\frac{\delta}{2}$, we have instead
\begin{equation}
e^{\mp i\Theta}\text{Proj}_{e^{\pm i\Theta}i\mathbb{R}}F_{\left[\Omega_{\delta},a_{1},\ldots,a_{n}\right]}\left(a_{1}+\frac{\delta\pm\delta i}{2}|\beta\right)=\mp i.\label{eq:singularity}
\end{equation}
\end{prop}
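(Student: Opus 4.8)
The plan is to prove (\ref{eq:s-holom}) by a purely local computation. Since both the weight $e^{-2\beta|\gamma|}$ and the combinatorial phase $\phi_{a_1,\ldots,a_n}(\gamma,z)$ depend on $z$ only through the terminal portion of $\gamma$ near $\pi(z)$, the identity relating a corner $c$ to an adjacent edge $e=c\mp\frac{\tau(c)^{-2}i\delta}{2}$ will follow from a local correspondence between the configurations contributing to $F(e)$ and those contributing to $F(c)$, obtained by rearranging $\gamma$ across the vertex shared by $c$ and $e$. Crucially, $\phi$ is \emph{independent of $\beta$}, so its bookkeeping is identical to the critical case treated in \cite{chelkak-hongler,chelkak-izyurov,smirnov-ii}; I would therefore recall that phase computation verbatim and only redo the weight accounting for general $\beta$.

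For the correspondence I would transport the terminal point from $\pi(e)$ to $\pi(c)$ across the shared vertex and track three quantities. The winding $\text{wind}(\text{p}(\gamma))$ changes by a quarter-turn $\pm\frac{\pi}{4}$, producing a universal phase through the factor $e^{-\frac{i}{2}\text{wind}}$; the number of full edges changes by a unit, producing the relative weight $e^{-2\beta}$; and the parity factors $(-1)^{\#\text{loops}}$ and $\text{sheet}_{a_1,\ldots,a_n}$ are matched under the (local, hence homotopically trivial away from the $a_j$) move. Summing over the two ways the configuration may be completed at the shared vertex produces, for the corner value, a combination of $F(e)$ and its complex conjugate, the conjugate arising because the two completions reach the edge with opposite orientations and hence conjugate winding phases. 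This is precisely the shape of $\frac{1}{2}\left(e^{\mp i\Theta}F(e)+e^{\pm i\Theta}\tau(c)^2\overline{F(e)}\right)$, which is what $e^{\mp i\Theta}\text{Proj}_{e^{\pm i\Theta}\tau(c)\mathbb{R}}F(e)$ expands to once one writes $\text{Proj}_{e^{i\alpha}\mathbb{R}}x=\frac{1}{2}\left(x+e^{2i\alpha}\bar x\right)$ with $e^{i\alpha}=e^{\pm i\Theta}\tau(c)$.

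The heart of the matter, and the step I expect to be the main obstacle, is the \emph{massive trigonometry}: the weights of the two completions must recombine into a projection onto the $\Theta$-rotated line $e^{\pm i\Theta}\tau(c)\mathbb{R}$ rather than onto $\tau(c)\mathbb{R}$. I expect this to be forced by the defining relations $e^{2\beta}=\cot\left(\frac{\pi}{8}+\Theta\right)$, equivalently $e^{2i\Theta}=\lambda^{-3}\frac{e^{-2\beta}+i}{e^{-2\beta}-i}$, together with the edge normalisation $c_z=\cos\left(\frac{\pi}{8}+\Theta\right)^{-1}$: the two local weights $1$ and $e^{-2\beta}=\tan\left(\frac{\pi}{8}+\Theta\right)$, rescaled by $c_z$, should assemble through a half-angle identity into exactly the coefficients $e^{\mp i\Theta}$ and $e^{\pm i\Theta}\tau(c)^2$ above. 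Verifying this uniformly for all four corner types $\tau(c)\in\{1,i,\lambda,\bar\lambda\}$ and both signs, and checking that $\Theta=0$ reduces to the standard $s$-holomorphicity of \cite{chelkak-hongler}, is where the sign conventions of the double cover and the half-angle trigonometry must be reconciled with care.

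Finally, (\ref{eq:singularity}) is an inhomogeneous condition at the source, and I would obtain it by direct evaluation. At the corner $a_1+\frac{\delta}{2}$, which is the fixed base point of every $\gamma_0$, the local computation above degenerates: one of the two completions is the trivial terminal step of $\gamma_0$ itself, contributing a deterministic winding and sheet rather than the value of $F$ at a neighbouring corner. Repeating the quarter-turn and weight accounting at the two adjacent edges $a_1+\frac{\delta\pm\delta i}{2}$, the homogeneous part drops out and the residual term evaluates the $e^{-\frac{i}{2}\text{wind}}$ phase of the minimal path, yielding the constant $\mp i$ after the projection and the $e^{\mp i\Theta}$ rotation. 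Throughout, I would verify well-definedness of $\phi$ (independence of the choice of $\text{p}(\gamma)$, as in \cite{chelkak-izyurov,chelkak-hongler}) and fix consistently the lift to the double cover at the source, since both the spinor sign and the winding reference are anchored there.
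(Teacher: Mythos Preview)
Your high-level strategy coincides with the paper's: set up the bijection $\gamma\mapsto\gamma\oplus t$ between edge and corner configurations (with $t$ the fixed two-half-edge path through the shared vertex), check that $(-1)^{\#\text{loops}}$ and $\text{sheet}$ are preserved, and reduce to a local trigonometric identity. Two points in your sketch, however, are inaccurate and would cause trouble if you tried to execute them as written.

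First, the two cases of the bijection are asymmetric, not ``$\pm\tfrac{\pi}{4}$ and one extra edge''. If $t\cap\gamma\neq\emptyset$ then $\text{wind}$ changes by $+\tfrac{\pi}{4}$ and $|\gamma\oplus t|=|\gamma|$; if $t\cap\gamma=\emptyset$ then $\text{wind}$ changes by $-\tfrac{3\pi}{4}$ and $|\gamma\oplus t|=|\gamma|+1$. This asymmetry is precisely what makes the massive trigonometry close: after projection the two cases yield $\cos(\tfrac{\pi}{8}+\Theta)$ and $\sin(\tfrac{\pi}{8}+\Theta)=e^{-2\beta}\cos(\tfrac{\pi}{8}+\Theta)$ respectively, and the extra $e^{-2\beta}$ in the second case is absorbed by the extra edge in the weight $e^{-2\beta|\gamma\oplus t|}$. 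The normalisation $c_z=\cos(\tfrac{\pi}{8}+\Theta)^{-1}$ then cancels the common cosine.

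Second, your description of the projection as arising from ``summing over the two ways the configuration may be completed, one giving $F(e)$ and one $\overline{F(e)}$'' is not the mechanism. The map $\gamma\mapsto\gamma\oplus t$ is a \emph{bijection}; the two terms in $\text{Proj}_{e^{i\alpha}\mathbb{R}}x=\tfrac12(x+e^{2i\alpha}\bar x)$ have no separate combinatorial interpretation here. What actually happens is that the corner phase $e^{-\frac{i}{2}\text{wind}(\text{p}(\gamma\oplus t))}$ always lies on $\tau(c)\mathbb{R}$, so once you write the edge phase as (corner phase)$\times$(unit rotation) and project onto $e^{\pm i\Theta}\tau(c)\mathbb{R}$, the projection collapses to the real part of a single scalar. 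Trying instead to give $\overline{F(e)}$ a combinatorial meaning via ``opposite orientation at the edge'' will not produce a clean term-by-term identity. Your treatment of the singularity (\ref{eq:singularity}) is on the right track: the same bijection now lands in $\Gamma_{\Omega_\delta}$ itself, and a direct winding computation gives the constant $\mp i$.
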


\begin{proof}
The proof of massive s-holomorphicity, essentially identical to the
massless case \cite[Subsection 3.1]{chelkak-hongler}, uses the bijection
between $\Gamma_{\Omega_{\delta}}(a_{1}+\frac{\delta}{2},c\mp\frac{\tau(c)^{-2}i\delta}{2})$
and $\Gamma_{\Omega_{\delta}}(a_{1}+\frac{\delta}{2},c)$ by the $\oplus$
(symmetric difference) operation. Without loss of generality, notice
that $\gamma\mapsto\gamma\oplus t$ with the fixed path $t:=\left\{ \left(c-\frac{\tau(c)^{-2}i\delta}{2},c-\frac{\tau(c)^{-2}\delta}{2}\right)\left(c-\frac{\tau(c)^{-2}\delta}{2},c\right)\right\} $
maps $\Gamma_{\Omega_{\delta}}(a_{1}+\frac{\delta}{2},c-\frac{\tau(c)^{-2}i\delta}{2})$
to $\Gamma_{\Omega_{\delta}}(a_{1}+\frac{\delta}{2},c)$ bijectively
(see Figure \ref{fig:The-square-lattice.}).

We now only need to show that the summand in the definition of $F_{\left[\Omega_{\delta},a_{1},\ldots,a_{n}\right]}\left(c\mp\frac{\tau(c)^{2}i\delta}{2}|\beta\right)$
for a given $\gamma$ transforms to the summand in $F_{\left[\Omega_{\delta},a_{1},\ldots,a_{n}\right]}\left(c|\beta\right)$
for $\gamma\oplus t$. Given $\text{p}(\gamma)\subset\gamma$, we
may take $\text{p}(\gamma\oplus t):=\text{p}(\gamma)\oplus t$. Clearly
$\left(-1\right){}^{\#\text{loops}},\text{sheet}$ remain the same
for $\gamma$ and $\gamma\oplus t$, while 
\[
\text{wind}\left(\text{p}(\gamma\oplus t)\right)=\begin{cases}
\text{wind}\left(\text{p}(\gamma)\right)+\frac{\pi}{4} & \text{if }\gamma\cap t\neq\emptyset\\
\text{wind}\left(\text{p}(\gamma)\right)-\frac{3\pi}{4} & \text{if }\gamma\cap t=\emptyset
\end{cases}\in\tau(c)\mathbb{R}.
\]

In the case where $\gamma\cap t\neq\emptyset$, $\left|\gamma\oplus t\right|=\left|\gamma\right|$
and 
\begin{alignat*}{1}
e^{-i\Theta}\text{Proj}_{e^{i\Theta}\tau(c)\mathbb{R}}e^{-\frac{i}{2}\text{wind}\left(\text{p}\left(\gamma\right)\right)} & =e^{-i\Theta}\text{Proj}_{e^{i\Theta}\tau(c)\mathbb{R}}\left[e^{i\Theta}e^{-\frac{i}{2}\text{wind}\left(\text{p}\left(\gamma\oplus t\right)\right)}\right]e^{-\frac{\pi i}{8}-i\Theta}\\
 & =e^{-i\Theta}\left[e^{i\Theta}e^{-\frac{i}{2}\text{wind}\left(\text{p}\left(\gamma\oplus t\right)\right)}\right]\text{Re}e^{-\frac{\pi i}{8}-i\Theta}\\
 & =e^{-\frac{i}{2}\text{wind}\left(\text{p}\left(\gamma\oplus t\right)\right)}\cos\left(\frac{\pi}{8}+\Theta\right),
\end{alignat*}
while if $\gamma\cap t=\emptyset$, $\left|\gamma\oplus t\right|=\left|\gamma\right|+1$
and similarly
\begin{alignat*}{1}
e^{-i\Theta}\text{Proj}_{e^{i\Theta}\tau(c)\mathbb{R}}e^{-\frac{i}{2}\text{wind}\left(\text{p}\left(\gamma\right)\right)} & =e^{-i\Theta}\left[e^{i\Theta}e^{-\frac{i}{2}\text{wind}\left(\text{p}\left(\gamma\oplus t\right)\right)}\right]\text{Re}e^{\frac{3\pi i}{8}-i\Theta}\\
 & =e^{-\frac{i}{2}\text{wind}\left(\text{p}\left(\gamma\oplus t\right)\right)}\sin\left(\frac{\pi}{8}+\Theta\right)\\
 & =e^{-\frac{i}{2}\text{wind}\left(\text{p}\left(\gamma\oplus t\right)\right)}e^{2\beta}\cos\left(\frac{\pi}{8}+\Theta\right),
\end{alignat*}
and the result follows.

At $a_{1}+\frac{\delta}{2}$, (say) $\gamma\in\Gamma_{\Omega_{\delta}}(a_{1}+\frac{\delta}{2},a_{1}+\frac{\delta}{2}+\frac{i\delta}{2})$
is mapped bijectively to $\gamma\oplus t\in\Gamma_{\Omega_{\delta}}$.
It is easy to see that
\[
\text{wind}\left(\text{p}(\gamma)\right)=\begin{cases}
-\frac{5\pi}{4}\pm2\pi & \text{if }t\subset\gamma,\\
-\frac{\pi}{4}\pm2\pi & \text{if }\gamma\cap t=\emptyset
\end{cases},
\]
and $(-1)^{\#\text{loops}_{a_{1},\ldots,a_{n}}\left(\gamma\setminus\text{p}(\gamma)\right)}\text{sheet}_{a_{1},\ldots,a_{n}}(\text{p}(\gamma))=\left(-1\right)^{\#\text{loops}_{a_{1},\ldots,a_{n}}\left(\gamma\oplus t\right)}$.
Now a simple computation similar to above yields the result.
\end{proof}
\begin{figure}

\includegraphics[width=0.7\columnwidth]{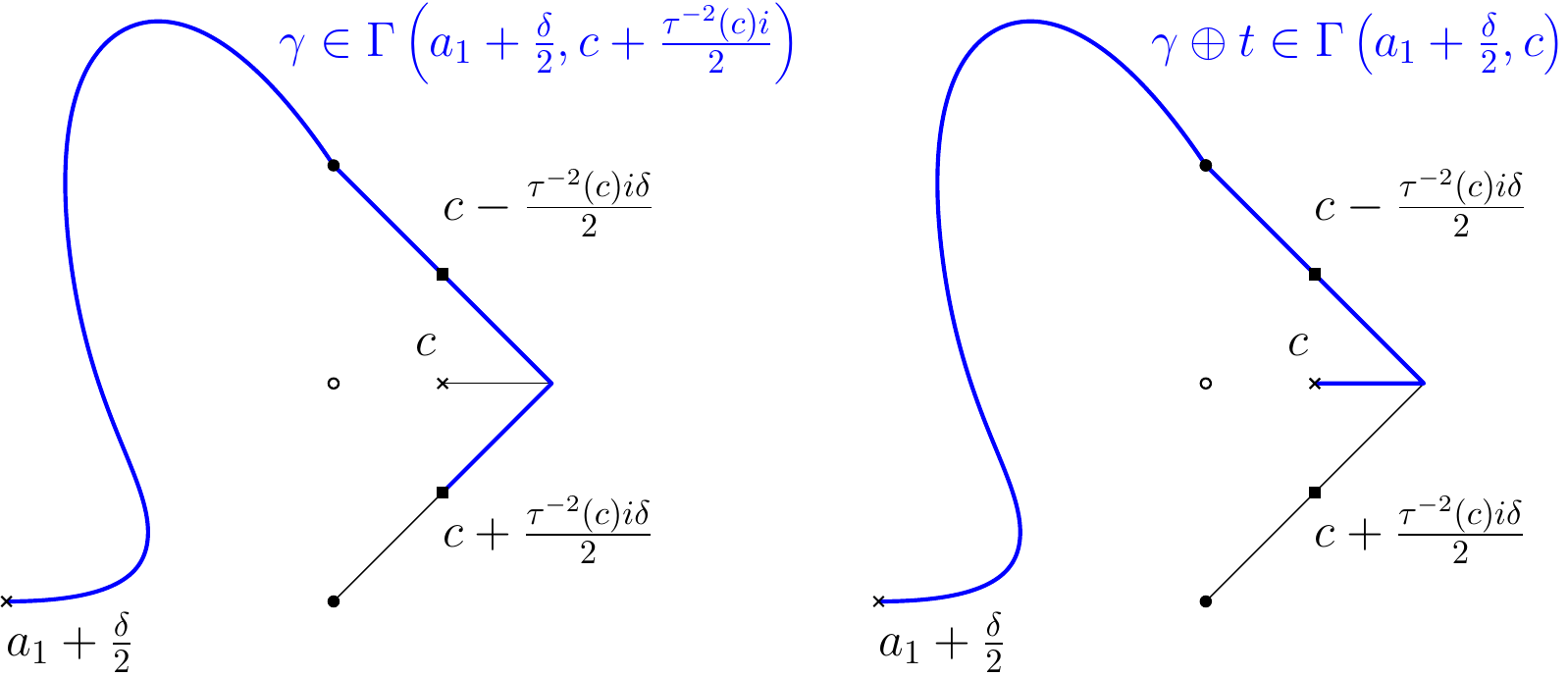}

\caption{\label{fig:The-proof-of}The proof of massive s-holomorphicity by
bijection.}

\end{figure}

We will take (\ref{eq:s-holom}) as the definition of \emph{($M,\beta,\Theta$)-massive
s-holomorphicity} (or just \emph{s-holomorphicity} when the mass is
clear) \emph{at} $c$ (or \emph{between} $c\mp\frac{\tau(c)^{-2}i\delta}{2}$).
We will see later that massive s-holomorphicity corresponds to the
continuous notion of perturbed analyticity $\partial_{z}f=m\bar{f}$.

The fermion defined above is a deterministic function without explicit
connection to the Ising model\textendash we now record how it encodes
probabilistic information.
\begin{prop}
\label{prop:isingcor}For $v=-1,0,1$, we have
\begin{alignat}{1}
F_{\left[\Omega_{\delta},a_{1},\ldots,a_{n}\right]}\left(a_{1}+\delta+\frac{e^{iv\frac{\pi}{2}}}{2}\delta|\beta\right) & =e^{-iv\frac{\pi}{4}}\frac{\mathbb{E}_{\Omega_{\delta}}^{\beta,+}\left[\sigma_{a_{1}+\delta+e^{iv\frac{\pi}{2}}\delta}\sigma_{a_{2}}\cdots\sigma_{a_{n}}\right]}{\mathbb{E}_{\Omega_{\delta}}^{\beta,+}\left[\sigma_{a_{1}}\cdots\sigma_{a_{n}}\right]},\label{eq:isingcor}
\end{alignat}
where $a_{1}+\delta+\frac{e^{iv\frac{\pi}{2}}}{2}\delta$ is lifted
to the same sheet as the fixed lift of $a_{1}+\frac{\delta}{2}$,
and thus for any $\beta_{1}<\beta_{2}$ there exist constants $C_{1}(\beta_{1},\beta_{2}),C_{2}(\beta_{1},\beta_{2})>0$,
uniform in $\beta\in\left[\beta_{1},\beta_{2}\right],\text{\ensuremath{\Omega}},\delta$,
such that
\[
C_{1}<\left|F_{\left[\Omega_{\delta},a_{1},\ldots,a_{n}\right]}\left(a_{1}+\delta+\frac{e^{iv\frac{\pi}{2}}}{2}\delta|\beta\right)\right|<C_{2}.
\]

In the case $n=2$ we have also 
\begin{equation}
\left|F_{\left[\Omega_{\delta},a_{1},a_{2}\right]}\left(a_{2}+\frac{\delta}{2}|\beta\right)\right|=\frac{\mathbb{E}_{\Omega_{\delta}^{*}}^{\beta^{*},\text{free}}\left[\sigma_{a_{1}+\delta}\sigma_{a_{2}+\delta}\right]}{\mathbb{E}_{\Omega_{\delta}}^{\beta,+}\left[\sigma_{a_{1}}\sigma_{a_{2}}\right]}.\label{eq:twopointcor}
\end{equation}
\end{prop}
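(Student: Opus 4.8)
The plan is to prove the three assertions in order: the corner-identification formula (\ref{eq:isingcor}) by a direct combinatorial bijection, then the uniform two-sided bound as a consequence of it together with Griffiths-type inequalities, and finally the $n=2$ duality identity (\ref{eq:twopointcor}) via Kramers--Wannier duality. For (\ref{eq:isingcor}) I would unwind Definition \ref{def:bounded-fermion} at the corner $z=a_{1}+\delta+\frac{e^{iv\pi/2}}{2}\delta$. Both the base corner $a_{1}+\frac{\delta}{2}$ and $z$ are incident to the single vertex $a_{1}+\delta$, so the fixed path $\gamma_{0}$ is the short ``turn'' through that vertex and $c_{z}=1$. Every $\gamma=\omega\oplus\gamma_{0}\in\Gamma_{\Omega_{\delta}}(a_{1}+\frac{\delta}{2},z)$ thus carries a disorder insertion joining the face $a_{1}$ to the neighbouring face $a_{1}+\delta+e^{iv\pi/2}\delta$. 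The crux is a sign-faithful bijection between these $\gamma$ and the loop configurations computing $\mathcal{Z}_{\Omega_{\delta}}^{+,\beta}[\sigma_{a_{1}+\delta+e^{iv\pi/2}\delta}\sigma_{a_{2}}\cdots\sigma_{a_{n}}]$: one checks that $(-1)^{\#\text{loops}}\cdot\text{sheet}$ reproduces exactly the loop parity separating the moved point from the boundary, while the turning at $a_{1}+\delta$ makes $e^{-\frac{i}{2}\text{wind}(\text{p}(\gamma))}$ contribute the constant phase $e^{-iv\pi/4}$. Dividing by the common denominator $\mathcal{Z}_{\Omega_{\delta}}^{+,\beta}[\sigma_{a_{1}}\cdots\sigma_{a_{n}}]$ then produces the stated ratio. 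Since $\phi_{a_{1},\ldots,a_{n}}$ and the underlying combinatorics are $\beta$-independent, this is identical to the massless computation of \cite{chelkak-hongler} and remains valid at any $\beta>0$.

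The uniform bound follows from (\ref{eq:isingcor}) alone. Writing $a_{1}':=a_{1}+\delta+e^{iv\pi/2}\delta$ and using $\sigma_{a_{1}}^{2}=1$, the second Griffiths (GKS) inequality \cite[Theorem 3.20]{friedli-velenik} applied to the sets $\{a_{1},a_{1}'\}$ and $\{a_{1},a_{2},\ldots,a_{n}\}$ gives $\mathbb{E}^{\beta,+}[\sigma_{a_{1}'}\sigma_{a_{2}}\cdots\sigma_{a_{n}}]\geq\mathbb{E}^{\beta,+}[\sigma_{a_{1}}\sigma_{a_{1}'}]\,\mathbb{E}^{\beta,+}[\sigma_{a_{1}}\sigma_{a_{2}}\cdots\sigma_{a_{n}}]$, while the same inequality with $a_{1},a_{1}'$ exchanged controls the reciprocal. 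As the right-hand side of (\ref{eq:isingcor}) is a positive real, this yields $\mathbb{E}^{\beta,+}[\sigma_{a_{1}}\sigma_{a_{1}'}]\leq|F_{[\Omega_{\delta},a_{1},\ldots,a_{n}]}(z|\beta)|\leq\mathbb{E}^{\beta,+}[\sigma_{a_{1}}\sigma_{a_{1}'}]^{-1}$, so it suffices to bound the single neighbouring correlation $\mathbb{E}^{\beta,+}[\sigma_{a_{1}}\sigma_{a_{1}'}]$ away from $0$ uniformly in $\Omega,\delta$ and $\beta\in[\beta_{1},\beta_{2}]$. This in turn is immediate from monotonicity in $\beta$ and in the domain and boundary conditions (GKS and FKG, \cite[Theorems 3.20--3.21]{friedli-velenik}), which reduce it to a strictly positive constant attained at $\beta_{1}$; the upper bound by $1$ is trivial.

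Finally, for (\ref{eq:twopointcor}) I evaluate $F_{[\Omega_{\delta},a_{1},a_{2}]}$ at the corner $a_{2}+\frac{\delta}{2}$ adjacent to the \emph{second} ramification face $a_{2}$. Now the fixed path $\gamma_{0}$ runs all the way from $a_{1}+\frac{\delta}{2}$ to $a_{2}+\frac{\delta}{2}$, so every $\gamma=\omega\oplus\gamma_{0}$ describes a configuration carrying a full disorder line between $a_{1}$ and $a_{2}$; combinatorially the defining sum is the unnormalised disorder two-point function $\langle\mu_{a_{1}}\mu_{a_{2}}\rangle$. Taking $|\cdot|$ removes the spinor phase $e^{-\frac{i}{2}\text{wind}}\cdot\text{sheet}$, which on these configurations is globally constant, leaving $\mathcal{Z}_{\Omega_{\delta}}^{+,\beta}[\sigma_{a_{1}}\sigma_{a_{2}}]^{-1}\sum_{\gamma}e^{-2\beta|\gamma|}$. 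Kramers--Wannier duality then turns the low-temperature disorder sum on $\Omega_{\delta}$ at $\beta$ into the high-temperature expansion of the spin correlation $\sigma_{a_{1}+\delta}\sigma_{a_{2}+\delta}$ on the dual lattice $\Omega_{\delta}^{*}$ at the dual temperature $\beta^{*}$ with free boundary conditions, whence the stated quotient. I expect this duality step to be the main obstacle: one must match the contour sum defining $|F|$ at the second disorder with the dual high-temperature expansion and get every normalisation exactly right --- the partition-function prefactors, the passage $\beta\mapsto\beta^{*}$, the dual lattice $\Omega_{\delta}^{*}$, the free boundary conditions, and the lattice shifts $a_{i}\mapsto a_{i}+\delta$ --- as well as verifying that the spinor phase is genuinely constant over all contributing configurations so that the modulus collapses to a single positive correlation. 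By contrast, the bijection in (\ref{eq:isingcor}) is a purely local finite phase computation, and the bound is a soft consequence of correlation inequalities.
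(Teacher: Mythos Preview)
Your proposal is correct and follows essentially the same approach as the paper: the combinatorial bijection for (\ref{eq:isingcor}) is exactly what the paper carries out (it does the $v=1$ case explicitly, tracking how $\text{wind}$, $\text{sheet}$, and $(-1)^{\#\text{loops}}$ combine when a separating loop is present or absent), and for (\ref{eq:twopointcor}) the paper simply cites \cite[Lemma~2.6]{chelkak-hongler}, whose proof is the Kramers--Wannier duality argument you sketch. The one place you are more explicit than the paper is the uniform bound: the paper dispatches it in one line by invoking the ``finite energy property of the Ising model'', whereas you derive the two-sided inequality $\mathbb{E}^{\beta,+}[\sigma_{a_{1}}\sigma_{a_{1}'}]\leq|F|\leq\mathbb{E}^{\beta,+}[\sigma_{a_{1}}\sigma_{a_{1}'}]^{-1}$ from GKS and then bound the single near-neighbour correlation via monotonicity; this is a cleaner and more self-contained justification of the same fact.
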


\begin{proof}
(\ref{eq:isingcor}) and (\ref{eq:twopointcor}) were proved for the
massless case in \cite[Lemma 2.6]{chelkak-hongler} and the proof
is easily seen to not depend on a specific value of $\beta$. 

We carry out the $v=1$ case here: the path $t:=\left\{ (a_{1}+\frac{\delta}{2},a_{1}+\delta),(a_{1}+\delta,a_{1}+\delta+\frac{i\delta}{2})\right\} $
is admissible (i.e. without a crossing) as $\text{p}(\gamma)$ with
$\text{wind}(\text{p(\ensuremath{\gamma})})=\frac{\pi}{2},\text{sheet}_{a_{1},\ldots,a_{n}}(\text{p}(\gamma))=1$
in $\gamma\in\Gamma_{\Omega_{\delta}}(a_{1}+\frac{\delta}{2},a_{1}+\delta+\frac{i\delta}{2})$
if and only if $\gamma$ does not contain a loop separating $a_{1}$
from $a_{1}+\delta+i\delta$ and $(-1)^{\#\text{loops}_{a_{1},\ldots,a_{n}}\left(\gamma\setminus t\right)}=(-1)^{\#\text{loops}_{a_{1}+\delta+i\delta,\ldots,a_{n}}\left(\gamma\setminus t\right)}$.
If $\gamma$ does contain such a loop $L$, we need to choose $\text{p}(\gamma)=t\cup L$
with $\text{wind}\left(\text{p}(\gamma)\right)=\frac{\pi}{2}\pm2\pi$.
In this case, $\text{sheet}_{a_{1},\ldots,a_{n}}\left(\text{p}(\gamma)\right)=1$
if and only if $L$ encloses an even number of $a_{i}$. This means
that 
\begin{alignat*}{1}
(-1)^{\#\text{loops}_{a_{1},\ldots,a_{n}}\left(\gamma\setminus\text{p}(\gamma)\right)}\text{sheet}_{a_{1},\ldots,a_{n}}(\text{p}(\gamma)) & =(-1)^{\#\text{loops}_{a_{1},\ldots,a_{n}}\left(\gamma\setminus t\right)}\\
 & =-(-1)^{\#\text{loops}_{a_{1}+\delta+i\delta,\ldots,a_{n}}\left(\gamma\setminus t\right)},
\end{alignat*}
so in all cases the summand in the definition of $F_{\left[\Omega_{\delta},a_{1},\ldots,a_{n}\right]}$
is $\lambda^{-1}(-1)^{\#\text{loops}_{a_{1}+\delta+i\delta,\ldots,a_{n}}\left(\gamma\setminus t\right)}$
and the result follows. The uniform bound comes from the finite energy
property of the Ising model (such expectations are uniformly bounded
from $0$ and $\infty$ in any finite distribution).
\end{proof}
A crucial feature of (discrete) massive s-holomorphicity, shared with
its continuous counterpart, is that the line integral $\text{Re}\int\left[F_{\left[\Omega_{\delta},a_{1},\ldots,a_{n}\right]}\left(z|\beta\right)\right]^{2}dz$
can be defined path-independently.
\begin{prop}
\label{prop:isquare}There is a single valued function $H_{\left[\Omega_{\delta},a_{1},\ldots,a_{n}\right]}\left(x|\beta\right)$
up to a global constant on $\mathcal{VF}\left[\Omega_{\delta}\right]$
constructed by
\begin{alignat}{1}
H_{\left[\Omega_{\delta},a_{1},\ldots,a_{n}\right]}^{\circ}\left(f|\beta\right)-H_{\left[\Omega_{\delta},a_{1},\ldots,a_{n}\right]}^{\bullet}\left(v|\beta\right) & :=2\delta\left|F_{\left[\Omega_{\delta},a_{1},\ldots,a_{n}\right]}\left(\frac{v+f}{2}|\beta\right)\right|^{2},\label{eq:squareint_def}
\end{alignat}
where $f\in\mathcal{F}\left[\Omega_{\delta}\right]\cup\partial\mathcal{F}\left[\Omega_{\delta}\right]$
and $v\in\mathcal{V}\left[\Omega_{\delta}\right]$ are $\delta$ away
from each other so that $\frac{v+f}{2}$ is the corner between them.
Put $\left|F_{\left[\Omega_{\delta},a_{1},\ldots,a_{n}\right]}\left(a_{1}+\frac{\delta}{2}|\beta\right)\right|^{2}=1$.

At the boundary faces $H^{\circ}$ is constant, so we may put
\begin{alignat}{1}
H_{\left[\Omega_{\delta},a_{1},\ldots,a_{n}\right]}^{\circ}\left(f|\beta\right):=0 & \text{ if }f\in\partial\mathcal{F}\left[\Omega_{\delta}\right],\label{eq:squareint_boundary}
\end{alignat}
and further define $H_{\left[\Omega_{\delta},a_{1},\ldots,a_{n}\right]}^{\bullet}\left(v|\beta\right):=0\text{ if }v\in\partial\mathcal{V}\left[\Omega_{\delta}\right]$,
then across a boundary edge $e=(a_{\text{int}}a)$ with $a_{\text{int}}\in\mathcal{V}\left[\Omega_{\delta}\right],a\in\partial\mathcal{V}\left[\Omega_{\delta}\right]$,
we have 
\begin{alignat}{1}
\partial_{\nu_{\text{out}}}^{\delta}H_{\left[\Omega_{\delta},a_{1},\ldots,a_{n}\right]}^{\bullet}\left(e|\beta\right) & :=-H_{\left[\Omega_{\delta},a_{1},\ldots,a_{n}\right]}^{\bullet}\left(a_{\text{int}}|\beta\right)\nonumber \\
 & =2\cos^{2}\left(\frac{\pi}{8}+\Theta\right)\delta\left|F_{\left[\Omega_{\delta},a_{1},\ldots,a_{n}\right]}\left(e|\beta\right)\right|^{2}\label{eq:squareint_outernormal}\\
 & \geq0.\nonumber 
\end{alignat}
\end{prop}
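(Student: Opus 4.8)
The plan is to read (\ref{eq:squareint_def}) as prescribing, along each corner $c=\tfrac{v+f}{2}$, the increment $H^{\circ}(f)-H^{\bullet}(v)=2\delta|F(c)|^{2}$ of a function living on the bipartite quad-graph whose node set is $\mathcal{F}[\Omega_{\delta}]\cup\partial\mathcal{F}[\Omega_{\delta}]\cup\mathcal{V}[\Omega_{\delta}]$ and whose edges are the corners. Such a function exists, and is unique up to an additive constant (anchored by $|F(a_{1}+\tfrac{\delta}{2})|^{2}=1$), exactly when the prescribed increments have vanishing signed sum around every cycle. Since $\Omega$ is simply connected, the cycle space of this quad-graph is generated by the elementary rhombi $f_{1}\to v_{1}\to f_{2}\to v_{2}$ surrounding a single lattice edge $e=(v_{1}v_{2})$, where $f_{1},f_{2}$ are the two faces adjacent to $e$ (each incident, via a corner, to both endpoints of $e$). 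Writing $c_{jk}=\tfrac{f_{j}+v_{k}}{2}$ for the four corners of such a rhombus, the telescoping of the increments shows that the whole construction reduces to the single local identity $|F(c_{11})|^{2}+|F(c_{22})|^{2}=|F(c_{12})|^{2}+|F(c_{21})|^{2}$, which I would derive from massive s-holomorphicity.

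Indeed, the four corners of $e$ realise the four types $\tau\in\{1,i,\lambda,\bar{\lambda}\}$, and (\ref{eq:s-holom}) gives $F(c_{jk})=e^{\mp i\Theta}\text{Proj}_{e^{\pm i\Theta}\tau(c_{jk})\mathbb{R}}F(e)$, so $|F(c_{jk})|^{2}=|\text{Proj}_{e^{\pm i\Theta}\tau(c_{jk})\mathbb{R}}F(e)|^{2}$. The two \emph{axis} corners (types $1,i$) project onto the orthogonal pair of lines $e^{i\Theta}\mathbb{R},\,e^{i\Theta}i\mathbb{R}$, whereas the two \emph{diagonal} corners (types $\lambda,\bar{\lambda}$) project onto the orthogonal pair $e^{-i\Theta}\lambda\mathbb{R},\,e^{-i\Theta}\bar{\lambda}\mathbb{R}$; in each case the two squared projections sum to $|F(e)|^{2}$ by Pythagoras. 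Hence both sides of the local identity equal $|F(e)|^{2}$, the $\Theta$-dependence cancelling exactly, so this is a clean perturbation of the massless computation of \cite{chelkak-hongler}. At the distinguished corner $a_{1}+\tfrac{\delta}{2}$, where (\ref{eq:s-holom}) is replaced by (\ref{eq:singularity}), the condition is engineered precisely so that the modulus of the constrained projection of $F(e)$ equals the normalised value $|F(a_{1}+\tfrac{\delta}{2})|^{2}=1$: only the \emph{phase} of $F$ is singular, so the two rhombi incident to $a_{1}+\tfrac{\delta}{2}$ still balance, and since $|F|^{2}$ is sheet-independent the resulting $H$ genuinely descends to $\mathcal{VF}[\Omega_{\delta}]$.

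For the boundary I would invoke the Riemann-type boundary condition satisfied by $F$ on $\partial\mathcal{E}[\Omega_{\delta}]$, which follows from the $+$ boundary condition as in the massless case: a contour $\gamma$ terminating at a boundary edge has forced winding and sheet, pinning $F(e)$ to a fixed line determined by $\nu_{\text{out}}(e)$. This has two consequences. First, the pinned direction forces $\text{Re}(F^{2}\,dz)$ to vanish along the boundary tangent, so the increment of $H^{\circ}$ between consecutive boundary faces is zero; thus $H^{\circ}$ is constant on $\partial\mathcal{F}[\Omega_{\delta}]$ and may be set to $0$ as in (\ref{eq:squareint_boundary}). Second, combined with s-holomorphicity the same direction makes the projection of $F(e)$ onto the line carrying the boundary corner $c=\tfrac{a_{\text{int}}+f}{2}$ occur at angle $\tfrac{\pi}{8}+\Theta$, whence $|F(c)|^{2}=\cos^{2}(\tfrac{\pi}{8}+\Theta)|F(e)|^{2}$ (the massive analogue of the critical $\cos^{2}\tfrac{\pi}{8}$). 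Substituting into (\ref{eq:squareint_def}) with $H^{\circ}(f)=0$ then gives $-H^{\bullet}(a_{\text{int}})=2\cos^{2}(\tfrac{\pi}{8}+\Theta)\delta|F(e)|^{2}$, which is (\ref{eq:squareint_outernormal}), with nonnegativity immediate.

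I expect the main obstacle to lie in this boundary bookkeeping rather than in the bulk identity. One must pin down the exact phase of the Riemann boundary condition for $F$ so that the projection constant is precisely $\cos(\tfrac{\pi}{8}+\Theta)$ --- consistently with the edge normalisation $c_{z}=\cos(\tfrac{\pi}{8}+\Theta)^{-1}$ of Definition \ref{def:bounded-fermion} --- and verify that the constancy of $H^{\circ}$ persists across \emph{every} boundary face, including at the corners of the boundary polygon where two boundary arcs meet and the local geometry degenerates. By contrast, the bulk four-corner identity is a one-line trigonometric cancellation once the corner types and signs in (\ref{eq:s-holom}) are tabulated, and the behaviour at $a_{1}+\tfrac{\delta}{2}$ requires only the observation that (\ref{eq:singularity}) fixes the correct projection modulus.
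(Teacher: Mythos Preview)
Your proposal is correct and follows essentially the same approach as the paper. The paper's proof verifies single-valuedness by the identical elementary-rhombus check---showing that the two pairs of diagonally opposite corners around an edge $e$ are projections of $F(e)$ onto orthogonal lines, hence each pair sums to $|F(e)|^{2}$---and then derives the boundary statements directly from the Riemann-type condition $F(e)\in\nu_{\text{out}}(e)^{-1/2}\mathbb{R}$, referring to the massless analogue in \cite{chelkak-hongler} for the details you spell out. One small remark: your assignment of the $e^{i\Theta}$ versus $e^{-i\Theta}$ rotation to the axis/diagonal pairs is orientation-dependent (it swaps for the other edge direction), but this does not affect the orthogonality and hence not the argument.
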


\begin{proof}
(\ref{eq:squareint_def}) gives rise to a single valued function because
doing a loop around any edge will give zero: if $e=(v_{1}v_{2})$
is an edge which is incident to faces $f_{1},f_{2}$, $H_{\left[\Omega_{\delta},a_{1},\ldots,a_{n}\right]}^{\circ}\left(f_{1}\right)-H_{\left[\Omega_{\delta},a_{1},\ldots,a_{n}\right]}^{\bullet}\left(v_{1}\right)+H_{\left[\Omega_{\delta},a_{1},\ldots,a_{n}\right]}^{\circ}\left(f_{2}\right)-H_{\left[\Omega_{\delta},a_{1},\ldots,a_{n}\right]}^{\bullet}\left(v_{2}\right)$
according to (\ref{eq:squareint_def}) is simply equal to $2\delta\left|F_{\left[\Omega_{\delta},a_{1},\ldots,a_{n}\right]}\left(\frac{f_{1}+v_{1}}{2}\right)\right|^{2}+2\delta\left|F_{\left[\Omega_{\delta},a_{1},\ldots,a_{n}\right]}\left(\frac{f_{2}+v_{2}}{2}\right)\right|^{2}=2\delta\left|F_{\left[\Omega_{\delta},a_{1},\ldots,a_{n}\right]}\left(e\right)\right|^{2}$,
since the two corner values at $\frac{f_{1,2}+v_{1,2}}{2}$ are simply
projections of $F_{\left[\Omega_{\delta},a_{1},\ldots,a_{n}\right]}\left(e\right)$
onto orthogonal lines by s-holomorphicity. $H_{\left[\Omega_{\delta},a_{1},\ldots,a_{n}\right]}^{\circ}\left(f_{2}\right)-H_{\left[\Omega_{\delta},a_{1},\ldots,a_{n}\right]}^{\bullet}\left(v_{1}\right)+H_{\left[\Omega_{\delta},a_{1},\ldots,a_{n}\right]}^{\circ}\left(f_{1}\right)-H_{\left[\Omega_{\delta},a_{1},\ldots,a_{n}\right]}^{\bullet}\left(v_{2}\right)$
is also equal to $2\delta\left|F_{\left[\Omega_{\delta},a_{1},\ldots,a_{n}\right]}\left(e\right)\right|^{2}$,
and increments along the loop $f_{1}\sim v_{1}\sim f_{2}\sim v_{2}\sim f_{1}$
sum to zero.

Then the boundary behaviour can be easily verified noting that $F_{\left[\Omega_{\delta},a_{1},\ldots,a_{n}\right]}\left(e|\beta\right)\in\nu_{\text{out }}(e)^{-1/2}\mathbb{R}$
if $e$ is a boundary edge. See \cite[Proposition 3.6]{chelkak-hongler}
for a massless counterpart.
\end{proof}
\begin{rem}
Writing corner values in terms of edge projections, we obtain that
$H_{\left[\Omega_{\delta},a_{1},\ldots,a_{n}\right]}$ is a discrete
version of the integral $\text{Re}\int\left[F_{\left[\Omega_{\delta},a_{1},\ldots,a_{n}\right]}\left(z|\beta\right)\right]^{2}dz$
in that 
\begin{equation}
H_{\left[\Omega_{\delta},a_{1},\ldots,a_{n}\right]}^{\circ}\left(f_{1}|\beta\right)-H_{\left[\Omega_{\delta},a_{1},\ldots,a_{n}\right]}^{\circ}\left(f_{2}|\beta\right)=\sqrt{2}\sin\left(\frac{\pi}{4}+2\Theta\right)\text{Re}\left[F_{\left[\Omega_{\delta},a_{1},\ldots,a_{n}\right]}\left(z|\beta\right)^{2}\left(f_{1}-f_{2}\right)\right],\label{eq:squareint}
\end{equation}
 for all $f_{1},f_{2}$ of distance $\sqrt{2}\delta$ from each other,
and 
\begin{equation}
H_{\left[\Omega_{\delta},a_{1},\ldots,a_{n}\right]}^{\bullet}\left(v_{1}|\beta\right)-H_{\left[\Omega_{\delta},a_{1},\ldots,a_{n}\right]}^{\bullet}\left(v_{2}|\beta\right)=\sqrt{2}\sin\left(\frac{\pi}{4}-2\Theta\right)\text{Re}\left[F_{\left[\Omega_{\delta},a_{1},\ldots,a_{n}\right]}\left(z|\beta\right)^{2}\left(v_{1}-v_{2}\right)\right],\label{eq:squareint2}
\end{equation}
 for $v_{1},v_{2}$ of distance $\sqrt{2}\delta$ from each other.
\end{rem}

The functions $H^{\circ,\bullet}$ constructed in the previous proposition
satisfy a discrete version of a second-order partial differential
equation. We first recall the standard discrete operators $\partial_{\bar{z}}^{\delta},\Delta^{\delta}$;
as alluded to in the notation section, we make a small modification
to the conventional definition for the laplacian in $\mathcal{V}\left[\Omega_{\delta}\right]$.
We make this boundary modification specifically for $\mathcal{V}\left[\Omega_{\delta}\right]$,
which lets us define $H_{\left[\Omega_{\delta},a_{1},\ldots,a_{n}\right]}^{\bullet}=0$
on boundary vertices; see \cite{chsm2012} for a motivation. 
\begin{defn}
Suppose $A$ is a function defined on $\mathcal{E}\left[\Omega_{\delta}\right]$
and $B$ on $\mathcal{V}\left[\Omega_{\delta}\right]$ (or $\mathcal{F}\left[\Omega_{\delta}\right]$,
or any locally isomorphic graph). We define the discrete Wirtinger
derivative and laplacian by
\begin{alignat}{1}
\partial_{\bar{z}}^{\delta}A(x) & :=\sum_{m=0}^{3}i^{m}e^{i\pi/4}A\left(x+i^{m}e^{i\pi/4}\frac{\delta}{\sqrt{2}}\right)\text{ if, e.g., }x\in\mathcal{VF}\left[\Omega_{\delta}\right],\nonumber \\
\Delta^{\delta}B(x) & :=\sum_{m=0}^{3}c_{m}\left[B\left(x+i^{m}e^{i\pi/4}\sqrt{2}\delta\right)-B(x)\right]\text{ if, e.g., }x\in\mathcal{V}\left[\Omega_{\delta}\right],\label{eq:lap_vert}
\end{alignat}
where $c_{m}=1$ if $x+i^{m}e^{i\pi/4}\sqrt{2}\delta\in\mathcal{V}\left[\Omega_{\delta}\right]$,
and $c_{m}=\frac{\sin\left(\frac{\pi}{4}-2\Theta\right)}{\cos^{2}\left(\frac{\pi}{8}+\Theta\right)}$
if $x+i^{m}e^{i\pi/4}\sqrt{2}\delta\in\partial\mathcal{V}\left[\Omega_{\delta}\right]$.
For any other lattice, $c_{m}\equiv1$. If $\Delta^{\delta}B(x)=M_{H}^{2}B(x)$
for $M_{H}^{2}=\frac{8\sin^{2}2\Theta}{\cos4\Theta}$, we call $B$
\emph{($\Theta$,$M_{H}^{2}$)-massive harmonic} at $x$.
\end{defn}

The spinor $F_{\left[\Omega_{\delta},a_{1},\ldots,a_{n}\right]}$
is $\Theta$-massive harmonic \cite{bedc,dgp,hkz}, at least away
from monodromy; see Proposition \ref{prop:Appendix}, and also the
proof of Proposition \ref{prop:onepointspinor} for the behaviour
near monodromy.

Note that in the scaling limit $\delta\downarrow0$, $\frac{1}{2\sqrt{2}\delta}\partial_{\bar{z}}^{\delta}\to\partial_{\bar{z}}:=\frac{1}{2}\left(\partial_{x}+i\partial_{y}\right)$
and $\frac{1}{\left(\sqrt{2}\delta\right)^{2}}\Delta^{\delta}\to\Delta$.
Therefore, in the following result, the second terms become negligible
as $\delta\to0$ compared to the first, at least if $F$ is in some
sense regular; for an analogue in the scaling limit, see (\ref{eq:continuoushlap}).
\begin{prop}
\label{prop:squareint}For $x\neq a_{1}+\delta,a_{2},\ldots a_{n},$
we have 
\begin{alignat}{1}
\Delta^{\delta}H_{\left[\Omega_{\delta},a_{1},\ldots,a_{n}\right]}^{\circ}(x) & =2\sin\left(\frac{\pi}{4}+2\Theta\right)\delta\cdot\nonumber \\
 & \left[A_{\Theta}\sum_{n=0}^{3}\left|F\left(x+i^{n}e^{i\pi/4}\frac{\delta}{\sqrt{2}}\right)\right|^{2}+B_{\Theta}\left|\partial_{\bar{z}}\bar{F}\right|^{2}(x)\right]\label{eq:squareint_lap}\\
\Delta^{\delta}H_{\left[\Omega_{\delta},a_{1},\ldots,a_{n}\right]}^{\bullet}(x) & =2\sin\left(\frac{\pi}{4}-2\Theta\right)\delta\cdot\nonumber \\
 & \left[-A_{-\Theta}\sum_{n=0}^{3}\left|F\left(x+i^{n}e^{i\pi/4}\frac{\delta}{\sqrt{2}}\right)\right|^{2}-B_{-\Theta}\left|\partial_{\bar{z}}\bar{F}\right|^{2}(x)\right]\nonumber 
\end{alignat}
where $A_{\Theta},B_{\Theta}$ are explicit numbers which depend on
$\Theta$. $A$ is odd and $B$ is even in $\Theta$, $A_{\Theta}\sim4\sqrt{2}\Theta$
and $B_{\Theta}\sim\frac{1}{2\sqrt{2}}$ as $\Theta\to0$ (see (\ref{eq:squareint_dbar})).
\end{prop}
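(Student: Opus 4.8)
The plan is to localise the claim to the face $f:=x$ and reduce it to a single algebraic identity among the four incident edge values, which is then resolved using massive s-holomorphicity (\ref{eq:s-holom}). The boundary modification of the coefficients in (\ref{eq:lap_vert}) concerns only the vertex lattice, so on faces $c_{m}\equiv1$ and
\[
\Delta^{\delta}H^{\circ}(f)=\sum_{m=0}^{3}\left[H^{\circ}\!\left(f+i^{m}e^{i\pi/4}\sqrt{2}\delta\right)-H^{\circ}(f)\right].
\]
Each summand is an increment between two faces at distance $\sqrt{2}\delta$, so (\ref{eq:squareint}) expresses it through the edge value $F(z_{m})$ at the midpoint $z_{m}:=f+i^{m}e^{i\pi/4}\frac{\delta}{\sqrt{2}}$, yielding
\[
\Delta^{\delta}H^{\circ}(f)=2\delta\sin\!\left(\tfrac{\pi}{4}+2\Theta\right)\mathrm{Re}\sum_{m=0}^{3}F(z_{m})^{2}\,w_{m},\qquad w_{m}:=i^{m}e^{i\pi/4}.
\]
Since $\partial_{\bar{z}}^{\delta}\bar{F}(f)=\sum_{m}w_{m}\bar{F}(z_{m})$ by definition, the proposition is equivalent to the purely local identity
\[
\mathrm{Re}\sum_{m=0}^{3}F(z_{m})^{2}w_{m}=A_{\Theta}\sum_{m=0}^{3}\bigl|F(z_{m})\bigr|^{2}+B_{\Theta}\Bigl|\partial_{\bar{z}}^{\delta}\bar{F}(f)\Bigr|^{2},
\]
with both sides quadratic forms in the quadruple $\bigl(F(z_{0}),\dots,F(z_{3})\bigr)$.

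The left-hand side is a \emph{holomorphic} quadratic form (pairing $F$ with $F$), whereas the right-hand side is \emph{Hermitian} (pairing $F$ with $\bar{F}$); the identity can therefore only hold on the subvariety cut out by s-holomorphicity, and exploiting that constraint is the heart of the matter. Because $f$ is neither the source face nor a monodromy face, (\ref{eq:s-holom}) holds at each of its four corners $c_{k}=f+i^{k}\frac{\delta}{2}$, and the singular relation (\ref{eq:singularity}) never intervenes. A short geometric check shows that the four corners realise all four directions $\tau(c_{k})\in\{1,i,\lambda,\bar{\lambda}\}$, that $c_{k}$ is adjacent to exactly the two face-$f$ edges $z_{k-1},z_{k}$, and that those two edges enter (\ref{eq:s-holom}) with \emph{opposite} signs $\pm\Theta$. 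Each corner thus contributes one real-linear relation forcing the two adjacent edge values to have equal $e^{\mp i\Theta}$-twisted projections onto $\tau(c_{k})\mathbb{R}$; since consecutive projection directions meet at angle $\tfrac{\pi}{4}+O(\Theta)$ and are independent, the four relations let me recover each edge value $F(z_{m})$ from the pair of real corner amplitudes $r_{m},r_{m+1}$ (where $F(c_{k})=r_{k}\tau(c_{k})$), so that the admissible configurations form a clean four-parameter family parametrised by $(r_{0},\dots,r_{3})$.

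Substituting these expressions turns both sides into explicit quadratic forms in $(r_{0},\dots,r_{3})$, and it remains to check that the holomorphic form on the left lands in the two-dimensional span of the two Hermitian forms on the right, reading off $A_{\Theta}$ and $B_{\Theta}$. The organising principle is parity under $\Theta\mapsto-\Theta$, which implements the face/vertex duality exchanging (\ref{eq:squareint}) with (\ref{eq:squareint2}): the part even in $\Theta$ collapses onto $\bigl|\partial_{\bar{z}}^{\delta}\bar{F}\bigr|^{2}$ and survives at criticality, recovering the classical subharmonicity of \cite{chelkak-hongler} with $B_{0}=\tfrac{1}{2\sqrt{2}}$, while the part odd in $\Theta$, generated precisely by the alternating sign twists, assembles into $\sum_{m}|F(z_{m})|^{2}$ and is first order in the mass, $A_{\Theta}\sim4\sqrt{2}\,\Theta$. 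The closed forms of $A_{\Theta},B_{\Theta}$ I would record separately (cf.\ (\ref{eq:squareint_dbar})). The computation for $H^{\bullet}$ is identical after swapping the two sublattices, i.e.\ replacing $\Theta\mapsto-\Theta$ and using (\ref{eq:squareint2}) in place of (\ref{eq:squareint}); this produces both the prefactor $\sin(\tfrac{\pi}{4}-2\Theta)$ and the overall minus sign in the second line of the statement.

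The main obstacle is exactly this last collapse: verifying that, once the $\Theta$-deformed s-holomorphicity relations are imposed, the holomorphic quadratic form $\mathrm{Re}\sum_{m}F(z_{m})^{2}w_{m}$ reorganises with no leftover cross-terms into only the two admissible invariants $\sum_{m}|F(z_{m})|^{2}$ and $|\partial_{\bar{z}}^{\delta}\bar{F}|^{2}$. The parity bookkeeping is what keeps this in hand, since it dictates a priori which monomials in the $r_{k}$ may appear in $A_{\Theta}$ versus $B_{\Theta}$ and forces $A_{\Theta}=O(\Theta)$, matching the stated asymptotics.
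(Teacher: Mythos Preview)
Your approach is essentially the paper's: reduce $\Delta^{\delta}H^{\circ,\bullet}(x)$ via (\ref{eq:squareint})--(\ref{eq:squareint2}) to $2\sin(\tfrac{\pi}{4}\pm2\Theta)\,\delta\,\mathrm{Re}\bigl[\partial_{\bar z}^{\delta}F^{2}(x)\bigr]$, then parametrise the surrounding edge values through the four corner projections and compute. The paper carries out that algebraic step explicitly in Proposition~\ref{prop:Appendix} (eq.~(\ref{eq:squareint_dbar})), whereas you stop at the parity heuristic; your plan is correct, but the ``no leftover cross-terms'' collapse is exactly what the appendix verifies by hand and is not guaranteed by parity alone.
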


\begin{proof}
We calculate $\partial_{\bar{z}}F_{\left[\Omega_{\delta},a_{1},\ldots,a_{n}\right]}^{2}$
in Proposition \ref{prop:Appendix}. The laplacians follow straightforwardly
by noting that $\Delta^{\delta}H_{\left[\Omega_{\delta},a_{1},\ldots,a_{n}\right]}^{\circ,\bullet}(x)=\text{Re}\left[2\sin\left(\frac{\pi}{4}\pm\Theta\right)\delta\partial_{\bar{z}}F_{\left[\Omega_{\delta},a_{1},\ldots,a_{n}\right]}^{2}\right]$,
which is true if $x\in\mathcal{V}\left[\Omega_{\delta}\right]$ is
adjacent to $\partial\mathcal{V}\left[\Omega_{\delta}\right]$ as
well (the boundary conductances are defined precisely to preserve
this relation). See also Remark \ref{rem:movable}.
\end{proof}
\begin{rem}
\label{rem:movable}Note that Proposition \ref{prop:squareint} applies
for $x=a_{1}$ as well, since, thanks to the singularity (\ref{eq:singularity}),
projections of $F_{\left[\Omega_{\delta},a_{1},\ldots,a_{n}\right]}\left(a_{1}+e^{i\frac{\pi}{4}}\cdot\frac{\delta}{\sqrt{2}}\right),F_{\left[\Omega_{\delta},a_{1},\ldots,a_{n}\right]}\left(a_{1}+e^{i\frac{3\pi}{2}}\cdot e^{i\frac{\pi}{4}}\cdot\frac{\delta}{\sqrt{2}}\right)$
(on the sheet which is cut along $\mathbb{R}_{\geq0}$ and has the
fixed lift of $a_{1}+\frac{\delta}{2}$ on the upper side of $\mathbb{R}_{\geq0}$)
onto the line $i\mathbb{R}$ are equal to $-i$; on this sheet, $F_{\left[\Omega_{\delta},a_{1},\ldots,a_{n}\right]}$
does show s-holomorphicity between $4$ edges surrounding $a_{1}$.
The singularity effectively transfers the monodromy at face $a_{1}$
to the vertex $a_{1}+\delta$.
\end{rem}

\subsection{Full-Plane Fermions\label{subsec:Infinite-Volume-Limit}}

We now define the fermion $F_{\left[\Omega_{\delta},a_{1},\ldots,a_{n}\right]}$
for $\Omega=\mathbb{C}$ by taking increasingly bigger balls $B_{R}=B_{R}(0)$,
allowing us to extract informations about the Ising measure on the
corresponding discretised domains.
\begin{lem}
Fix $\delta>0$ and $\Theta<0$. There exists a constant $C=C(\delta,\Theta)$
such that $\left|F_{\left[\left(B_{R}\right)_{\delta},a_{1},\ldots,a_{n}\right]}(c)\right|\leq C$
for any $c\in\mathcal{C}\left(\left[\left(B_{R}\right)_{\delta},a_{1},\ldots,a_{n}\right]\right)$.
\end{lem}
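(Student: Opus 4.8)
The plan is to prove the stronger assertion that the total energy $\sum_{c}\lvert F(c)\rvert^{2}$, summed over all corners $c$, is bounded by a constant depending only on $\delta$ and $\Theta$ but not on $R$; the pointwise bound of the statement then follows trivially, since each summand is dominated by the whole sum. Throughout I abbreviate $F=F_{\left[\left(B_{R}\right)_{\delta},a_{1},\ldots,a_{n}\right]}$ and $H^{\bullet}=H_{\left[\left(B_{R}\right)_{\delta},a_{1},\ldots,a_{n}\right]}^{\bullet}$. The engine is the discrete divergence (Green) identity $\sum_{x}\Delta^{\delta}H^{\bullet}(x)=\sum_{\partial}\partial_{\nu_{\text{out}}}^{\delta}H^{\bullet}$, which I would apply on the region $D$ obtained from $\left(B_{R}\right)_{\delta}$ by deleting fixed small neighbourhoods of the monodromy faces $a_{1},\ldots,a_{n}$. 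On $D$ the hypothesis of Proposition \ref{prop:squareint} holds at every vertex, so $\Delta^{\delta}H^{\bullet}$ takes there the explicit sign-definite form of (\ref{eq:squareint_lap}).

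The crucial point is that subcriticality $\Theta<0$ fixes all the relevant signs. First I would record that for $\Theta<0$ one has $\sin(\tfrac{\pi}{4}-2\Theta)>0$, $A_{-\Theta}>0$ and $B_{-\Theta}>0$ (using that $A$ is odd with $A_{\Theta}\sim4\sqrt{2}\,\Theta$ and $B$ is even with $B_{\Theta}\sim\tfrac{1}{2\sqrt{2}}$ near $0$), so that by (\ref{eq:squareint_lap}) the bulk term $-\Delta^{\delta}H^{\bullet}(x)=2\sin(\tfrac{\pi}{4}-2\Theta)\delta\big[A_{-\Theta}\sum_{m}\lvert F(x+i^{m}e^{i\pi/4}\tfrac{\delta}{\sqrt{2}})\rvert^{2}+B_{-\Theta}\lvert\partial_{\bar z}\bar F\rvert^{2}(x)\big]$ is nonnegative and bounds a positive multiple of the local energy. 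Second, by (\ref{eq:squareint_outernormal}) the outward flux of $H^{\bullet}$ across every boundary edge of $\partial B_{R}$ equals $2\cos^{2}(\tfrac{\pi}{8}+\Theta)\delta\lvert F\rvert^{2}\ge0$. Feeding these into the Green identity on $D$, and summing so that each corner of $D$ is counted once by its unique incident vertex, I expect to reach
\[
2\sin(\tfrac{\pi}{4}-2\Theta)\,\delta\,A_{-\Theta}\sum_{c\in D}\lvert F(c)\rvert^{2}\;\le\;-\sum_{j}\big(\text{flux of }H^{\bullet}\text{ across }\partial B(a_{j})\big),
\]
where the nonnegative $\partial B_{R}$–flux has simply been discarded to the favourable side.

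It then remains to show that the right-hand side is $O(1)$ uniformly in $R$, which is exactly where the Ising input enters. By Proposition \ref{prop:isingcor} and its analogues at $a_{2},\ldots,a_{n}$ (finite energy), $\lvert F\rvert$ is bounded above on the fixed circles $\partial B(a_{j})$, so the fluxes there — computed from $\lvert F\rvert^{2}$ and nearest-neighbour differences of $H^{\bullet}$, all local quantities of size $O(\delta)$ on a fixed-size contour — are bounded independently of $R$; for $a_{1}$ one may alternatively read the source strength directly off the normalisation $\lvert F(a_{1}+\tfrac{\delta}{2})\rvert^{2}=1$ together with (\ref{eq:singularity}). Consequently $\sum_{c\in D}\lvert F(c)\rvert^{2}\le C(\delta,\Theta)$, and adding the finitely many corners in the deleted neighbourhoods (each $O(1)$ for fixed $\delta$) gives $\sum_{\text{all }c}\lvert F(c)\rvert^{2}\le C'(\delta,\Theta)$, whence $\lvert F(c)\rvert\le\sqrt{C'}$ for every $c$.

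The main obstacle I anticipate is the uniform control of the monodromy contributions together with the global bookkeeping: one must check the sign $A_{-\Theta}>0$ over the whole range $\Theta\in(-\tfrac{\pi}{8},0)$ from the explicit form behind (\ref{eq:squareint_dbar}), keep the discrete divergence theorem honest in the presence of the modified boundary conductances $c_{m}$ of (\ref{eq:lap_vert}), and certify that the fluxes across the $\partial B(a_{j})$ are genuinely $R$-independent, the latter resting on the finite-energy bound near each monodromy face. By contrast, the far boundary $\partial B_{R}$ causes no trouble precisely because its flux has a favourable sign, so no a priori estimate on $\lvert F\rvert$ at distance $R$ is needed. This sign, and hence the whole argument, is a manifestation of the mass $\Theta<0$, and the estimate would degenerate at criticality $\Theta=0$, where $A_{0}=0$.
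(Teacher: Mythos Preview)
Your overall strategy---apply the discrete Green identity to $H^{\bullet}$, use the sign-definite form of $\Delta^{\delta}H^{\bullet}$ from (\ref{eq:squareint_lap}) together with $\Theta<0$, and discard the favourable outer boundary flux (\ref{eq:squareint_outernormal})---is exactly the paper's. The gap is in how you handle the monodromy contributions.

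You delete neighbourhoods of \emph{all} $a_{1},\ldots,a_{n}$ and then assert that $|F|$ is bounded on the resulting inner circles ``by Proposition \ref{prop:isingcor} and its analogues at $a_{2},\ldots,a_{n}$''. No such analogues exist: Proposition \ref{prop:isingcor} gives a uniform bound only at the three corners $a_{1}+\delta+\tfrac{e^{iv\pi/2}}{2}\delta$ adjacent to the source (plus the normalisation $|F(a_{1}+\tfrac{\delta}{2})|=1$), not on a whole circle, and there is no a priori finite-energy bound on $|F|$ near $a_{2},\ldots,a_{n}$---that is precisely the content of the lemma you are trying to prove, so invoking it is circular. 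The deletion near $a_{2},\ldots,a_{n}$ is moreover unnecessary: the exclusions $a_{2},\ldots,a_{n}$ in Proposition \ref{prop:squareint} are \emph{faces}, relevant only for $\Delta^{\delta}H^{\circ}$; the formula for $\Delta^{\delta}H^{\bullet}$ holds at every vertex except the single point $a_{1}+\delta$ (cf.\ Remark \ref{rem:movable} for why $a_{1}$ itself is fine).

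The fix is to apply Green's identity on the full vertex set $\mathcal{V}[(B_{R})_{\delta}]$ with no deletions, isolate the one exceptional term $\Delta^{\delta}H^{\bullet}(a_{1}+\delta)$, and bound it directly by $\max_{m}|F(a_{1}+\delta+\tfrac{i^{m}\delta}{2})|^{2}$ via s-holomorphicity; the four corners involved are exactly those controlled by Proposition \ref{prop:isingcor} and the source normalisation. This yields (\ref{eq:precompactness}) and hence the uniform $\ell^{2}$ bound you were aiming for.
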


\begin{proof}
Discrete Green's formula implies that
\begin{equation}
\sum_{v\in\mathcal{V}\left[\left(B_{R}\right)_{\delta}\right]}\Delta^{\delta}H_{\left[\left(B_{R}\right)_{\delta},a_{1},\ldots,a_{n}\right]}^{\bullet}(v)=\frac{\sin\left(\frac{\pi}{4}-2\Theta\right)}{\cos^{2}\left(\frac{\pi}{8}+\Theta\right)}\sum_{e\in\partial\mathcal{E}\left[\left(B_{R}\right)_{\delta}\right]}\partial_{\nu_{\text{out}}}^{\delta}H_{\left[\left(B_{R}\right)_{\delta},a_{1},\ldots,a_{n}\right]}^{\bullet}\left(e\right),\label{eq:discrete-greens-formula}
\end{equation}
and we can use (\ref{eq:squareint_lap}) on vertices other than $a_{1}+\delta$
for the laplacian and (\ref{eq:squareint_outernormal}) for the boundary
outer derivatives, so leaving just the laplacian at $(a_{1}+\delta)$
on the left hand side
\begin{alignat}{1}
 & \Delta^{\delta}H_{\left[\left(B_{R}\right)_{\delta},a_{1},\ldots,a_{n}\right]}^{\bullet}(a_{1}+\delta)=\nonumber \\
\sum_{\begin{array}{c}
v\in\mathcal{V}\left[\left(B_{R}\right)_{\delta}\right]\\
v\neq a_{1}+\delta
\end{array}} & 2\sin\left(\frac{\pi}{4}-\Theta\right)\delta\cdot\left[A_{-\Theta}\sum_{n=0}^{3}\left|F\left(v+i^{n}e^{i\pi/4}\frac{\delta}{\sqrt{2}}\right)\right|^{2}+B_{-\Theta}\left|\partial_{\bar{z}}\bar{F}\right|^{2}(v)\right]\nonumber \\
 & +\sum_{e\in\partial\mathcal{E}\left[\left(B_{R}\right)_{\delta}\right]}2\delta\sin\left(\frac{\pi}{4}-2\Theta\right)\left|F_{\left[\left(B_{R}\right)_{\delta},a_{1},\ldots,a_{n}\right]}\left(e\right)\right|^{2}\nonumber \\
 & \geq2\sin\left(\frac{\pi}{4}-\Theta\right)A_{-\Theta}\delta\sum_{\begin{array}{c}
c\in\mathcal{C}\left[\left(B_{R}\right)_{\delta}\right]\\
c\neq a_{1}+\delta+\frac{i^{m}\delta}{2}
\end{array}}\left|F\left(c\right)\right|^{2}\label{eq:precompactness}
\end{alignat}

Note that $A_{-\Theta},B_{-\Theta}>0$, so we have the desired bound
if $\Delta^{\delta}H_{\left[\left(B_{R}\right)_{\delta},a_{1},\ldots,a_{n}\right]}^{\bullet}(a_{1}+\delta)$
is bounded. But by s-holomorphicity, clearly 
\begin{equation}
\Delta^{\delta}H_{\left[\left(B_{R}\right)_{\delta},a_{1},\ldots,a_{n}\right]}^{\bullet}(a_{1}+\delta)\leq16\sqrt{2}\max_{0\leq m\leq3}\delta\left|F_{\left[\left(B_{R}\right)_{\delta},a_{1},\ldots,a_{n}\right]}\left(a_{1}+\delta+\frac{e^{iv\frac{\pi}{2}}}{2}\delta\right)\right|^{2},\label{eq:a1lap}
\end{equation}
but the fermion value on the right hand side is bounded by a constant
only depending on $\beta$ by Proposition \ref{prop:isingcor}.
\end{proof}
Given uniform boundedness, we can use diagonalisation to get a subsequential
limit $F_{\left[\mathbb{C}_{\delta},a_{1},\ldots,a_{n}\right]}$ on
the whole of $\left[\mathbb{C}_{\delta},a_{1},\ldots,a_{n}\right]$.
It suffices to show that such a limit must be unique.
\begin{prop}
\label{prop:unbounded-prop}Any subsequential limit $F_{\left[\mathbb{C}_{\delta},a_{1},\ldots,a_{n}\right]}$
of $F_{\left[\left(B_{R}\right)_{\delta},a_{1},\ldots,a_{n}\right]}$
as $R\to\infty$
\begin{enumerate}
\item shows massive s-holomorphicity and the singularity at $a_{1}+\frac{\delta}{2}$
as in Proposition \ref{prop:shol}, 
\item at infinity: $F_{\left[\mathbb{C}_{\delta},a_{1},\ldots,a_{n}\right]}\to0$
uniformly and its square integral $H_{\left[\mathbb{C}_{\delta},a_{1},\ldots,a_{n}\right]}$
tends to a finite constant,
\item has finite discrete '$L^{2}$ norm':
\begin{alignat}{1}
\sum_{\begin{array}{c}
v\in\mathcal{V}\left[\mathbb{C}_{\delta}\right]\\
v\neq a_{1}+\delta
\end{array}}2 & \sin\left(\frac{\pi}{4}-\Theta\right)\delta\cdot\left[A_{-\Theta}\sum_{n=0}^{3}\left|F\left(v+i^{n}e^{i\pi/4}\frac{\delta}{\sqrt{2}}\right)\right|^{2}\right]\label{eq:unbounded_ineq}\\
\leq & \sum_{\begin{array}{c}
v\in\mathcal{V}\left[\mathbb{C}_{\delta}\right]\\
v\neq a_{1}+\delta
\end{array}}\left|\Delta^{\delta}H_{\left[\mathbb{C}_{\delta},a_{1},\ldots,a_{n}\right]}^{\bullet}(v)\right|<cst\cdot\delta<\infty,\nonumber 
\end{alignat}
with constant independent of $\delta$.
\end{enumerate}
Moreover, there is only one such function $F_{\left[\mathbb{C}_{\delta},a_{1},\ldots,a_{n}\right]}$.
\end{prop}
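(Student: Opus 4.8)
The plan is to obtain (1)--(3) by pushing finite, local, or sign-definite relations through the subsequential limit, and to reserve the real effort for the uniqueness claim. First I would treat (1) and (3). Massive s-holomorphicity (\ref{eq:s-holom}) and the singularity relation (\ref{eq:singularity}) are each a single linear constraint attached to one corner or to the monodromy face $a_{1}$, so they survive under the pointwise (hence locally uniform) convergence produced by the diagonalisation, giving (1). For (3), the precompactness computation (\ref{eq:precompactness})--(\ref{eq:a1lap}) already bounds the partial energies $\sum_{c}|F_{[(B_{R})_{\delta},\ldots]}(c)|^{2}$ by $\Delta^{\delta}H^{\bullet}(a_{1}+\delta)\le cst\cdot\delta$ uniformly in $R$, the last bound coming from the uniform control on Ising correlations in Proposition \ref{prop:isingcor}; passing to the limit by Fatou gives the finite norm (\ref{eq:unbounded_ineq}). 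Here I would record that, since $\Theta<0$ makes $A_{-\Theta},B_{-\Theta}>0$, Proposition \ref{prop:squareint} yields $\Delta^{\delta}H^{\bullet}(v)\le0$ at every $v\ne a_{1}+\delta$, a sign I will reuse below.

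Next, for (2), finiteness of $\sum_{c}|F(c)|^{2}$ forces $F\to0$ along the sum, and s-holomorphicity upgrades this to uniform decay at infinity by a standard argument: a single corner with $|F(c)|\ge\varepsilon$ propagates a fixed amount of $L^{2}$ mass into its neighbourhood through (\ref{eq:s-holom}), so infinitely many such corners escaping to infinity would contradict summability. Convergence of $H$ to a finite constant then follows because its increments are $O(\delta|F|^{2})$ by (\ref{eq:squareint})--(\ref{eq:squareint2}), hence summable along any path to infinity, while single-valuedness makes the limiting constant path-independent.

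The hard part is uniqueness, which I would prove by a discrete energy argument. Given two limits $F_{1},F_{2}$, set $G:=F_{1}-F_{2}$. Since both satisfy the \emph{same} inhomogeneous singularity (\ref{eq:singularity}), their difference $G$ is massive s-holomorphic everywhere, including at $a_{1}+\tfrac{\delta}{2}$ with no singularity; it is a spinor, vanishes at infinity, and inherits the finite norm. I then form its square integral $H_{G}$ as in Proposition \ref{prop:isquare} and apply Proposition \ref{prop:squareint}, which now holds at \emph{every} vertex with no positive source, so $\Delta^{\delta}H_{G}^{\bullet}(v)\le0$ for all $v$. Summing over $B_{R}$ through discrete Green's formula (\ref{eq:discrete-greens-formula}) and letting $R\to\infty$ along a subsequence $R_{k}$ chosen so that the annular energy $\sum_{e\in\partial B_{R_{k}}}|G(e)|^{2}\to0$ (available precisely because the total energy is finite), the boundary flux vanishes, forcing $\sum_{v}\Delta^{\delta}H_{G}^{\bullet}(v)=0$. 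As every summand is $\le0$, each must vanish, so $\sum_{n}|G|^{2}=0$ at each vertex and $G\equiv0$. I expect the only delicate point to be the vanishing of this boundary flux at infinity, which is exactly why the finite-norm bound (3) and the uniform decay (2) must be established first, and why the radii $R_{k}$ are selected along small annular energy rather than taken arbitrarily.
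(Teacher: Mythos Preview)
Your argument is correct. For items (1)--(3) you follow the paper's approach essentially verbatim: local linear relations pass to the limit, the uniform energy bound from (\ref{eq:precompactness})--(\ref{eq:a1lap}) transfers via Fatou/monotone convergence, and decay at infinity plus convergence of $H$ follow from summability of $|F|^{2}$. One minor remark: the s-holomorphicity step you invoke to ``upgrade'' pointwise decay to uniform decay is unnecessary, since finiteness of $\sum_{c}|F(c)|^{2}$ already forces all but finitely many corners to satisfy $|F(c)|<\varepsilon$; the paper simply observes this directly.

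For uniqueness you take a genuinely different route. The paper argues that $\hat H^{\bullet}$ is everywhere superharmonic and tends to a finite constant at infinity, then invokes the discrete Liouville/maximum principle to conclude $\hat H^{\bullet}$ is constant, whence $\hat F\equiv 0$. You instead run the energy identity: sum $\Delta^{\delta}H_{G}^{\bullet}$ over $B_{R}$ via Green's formula, kill the boundary flux using the finite $L^{2}$ norm (in fact any $R\to\infty$ works, not just a subsequence, since the annular energies are summable), and conclude from sign-definiteness that every Laplacian term vanishes. Both arguments hinge on the same superharmonicity from Proposition~\ref{prop:squareint}; the paper's is a one-line appeal to a classical principle, while yours is more self-contained and makes the mechanism explicit. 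Either is perfectly adequate here.
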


\begin{proof}
The first entry is immediate from the corresponding properties of
$F_{\left[\left(B_{R}\right)_{\delta},a_{1},\ldots,a_{n}\right]}$.

The inequality (\ref{eq:unbounded_ineq}) can be deduced from a uniform
bound for the analogue for $\left|\Delta^{\delta}H_{\left[\left(B_{R}\right)_{\delta},a_{1},\ldots,a_{n}\right]}^{\bullet}\right|$.
It is bounded independently of $R$ by (\ref{eq:discrete-greens-formula})
and (\ref{eq:a1lap}), so monotone convergence gives the desired inequality
for the limit as $R\to\infty$.

The infinity behaviour is then immediate from the fact that the sum
of $\left|F_{\left[\mathbb{C}_{\delta},a_{1},\ldots,a_{n}\right]}\right|^{2}$
along any line in $\mathbb{C}\setminus B_{R}$ vanishes uniformly
as $R\to\infty$ by (\ref{eq:unbounded_ineq}).

If there are two such $F_{\left[\mathbb{C}_{\delta},a_{1},\ldots,a_{n}\right]}$,
their difference $\hat{F}_{\left[\mathbb{C}_{\delta},a_{1},\ldots,a_{n}\right]}$
is everywhere s-holomorphic, and since the sum of $\left|\hat{F}_{\left[\mathbb{C}_{\delta},a_{1},\ldots,a_{n}\right]}\right|^{2}$
along any line in $\mathbb{C}\setminus B_{R}$ is finite and decays
to zero, the square integral $\hat{H}_{\left[\mathbb{C}_{\delta},a_{1},\ldots,a_{n}\right]}$
is finite and constant at infinity. $\hat{H}_{\left[\Omega_{\delta},a_{1},\ldots,a_{n}\right]}^{\bullet}$
is everywhere superharmonic and is finite at infinity, so $H_{\left[\Omega_{\delta},a_{1},\ldots,a_{n}\right]}^{\dagger\bullet}$
is constant and $F_{\left[\Omega_{\delta},a_{1},\ldots,a_{n}\right]}^{\dagger}\equiv0$
.
\end{proof}
Fix $\Theta<0$. On the full plane, we have an explicit characterisation
of the one point spinor in terms of the \emph{massive harmonic measure}
of the slit plane $\text{hm}_{(1+i)\mathbb{Z}^{2}\setminus\mathbb{R}_{<0}}^{0}$:
$\text{hm}_{(1+i)\mathbb{Z}^{2}\setminus\mathbb{R}_{<0}}^{0}(z|\Theta)$
for $z\in(1+i)\mathbb{Z}^{2}$ is the probability of a simple random
walk started at $z$ extinguished at each step with probability $\left(1+\frac{2\sin^{2}2\Theta}{\cos(4\Theta)}\right)^{-1}\frac{2\sin^{2}2\Theta}{\cos(4\Theta)}$
to successfully reach $0$ before hitting $(1+i)\mathbb{Z}^{2}\cap\mathbb{R}_{<0}$.
$\text{hm}_{(1+i)\mathbb{Z}^{2}\setminus\mathbb{R}_{<0}}^{0}(\cdot|\Theta)$
is the unique $\Theta$-massive harmonic function (in the sense of
(\ref{eq:dmharm})) on $(1+i)\mathbb{Z}^{2}\setminus\mathbb{R}_{\leq0}$
which has the boundary values $1$ at $0$ and $0$ on $(1+i)\mathbb{Z}^{2}\cap\mathbb{R}_{<0}$
and infinity.
\begin{prop}
\label{prop:onepointspinor}Denote the slit planes $\mathbb{X}^{+}:=\left\{ z\in\left[\mathbb{C},0\right]:\text{Re}\sqrt{z}>0\right\} \cong\mathbb{C}\setminus\mathbb{R}_{>0}$
and $\mathbb{Y}^{+}:=\left\{ z\in\left[\mathbb{C},0\right]:\text{Im}\sqrt{z}>0\right\} \cong\mathbb{C}\setminus\mathbb{R}_{>0}$.
Then
\[
F_{\left[\mathbb{C}_{\delta},0\right]}(c\delta|\Theta)=\begin{cases}
\text{hm}_{(1+i)\mathbb{Z}^{2}\setminus\mathbb{R}_{<0}}^{0}\left(c-\frac{3}{2}|\Theta\right) & c\delta\in\mathbb{X}^{+}\cap\mathcal{C}^{1}\left[\mathbb{C}_{\delta},0\right]\\
-i\text{hm}_{(1+i)\mathbb{Z}^{2}\setminus\mathbb{R}_{<0}}^{0}\left(\frac{1}{2}-c|\Theta\right) & c\delta\in\mathbb{Y}^{+}\cap\mathcal{C}^{i}\left[\mathbb{C}_{\delta},0\right]
\end{cases}.
\]
\end{prop}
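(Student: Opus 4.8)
The plan is to reduce the identity to a statement about real-valued massive harmonic functions and then invoke the uniqueness of the massive harmonic measure asserted just before the proposition. Concretely, I would show that on each of the two sheets the restriction of $F_{\left[\mathbb{C}_{\delta},0\right]}$ to corners of the appropriate type is a real-valued $\Theta$-massive harmonic function whose boundary data (value $1$ at the origin, $0$ on the slit $\mathbb{R}_{<0}$, $0$ at infinity) coincide with those characterising $\text{hm}_{(1+i)\mathbb{Z}^{2}\setminus\mathbb{R}_{<0}}^{0}$, after which the two functions must agree.

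First I would read the reality of the corner values directly off the s-holomorphicity relation (\ref{eq:s-holom}): since the projection $\text{Proj}_{e^{\pm i\Theta}\tau(c)\mathbb{R}}$ lands in $e^{\pm i\Theta}\tau(c)\mathbb{R}$ and the prefactor $e^{\mp i\Theta}$ cancels the $e^{\pm i\Theta}$, one has $F_{\left[\mathbb{C}_{\delta},0\right]}(c)\in\tau(c)\mathbb{R}$. Thus at real corners ($\tau=1$) the fermion is real and at imaginary corners ($\tau=i$) it lies in $i\mathbb{R}$, so that $u:=F_{\left[\mathbb{C}_{\delta},0\right]}|_{\mathbb{X}^{+}\cap\mathcal{C}^{1}}$ and $w:=i\,F_{\left[\mathbb{C}_{\delta},0\right]}|_{\mathbb{Y}^{+}\cap\mathcal{C}^{i}}$ are genuinely real. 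The real corners sit at $(1+i)\mathbb{Z}^{2}\delta-\tfrac{\delta}{2}$ and the imaginary corners at a conjugate shift, each a copy of $(1+i)\mathbb{Z}^{2}$; I would read off the translations $c\mapsto c-\tfrac32$ and $c\mapsto\tfrac12-c$ by locating the origin-image corner ($c=\tfrac32$, i.e. the real corner $\tfrac{3\delta}{2}$ between vertex $\delta$ and face $2\delta$) and fixing the orientation of the $\mathbb{Y}^{+}$ sheet relative to $\mathbb{X}^{+}$.

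Next I would verify that $u$ and $w$ are $\Theta$-massive harmonic in the sense of (\ref{eq:dmharm}) on these sublattices. This is the massive harmonicity of the spinor recorded in Proposition \ref{prop:Appendix} (equivalently (\ref{eq:squareint_lap})), valid away from the monodromy; I would reconcile the two notions by the elementary check that the extinction probability $(1+\tfrac{2\sin^{2}2\Theta}{\cos4\Theta})^{-1}\tfrac{2\sin^{2}2\Theta}{\cos4\Theta}=2\sin^{2}2\Theta=1-\cos4\Theta$ is exactly the killing rate producing the eigenvalue $M_{H}^{2}=\tfrac{8\sin^{2}2\Theta}{\cos4\Theta}$. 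At the monodromy, harmonicity fails only at the origin-image, which is precisely the excluded Dirichlet vertex of $\text{hm}^{0}$; by Remark \ref{rem:movable} the singularity is transferred to the vertex $\delta$, around which s-holomorphicity persists, and the singularity condition (\ref{eq:singularity}), propagated through (\ref{eq:s-holom}), pins the value of $u$ at the origin-image to $1$, reproducing $\text{hm}^{0}(0)=1$.

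Finally I would assemble the remaining boundary data: at infinity $F_{\left[\mathbb{C}_{\delta},0\right]}\to0$ with finite square integral by Proposition \ref{prop:unbounded-prop}(2), so $u,w\to0$ there; and on the negative-real slit the spinor antisymmetry $F(z^{\cdot})=-F(z)$ forces $u$ and $w$ to be odd under reflection across the branch cut, hence to vanish at lattice points lying on $\mathbb{R}_{<0}$, matching the absorbing value $0$ of $\text{hm}^{0}$. With value $1$ at the origin-image, $0$ on the slit, and $0$ at infinity, the stated uniqueness of the $\Theta$-massive harmonic measure yields $u=\text{hm}^{0}(c-\tfrac32)$ and $w=\text{hm}^{0}(\tfrac12-c)$. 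The hard part will be the slit condition together with the lattice bookkeeping: one must confirm that the spinor sign-flip across the cut is exactly the odd reflection vanishing on $\mathbb{R}_{<0}$, and that the shifts $c-\tfrac32,\ \tfrac12-c$ correctly align the two corner sublattices with $(1+i)\mathbb{Z}^{2}$. The imaginary-corner case may alternatively be deduced from the real-corner one via the $90^{\circ}$ symmetry exchanging $\mathbb{X}^{+}$ and $\mathbb{Y}^{+}$.
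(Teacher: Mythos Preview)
Your overall strategy is the same as the paper's: restrict to one corner type, check massive harmonicity away from the exceptional points, pin down the boundary data, and invoke the uniqueness of the massive harmonic measure. However, two of your boundary-data arguments are not correct as stated.

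The serious gap is the value at the origin-image for the real-corner case. You claim that the singularity condition (\ref{eq:singularity}), ``propagated through (\ref{eq:s-holom}),'' forces $u(\tfrac{3\delta}{2})=1$. This does not follow: (\ref{eq:singularity}) constrains the projections of the edge values at $\tfrac{\delta\pm i\delta}{2}$ onto the line $e^{\pm i\Theta}i\mathbb{R}$, which is exactly the information carried by the (undefined) imaginary corner $\tfrac{\delta}{2}$. The real corner $\tfrac{3\delta}{2}$ is adjacent to the \emph{different} edges $\tfrac{3\delta\pm i\delta}{2}$, and no finite chain of s-holomorphicity relations extracts its value from the singularity without already knowing the global solution. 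The paper obtains $F_{[\mathbb{C}_\delta,0]}(\tfrac{3\delta}{2})=1$ by an entirely separate route: Proposition~\ref{prop:isingcor} identifies this value with the ratio $\mathbb{E}^{+,\beta}[\sigma_{2\delta}]/\mathbb{E}^{+,\beta}[\sigma_{0}]$, which equals $1$ in the infinite-volume limit by translation invariance of the subcritical Gibbs state. (Your singularity argument \emph{is} the right one for the imaginary-corner case, where the boundary value sits at $\tfrac{\delta}{2}$ itself.)

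A second, smaller gap is the vanishing on the slit. Spinor antisymmetry $F(z^{\cdot})=-F(z)$ only says that the two boundary values of $u$ on the upper and lower sides of the cut are negatives of each other; it does not force either to be zero. One needs in addition the reflection symmetry $F(\bar z)=\overline{F(z)}$ of the full-plane problem. The paper supplies this directly by a combinatorial argument: for a real corner $c$ on $\mathbb{R}_{<0}$, the path set $\Gamma(\tfrac{\delta}{2},\pi(c))$ is invariant under $\gamma\mapsto\bar\gamma$ while the phase $\phi(\gamma,c)$ flips sign, so the defining sum vanishes term-by-term.
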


\begin{proof}
By Proposition \ref{prop:Appendix}, $F_{\left[\mathbb{C}_{\delta},0\right]}(c\delta)$
restricted to $\mathcal{C}^{1}\left[\mathbb{C}_{\delta},0\right]$
or $\mathcal{C}^{i}\left[\mathbb{C}_{\delta},0\right]$ is $\Theta$-massive
harmonic, except possibly at the lifts of $\pm\frac{\delta}{2}$ (because
there is no planar neighbourhood $G_{\delta}$ around these points
in $\left[\mathbb{C}_{\delta},0\right]$, see Figure \ref{fig:Using-holomorphicity-to})
and $\frac{3\delta}{2}$ (because $F_{\left[\mathbb{C}_{\delta},0\right]}(c\delta)$
is not s-holomorphic at the lifts of $\frac{\delta}{2}$).

From Definition \ref{def:bounded-fermion}, it is clear that $F_{\left[\mathbb{C}_{\delta}\cap B_{R},0\right]}(c\delta)=0$
for any $c\in\mathcal{C}^{1}\left[\mathbb{C}_{\delta},0\right]$ on
the (lift of) negative real line and $c\in\mathcal{C}^{i}\left[\mathbb{C}_{\delta},0\right]$
on the (lift of) positive real line, since the complex phase $\phi_{a_{1},\ldots,a_{n}}\left(\gamma,c\right)$
for any $\gamma\in\Gamma_{\mathbb{C}_{\delta}\cap B_{R}}(\frac{\delta}{2},\pi(c))$
switches sign for the reflection across the real line $\gamma_{r}:=\left\{ \bar{e}:e\in\gamma\right\} $,
and $\Gamma_{\mathbb{C}_{\delta}\cap B_{R}}(\frac{\delta}{2},\pi(c))$
is invariant under the reflection. We conclude that $F_{\left[\mathbb{C}_{\delta},0\right]}(c\delta)=0$
for $c\in\mathcal{C}^{1}\left[\mathbb{C}_{\delta},0\right]$ .

Also note that $F_{\left[\mathbb{C}_{\delta},0\right]}(\frac{3\delta}{2})=1$,
where $\frac{3\delta}{2}$ is evaluated at the lift which is on the
same sheet as the fixed lift of $\frac{\delta}{2}$, since the corresponding
Ising quantity in (\ref{eq:isingcor}) is precisely the ratio of two
adjacent magnetisations (spin $1$-point functions); in the infinite
volume limit with $\beta>\beta_{c}$, the ratio tends to $1$.

All in all, $F_{\left[\mathbb{C}_{\delta},0\right]}(\cdot|\Theta)$
restricted to $\mathbb{X}^{+}\cap\mathcal{C}^{1}\left[\mathbb{C}_{\delta},0\right]$
is the massive harmonic function which takes the boundary values $1$
on the lift of $\frac{3\delta}{2}$ in $\mathbb{X}^{+}\cap\mathcal{C}^{1}\left[\mathbb{C}_{\delta},0\right]$
and $0$ on the lift of the negative real line and infinity, whence
the identification with $\text{hm}_{(1+i)\mathbb{Z}^{2}\setminus\mathbb{R}_{<0}}^{0}$.
The same argument applies for its restriction to $\mathbb{Y}^{+}\cap\mathcal{C}^{i}\left[\mathbb{C}_{\delta},0\right]$,
noting the boundary value at $\frac{\delta}{2}$ given by (\ref{eq:singularity}).
\end{proof}

\subsection{Massive Complex Analysis: Continuous Fermions and their Uniqueness\label{subsec:Massive-Complex-Analysis:}}

We now present the boundary value problem which the limit of our discrete
fermions solves. Since massive models by definition do not possess
conformal invariance, we cannot expect the limit to be holomorphic
as in the critical case; instead, it satisfies a particular perturbed
notion of holomorphicity $\partial_{\bar{z}}f=m\bar{f}$, which belongs
to various similar families of functions dubbed \emph{generalised
analytic} or \emph{pseudoanalytic} in writings of, e.g., L. Bers and
I. N. Vekua. Here we use a small excerpt of the established theory,
saying that $f$ is holomorphic up to a continuous factor. Although
any nonzero smooth function is trivially pseudoanalytic (with the
non-constant complex coefficient $\partial_{\bar{z}}/\bar{f}$), fixing
the governing equation $\partial_{\bar{z}}f=m\bar{f}$ allows us to
use known functions which satisfy the same equation to successively
cancel out singularities; in other words, we have a generalised version
of Laurent series. Imposing a real constant $m$ in addition yields
the possibility to define the line integral $\text{Re}\int f^{2}dz$.

In analogy with the discrete terminology, we refer to our particular
continuous condition \emph{($m$)-massive holomorphicity.} We will
assume $m<0$ is fixed throughout this section.
\begin{lem}[Similarity Principle]
\label{lem:similarity}Let $D$ be a bounded domain in $\mathbb{C}$.
If a continuously differentiable function $f:D\to\mathbb{C}$ is $m$-massive
meromorphic, i.e. satisfies $\partial_{\bar{z}}f:=\frac{1}{2}\frac{\partial}{\partial x}f+\frac{i}{2}\frac{\partial}{\partial y}f=m\bar{f}$
except on a finite set, there exists a Hölder continuous function
$s$ on $\bar{U}$ such that $e^{-s}f$ is holomorphic in $U$.

In particular, $f$ only vanishes at isolated points.
\end{lem}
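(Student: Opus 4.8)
The plan is to invoke the classical representation theorem of Bers and Vekua \cite{bers,vek}, which reduces the equation $\partial_{\bar{z}}f=m\bar{f}$ to genuine holomorphicity after multiplication by a nonvanishing Hölder factor. The first step is to rewrite the (apparently antilinear) equation as a \emph{linear} one with bounded coefficient: setting
\[
\alpha(z):=\begin{cases}m\,\overline{f(z)}/f(z) & f(z)\neq0,\\0 & f(z)=0,\end{cases}
\]
we have $\partial_{\bar{z}}f=\alpha f$ with $\left|\alpha\right|\leq\left|m\right|$ off the exceptional finite set, so that $\alpha\in L^{\infty}(D)$ once extended by $0$ to a bounded domain containing $\bar{D}$.

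Next I would solve the inhomogeneous problem $\partial_{\bar{z}}s=\alpha$ explicitly through the solid Cauchy (Pompeiu) transform
\[
s(z):=-\frac{1}{\pi}\int_{D}\frac{\alpha(\zeta)}{\zeta-z}\,dA(\zeta),
\]
recalling that $\frac{1}{\pi z}$ is the fundamental solution of $\partial_{\bar{z}}$. Because $\alpha$ is bounded and compactly supported, the standard mapping properties of this operator give that $s$ is Hölder continuous on $\bar{D}$ (indeed of every exponent $\gamma\in(0,1)$) and that $\partial_{\bar{z}}s=\alpha$ in the distributional sense; this produces the factor $s$ asserted in the statement (with $U=D$). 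The third step is to verify that $g:=e^{-s}f$ is holomorphic. Formally $\partial_{\bar{z}}g=e^{-s}\left(\partial_{\bar{z}}f-(\partial_{\bar{z}}s)f\right)=e^{-s}\left(\alpha f-\alpha f\right)=0$; since $s$ is only Hölder and not $C^{1}$, I would make this rigorous as an identity of distributions and then invoke Weyl's lemma to upgrade $g$ to a genuinely holomorphic function on $D$ away from the exceptional finite set. As $f\in C^{1}(D)$ stays bounded near each exceptional point while $e^{-s}$ is continuous and nonvanishing, $g$ is bounded near those points, so Riemann's removable singularity theorem extends it holomorphically across them.

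Finally, for the zeros: since $e^{s}$ never vanishes, $f=e^{s}g$ and the zero sets of $f$ and $g$ coincide; a holomorphic $g\not\equiv0$ has only isolated zeros, whence so does $f$ (and $f\equiv0$ corresponds to $g\equiv0$). The main obstacle I anticipate is the analytic bookkeeping around the Cauchy transform: proving the Hölder continuity of $s$ from merely $L^{\infty}$ data, and justifying $\partial_{\bar{z}}g=0$ weakly before passing to classical holomorphicity via Weyl's lemma. Confirming that the $C^{1}$ hypothesis precludes genuine poles at the exceptional finite set is a comparatively minor point.
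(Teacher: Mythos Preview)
Your proposal is correct and follows exactly the approach the paper has in mind: the paper's own proof is simply a citation to Bers \cite{bers} and Vekua \cite{vek}, with the remark that the underlying idea (the Cauchy transform of $m\bar{f}/f$) is carried out explicitly later in Lemma~\ref{lem:obs-decomposition}. Your write-up is a faithful expansion of that classical argument, and your observation that $|\alpha|\le|m|$ directly (so $\alpha\in L^\infty$ without needing to know the zero set in advance) is in fact cleaner than the phrasing the paper uses downstream.
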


\begin{proof}
See \cite[Section 10]{bers} or \cite[Section III.4]{vek}; we also
explain the underlying idea and explicitly carry out specific cases
in Section \ref{subsec:Analysis-of-the}, e.g. the proof of Lemma
\ref{lem:obs-decomposition}.
\end{proof}
\begin{cor}
\label{cor:powerseries}There exists a family of $m$-massive holomorphic
functions $Z_{n}^{1},Z_{n}^{i}$ for each $n\in\mathbb{Z}$ such that
as $z\to0$
\[
Z_{n}^{1}(z)\sim z^{n},Z_{n}^{i}(z)\sim iz^{n},
\]
and any function $f$ that is $m$-massive holomorphic in a punctured
neighbourhood of $a$ can be expressed near $a$ as formal power series
in $Z_{n}$, i.e.
\begin{equation}
f(z)=\sum_{n}\left[A_{n}^{1}Z_{n}^{1}(z-a)+A_{n}^{i}Z_{n}^{i}(z-a)\right]\label{eq:powerseries}
\end{equation}
for real numbers $A_{n}^{1,i}$. If $f(z)$ is a spinor defined on
a double cover ramified at $a$, $f$ admits formal power series in
analogous functions indexed by half-integers $Z_{n+\frac{1}{2}}$.
These expansions are uniformly convergent in a small disc or annulus
around $a$, respectively if $f$ has a singularity at $a$ or not.
\end{cor}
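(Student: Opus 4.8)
The plan is to separate the existence of the formal powers from the expansion itself, and to keep in the foreground the fact that $m$-massive holomorphicity is only \emph{real}-linear: if $\partial_{\bar{z}}f=m\bar{f}$ and $\partial_{\bar{z}}g=m\bar{g}$, then $\partial_{\bar{z}}(\alpha f+\beta g)=m\,\overline{\alpha f+\beta g}$ for real $\alpha,\beta$, whereas multiplication by $i$ destroys the relation, since $\partial_{\bar{z}}(if)=im\bar{f}=-m\,\overline{if}$. This is exactly why one needs two independent solutions $Z_{n}^{1}\sim z^{n}$ and $Z_{n}^{i}\sim iz^{n}$ per degree rather than a single complex-coefficient monomial, and why the coefficients $A_{n}^{1,i}$ in (\ref{eq:powerseries}) are forced to be real.

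To construct the $Z_{n}$, I would first observe that differentiating the equation gives $\partial_{z}\partial_{\bar{z}}f=m\,\partial_{z}\bar{f}=m\,\overline{\partial_{\bar{z}}f}=m^{2}f$, so every $m$-massive holomorphic function solves the modified Helmholtz equation $\Delta f=4m^{2}f$. Separating variables in $z=re^{i\theta}$ and imposing $\partial_{\bar{z}}f=m\bar{f}$ couples only the angular modes $e^{in\theta}$ and $e^{-i(n+1)\theta}$, reducing the problem to a two-component first-order radial system whose decoupled form is the modified Bessel equation. Selecting the solution regular (resp. singular) at the origin for $n\geq0$ (resp. $n<0$) and normalising its leading radial factor produces $Z_{n}^{1}$ with $Z_{n}^{1}(z)=z^{n}+O(|z|^{n+1})$; the purely imaginary solution of the same system (equivalently, the solution with $m\mapsto-m$, multiplied by $i$) produces $Z_{n}^{i}\sim iz^{n}$. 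Because the modified Bessel functions expand as $r^{\pm n}$ times an entire series in $r^{2}$, each $Z_{n}$ is represented by a locally uniformly convergent series, which will feed the convergence step; alternatively one may cite the recursive construction of formal powers in \cite{bers,vek}.

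For the expansion I would argue by peeling off leading terms. Given $f$ that is $m$-massive holomorphic in a punctured neighbourhood of $a$, Lemma \ref{lem:similarity} writes $f=e^{s}\Phi$ with $s$ H\"older continuous (so $e^{\pm s}$ are bounded and bounded away from $0$) and $\Phi$ holomorphic, whence $f$ inherits the finite order of its possible pole: $f=O(|z-a|^{-N})$ for some $N\in\mathbb{Z}$. I would then induct on the order, starting from the most singular term: the leading coefficient $c\in\mathbb{C}$ at order $(z-a)^{-N}$ decomposes as $c=(\text{Re}\,c)\cdot1+(\text{Im}\,c)\cdot i$, and subtracting $(\text{Re}\,c)\,Z_{-N}^{1}(z-a)+(\text{Im}\,c)\,Z_{-N}^{i}(z-a)$ cancels that order while, by real-linearity, leaving the remainder $m$-massive holomorphic. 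Iterating defines all $A_{n}^{1,i}\in\mathbb{R}$ and the formal series. The spinor case is identical after passing to the double cover ramified at $a$, where $\sqrt{z-a}$ is single-valued: the angular modes become half-integral $e^{i(n+1/2)\theta}$, the same Helmholtz reduction and radial system go through with half-integer Bessel order, and one obtains $Z_{n+\frac{1}{2}}$ and a half-integer-indexed expansion.

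The main obstacle is convergence, i.e. showing that the formally constructed series actually sums to $f$ uniformly on the small disc, resp. annulus, of the statement. Here I would transfer the estimate to the holomorphic side: since $e^{\pm s}$ are bounded, the coefficients $A_{n}^{1,i}$ are comparable to the Laurent coefficients of $\Phi=e^{-s}f$, which decay geometrically on any compactly contained annulus; combined with the local bounds $|Z_{n}(z)|\leq C\,|z|^{n}$ coming from the Bessel expansions, this furnishes a convergent majorant and hence locally uniform convergence, with the sum equal to $f$ by the exactness of the subtraction at every order. Making the comparison between $A_{n}^{1,i}$ and the Laurent coefficients of $\Phi$ quantitative is the delicate point, and is precisely the content of the Bers--Vekua expansion theorem \cite{bers,vek}, on which I would ultimately rely.
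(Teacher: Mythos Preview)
Your proposal is correct and follows essentially the same route as the paper: both rely on the similarity principle (Lemma~\ref{lem:similarity}) to reduce to a holomorphic function, use the resulting discreteness of order ($o((z-a)^n)\Rightarrow O((z-a)^{n+1})$) to peel off leading terms, and defer the convergence machinery to \cite{bers,vek}. Your explicit Bessel construction of the $Z_n$ also coincides with the paper's choice in Remark~\ref{rem:formalpowers}.
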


\begin{proof}
This corollary depends on the fact that $s$ in Lemma \ref{lem:similarity}
is continuous, so that if a massive holomorphic function is $o((z-a)^{n})$
at $a$ then it is automatically $O((z-a)^{n+1})$; see \cite[Section 5]{bers}.
\end{proof}
\begin{rem}
\label{rem:formalpowers}There is no canonical choice of the `local
formal powers' $Z_{n}^{1,i}$. We will need explicit functions to
expand continuous fermions around their singularities and analyse
them further to derive Painlevé III in Section \ref{sec:Continuum-Analysis:-Painlev};
we hereby fix the following radially symmetric functions for half-integers
$\nu$:
\begin{alignat}{1}
Z_{\nu}^{1}(re^{i\theta}) & :=\frac{\Gamma(\nu+1)}{|m|^{\nu}}\left[e^{i\nu\theta}I_{\nu}(2|m|r)+(\text{sgn}m)\cdot e^{-i(\nu+1)\phi}I_{\nu+1}(2|m|r)\right],\label{eq:formalpowers}\\
Z_{\nu}^{i}(re^{i\theta}) & :=\frac{\Gamma(\nu+1)}{|m|^{\nu}}\left[ie^{i\nu\theta}I_{\nu}(2|m|r)-i(\text{sgn}m)\cdot e^{-i(\nu+1)\phi}I_{\nu+1}(2|m|r)\right],\nonumber 
\end{alignat}
where $I_{n}$ is the modified Bessel function of the first kind.
One can easily verify the desired asymptotics and massive holomorphicity
from the corresponding facts about $I_{\nu}$, namely that $I_{\nu}(r)\stackrel{r\to0}{\sim}\Gamma(\nu+1)\left(\frac{r}{2}\right)^{\nu}$
and $I_{\nu}'(r)=I_{\nu\pm1}(r)\pm\frac{\nu}{r}I_{\nu}(r)$ \cite[Chapter 10]{dlmf}
(See Section \ref{sec:Continuum-Analysis:-Painlev} for useful formulae,
and note also that $\partial_{\bar{z}}=\frac{1}{2}e^{i\theta}\left(\partial_{r}+ir^{-1}\partial_{\theta}\right)$).

As a special case, we have $Z_{-\frac{1}{2}}^{1}(z)=\frac{e^{2m\left|z\right|}}{\sqrt{z}}$.
\end{rem}

\begin{prop}
\label{prop:contbvp}The following boundary value problem on a bounded
smooth simply connected domain $\Omega$ has at most one solution:

$f:\left[\Omega,a_{1},\ldots,a_{n}\right]\to\mathbb{C}$ satisfies
\begin{enumerate}
\item $f$ is continuously differentiable, square integrable, and $\partial f=m\bar{f}$
in $\left[\Omega,a_{1},\ldots,a_{n}\right]$,
\item $f(z)\in\sqrt{\nu_{out}^{-1}(z)}\mathbb{R}$ on the lift of $\partial\Omega$,
where $\nu_{out}(z)$ is the outer normal at $\pi(z)\in\partial\Omega$,
\item $(z-a_{1})^{1/2}f(z)\to1$ as $z\to a_{1}$, and
\item $\exists\mathcal{B}_{j}\in\mathbb{R}$ such that $(z-a_{j})^{1/2}f(z)\to i\mathcal{B}_{j}$
as $z\to a_{j}$ for $j=2,\ldots,n$.
\end{enumerate}
\end{prop}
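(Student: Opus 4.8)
The plan is to prove uniqueness by a maximum-principle argument applied to the continuous analogue of the square integral $H$ from Proposition \ref{prop:isquare}. Suppose $f_1,f_2$ both solve the problem and set $\hat f:=f_1-f_2$. Since $\partial_{\bar z}f=m\bar f$ and the boundary condition (2) are both $\mathbb{R}$-linear, $\hat f$ again satisfies (1)--(2); subtracting (3), resp.\ (4), shows that $(z-a_1)^{1/2}\hat f\to0$ while $(z-a_j)^{1/2}\hat f\to i\mathcal{B}_j'$ for real $\mathcal{B}_j':=\mathcal{B}_j^{(1)}-\mathcal{B}_j^{(2)}$, $j\ge2$. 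By Corollary \ref{cor:powerseries} the first of these means $\hat f$ carries no $Z_{-1/2}$ term at $a_1$, so $\hat f=O((z-a_1)^{1/2})$ is regular there, while at $a_j$ its leading singular part is the purely imaginary $\mathcal{B}_j'\,Z_{-1/2}^i(z-a_j)$.

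I then define $\hat H:=\mathrm{Re}\int\hat f^2\,dz$. Because $\partial_{\bar z}(\hat f^2)=2m|\hat f|^2$ is real, the real part of $\hat f^2\,dz$ is a closed $1$-form, and the reality of the residues $\mathrm{Res}_{a_j}\hat f^2=(i\mathcal{B}_j')^2=-\mathcal{B}_j'^2$ (which I read off from the leading formal powers) makes all its periods vanish; hence $\hat H$ is single-valued on $\Omega\setminus\{a_2,\dots,a_n\}$ and extends continuously across $a_1$. A direct computation gives $\Delta\hat H=4m|\hat f|^2\le0$, so $\hat H$ is superharmonic; near $a_j$ one has $\hat H\sim-\mathcal{B}_j'^2\log|z-a_j|\to+\infty$, a negative point mass that only reinforces superharmonicity, so $\hat H$ is superharmonic on all of $\Omega$. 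The boundary condition (2), written $\hat f=\sqrt{\nu_{out}^{-1}}\,r$ with $r\in\mathbb{R}$, forces $\hat f^2=\nu_{out}^{-1}|\hat f|^2$; along the tangent $dz\parallel i\nu_{out}$ this makes $\mathrm{Re}(\hat f^2\,dz)=0$, so $\hat H$ is constant on $\partial\Omega$, which I normalise to $0$.

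The crux, and the place where the ``$+$'' boundary condition is decisive, is the sign of the normal derivative: along $\nu_{out}$ one computes $\partial_{\nu_{out}}\hat H=\mathrm{Re}(\hat f^2\nu_{out})=|\hat f|^2\ge0$. On the other hand, by the minimum principle the superharmonic $\hat H$ attains its minimum $0$ on $\partial\Omega$, so $\hat H\ge0$ inside; if $\hat f\not\equiv0$ then $\hat H\not\equiv0$ and the strong minimum principle gives $\hat H>0$ in the interior, whence Hopf's lemma yields $\partial_{\nu_{out}}\hat H<0$ at every boundary point. This contradicts $\partial_{\nu_{out}}\hat H=|\hat f|^2\ge0$ unless $\hat H\equiv0$, and then $\Delta\hat H=4m|\hat f|^2=0$ with $m<0$ forces $\hat f\equiv0$, i.e.\ $f_1=f_2$.

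I expect the main obstacles to be precisely the two sign facts that make the argument close: the reality of the residues of $\hat f^2$ (so that $\hat H$ is genuinely single-valued, rather than picking up a real period around each $a_j$), and the nonnegativity $\partial_{\nu_{out}}\hat H=|\hat f|^2\ge0$ extracted from condition (2) --- this is what distinguishes the admissible boundary phase $\sqrt{\nu_{out}^{-1}}\mathbb{R}$ from its $i$-multiple, and is the continuous shadow of the discrete positivity \eqref{eq:squareint_outernormal}. A subsidiary technical point is the regularity of $\hat f$ up to the smooth $\partial\Omega$ needed to invoke Hopf's lemma, which I expect to obtain from the similarity principle (Lemma \ref{lem:similarity}) together with a Schwarz-type reflection across the boundary arcs.
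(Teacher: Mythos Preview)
Your argument is correct and rests on exactly the same three sign facts the paper exploits: $\partial_{\bar z}\hat f^2=2m|\hat f|^2$ with $m<0$, the boundary phase giving $\hat f^2\,dz\in i\mathbb{R}_{\ge0}\,ds$ on $\partial\Omega$, and the purely imaginary residues $-2\pi i(\mathcal{B}_j')^2$ at $a_j$. The difference is one of packaging. The paper does not build $\hat H$ at all; it applies Green--Riemann to $\hat f^2$ on $\Omega_r=\Omega\setminus\bigcup_jB_r(a_j)$ to get
\[
\tfrac{1}{i}\oint_{\partial\Omega}\hat f^2\,dz
=4m\iint_{\Omega}|\hat f|^2\,d^2z-2\pi\sum_{j\ge2}(\mathcal{B}_j')^2,
\]
reads the left side as $\oint_{\partial\Omega}|\hat f|^2\,ds\ge0$ and the right side as $\le0$, and concludes. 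This is your minimum-principle/Hopf argument collapsed into one line: your $\partial_{\nu_{\mathrm{out}}}\hat H=|\hat f|^2$ is exactly the boundary integrand, and your $\Delta\hat H\le0$ plus the point masses is exactly the area term plus residues. The direct integral version is shorter and, more importantly, sidesteps the boundary regularity you flag at the end: Green--Riemann only needs $\hat f\in L^2$ and continuity of $\hat f$ up to $\partial\Omega$, not the $C^1$-up-to-boundary control that Hopf's lemma would require. So your proof works, but the paper's route buys you a clean argument without the ``subsidiary technical point.''
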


\begin{proof}
If there are any two such functions $f_{1},f_{2}$, applying Green-Riemann's
formula to their difference $\hat{f}$ on $\Omega_{r}=\Omega\setminus\cup_{j}B_{r}(a_{j})$
for small $r>0$ yields
\begin{equation}
\oint_{\partial\Omega_{r}}\hat{f}{}^{2}dz=2i\iint_{\Omega_{r}}\partial_{\bar{z}}\hat{f}{}^{^{2}}=2i\iint_{\Omega_{r}}2m|\hat{f}|^{2}dz\in i\mathbb{R}.\label{eq:green-riemann}
\end{equation}

$\hat{f}\sim cst\cdot(z-a_{1})^{1/2}$ near $a_{1}$ in view of Corollary
\ref{cor:powerseries} so $\oint_{\partial B_{r}(a_{1})}\hat{f}^{2}dz$
tends to zero as $r\to0$,  and $\hat{f}\to i\hat{\mathcal{B}_{j}}(z-a_{j})^{-1/2}$
for some $\hat{\mathcal{B}_{i}}\in\mathbb{R}$ as $z$ tends to any
other $a_{j}$. On the inner circles, we have
\begin{alignat}{1}
\oint_{\partial B_{r}(a_{j})}\hat{f}{}^{2}dz\xrightarrow{r\to0} & -2\pi i\hat{\mathcal{B}_{j}^{2}}\in i\mathbb{R}\text{ for }j=2,\ldots,n,\label{eq:innermonodromy}
\end{alignat}
whereas the boundary condition on the lift of $\partial\Omega$ readily
gives $\frac{1}{i}\oint_{\partial\Omega}\hat{f}{}^{2}dz=\oint_{\partial\Omega}|\hat{f}|^{2}ds\geq0$.
Therefore 
\[
0\leq\frac{1}{i}\oint_{\partial\Omega}\hat{f}{}^{2}dz=2\iint_{\Omega_{r}}2m|\hat{f}|^{2}dz-\sum_{j}2\pi\hat{\mathcal{B}_{j}}^{2}\leq0,
\]
so $\hat{f}\equiv0$.
\end{proof}
\begin{rem}
\label{rem:hbvp}As in the proof of the Proposition \ref{prop:contbvp},
it is easy to show that $h:=\text{Re}\int f^{2}dz$ is globally well-defined
in $\Omega\setminus\left\{ a_{1},\ldots,a_{n}\right\} $. In terms
of $h$, the boundary condition $f(z)\in\sqrt{\nu_{out}^{-1}}\mathbb{R}$
and the asymptotics around $a_{i}$ is equivalent to 
\begin{enumerate}
\item $h$ is constant on $\partial\Omega$ and there is no $x_{0}\in\partial\Omega$
such that $h>h(\partial\Omega)$ in a neighbourhood of $x_{0}$, and
\item $h$ is bounded below near $a_{2},\ldots,a_{n}$ and $h^{\dagger}(w):=\text{Re}\int^{w}\left(f(z)-Z_{-\frac{1}{2}}^{1}(z-a_{1})\right)^{2}dz$
is single valued and bounded near $a_{1}$.
\end{enumerate}
We note that this boundary problem can easily be extended to the case
where $\Omega=\mathbb{C}$, by requiring that $h$ be finite and constant
at infinity (we will in fact require that $f$ decays exponentially
fast, see also Lemma \ref{lem:one-point}).
\end{rem}

We now define quantities which reflect the geometry of $\Omega$ exploiting
the boundary value problem above, which will turn out to be directly
related to the Ising correlations through the connection which is
precisely our main convergence result in Section \ref{subsec:Analysis-near-the}.
Determination of these quantities through isomonodromic deformation
is the main subject of Section \ref{sec:Continuum-Analysis:-Painlev}.
\begin{defn}
\label{def:A-powerseries}Given a solution $f_{\left[\Omega,a_{1},\ldots,a_{n}\right]}$
of the boundary value problem presented in Proposition \ref{prop:contbvp}
(the \emph{continuous massive fermion}), define $\mathcal{A}_{\Omega}^{1,i}=\mathcal{A}_{\Omega}^{1,i}\left(a_{1},\ldots,a_{n}|m\right)$
as the real coefficients in the expansion
\begin{equation}
f_{\left[\Omega,a_{1},\ldots,a_{n}\right]}(z)=Z_{-\frac{1}{2}}^{1}(z-a_{1})+2\mathcal{A}_{\Omega}^{1}Z_{\frac{1}{2}}^{1}(z-a_{1})+2\mathcal{A}_{\Omega}^{i}Z_{\frac{1}{2}}^{i}(z-a_{1})+O\left((z-a)^{3/2}\right).\label{eq:A}
\end{equation}

In addition, in the case where $n=2$, define $\mathcal{B}_{\Omega}=\mathcal{B}_{\Omega}\left(a_{1},a_{2}|m\right)$
as the coefficient
\begin{equation}
f_{\left[\Omega,a_{1},a_{2}\right]}(z)=\mathcal{B}_{\Omega}Z_{-\frac{1}{2}}^{i}(z-a_{1})+O\left((z-a)^{1/2}\right).\label{eq:B}
\end{equation}

For notational convenience, we do not assume $\mathcal{B}_{\Omega}>0$
as in \cite{chelkak-hongler}; instead, its sign depends on the sheet
choice of $Z_{-\frac{1}{2}}^{i}(z-a_{1})$, which we will explicitly
fix whenever needed.
\end{defn}

\section{Discrete Analysis: Scaling Limit\label{sec:Discrete-Analysis:-Scaling}}

In this section, we show the convergence of the discrete fermions
introduced in Section \ref{sec:Massive-Discrete-Fermions} to their
continuous counterparts. Then we show that the family of discrete
fermions as $\delta\downarrow0$ is precompact in Section \ref{subsec:convergence},
whose limit satisfies a unique characterisation as laid out in Proposition
\ref{prop:contbvp}. These suffice to show convergence to the desired
limit. Throughout this section, we assume a continuous mass $m<0$
is fixed.

\subsection{Bulk Convergence\label{subsec:convergence}}

We finally undertake convergence of the discrete fermion $F_{\left[\Omega_{\delta},a_{1},\ldots,a_{n}\right]}$
to its continuous counterpart $f_{\left[\Omega,a_{1},\ldots,a_{n}\right]}$
in scaling limit. First, we need to interpolate the discrete function
defined on $\left[\Omega_{\delta},a_{1},\ldots,a_{n}\right]$ on the
continuous domain $\left[\Omega,a_{1},\ldots,a_{n}\right]$. While
any reasonable interpolation (e.g. linear interpolation used in many
papers dealing with massless case) should converge to the unique continuous
limit, we will assume an interpolation scheme with a continuously
differentiable $F_{\left[\Omega_{\delta},a_{1},\ldots,a_{n}\right]}$
as an easy way to show that the limit itself is continuous differentiable.
While we do not explicitly carry it out, we could show the limit is
smooth by using arbitrarily more regular interpolation scheme.
\begin{prop}
\label{prop:precompactness}Suppose $\Omega$ is bounded, simply connected
with smooth boundary or $\mathbb{C}$. For any compact subset $K\subset\Omega$,
any infinite collection $\left(\frac{2}{\pi}\delta_{k}\right)^{-1/2}F_{\left[\Omega_{\delta_{k}},a_{1},\ldots,a_{n}\right]}=:f_{\left[\Omega_{\delta_{k}},a_{1},\ldots,a_{n}\right]}$
with $\delta_{k}\downarrow0$ has a subsequence that (suitably interpolated
as above) converges in $C^{1}(K)$-topology to a continuously differentiable
limit.
\end{prop}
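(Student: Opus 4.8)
The plan is a standard Arzel\`a--Ascoli precompactness argument, adapted to the massive lattice equation: I would first establish a uniform (in $\delta$) $L^{\infty}$ bound for the rescaled fermion on $K$, then upgrade this to uniform $C^{1,\alpha}$ bounds using a priori regularity estimates for massive s-holomorphic functions, and only at the very end extract a convergent subsequence. Identification of the limit is \emph{not} needed here; it is handled separately by the uniqueness statement of Proposition \ref{prop:contbvp}.

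First I would prove uniform boundedness on compact subsets of $\left[\Omega,a_{1},\ldots,a_{n}\right]$ lying at positive distance from the ramification points $a_{j}$ and from $\partial\Omega$. The engine is the square-integral function $H$ of Proposition \ref{prop:isquare}: by Proposition \ref{prop:squareint} its discrete Laplacian is $O(\delta)\sum\left|F\right|^{2}$, and the boundary outer derivatives (\ref{eq:squareint_outernormal}) are nonnegative, so the discrete Green's formula --- run exactly as in the derivation of (\ref{eq:unbounded_ineq}) for the full plane --- bounds the discrete mass $\delta\sum_{c}\left|F(c)\right|^{2}$ uniformly in $\delta$. In terms of the rescaled $f_{\left[\Omega_{\delta},a_{1},\ldots,a_{n}\right]}=(\tfrac{2}{\pi}\delta)^{-1/2}F$ this is literally a uniform discrete $L^{2}$ bound, matching condition (1) of Proposition \ref{prop:contbvp}. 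Since $F$ restricted to real and to imaginary corners is $(\Theta,M_{H}^{2})$-massive harmonic with $M_{H}^{2}=O(\delta^{2})$, a discrete mean-value (elliptic) estimate for its real-valued massive harmonic components converts this $L^{2}$ bound into a pointwise $L^{\infty}$ bound on $K$; near $a_{1}$ the constants are anchored by Proposition \ref{prop:isingcor}, which already pins $\left|F\right|$ between two constants at the corners adjacent to the monodromy.

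For equicontinuity and the passage to $C^{1}$, I would invoke the a priori regularity theory for s-holomorphic functions in the perturbed form. The key point is that massive s-holomorphicity (\ref{eq:s-holom}) is an $O(\delta)$ perturbation of the critical relation, equivalently that $H^{\circ},H^{\bullet}$ are discrete harmonic up to a defect of order $\delta$ times a bounded quantity. Uniformly bounded (massive) harmonic functions enjoy uniform interior bounds on their first and second discrete differences; these transfer to $F$ through the s-holomorphicity identities relating corner to edge values and through (\ref{eq:squareint}), giving uniform control of $\delta^{-1}\partial_{\lambda}^{\delta}F$ and of its discrete modulus of continuity on $K$. With the continuously differentiable interpolation chosen in the statement, the interpolated functions and their gradients are then uniformly bounded and equicontinuous on $K$, so Arzel\`a--Ascoli (via the compact embedding $C^{1,\alpha}(K)\hookrightarrow C^{1}(K)$) produces a subsequence converging in $C^{1}(K)$ to a continuously differentiable limit.

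The main obstacle is securing the uniform \emph{derivative} estimates in the massive regime, i.e.\ the discrete analogue of ``a bounded massive harmonic function has bounded gradient'' with constants independent of $\delta$. The $C^{0}$ equicontinuity needed for plain uniform convergence follows comparatively cheaply from the $L^{2}$ bound plus a mean-value inequality, but the extra order of regularity demanded by $C^{1}$ convergence is delicate. I would obtain it either by treating the massive relation as a genuine perturbation of the critical s-holomorphic estimates of \cite{chelkak-hongler} and absorbing the mass term into the already-established $L^{\infty}$ and $L^{2}$ bounds (the correction being $O(\delta)$ on the lattice scale, hence harmless after summation), or --- more in keeping with this paper's probabilistic viewpoint --- by reading the gradient bounds off the extinguished-random-walk representation of the massive harmonic functions $H^{\circ,\bullet}$ directly.
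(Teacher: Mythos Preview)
Your proposal is correct and follows essentially the same strategy as the paper: obtain a uniform discrete $L^{2}$ bound on $f_{\left[\Omega_{\delta},a_{1},\ldots,a_{n}\right]}$ via the Green's formula applied to $H^{\bullet}$ (exactly the computation leading to (\ref{eq:precomp_lap})), then upgrade this to equicontinuity of the discrete derivatives and apply Arzel\`a--Ascoli.

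The one point where the paper is cleaner than your outline is the $L^{2}\to C^{1}$ step. You route this through $H^{\circ,\bullet}$ and speak of transferring regularity back to $F$ via the s-holomorphicity identities, and in your last paragraph you mention reading gradient bounds off the random-walk representation of $H^{\circ,\bullet}$. The paper instead works directly with $F$: since $F$ restricted to $\mathcal{C}^{1}$ (or $\mathcal{C}^{i}$) is itself $M_{H}^{2}$-massive harmonic, Proposition~\ref{prop:hmestimate} gives \emph{both} the pointwise bound and the discrete-derivative bound on $F$ immediately from the $L^{2}$ norm, via the extinguished-random-walk representation. This bypasses $H$ entirely at this stage and avoids the complication that $H^{\circ,\bullet}$ is not massive harmonic but satisfies the more involved relation (\ref{eq:squareint_lap}). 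Your alternative of treating the mass as a perturbation of the critical estimates of \cite{chelkak-hongler} would also work, but the direct massive-harmonic estimate on $F$ is the shorter path.
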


\begin{proof}
By Arzelà-Ascoli, it suffices to show that the discrete derivatives
\begin{alignat*}{1}
\delta^{-1}\partial_{x}^{\delta}f_{\left[\Omega_{\delta},a_{1},\ldots,a_{n}\right]}(z) & :=\delta^{-1}\left[f_{\left[\Omega_{\delta},a_{1},\ldots,a_{n}\right]}(z+2\delta)-f_{\left[\Omega_{\delta},a_{1},\ldots,a_{n}\right]}(z)\right];\\
\delta^{-1}\partial_{y}^{\delta}f_{\left[\Omega_{\delta},a_{1},\ldots,a_{n}\right]}(z) & :=\delta^{-1}\left[f_{\left[\Omega_{\delta},a_{1},\ldots,a_{n}\right]}(z+2\delta i)-f_{\left[\Omega_{\delta},a_{1},\ldots,a_{n}\right]}(z)\right],
\end{alignat*}
 are equicontinuous on $K$. In view of Proposition \ref{prop:hmestimate},
it suffices to show that the 'discrete $L^{2}$ norm' $\sum_{c\in\mathcal{C}\left(K_{\delta}\right)}\left|f_{\left[\Omega_{\delta},a_{1},\ldots,a_{n}\right]}(c)\right|^{2}\delta^{2}$
is bounded by a universal number.

Apply the discrete Green's formula as in (\ref{eq:discrete-greens-formula})
to $H_{\left[\Omega_{\delta},a_{1},\ldots,a_{n}\right]}$, then we
get the following analogue to (\ref{eq:unbounded_ineq}) (dividing
both sides by $\delta$),
\begin{alignat}{1}
cst & \geq\delta^{-1}\Delta^{\delta}H_{\left[\Omega_{\delta},a_{1},\ldots,a_{n}\right]}^{\bullet}(a_{1}+\delta)\label{eq:precomp_lap}\\
 & \geq2\sin\left(\frac{\pi}{4}-\Theta\right)\sum_{\begin{array}{c}
c\in\mathcal{C}\left[\Omega_{\delta}\right]\\
c\neq a_{1}+\delta+\frac{i^{m}\delta}{2}
\end{array}}\delta A_{-\Theta}\left|f_{\left[\Omega_{\delta},a_{1},\ldots,a_{n}\right]}\left(c\right)\right|^{2}\nonumber \\
 & +\sum_{e\in\partial\mathcal{E}\left[\Omega_{\delta}\right]}2\delta\sin\left(\frac{\pi}{4}-2\Theta\right)\left|f_{\left[\Omega_{\delta},a_{1},\ldots,a_{n}\right]}\left(e\right)\right|^{2},\nonumber \\
 & \geq2\sin\left(\frac{\pi}{4}-\Theta\right)\sum_{\begin{array}{c}
c\in\mathcal{C}\left[\Omega_{\delta}\right]\\
c\neq a_{1}+\delta+\frac{i^{m}\delta}{2}
\end{array}}\delta A_{-\Theta}\left|f_{\left[\Omega_{\delta},a_{1},\ldots,a_{n}\right]}\left(c\right)\right|^{2}
\end{alignat}
and since $A_{-\Theta}\sim-2\sqrt{2}m\delta$, we have the desired
$L^{2}$ bound from the sum of $\delta A_{-\Theta}\left|f_{\left[\Omega_{\delta},a_{1},\ldots,a_{n}\right]}\left(c\right)\right|^{2}$.
\end{proof}
With a sequence $K_{m}$ of increasing compact subsets such that $\bigcup_{m}K_{m}=\Omega\setminus\left\{ a_{1},\ldots,a_{n}\right\} $
and using diagonalisation, we can find a global subsequential limit
$f_{\left[\Omega,a_{1},\ldots,a_{n}\right]}$ with uniform convergence
in compact subsets of $\Omega$. We finish the proof of convergence
by showing that a limit must satisfy the boundary value problem of
Proposition \ref{prop:contbvp}, and thus is unique. We note that
continuous differentiability, square integrability and the condition
$\partial_{\bar{z}}f_{\left[\Omega,a_{1},\ldots,a_{n}\right]}=m\overline{f_{\left[\Omega,a_{1},\ldots,a_{n}\right]}}$
follows straightforwardly from Proposition \ref{prop:Appendix} and
Remark \ref{rem:mdhol}, so we are left to verify the boundary conditions,
as laid out in Remark \ref{rem:hbvp}.

We first treat the following explicit case:
\begin{lem}
\label{lem:one-point}The one point spinor $f_{\left[\mathbb{C}_{\delta},0\right]}$
converges to $Z_{-\frac{1}{2}}^{1}(z=re^{i\theta})=\frac{e^{2mr}}{\sqrt{z}}$
uniformly in compact subsets of $\left[\mathbb{C}_{\delta},0\right]$.
\end{lem}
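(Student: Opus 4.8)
The plan is to combine the precompactness already established in Proposition \ref{prop:precompactness} with the uniqueness of the continuous boundary value problem, using the explicit discrete formula of Proposition \ref{prop:onepointspinor} only to pin down the normalisation at the origin. First I would apply Proposition \ref{prop:precompactness} with $\Omega=\mathbb{C}$ to extract, from any sequence $\delta_k\downarrow0$, a subsequence along which $f_{[\mathbb{C}_{\delta_k},0]}$ converges in $C^1$ on compact subsets of $[\mathbb{C},0]\setminus\{0\}$ to a limit $f$. By the discrete massive holomorphicity recorded in Proposition \ref{prop:Appendix} and Remark \ref{rem:mdhol}, passing to the limit shows that $f$ is a continuously differentiable spinor on $[\mathbb{C},0]$ satisfying $\partial_{\bar z}f=m\bar f$ away from the ramification point.

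Next I would control the behaviour of $f$ at infinity and at the origin. The uniform discrete $L^2$-bound and decay furnished by Proposition \ref{prop:unbounded-prop} survive in the limit, so $f$ is square integrable, decays at infinity, and $h:=\mathrm{Re}\int f^2\,dz$ is finite and constant there; this is exactly the infinity condition of the full-plane problem described in Remark \ref{rem:hbvp}. At the origin, Corollary \ref{cor:powerseries} gives an expansion $f=A\,Z_{-\frac12}^1(z)+B\,Z_{-\frac12}^i(z)+O(z^{1/2})$ with $A,B\in\mathbb{R}$; since $z^{1/2}Z_{-\frac12}^1\to1$ and $z^{1/2}Z_{-\frac12}^i\to i$, the singularity data is governed by $A+iB$.

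The one remaining point, and the crux of the argument, is to show $A=1$ and $B=0$. Here I would invoke Proposition \ref{prop:onepointspinor}, which identifies $F_{[\mathbb{C}_\delta,0]}$ with the massive harmonic measure $\mathrm{hm}^0_{(1+i)\mathbb{Z}^2\setminus\mathbb{R}_{<0}}$: the reflection symmetry across the real axis used there (which forces $F$ to vanish on $\mathcal{C}^1$ over $\mathbb{R}_{<0}$ and on $\mathcal{C}^i$ over $\mathbb{R}_{>0}$) is precisely the symmetry enjoyed by $Z_{-\frac12}^1=e^{2mr}/\sqrt z$ and not by $Z_{-\frac12}^i$, forcing $B=0$. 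The constant $A=1$ then follows from the normalisation $|F_{[\mathbb{C}_\delta,0]}(a_1+\tfrac{\delta}{2})|^2=1$ together with the singularity relation (\ref{eq:singularity}); matching this against the lattice asymptotics of $\mathrm{hm}^0$ is exactly what the renormalisation factor $(\tfrac{2}{\pi}\delta)^{-1/2}$ is designed to absorb. This constant-matching at the microscopic scale is the main obstacle, since it requires the precise Green's-function constant $\tfrac{2}{\pi}$ rather than a soft estimate.

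Finally I would close by uniqueness. The limit $f$ now solves the full-plane boundary value problem of Proposition \ref{prop:contbvp} (extended to $\mathbb{C}$ via Remark \ref{rem:hbvp}) with $n=1$ and singularity normalisation $1$. By Remark \ref{rem:formalpowers} the function $Z_{-\frac12}^1(z)=e^{2mr}/\sqrt z$ is itself $m$-massive holomorphic, has the required $z^{-1/2}$ singularity with coefficient $1$, and decays exponentially at infinity because $m<0$; hence it too is a solution. The uniqueness furnished by Proposition \ref{prop:contbvp} then gives $f=Z_{-\frac12}^1$. Since every subsequential limit coincides with $Z_{-\frac12}^1$, the whole family converges to it uniformly on compact subsets, as claimed.
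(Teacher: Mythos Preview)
Your overall strategy---precompactness via Proposition~\ref{prop:precompactness}, then uniqueness via the full-plane version of Proposition~\ref{prop:contbvp}---is the same as the paper's, and the reflection-symmetry argument for $B=0$ is fine (the $L^2$ bound already rules out singularities worse than $z^{-1/2}$). One minor point: for the behaviour at infinity you invoke Proposition~\ref{prop:unbounded-prop}, but the decay there is stated for fixed $\delta$; to get decay of the \emph{limit} you still need the uniform exponential estimate of Proposition~\ref{prop:hmestimate} applied to the harmonic-measure identification, which is what the paper does.

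The real gap is your argument for $A=1$. You appeal to the microscopic normalisation $|F_{[\mathbb{C}_\delta,0]}(a_1+\tfrac{\delta}{2})|^2=1$ (equivalently $F(a_1+\tfrac{3\delta}{2})=1$), but this is a value at lattice distance $O(\delta)$ from the branch point, whereas $A$ is the coefficient in the \emph{macroscopic} expansion, valid at fixed $z$ as $\delta\to0$. There is no direct bridge between the two: plugging $z=\tfrac{3\delta}{2}$ into $A/\sqrt z$ and comparing with $(\tfrac{2}{\pi}\delta)^{-1/2}$ does not even give the right constant, because the continuous profile is not an approximation to the discrete function at microscopic scale. The paper closes this differently: it compares the massive harmonic measure to the \emph{massless} one (simple random walk), for which convergence of the renormalised spinor to $z^{-1/2}$ is a known result (\cite{kesten,lali2004}, \cite[Lemma~2.14]{chelkak-hongler}, \cite[Lemma~5.14]{ghp}). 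Domination of the massive hitting probability by the massless one gives $\alpha:=\lim_{z\to0}z^{1/2}f_{[\mathbb{C},0]}(z)\in[0,1]$, and since the extinction probability over a walk of length $o(|m|^{-1})$ is negligible, the two harmonic measures are asymptotically equal as $z\to0$, forcing $\alpha=1$. You correctly flag the constant-matching as the crux, but your proposal does not supply the actual argument; the comparison with the massless spinor at macroscopic scale is the missing ingredient.
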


\begin{proof}
We show that a subsequential limit $f_{\left[\mathbb{C},0\right]}$
satisfies the unique characterisation of Remark \ref{rem:hbvp}, and
thus has to be equal to $Z_{-\frac{1}{2}}^{1}$, an explicit solution.
We first show that $f_{\left[\mathbb{C},0\right]}$ vanishes at infinity
sufficiently fast to yield that $h_{\left[\mathbb{C},0\right]}:=\text{Re}\int f_{\left[\mathbb{C},0\right]}^{2}dz$
is constant at infinity. But this follows from the identification
of the discrete spinor with the massive harmonic measure (the hitting
probability of an extinguished random walk) of the tip of a slit in
Proposition \ref{prop:onepointspinor} and the exponential estimates
of Proposition \ref{prop:hmestimate}.

It now remains to identify the singularity at $0$ as $f_{\left[\mathbb{C},0\right]}\sim z{}^{-1/2}$.
Note that the massless harmonic measure \cite{kesten,lali2004} and
thus the spinor \cite[Lemma 2.14]{chelkak-hongler} (for the identification
$\vartheta(\delta)=\left(\frac{2}{\pi}\delta\right)^{1/2}$, see \cite[Lemma 5.14]{ghp})
show that exact behaviour. Since the hitting probability of a massive
random walk is dominated by the hitting probability of a simple random
walk, $f_{\left[\mathbb{C},0\right]}\cdot z^{1/2}\xrightarrow{z\to0}\alpha$
for some $\alpha\in\left[0,1\right]$. But the probability that the
massive random walk is extinguished can be made arbitrarily close
to $0$ as length scale becomes negligible compared to $\frac{1}{\left|m\right|}$,
so the two harmonic measures become asymptotically equal as $z\to0$
and we conclude $\alpha=1$.
\end{proof}
By translation invariance of its definition, $f_{\left[\mathbb{C}_{\delta},a_{1}\right]}(z)$
converges to $Z_{-\frac{1}{2}}^{1}(z-a_{1})$ uniformly in compact
subsets of $\left[\mathbb{C},a_{1}\right]$.
\begin{prop}
\label{prop:ident_limit}A subsequential limit $f_{\left[\Omega,a_{1},\ldots,a_{n}\right]}$
of $f_{\left[\Omega_{\delta},a_{1},\ldots,a_{n}\right]}$ satisfies
the conditions of Remark \ref{rem:hbvp}, and thus is unique.
\end{prop}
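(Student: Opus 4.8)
The plan is to verify, clause by clause, that a subsequential $C^{1}_{\mathrm{loc}}$-limit $f:=f_{\left[\Omega,a_{1},\ldots,a_{n}\right]}$ produced by Proposition \ref{prop:precompactness} satisfies the reformulated boundary value problem of Remark \ref{rem:hbvp}; the equation $\partial_{\bar{z}}f=m\bar{f}$, continuous differentiability, and square integrability are already in hand from Proposition \ref{prop:Appendix} and Remark \ref{rem:mdhol}. The central object is the primitive $h:=\text{Re}\int f^{2}\,dz$. I would first show that the suitably rescaled discrete square integrals $H^{\circ}_{\left[\Omega_{\delta},\ldots\right]}$ and $H^{\bullet}_{\left[\Omega_{\delta},\ldots\right]}$ both converge, up to a common additive constant, to $h$ uniformly on compact subsets of $\Omega\setminus\left\{ a_{1},\ldots,a_{n}\right\}$. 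This is because the increment formulae (\ref{eq:squareint}) and (\ref{eq:squareint2}) exhibit the discrete primitives as Riemann sums of $\text{Re}\left[F^{2}\,dz\right]$, so the $C^{1}$-convergence of $f_{\left[\Omega_{\delta},\ldots\right]}$ transfers directly; moreover $H^{\circ}-H^{\bullet}=2\delta\left|F\right|^{2}=O(\delta)$, so the two functions share the single continuous limit $h$.

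Next I would read off the boundary conditions from the discrete ones. The convention $H^{\circ}\equiv0$ on $\partial\mathcal{F}\left[\Omega_{\delta}\right]$ in (\ref{eq:squareint_boundary}), together with the boundary spin condition $F(e)\in\nu_{\text{out}}(e)^{-1/2}\mathbb{R}$ established in Proposition \ref{prop:isquare} (which gives $f(z)\in\sqrt{\nu_{\text{out}}^{-1}(z)}\mathbb{R}$ and hence $\text{Re}\left[f^{2}\,dz\right]=0$ tangentially on the lift of $\partial\Omega$), forces $h$ to be constant on $\partial\Omega$. The condition that $h$ have no local maximum from above at the boundary I would obtain from (\ref{eq:squareint_outernormal}): since $\partial^{\delta}_{\nu_{\text{out}}}H^{\bullet}\geq0$ and $H^{\bullet}=0$ on $\partial\mathcal{V}\left[\Omega_{\delta}\right]$, we have $H^{\bullet}\leq0$ throughout a boundary layer, so $h\leq h(\partial\Omega)$ near $\partial\Omega$ in the limit. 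I expect the main obstacle to be precisely this boundary analysis: upgrading the interior convergence of $H$ to convergence up to $\partial\Omega$ (so that the constant boundary value is genuinely attained and the boundary-layer sign survives the limit) requires uniform control in a full neighbourhood of $\partial\Omega$ rather than merely on fixed interior compacts.

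Finally I would treat the monodromy points, distinguishing $a_{1}$ from $a_{2},\ldots,a_{n}$. Near $a_{1}$ the local behaviour is pinned down by comparison with the full-plane one-point spinor: $f_{\left[\mathbb{C}_{\delta},a_{1}\right]}\to Z_{-\frac{1}{2}}^{1}(z-a_{1})$ by Lemma \ref{lem:one-point}, and matching monodromy together with the normalisation $\left|F(a_{1}+\frac{\delta}{2})\right|=1$ yields $f\sim Z_{-\frac{1}{2}}^{1}(z-a_{1})$, so that $h^{\dagger}(w)=\text{Re}\int^{w}\bigl(f(z)-Z_{-\frac{1}{2}}^{1}(z-a_{1})\bigr)^{2}\,dz$ is single-valued and bounded near $a_{1}$. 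At each $a_{j}$ with $j\geq2$ there is no source term in (\ref{eq:squareint_lap}), the only source sitting at $a_{1}+\delta$ by Remark \ref{rem:movable}, so $H^{\bullet}$ is superharmonic there; the finite discrete $L^{2}$-bound from the precompactness estimate (\ref{eq:precomp_lap}) limits the most singular term of the spinor expansion of Corollary \ref{cor:powerseries} to order $Z_{-\frac{1}{2}}(z-a_{j})$, while superharmonicity forbids $h\to-\infty$, giving $h$ bounded below and thereby forcing the leading coefficient to be purely imaginary, as required. With all clauses of Remark \ref{rem:hbvp} verified, the uniqueness in Proposition \ref{prop:contbvp} identifies every subsequential limit with the unique solution, and full convergence of $f_{\left[\Omega_{\delta},a_{1},\ldots,a_{n}\right]}$ follows.
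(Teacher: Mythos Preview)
Your overall plan is sound and matches the paper's strategy, but you have correctly identified---and then not filled---the decisive gap. The paper's key device is a comparison with the discrete Dirichlet Green's function: since $h_{\left[\Omega_{\delta},\ldots\right]}^{\bullet}$ is superharmonic everywhere on $\mathcal{V}\left[\Omega_{\delta}\right]\setminus\left\{a_{1}+\delta\right\}$, vanishes on $\partial\mathcal{V}\left[\Omega_{\delta}\right]$, and has $\Delta^{\delta}h^{\bullet}(a_{1}+\delta)\leq c_{0}$ uniformly in $\delta$, one gets the global lower bound $h^{\bullet}\geq c_{0}\mathcal{G}_{\delta}$ with $\mathcal{G}_{\delta}=\mathcal{G}_{\mathcal{V}[\Omega_{\delta}]}(\cdot,a_{1}+\delta)$. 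This single inequality does two jobs at once. First, near $\partial\Omega$ one has $\mathcal{G}_{\delta}=O(\delta)$ (Lemma~\ref{lem:harm-boundary}), so $c_{0}\mathcal{G}_{\delta}(a_{\text{int}})\leq h^{\bullet}(a_{\text{int}})\leq 0$ on vertices adjacent to the boundary, which by (\ref{eq:squareint_outernormal}) gives $\left|f_{\left[\Omega_{\delta},\ldots\right]}\right|$ uniformly bounded on boundary edges and corners; subharmonicity of $|f|^{2}$ then propagates this to a uniform bound on all of $\Omega_{r}$, and only \emph{then} do you get uniform convergence of $h$ on $\overline{\Omega_{r}}$ and continuity up to $\partial\Omega$. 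Your argument, which attempts to pass the inequality $H^{\bullet}\leq 0$ directly to the limit using only interior-compact convergence, does not supply this boundary bound and so the ``upgrading'' step remains open exactly as you feared. Second, the same Green's function comparison immediately gives $h$ bounded below near $a_{2},\ldots,a_{n}$, since $\mathcal{G}_{\delta}$ is bounded away from $a_{1}$; your alternative route via the $L^{2}$ bound plus ``superharmonicity forbids $h\to-\infty$'' is not wrong in spirit, but the Green's function argument makes it a one-liner.

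For the behaviour at $a_{1}$ you have the right idea but are missing the mechanism. Knowing $f_{\left[\mathbb{C}_{\delta},a_{1}\right]}\to Z_{-1/2}^{1}$ and $\left|F(a_{1}+\tfrac{\delta}{2})\right|=1$ does not by itself show $f-Z_{-1/2}^{1}$ stays bounded as $z\to a_{1}$. The paper passes to the discrete difference $f^{\dagger}_{\delta}:=f_{\left[\Omega_{\delta},\ldots\right]}-f_{\left[\mathbb{C}_{\delta},a_{1}\right]}$, observes that the singularities at $a_{1}+\tfrac{\delta}{2}$ cancel so that $f^{\dagger}_{\delta}$ is massive s-holomorphic (hence massive harmonic) \emph{through the monodromy}, and then uses that both summands are uniformly bounded on a fixed annulus $S_{r}$ to conclude $f^{\dagger}_{\delta}$ is uniformly bounded on $B_{r}(a_{1})$ by the maximum principle. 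Only then does boundedness and single-valuedness of $h^{\dagger}$ follow.
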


\begin{proof}
We first suppose $\Omega\neq\mathbb{C}$. Consider the renormalised
discrete square integral $h_{\left[\Omega_{\delta},a_{1},\ldots,a_{n}\right]}:=\left(\frac{2}{\pi}\delta\right)^{-1}H_{\left[\Omega_{\delta},a_{1},\ldots,a_{n}\right]}$.
By Propositions \ref{prop:squareint} and \ref{prop:unbounded-prop},
$h_{\left[\Omega_{\delta},a_{1},\ldots,a_{n}\right]}^{\bullet}=\delta^{-1}H_{\left[\Omega_{\delta},a_{1},\ldots,a_{n}\right]}^{\bullet}$
is discrete superharmonic on $\mathcal{V}\left[\Omega_{\delta}\right]\setminus\left\{ a_{1}+\delta\right\} $
and takes the boundary value $0$ on $\partial\mathcal{V}\left[\Omega_{\delta}\right]$.
Moreover, by Proposition \ref{prop:precompactness}, $\Delta^{\delta}h_{\left[\Omega_{\delta},a_{1},\ldots,a_{n}\right]}^{\bullet}(a_{1}+\delta)\leq cst=:c_{0}$.
Thus, by superharmonicity, $h_{\left[\Omega_{\delta},a_{1},\ldots,a_{n}\right]}^{\bullet}$
is lower bounded by $c_{0}\mathcal{G}_{\delta}$, where $\mathcal{G}_{\delta}$
is the domain Green's function $\mathcal{G}_{\delta}:=\mathcal{G}_{\mathcal{V}\left[\Omega_{\delta}\right]}(\cdot,a_{1}+\delta)$
with Dirichlet boundary condition on $\partial\mathcal{V}\left[\Omega_{\delta}\right]$.

Since $\mathcal{G}_{\delta}$ is $O(\delta)$ on any vertex $a_{\text{int}}\in\mathcal{V}\left[\Omega_{\delta}\right]$
adjacent to $a\in\mathcal{V}\left[\Omega_{\delta}\right]$ (Lemma
\ref{lem:harm-boundary}), we have that $O(\delta)=c_{0}\mathcal{G}_{\delta}(a_{\text{int}})\leq h_{\left[\Omega_{\delta},a_{1},\ldots,a_{n}\right]}^{\bullet}(a_{\text{int}})\leq0$
and $h_{\left[\Omega_{\delta},a_{1},\ldots,a_{n}\right]}^{\bullet}(a_{\text{int}})=-\left|f_{\left[\Omega_{\delta},a_{1},\ldots,a_{n}\right]}\left(\frac{a_{\text{int}}+a}{2}\right)\right|^{2}\delta=O(\delta)$
by (\ref{eq:squareint_outernormal}). We see that therefore $f_{\left[\Omega_{\delta},a_{1},\ldots,a_{n}\right]}$
is uniformly bounded on boundary edges and corners (see Figure \ref{fig:The-square-lattice.}).
Since $\left|f\right|^{2}$ is subharmonic by (\ref{eq:f2-subharm}),
$f_{\left[\Omega_{\delta},a_{1},\ldots,a_{n}\right]}$ is uniformly
bounded on $\Omega_{r}:=\Omega\setminus\bigcup_{j}B_{r}(a_{j})$ for
small fixed $r>0$. Therefore by equicontinuity $h_{\left[\Omega_{\delta},a_{1},\ldots,a_{n}\right]}\to h_{\left[\Omega,a_{1},\ldots,a_{n}\right]}$
uniformly on $\overline{\Omega_{r}}$, and $h_{\left[\Omega,a_{1},\ldots,a_{n}\right]}$
is continuous up to the boundary. Note that since $\partial_{\bar{z}}f_{\left[\Omega,a_{1},\ldots,a_{n}\right]}=m\overline{f_{\left[\Omega,a_{1},\ldots,a_{n}\right]}}$,
$h_{\left[\Omega,a_{1},\ldots,a_{n}\right]}=\text{Re}\int f_{\left[\Omega,a_{1},\ldots,a_{n}\right]}^{2}dz$
satisfies 
\begin{equation}
\Delta h_{\left[\Omega,a_{1},\ldots,a_{n}\right]}=4\partial_{\bar{z}}\partial_{z}h_{\left[\Omega,a_{1},\ldots,a_{n}\right]}=2\partial_{\bar{z}}f_{\left[\Omega,a_{1},\ldots,a_{n}\right]}^{2}=4m\left|f_{\left[\Omega,a_{1},\ldots,a_{n}\right]}\right|^{2}.\label{eq:continuoushlap}
\end{equation}

We now verify the two remaining conditions of the boundary value problem.
\begin{enumerate}
\item $h_{\left[\Omega,a_{1},\ldots,a_{n}\right]}$ in $\Omega_{r}$ is
a continuous solution of a Poisson equation with bounded data $4m\left|f_{\left[\Omega,a_{1},\ldots,a_{n}\right]}\right|^{2}$
and smooth boundary data; $h_{\left[\Omega,a_{1},\ldots,a_{n}\right]}$
is continuously differentiable up to the boundary thanks to standard
Green's function estimates (e.g. \cite[Theorem 4.3]{GiTr}). From
\cite[Remark 6.3]{chsm2012}, which only uses the superharmonicity
(in their convention, subharmonicity) of $h_{\left[\Omega_{\delta},a_{1},\ldots,a_{n}\right]}^{\bullet}$
and does not depend on a particular property of $h_{\left[\Omega_{\delta},a_{1},\ldots,a_{n}\right]}^{\circ}$,
we see that there is no neighbourhood of $x_{0}\in\partial\Omega$
where $h>0$.
\item $h_{\left[\Omega_{\delta},a_{1},\ldots,a_{n}\right]}$ is bounded
below by $c\mathcal{G}_{\delta}$, which is bounded from negative
infinity away from $a_{1}$ in the scaling limit. So near $a_{2},\ldots,a_{n}$,
$h_{\left[\Omega,a_{1},\ldots,a_{n}\right]}$ is bounded below. For
the asymptotic near $a_{1}$, note that $f_{\left[\Omega_{\delta},a_{1},\ldots,a_{n}\right]}^{\dagger}:=f_{\left[\Omega_{\delta},a_{1},\ldots,a_{n}\right]}-f_{\left[\mathbb{C}_{\delta},a_{1}\right]}$
is s-holomorphic near $a_{1}$ with $f_{\left[\Omega_{\delta},a_{1},\ldots,a_{n}\right]}^{\dagger}(a_{1}+\frac{\delta}{2})=0$,
so by Proposition \ref{prop:Appendix} it is everywhere massive harmonic
(unlike in the proof of Proposition \ref{prop:onepointspinor}, the
zero prevents a singularity near monodromy; see also \cite[Remark 2.6]{ghp}).
But both $f_{\left[\Omega_{\delta},a_{1},\ldots,a_{n}\right]},f_{\left[\mathbb{C}_{\delta},a_{1}\right]}$
are uniformly bounded, say, in a discrete circle $S_{r}:=B_{r+5\delta}(a_{1})\setminus B_{r}(a_{1})$
for small $r>0$. As above, we conclude that $f_{\left[\Omega_{\delta},a_{1},\ldots,a_{n}\right]}^{\dagger}$
is uniformly bounded in $B_{r}(a_{1})$. Given Lemma \ref{lem:one-point},
this suffices to show boundedness and well-definedness for the continuous
limits $f^{\dagger},h^{\dagger}$.
\end{enumerate}
If $\Omega=\mathbb{C}$, we first show that $f_{\left[\Omega_{\delta},a_{1},\ldots,a_{n}\right]}\to0$
at infinity uniformly in $\delta$ and $h_{\left[\Omega_{\delta},a_{1},\ldots,a_{n}\right]}$
is constant and finite at infinity using Proposition \ref{prop:hmestimate}
and the uniform $L^{2}$ bound (\ref{eq:precomp_lap}). Since $f_{\left[\Omega_{\delta},a_{1},\ldots,a_{n}\right]}\to0$
at infinity uniformly in $\delta$, we may compare with the Green's
function of a suitably large ball to obtain lower boundedness around
$a_{2},\ldots,a_{n}$. The above proof applies near $a_{1}$.
\end{proof}

\subsection{Analysis near the Singularity\label{subsec:Analysis-near-the}}

To show convergence of $F_{\left[\Omega_{\delta},a_{1},\ldots,a_{n}\right]}\left(a_{1}+\frac{3\delta}{2}\right)$,
we model the expansion of $f_{\left[\Omega,a_{1},\ldots,a_{n}\right]}$
around $a_{1}$ from Definition \ref{def:A-powerseries} in the discrete
setting, and carefully analyse the magnitude of the difference. The
candidate for discrete $Z_{-\frac{1}{2}}^{1}$ is clear: $F_{\left[\mathbb{C}_{\delta},a_{1}\right]}$
is able to cancel out the singularity of $F_{\left[\Omega_{\delta},a_{1},\ldots,a_{n}\right]}$,
and converges to $Z_{-\frac{1}{2}}^{1}$ by Lemma \ref{lem:one-point}.

Then, we need to build a discrete analogue of $Z_{\frac{1}{2}}^{1}$
(or rather, some massive s-holomorphic function that has square root
behaviour around $0$). Following \cite[(3.12)]{chelkak-hongler},
we define a discrete function $G_{\left[\mathbb{C}_{\delta},a_{1}\right]}$
by discrete integrating the $F_{\left[\mathbb{C}_{\delta},a_{1}\right]}$.
\begin{prop}
\label{prop:squareroot}Construct the spinor $G_{\left[\mathbb{C}_{\delta},a_{1}\right]}:\mathcal{C}^{1}\left[\Omega_{\delta},a_{1}\right]\to\mathbb{R}$
by
\begin{equation}
G_{\left[\mathbb{C}_{\delta},a_{1}\right]}\left(z\right):=\delta\sum_{j=0}^{\infty}\Gamma^{j}F_{\left[\mathbb{C}_{\delta},a_{1}\right]}(z-2j\delta),\label{eq:squareroot}
\end{equation}
where $\Gamma(\delta|\Theta):=\tan^{2}\left(\frac{\pi}{4}+2\Theta\right)$,
and $z-2j\delta$ is taken on the same sheet as $z$ if $\pi(z)\notin a_{1}+\mathbb{R}_{>0}$
or if $\pi(z),\pi\left(z-2j\delta\right)\in a_{1}+\mathbb{R}_{>0}$,
while $F_{\left[\mathbb{C}_{\delta},a_{1}\right]}(z-2j\delta)=0$
naturally as soon as $\pi\left(z-2j\delta\right)\in a_{1}+\mathbb{R}_{<0}$
(cf. Proposition \ref{prop:onepointspinor}).

It is massive harmonic with coefficient $M_{H}^{2}$ on $\mathbb{X}^{+}\cap\mathcal{C}^{1}\left[\mathbb{C}_{\delta},0\right]$,
and $\left(\frac{2}{\pi}\delta\right)^{-1/2}G_{\left[\mathbb{C}_{\delta},a_{1}\right]}$
converges uniformly in compact subsets of $\left[\mathbb{C},a_{1}\right]$
to $g_{\left[\mathbb{C},a_{1}\right]}$ which has the asymptotic behaviour
$g_{\left[\mathbb{C},a_{1}\right]}(z)\sim\text{Re}\sqrt{z-a_{1}}$
near $a_{1}$.
\end{prop}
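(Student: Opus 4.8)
The plan is to exploit the fact that the geometric sum in \eqref{eq:squareroot} was built precisely so that its partial sums obey a first-order recursion; from that recursion I will read off both massive harmonicity and, in the limit, the singular behaviour. First I would record the telescoping identity that follows immediately from reindexing the series, namely
\[
G_{\left[\mathbb{C}_{\delta},a_{1}\right]}(z)-\Gamma\,G_{\left[\mathbb{C}_{\delta},a_{1}\right]}(z-2\delta)=\delta\,F_{\left[\mathbb{C}_{\delta},a_{1}\right]}(z).
\]
Since $\Theta<0$ forces $0<\Gamma=\tan^{2}\left(\tfrac{\pi}{4}+2\Theta\right)<1$, the series is convergent (indeed eventually a finite sum, because $F_{\left[\mathbb{C}_{\delta},a_{1}\right]}$ vanishes once $\pi(z-2j\delta)\in a_{1}+\mathbb{R}_{<0}$ by Proposition \ref{prop:onepointspinor}), so $G_{\left[\mathbb{C}_{\delta},a_{1}\right]}$ is well defined and real valued on $\mathcal{C}^{1}$.

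For massive harmonicity I would set $P(z):=\Delta^{\delta}G_{\left[\mathbb{C}_{\delta},a_{1}\right]}(z)-M_{H}^{2}\,G_{\left[\mathbb{C}_{\delta},a_{1}\right]}(z)$ and apply $\Delta^{\delta}$ to the recursion. Because each offset $\eta_{m}=i^{m}e^{i\pi/4}\sqrt{2}\delta$ commutes with the horizontal shift $z\mapsto z-2\delta$, the recursion propagates through the Laplacian to give $\Delta^{\delta}G_{\left[\mathbb{C}_{\delta},a_{1}\right]}(z)=\delta\,\Delta^{\delta}F_{\left[\mathbb{C}_{\delta},a_{1}\right]}(z)+\Gamma\,\Delta^{\delta}G_{\left[\mathbb{C}_{\delta},a_{1}\right]}(z-2\delta)$. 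Substituting the massive harmonicity $\Delta^{\delta}F_{\left[\mathbb{C}_{\delta},a_{1}\right]}=M_{H}^{2}F_{\left[\mathbb{C}_{\delta},a_{1}\right]}$ of the one-point spinor on $\mathcal{C}^{1}$ (Proposition \ref{prop:onepointspinor}) and invoking the recursion once more collapses everything to the clean defect recursion $P(z)=\Gamma\,P(z-2\delta)$. Iterating gives $P(z)=\Gamma^{N}P(z-2N\delta)$; as $N\to\infty$ the base point runs to $-\infty$ along a fixed horizontal line, where $F_{\left[\mathbb{C}_{\delta},a_{1}\right]}$ and hence $P$ stay bounded (in fact decay), and since $\Gamma^{N}\to0$ we conclude $P\equiv0$ on $\mathbb{X}^{+}\cap\mathcal{C}^{1}\left[\mathbb{C}_{\delta},0\right]$. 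One only needs $F_{\left[\mathbb{C}_{\delta},a_{1}\right]}$ massive harmonic at each $z-2k\delta$, which holds off the positive real axis, away from the tip and the exceptional corners of Proposition \ref{prop:onepointspinor}.

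For convergence I would run the standard precompactness scheme. The weights are summable with total mass $\delta\sum_{j\ge0}\Gamma^{j}=O(1)$, because $1-\Gamma\sim4|m|\delta$, so the renormalised quantity $\left(\tfrac{2}{\pi}\delta\right)^{-1/2}G_{\left[\mathbb{C}_{\delta},a_{1}\right]}=\delta\sum_{j}\Gamma^{j}f_{\left[\mathbb{C}_{\delta},a_{1}\right]}(z-2j\delta)$ is uniformly bounded on compact subsets of $\left[\mathbb{C},a_{1}\right]$, the integrable singularity $\left|f\right|\sim\left|z-a_{1}\right|^{-1/2}$ of Lemma \ref{lem:one-point} causing no trouble. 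Massive harmonicity together with this bound yields equicontinuity via Proposition \ref{prop:hmestimate}, so a subsequence converges uniformly on compacts to a real massive harmonic limit $g_{\left[\mathbb{C},a_{1}\right]}$. To identify its singularity I would pass the recursion to the limit: using $\Gamma\sim1+4m\delta$ it becomes the first-order relation $\partial_{x}g_{\left[\mathbb{C},a_{1}\right]}-2m\,g_{\left[\mathbb{C},a_{1}\right]}=\tfrac{1}{2}Z_{-\frac{1}{2}}^{1}(\cdot-a_{1})$, whose forcing is $\sim(z-a_{1})^{-1/2}$ at $a_{1}$; integrating this against the subleading mass term shows $g_{\left[\mathbb{C},a_{1}\right]}(z)\sim\text{Re}\sqrt{z-a_{1}}$. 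Uniqueness of the limit, characterised as the massive harmonic function on $\mathbb{X}^{+}$ vanishing on the slit, decaying at infinity, and carrying this prescribed singularity, upgrades subsequential to full convergence.

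The hard part will be the last step: pinning the exact leading coefficient of the $\text{Re}\sqrt{z-a_{1}}$ asymptotic to be $1$. This requires controlling the weighted sum precisely in the regime where $f_{\left[\mathbb{C}_{\delta},a_{1}\right]}$ blows up like $\left|z-a_{1}\right|^{-1/2}$, and checking that neither the damping $\Gamma^{j}$ nor the exponential factor $e^{2m\left|\cdot\right|}$ built into $Z_{-\frac{1}{2}}^{1}$ perturbs the coefficient away from its massless value, since both are $\approx1$ on the microscopic scale near the singularity. This is exactly the point at which the normalisation $\left(\tfrac{2}{\pi}\delta\right)^{-1/2}$ and Lemma \ref{lem:one-point} must be fed in, matching the massless computation behind \cite[(3.12)]{chelkak-hongler}.
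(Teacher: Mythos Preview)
Your recursion argument for massive harmonicity is clean off the positive real axis, and indeed the paper also dismisses that case as ``clear''. But the statement requires massive harmonicity on \emph{all} of $\mathbb{X}^{+}\cap\mathcal{C}^{1}$, and the positive real line lies in the interior of $\mathbb{X}^{+}$. Your iteration $P(z)=\Gamma P(z-2\delta)$ needs $(\Delta^{\delta}-M_{H}^{2})F_{[\mathbb{C}_{\delta},a_{1}]}(z)=0$ at each step; for $z$ on $a_{1}+\mathbb{R}_{>0}$ the horizontal shifts $z-2j\delta$ inevitably pass through the tip $a_{1}+\tfrac{3\delta}{2}$, where $F$ is \emph{not} massive harmonic (it is the source of the harmonic measure in Proposition~\ref{prop:onepointspinor}), and then exit $\mathbb{X}^{+}$ through the branch point. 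So your argument reduces $P$ on the whole positive axis to $P(a_{1}+\tfrac{3\delta}{2})$, but does not show that this base value vanishes. You explicitly flag the restriction ``off the positive real axis, away from the tip'' but never return to close it.

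This is exactly the nontrivial content of the paper's proof. The particular weight $\Gamma=\tan^{2}\!\left(\tfrac{\pi}{4}+2\Theta\right)$ is not an arbitrary geometric damping: it is engineered so that, when one rewrites the massive Laplacian of $F$ along the axis using the full massive \emph{holomorphicity} relation \eqref{eq:mdhol} (which couples real and imaginary corner values), the resulting sum over $j$ telescopes. The boundary term is $\Gamma^{N}$ times an imaginary-corner value of $F$ on the other slit $a_{1}+\mathbb{R}_{>0}$, which one then shows is $O(N^{-1/2})$ via the harmonic-measure identification. Massive \emph{harmonicity} of $F$ alone cannot produce this cancellation; you need the first-order relation across corner types.

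Two smaller remarks. First, the series is \emph{not} ``eventually a finite sum'' when $\mathrm{Im}\,z\neq 0$: the translates $z-2j\delta$ never land on the slit, so you genuinely need the decay of $F$ (or $\Gamma<1$) for convergence; the paper in fact proves the sharper uniform bound $\Gamma^{j}F(z-2j\delta)\leq h_{j}=O(j^{-3/2})$ by comparing with the massless harmonic measure, which also gives the uniform tail control needed to pass the Riemann sum to the integral \eqref{eq:sqroot_int}. Second, your convergence scheme via precompactness and a uniqueness characterisation is a legitimate alternative to the paper's direct limit, and the ODE $\partial_{x}g-2mg=\tfrac{1}{2}\mathrm{Re}\,Z_{-1/2}^{1}$ you extract from the recursion is equivalent to \eqref{eq:sqroot_int}; but the uniqueness claim (massive harmonic on $\mathbb{X}^{+}$, zero on the slit, decaying at infinity, with $g\sim\mathrm{Re}\sqrt{z-a_{1}}$) would still need an argument ruling out spurious bounded massive-harmonic spinors, and your precompactness bound $\delta\sum_{j}\Gamma^{j}=O(1)$ times $\sup|f|$ does not control $g_{[\mathbb{C}_{\delta},a_{1}]}$ near $a_{1}$ without the $O(j^{-3/2})$ estimate.
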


\begin{proof}
We first show that the discrete integrand $\Gamma^{j}F_{\left[\mathbb{C}_{\delta},a_{1}\right]}(z-2j\delta)$
is summable. Without loss of generality, work on the sheet where $F_{\left[\mathbb{C}_{\delta},a_{1}\right]}(z-2j\delta)>0$.
By Proposition \ref{prop:onepointspinor}, $F_{\left[\mathbb{C}_{\delta},a_{1}\right]}(z-2j\delta)$
is the probability of a massive random walk started at the real corner
$z-2j\delta$ and extinguished at each step with probability $\left(1+\frac{2\sin^{2}2\Theta}{\cos(4\Theta)}\right)^{-1}\frac{2\sin^{2}2\Theta}{\cos(4\Theta)}$
surviving to hit $a_{1}+\frac{3\delta}{2}$ before $a_{1}+\mathbb{R}_{<0}$;
denote by $h_{j}$ the probability of the same event with $\Theta=0$,
i.e. the massless harmonic measure. Project the two-dimensional walk
into two independent one-dimensional walks respectively in the $x,y$-directions
with step lengths $\delta$. If $p_{2j}$ is the probability of the
one-dimensional massive random walk started at $x-2j\delta$ surviving
to hit $x$, we have the upper bound
\[
\Gamma^{j}F_{\left[\mathbb{C}_{\delta},a_{1}\right]}(z-2j\delta)\leq\Gamma^{j}p_{2j}h_{j}.
\]
However, $p_{2j}$ is the massive harmonic measure of $0$ as seen
from $-2j$ in $\mathbb{Z}_{\leq0}$. From the boundary conditions
$p_{0}=1$, $p_{-\infty}=0$ and massive harmonicity in 1D $p_{j-2}+p_{j}-2p_{j-1}=\frac{4\sin^{2}2\Theta}{\cos4\Theta}p_{j-1}$,
it is straightforward to verify $p_{j}=\Gamma^{-j/2}$, and we have
$\Gamma^{j}F_{\left[\mathbb{C}_{\delta},a_{1}\right]}(z-2j\delta)\leq h_{j}$.
By \cite[Lemma 3.4]{chelkak-hongler}, $h_{j}=O(j^{-3/2})$ uniformly
for $z$ in a compact subset, and thus the sum is finite.

We know that $\left(\frac{2}{\pi}\delta\right)^{-1/2}F_{\left[\mathbb{C}_{\delta},a_{1}\right]}(z')$
converges to $\text{Re}\frac{e^{2mr'}}{\sqrt{z'}}$ on $\mathcal{C}^{1}\left[\Omega_{\delta},a_{1}\right]$
uniformly in compact subsets away from $a_{1}$. Also note that in
the scaling limit $x-2j\delta=x'$, $\Gamma^{j}$ converges to $e^{-2m(x-x')}$.
By above, if $z=x+iy$ is in a compact subset, the discrete integrand
$\Gamma^{\frac{x-x'}{2\delta}}F_{\left[\mathbb{C}_{\delta},a_{1}\right]}(z')$
decays to zero as $x'\to-\infty$ uniformly in $z$, so we conclude
that $g_{\left[\mathbb{C}_{\delta},a_{1}\right]}:=\left(\frac{2}{\pi}\delta\right)^{-1/2}G_{\left[\mathbb{C}_{\delta},a_{1}\right]}$
converges
\begin{equation}
g_{\left[\mathbb{C}_{\delta},a_{1}\right]}\left(z=x+iy\right)\xrightarrow{\delta\to0}\frac{1}{2}\int_{-\infty}^{x}e^{2m(x'-x)}\text{Re}\frac{e^{2mr'}}{\sqrt{x'+iy'}}dx',\label{eq:sqroot_int}
\end{equation}
uniformly for $z$ in a compact subset. As $z\to0$, we can uniformly
bound $e^{2m(x'-x)}e^{2mr'}$ close to $1$, which gives the asymptotic
of $g_{\left[\mathbb{C},a_{1}\right]}$ near $z$.

Massive harmonicity is clear unless $\pi(z)\in\mathbb{R}_{>0}$. If
$\pi(z)\in a_{1}+\mathbb{R}_{>0}$, $\left(\Delta^{\delta}-M_{H}^{2}\right)F_{\left[\mathbb{C}_{\delta},a_{1}\right]}(z-2j\delta)=0$
if $\pi(z-2j\delta)\in a_{1}+\frac{3\delta}{2}+\mathbb{R}_{\geq0}$
while $F_{\left[\mathbb{C}_{\delta},a_{1}\right]}(z-2j\delta)=0$
if $\pi(z-2j\delta)\in a_{1}+\mathbb{R}_{<0}$, thus
\begin{alignat*}{1}
\left(\Delta^{\delta}-M_{H}^{2}\right)G_{\left[\mathbb{C}_{\delta},a_{1}\right]}\left(z\right) & =\delta\sum_{j=\left\lfloor (z-a_{1})/2\delta\right\rfloor }^{\infty}\Gamma^{j}\left(\Delta^{\delta}-M_{H}^{2}\right)F_{\left[\mathbb{C}_{\delta},a_{1}\right]}(z-2j\delta)\\
 & =\delta\Gamma^{\left\lfloor (z-a_{1})/2\delta\right\rfloor }\sum_{j=0}^{\infty}\Gamma^{j}\left(\Delta^{\delta}-M_{H}^{2}\right)F_{\left[\mathbb{C}_{\delta},a_{1}\right]}(a_{1}+\frac{3\delta}{2}-2j\delta),
\end{alignat*}
where the laplacian is taken on the planar slit domain $\mathbb{X}^{+}\cap\mathcal{C}^{1}\left[\mathbb{C}_{\delta},0\right]$
with zero boundary values on the slit.

We need to show that the last sum vanishes.
\begin{alignat*}{1}
 & \sum_{j=0}^{N}\Gamma^{j}\left(\Delta^{\delta}-M_{H}^{2}\right)F_{\left[\mathbb{C}_{\delta},a_{1}\right]}(a_{1}+\frac{3\delta}{2}-2j\delta)\\
= & \sum_{j=0}^{N}\Gamma^{j}\sum_{s=\pm1}\left[F_{\left[\mathbb{C}_{\delta},a_{1}\right]}(a_{1}+\frac{3\delta}{2}-(2j+s)\delta+i\delta)-\tan\left(\frac{\pi}{4}-s2\Theta\right)F_{\left[\mathbb{C}_{\delta},a_{1}\right]}(a_{1}+\frac{3\delta}{2}-2j\delta)\right]\\
+ & \sum_{j=0}^{N}\Gamma^{j}\sum_{s=\pm1}\left[F_{\left[\mathbb{C}_{\delta},a_{1}\right]}(a_{1}+\frac{3\delta}{2}-(2j+s)\delta-i\delta)-\tan\left(\frac{\pi}{4}+s2\Theta\right)F_{\left[\mathbb{C}_{\delta},a_{1}\right]}(a_{1}+\frac{3\delta}{2}-2j\delta)\right],
\end{alignat*}
where two sums are done respectively above and below the slit. By
massive discrete holomorphicity, we can convert the differences of
real corner values into differences of imaginary corner values: we
need to be careful, since the points in $\mathbb{X}^{+}$ directly
above and below the cut are on opposite sheets. We can in fact think
of $a_{1}+\frac{\delta}{2}$ as lying on the slit, since the singularity
(\ref{eq:singularity}) ascribes two values at $a_{1}+\frac{\delta}{2}$,
coming from above and below $a_{1}+\mathbb{R}$. Above the slit, (\ref{eq:mdhol})
implies
\begin{alignat*}{1}
 & \sum_{s=\pm1}\left[F_{\left[\mathbb{C}_{\delta},a_{1}\right]}(a_{1}+\frac{3\delta}{2}-(2j+s)\delta+i\delta)-\tan\left(\frac{\pi}{4}-s2\Theta\right)F_{\left[\mathbb{C}_{\delta},a_{1}\right]}(a_{1}+\frac{3\delta}{2}-2j\delta)\right]\\
= & -i\left[F_{\left[\mathbb{C}_{\delta},a_{1}\right]}(a_{1}+\frac{3\delta}{2}-2j\delta+i\delta)-\tan\left(\frac{\pi}{4}+2\Theta\right)F_{\left[\mathbb{C}_{\delta},a_{1}\right]}(a_{1}+\frac{3\delta}{2}-(2j-1)\delta)\right]\\
 & +i\left[F_{\left[\mathbb{C}_{\delta},a_{1}\right]}(a_{1}+\frac{3\delta}{2}-2j\delta+i\delta)-\tan\left(\frac{\pi}{4}-2\Theta\right)F_{\left[\mathbb{C}_{\delta},a_{1}\right]}(a_{1}+\frac{3\delta}{2}-(2j+1)\delta)\right]\\
= & i\tan\left(\frac{\pi}{4}+2\Theta\right)F_{\left[\mathbb{C}_{\delta},a_{1}\right]}(a_{1}+\frac{3\delta}{2}-(2j-1)\delta)-i\tan\left(\frac{\pi}{4}-2\Theta\right)F_{\left[\mathbb{C}_{\delta},a_{1}\right]}(a_{1}+\frac{3\delta}{2}-(2j+1)\delta).
\end{alignat*}
Thanks to the factor of $\Gamma^{j}$, the sum telescopes (see Figure
\ref{fig:Using-holomorphicity-to})
\begin{alignat*}{1}
 & \sum_{j=0}^{N}\Gamma^{j}\sum_{s=\pm1}\left[F_{\left[\mathbb{C}_{\delta},a_{1}\right]}(a_{1}+\frac{3\delta}{2}-(2j+s)\delta+i\delta)-\tan\left(\frac{\pi}{4}-s2\Theta\right)F_{\left[\mathbb{C}_{\delta},a_{1}\right]}(a_{1}+\frac{3\delta}{2}-2j\delta)\right]\\
= & i\tan\left(\frac{\pi}{4}+2\Theta\right)F_{\left[\mathbb{C}_{\delta},a_{1}\right]}(a_{1}+\frac{5\delta}{2})-i\Gamma^{N}\tan\left(\frac{\pi}{4}-2\Theta\right)F_{\left[\mathbb{C}_{\delta},a_{1}\right]}(a_{1}+\frac{3\delta}{2}-(2N+1)\delta).
\end{alignat*}

The sum below the slit analogously gives
\begin{alignat*}{1}
 & \sum_{j=0}^{N}\Gamma^{j}\sum_{s=\pm1}\left[F_{\left[\mathbb{C}_{\delta},a_{1}\right]}(a_{1}+\frac{3\delta}{2}-(2j+s)\delta-i\delta)-\tan\left(\frac{\pi}{4}+s2\Theta\right)F_{\left[\mathbb{C}_{\delta},a_{1}\right]}(a_{1}+\frac{3\delta}{2}-2j\delta)\right]\\
=- & i\tan\left(\frac{\pi}{4}+2\Theta\right)F_{\left[\mathbb{C}_{\delta},a_{1}\right]}(a_{1}+\frac{5\delta}{2})+i\Gamma^{N}\tan\left(\frac{\pi}{4}-2\Theta\right)F_{\left[\mathbb{C}_{\delta},a_{1}\right]}(a_{1}+\frac{3\delta}{2}-(2N+1)\delta),
\end{alignat*}
where $a_{1}+\frac{3\delta}{2}-(2N+1)\delta$ is approached from below
the sheet. Summing the two and taking $a_{1}+\frac{3\delta}{2}-(2N+1)\delta$
from above the slit,
\[
\sum_{j=0}^{N}\Gamma^{j}\left(\Delta^{\delta}-M_{H}^{2}\right)F_{\left[\mathbb{C}_{\delta},a_{1}\right]}(a_{1}+\frac{3\delta}{2}-2j\delta)=-2i\Gamma^{N}\tan\left(\frac{\pi}{4}-2\Theta\right)F_{\left[\mathbb{C}_{\delta},a_{1}\right]}(a_{1}+\frac{3\delta}{2}-(2N+1)\delta).
\]

Now, by Proposition \ref{prop:onepointspinor}, $-i\cdot F_{\left[\mathbb{C}_{\delta},a_{1}\right]}(a_{1}+\frac{3\delta}{2}-(2N+1)\delta)$
is the massive harmonic measure of the point $a_{1}+\frac{\delta}{2}$
in the discrete plane $\mathbb{Y}^{+}\cap\mathcal{C}^{i}\left[\mathbb{C}_{\delta},0\right]$
slit by $a_{1}+\mathbb{R}_{>0}$. $-i\cdot\Gamma^{N}F_{\left[\mathbb{C}_{\delta},a_{1}\right]}(a_{1}+\frac{3\delta}{2}-(2N+1)\delta)=O(N^{-1/2})$
since we may bound it by the massless harmonic measure as above, and
the sum decays to zero, as desired.
\end{proof}
\begin{thm}
\label{thm:convergence}As $\delta\to0$, for $a_{1},\ldots,a_{n}$
at a definite distance from the boundary and each other, we have uniformly
\begin{alignat}{1}
F_{\left[\Omega_{\delta},a_{1},\ldots,a_{n}\right]}\left(a_{1}+\frac{3\delta}{2}\right) & =1+2\delta\mathcal{A}_{\Omega_{\delta}}^{1}\left(a_{1},\ldots,a_{n}\right)+o(\delta),\label{eq:thm1}\\
\left|F_{\left[\Omega_{\delta},a_{1},a_{2}\right]}\left(a_{2}+\frac{\delta}{2}\right)\right| & =\left|\mathcal{B}_{\Omega}(a_{1},a_{2})\right|+o(1).\label{eq:thm2}
\end{alignat}
\end{thm}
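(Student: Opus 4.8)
The plan is to upgrade the bulk convergence of Proposition \ref{prop:ident_limit}, which holds only uniformly on compact subsets of $[\Omega,a_1,\ldots,a_n]$ away from the ramification points, to the microscopic corners at $a_1$ and $a_2$ via the formal-power matching technique of \cite{chelkak-hongler}. For (\ref{eq:thm1}) I would first subtract the leading singularity: set $F^{\dagger}:=F_{[\Omega_\delta,a_1,\ldots,a_n]}-F_{[\mathbb{C}_\delta,a_1]}$. Since $F_{[\mathbb{C}_\delta,a_1]}(a_1+\frac{3\delta}{2})=1$ (proof of Proposition \ref{prop:onepointspinor}), the claim reduces to $F^{\dagger}(a_1+\frac{3\delta}{2})=2\delta\mathcal{A}^1_{\Omega_\delta}+o(\delta)$ with $\mathcal{A}^1_{\Omega_\delta}\to\mathcal{A}^1_\Omega$. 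Exactly as in the proof of Proposition \ref{prop:ident_limit}, $F^{\dagger}$ is genuinely s-holomorphic (hence $\Theta$-massive harmonic) in a full discrete neighbourhood of $a_1$ and vanishes at $a_1+\frac{\delta}{2}$, the zero cancelling the monodromy so that no singular powers $Z^1_{-1/2},Z^i_{-1/2}$ enter its expansion. After rescaling, $f^{\dagger}:=(\frac{2}{\pi}\delta)^{-1/2}F^{\dagger}$ converges on mesoscopic circles to $f^{\dagger}_{[\Omega]}=2\mathcal{A}^1_\Omega Z^1_{1/2}(\cdot-a_1)+2\mathcal{A}^i_\Omega Z^i_{1/2}(\cdot-a_1)+O((z-a_1)^{3/2})$ by (\ref{eq:A}).

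Second, I would match $F^{\dagger}$ against discrete models of the subleading powers. The function $G_{[\mathbb{C}_\delta,a_1]}$ of Proposition \ref{prop:squareroot} supplies the real direction, $(\frac{2}{\pi}\delta)^{-1/2}G_{[\mathbb{C}_\delta,a_1]}\to\text{Re}\,Z^1_{1/2}$, and the $i$-direction is modelled by the analogous construction on $\mathcal{C}^i$. Writing the discrete counterpart of the formal-power expansion of Corollary \ref{cor:powerseries}, on the real corners one has $F^{\dagger}=2\mathcal{A}^1_{\Omega_\delta}G_{[\mathbb{C}_\delta,a_1]}+(\text{higher discrete powers})$, the $i$-direction power contributing nothing on $a_1+\mathbb{R}_{>0}$ because $Z^i_{1/2}$ is purely imaginary there. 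The coefficient $\mathcal{A}^1_{\Omega_\delta}$ is pinned by matching on an intermediate circle $|z-a_1|=r$ with $\delta\ll r\ll1$, where both $f^{\dagger}$ and the rescaled discrete powers are uniformly close to their continuous limits, forcing $\mathcal{A}^{1,i}_{\Omega_\delta}\to\mathcal{A}^{1,i}_\Omega$. Finally I would evaluate at the near corner using the exact value $G_{[\mathbb{C}_\delta,a_1]}(a_1+\frac{3\delta}{2})=\delta$, which is immediate from the summation (\ref{eq:squareroot}) since only the $j=0$ term survives, to obtain $F^{\dagger}(a_1+\frac{3\delta}{2})=2\delta\mathcal{A}^1_{\Omega_\delta}+o(\delta)$. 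This proves (\ref{eq:thm1}); it is precisely the exact near-corner value $\delta$ of the discrete square root that yields the clean coefficient $2\delta\mathcal{A}^1$, the naive continuum evaluation $\text{Re}\sqrt{3\delta/2}$ being off at the microscopic scale.

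For (\ref{eq:thm2}) the argument runs at $a_2$, where the spinor carries no prescribed source but acquires an induced $(z-a_2)^{-1/2}$ singularity. The fourth condition of Proposition \ref{prop:contbvp} gives $f_{[\Omega,a_1,a_2]}(z)\sim i\mathcal{B}_2(z-a_2)^{-1/2}=\mathcal{B}_2 Z^i_{-1/2}(z-a_2)$ near $a_2$, a pure $i$-direction leading power whose modulus is $|\mathcal{B}_\Omega|$ (cf. (\ref{eq:B})). Modelling this by the rescaled $i$-direction full-plane fermion, which converges to $Z^i_{-1/2}(\cdot-a_2)$, and matching the leading coefficient on a mesoscopic circle about $a_2$ produces a discrete coefficient $\mathcal{B}_{\Omega_\delta}\to\mathcal{B}_\Omega$. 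Transferring to the microscopic near corner and using that the singularity normalisation (\ref{eq:singularity}) forces the corresponding discrete power to have modulus $1$ at $a_2+\frac{\delta}{2}$ (the analogue of $G_{[\mathbb{C}_\delta,a_1]}(a_1+\frac{3\delta}{2})=\delta$) yields $|F_{[\Omega_\delta,a_1,a_2]}(a_2+\frac{\delta}{2})|=|\mathcal{B}_{\Omega_\delta}|+o(1)\to|\mathcal{B}_\Omega|$. Taking the modulus is essential, since the sign depends on the sheet choice for $Z^i_{-1/2}$ (Definition \ref{def:A-powerseries}).

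The main obstacle is the two-scale matching that transfers information between the mesoscopic circle, where the discrete objects are close to their continuous limits, and the microscopic corner, where the theorem's coefficients live. One must choose $r=r(\delta)$ with $\delta\ll r\ll1$ and control, uniformly as $\delta\to0$ and after dividing by $\delta$, both the $O((z-a_1)^{3/2})$ tail of the formal-power expansion and the discretisation error of the discrete powers on $|z-a_1|=r$. The delicate input, exactly as in \cite{chelkak-hongler}, is a sufficiently sharp a priori estimate on the discrete formal powers near the ramification point, namely their precise near-corner values (here $G_{[\mathbb{C}_\delta,a_1]}(a_1+\frac{3\delta}{2})=\delta$) and their convergence rate at scale $r$, which in turn rest on the telescoping identity and extinguished-random-walk bounds established in Proposition \ref{prop:squareroot} and the harmonic-measure asymptotics cited there.
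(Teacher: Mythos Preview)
Your outline has the right architecture --- subtract discrete formal powers, match on a mesoscopic circle, read off the near-corner value --- and you correctly identify the exact values $F_{[\mathbb{C}_\delta,a_1]}(a_1+\tfrac{3\delta}{2})=1$ and $G_{[\mathbb{C}_\delta,a_1]}(a_1+\tfrac{3\delta}{2})=\delta$. But there is a genuine gap: you omit the reflection step, and without it the Beurling-type transfer from $S_r$ to the near corner cannot close.

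Concretely, your remainder $K:=F^\dagger|_{\mathcal{C}^1}-2\mathcal{A}^1_\Omega G_{[\mathbb{C}_\delta,a_1]}$ is massive harmonic on the slit domain $\mathbb{X}^+\cap\mathcal{C}^1$, but its boundary values on the slit $a_1+\mathbb{R}_{<0}$ are \emph{not} zero. Both $F_{[\mathbb{C}_\delta,a_1]}$ and $G_{[\mathbb{C}_\delta,a_1]}$ vanish there (Proposition~\ref{prop:onepointspinor} and the construction~(\ref{eq:squareroot})), so the slit values of $K$ are those of $F_{[\Omega_\delta,a_1,\ldots,a_n]}$ itself. In the continuum these are $\text{Re}\bigl[2i\mathcal{A}^i_\Omega\sqrt{z-a_1}\bigr]=-2\mathcal{A}^i_\Omega\,\text{Im}\sqrt{z-a_1}$, which is of order $r^{1/2}$ on $S_r$, not $o(r^{1/2})$. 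The sharp Beurling bound $|K(a_1+\tfrac{3\delta}{2})|\le cst\cdot\delta^{1/2}r^{-1/2}\max_{S_r\cup\text{slit}}|K|$ then yields only $O(\delta^{1/2})$, which is useless after dividing by $\delta^{1/2}$. Your remark that $Z^i_{1/2}$ is purely imaginary on $a_1+\mathbb{R}_{>0}$ is correct but beside the point: the estimate is driven by the slit $a_1+\mathbb{R}_{<0}$, not by the target corner.

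The paper fixes this by symmetrising. Reflect $\Omega$ across $a_1+\mathbb{R}$ to $\mathcal{R}(\Omega)$; a parity argument on the contour set $\Gamma$ shows that $\tfrac12\bigl[f_{[\Omega_\delta,\ldots]}+f_{[\mathcal{R}(\Omega_\delta),\ldots]}\bigr]$ has the \emph{same} value at $a_1+\tfrac{3\delta}{2}$ but vanishes on $\mathcal{C}^1\cap(a_1+\mathbb{R}_{<0})$, killing precisely the $\mathcal{A}^i$ component. Now
\[
K_\delta:=\tfrac12\bigl[f_{[\Omega_\delta,\ldots]}+f_{[\mathcal{R}(\Omega_\delta),\ldots]}\bigr]-f_{[\mathbb{C}_\delta,a_1]}-2\mathcal{A}^1_\Omega\, g_{[\mathbb{C}_\delta,a_1]}
\]
is massive harmonic on the slit domain with zero slit values, its bulk limit on $S_r$ is $o(r^{1/2})$, and Beurling gives $|K_\delta(a_1+\tfrac{3\delta}{2})|\le cst\cdot\delta^{1/2}r^{-1/2}\cdot o(r^{1/2})=o(\delta^{1/2})$, which is (\ref{eq:thm1}). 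The same device (reflection across $a_2+\mathbb{R}$, working on $\mathcal{C}^i$) underlies (\ref{eq:thm2}); the correct discrete model there is not a ``rescaled $i$-direction full-plane fermion'' but the massive harmonic measure $W_\delta$ of the single point $a_2+\tfrac{\delta}{2}$ in the slit domain, and the transfer uses the maximum principle rather than Beurling.
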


\begin{proof}
We adapt the strategy of \cite[Subsection 3.5]{chelkak-hongler};
the massive harmonic nature of our functions are hardly visible since
at short length scale massive hitting probabilities approach simple
random walk hitting probabilities. While we try to use the same notation
for corresponding notions where appropriate, the functions we work
with are all massive harmonic.

Note that, due to explicit constructions in Propositions \ref{prop:onepointspinor}
and \ref{prop:squareroot}, (\ref{eq:thm1}) can be written as 
\[
F_{\left[\Omega_{\delta},a_{1},\ldots,a_{n}\right]}\left(a_{1}+\frac{3\delta}{2}\right)=F_{\left[\mathbb{C}_{\delta},a_{1}\right]}\left(a_{1}+\frac{3\delta}{2}\right)+2\mathcal{A}_{\Omega_{\delta}}^{1}\left(a_{1},\ldots,a_{n}\right)G_{\left[\mathbb{C}_{\delta},a_{1}\right]}\left(a_{1}+\frac{3\delta}{2}\right)+o(\delta).
\]

Define the reflection $\mathcal{R}\left(\Omega\right)$ of $\Omega$
across the line $a_{1}+\mathbb{R}$. Then there is a ball $B(a_{1})$
around $a_{1}$ which belongs to $\Omega\cap\mathcal{R}\left(\Omega\right)$.
Denote $\Lambda$ to be the lift of the slit neighbourhood $B(a_{1})\setminus\left[a_{1}+\mathbb{R}_{<0}\right]$
such that both $F_{\left[\Omega_{\delta},a_{1},\ldots,a_{n}\right]},F_{\left[\mathcal{R}\left(\Omega_{\delta}\right),a_{1},\ldots,a_{n}\right]}$
have their fixed lift of the path origin $a_{1}+\frac{\delta}{2}$
on $\Lambda$. By symmetry arguments about the path set $\Gamma(a_{1}+\frac{\delta}{2},z)$
similar to the proof of Proposition \ref{prop:onepointspinor}, we
have $F_{\left[\Omega_{\delta},a_{1},\ldots,a_{n}\right]}\left(a_{1}+\frac{3\delta}{2}\right)=F_{\left[\mathcal{R}\left(\Omega_{\delta}\right),a_{1},\ldots,a_{n}\right]}\left(a_{1}+\frac{3\delta}{2}\right)$,
whereas the boundary values $F_{\left[\Omega_{\delta},a_{1},\ldots,a_{n}\right]}(z)$,$F_{\left[\mathcal{R}\left(\Omega_{\delta}\right),a_{1},\ldots,a_{n}\right]}(z)$
on the slit $\pi(z)\in a_{1}+\mathbb{R}_{<0}$ cancel out to give
$F_{\left[\Omega_{\delta},a_{1},\ldots,a_{n}\right]}(z)+F_{\left[\mathcal{R}\left(\Omega_{\delta}\right),a_{1},\ldots,a_{n}\right]}(z)=0$
for $z\in\mathcal{C}^{1}\left[\Omega_{\delta},a_{1}\right]\cap\partial\Lambda$
(see also \cite[Subsection 3.5]{chelkak-hongler}). Then, restricted
to $\mathcal{C}^{1}\left[\Omega_{\delta},a_{1}\right]\cap\Lambda$,
(recall $g_{\left[\mathbb{C}_{\delta},a_{1}\right]}:=\left(\frac{2}{\pi}\delta\right)^{-1/2}G_{\left[\mathbb{C}_{\delta},a_{1}\right]}$)
\[
K_{\delta}:=\frac{1}{2}\left[f_{\left[\Omega_{\delta},a_{1},\ldots,a_{n}\right]}+f_{\left[\mathcal{R}\left(\Omega_{\delta}\right),a_{1},\ldots,a_{n}\right]}\right]-f_{\left[\mathbb{C}_{\delta},a_{1}\right]}-2\mathcal{A}_{\Omega_{\delta}}^{1}\left(a_{1},\ldots,a_{n}\right)g_{\left[\mathbb{C}_{\delta},a_{1}\right]},
\]
is everywhere massive harmonic with zero boundary values on the slit
$\mathcal{C}^{1}\left[\Omega_{\delta},a_{1}\right]\cap\partial\Lambda$.
We have to show
\[
K_{\delta}(a_{1}+\frac{3\delta}{2})=f_{\left[\Omega_{\delta},a_{1},\ldots,a_{n}\right]}(a_{1}+\frac{3\delta}{2})-1-2\mathcal{A}_{\Omega_{\delta}}^{1}\left(a_{1},\ldots,a_{n}\right)\delta=o(\delta^{1/2}).
\]

Noting the expansion of Definition \ref{def:A-powerseries}, we see
that, on $\Lambda$, away from $a_{1}$,
\[
K_{\delta}\to\frac{1}{2}\left[f_{\left[\Omega,a_{1},\ldots,a_{n}\right]}+f_{\left[\mathcal{R}\left(\Omega\right),a_{1},\ldots,a_{n}\right]}\right]-Z_{-\frac{1}{2}}^{1}-2\mathcal{A}_{\Omega_{\delta}}^{1}\left(a_{1},\ldots,a_{n}\right)\text{Re}\sqrt{z-a_{1}}=o\left(\left(z-a_{1}\right)^{1/2}\right),
\]
so on the discrete circle $S_{r}=B_{r+5\delta}(a_{1})\setminus B_{r}(a_{1})$
for $r>0$, $\max_{S_{r}}\left|K_{\delta}\right|=o(r^{1/2})$ as $r\to0$.
Sharp discrete Beurling estimate (see \cite[(3.4)]{chelkak-hongler}
for the form used here) for harmonic functions may be used to dominate
the value of the massive harmonic function $S_{\delta}$ at $a_{1}+\frac{3\delta}{2}$,
and we have
\[
\left|K_{\delta}(a_{1}+\frac{3\delta}{2})\right|\leq cst\cdot\delta^{1/2}r^{-1/2}\max_{S_{r}}\left|K_{\delta}\right|=cst\cdot\delta^{1/2}o(1),
\]
where $o(1)$ holds for $r\to0$. We conclude the right hand side
is $o(\delta^{1/2})$ as $\delta\to0$.

For (\ref{eq:thm2}), without loss of generality assume $\mathcal{B}_{\Omega}(a_{1},a_{2})>0$.
Define this time $\mathcal{R}\left(\Omega\right)$ as the reflection
of $\Omega$ across $a_{2}+\mathbb{R}$. As above, write $\Lambda$
for the lift of the slit disc $B(a_{2})\setminus\left[a_{2}+\mathbb{R}_{<0}\right]$
such that we simultaneously define $F_{\left[\Omega_{\delta},a_{1},a_{2}\right]}(a_{2}+\frac{\delta}{2})=F_{\left[\mathcal{R}\left(\Omega_{\delta}\right),a_{1},a_{2}\right]}(a_{2}+\frac{\delta}{2})=:i\mathcal{B}_{\delta}$
with $\mathcal{B}_{\delta}>0$. Again, by symmetry, $f_{\left[\Omega_{\delta},a_{1},a_{2}\right]}+f_{\left[\mathcal{R}\left(\Omega_{\delta}\right),a_{1},a_{2}\right]}$
is zero on the boundary $\partial\Lambda\cap\mathcal{C}^{i}\left[\Omega_{\delta},a_{1},a_{2}\right]$,
i.e. the lift of $a_{2}+\mathbb{R}_{<0}$. Define the massive harmonic
measure of the point $a_{1}+\frac{\delta}{2}$ in the lattice $\Lambda\cap\mathcal{C}^{i}\left[\Omega_{\delta},a_{1},a_{2}\right]$
as $W_{\delta}$. Then define
\[
T_{\delta}:=\frac{1}{2}\left[F_{\left[\Omega_{\delta},a_{1},\ldots,a_{n}\right]}+F_{\left[\mathcal{R}\left(\Omega_{\delta}\right),a_{1},\ldots,a_{n}\right]}\right]-i\mathcal{B}_{\delta}W_{\delta},
\]
which is massive harmonic on $\Lambda\cap\mathcal{C}^{i}\left[\Omega_{\delta},a_{1},a_{2}\right]\setminus\left\{ a_{2}+\frac{\delta}{2}\right\} $,
takes the value $0$ on $\partial\Lambda\cap\mathcal{C}^{i}\left[\Omega_{\delta},a_{1},a_{2}\right]$
and $\left\{ a_{2}+\frac{\delta}{2}\right\} $. Since $\frac{1}{2}\left[f_{\left[\Omega_{\delta},a_{1},\ldots,a_{n}\right]}+f_{\left[\mathcal{R}\left(\Omega_{\delta}\right),a_{1},\ldots,a_{n}\right]}\right]$
restricted to the imaginary corners converges to a continuous limit
with asymptotic $i\cdot\text{Re}\frac{\mathcal{B}_{\Omega_{\delta}}(a_{1},a_{2})}{\sqrt{z-a_{2}}}+o(\left(z-a_{2}\right)^{-1/2})$
on the discrete circle $S_{r}=B_{r+5\delta}(a_{2})\setminus B_{r}(a_{2})$,
it is easy to see that unless $\mathcal{B}_{\delta}\to\mathcal{B}_{\Omega}(a_{1},a_{2})$
and $T_{\delta}\left(a_{2}+\frac{\delta}{2}\right)=o(1)$, we can
find a point in the bulk which is greater than any value on the boundary
$S_{r}$, contradicting the maximum principle; see also \cite[(3.23)]{chelkak-hongler}.
\end{proof}

\section{Continuum Analysis: Isomonodromy and Painlevé III\label{sec:Continuum-Analysis:-Painlev}}

\subsection{Continuous Analysis of the Coefficients\label{subsec:Analysis-of-the}}

In this section, we carry out analysis of the continuum coefficients
such as $\mathcal{A}_{\Omega},\mathcal{B}_{\Omega}$ needed for the
proof of the main theorem and the derivation of the Painlevé III transcendent
in the next section. We will assume that a continuous mass parameter
$m<0$ is fixed throughout this section.

\subsubsection*{Preliminaries: Massive Cauchy Formula.}

In Section \ref{subsec:Massive-Complex-Analysis:}, we have seen that
massive holomorphic functions admit a generalisation of Laurent-type
expansions. Here we give explicit formulae related to the formal powers
$Z_{\nu}^{1,i}$ and note that a Cauchy-type integral formula holds.

Recall $\partial_{z}=\frac{1}{2}e^{-i\theta}\left(\partial_{r}-ir^{-1}\partial_{\theta}\right),\partial_{\bar{z}}=\frac{1}{2}e^{i\theta}\left(\partial_{r}+ir^{-1}\partial_{\theta}\right)$.
The following holds (\cite[Section 10]{dlmf}): 
\begin{alignat}{1}
I_{\nu}'(r) & =I_{\nu\pm1}(r)\pm\frac{\nu}{r}I_{\nu}(r),\nonumber \\
-2\frac{\sin\nu\pi}{\pi r} & =I_{\nu}(r)I_{-\nu-1}(r)-I_{\nu+1}(r)I_{-\nu}(r).\label{eq:modified}
\end{alignat}

Define $W_{\nu}(re^{i\theta}):=e^{i\nu\theta}I_{\nu}\left(2\left|m\right|r\right)$.
The formal powers $Z_{\nu}^{1,i}$ defined in can be written as
\begin{alignat}{1}
Z_{\nu}^{1} & =\frac{\Gamma(\nu+1)}{\left|m\right|^{\nu}}\left(W_{\nu}+\left(\text{sgn}m\right)\overline{W_{\nu+1}}\right),\label{eq:ZW}\\
Z_{\nu}^{i} & =\frac{\Gamma(\nu+1)}{\left|m\right|^{\nu}}\left(iW_{\nu}-i\left(\text{sgn}m\right)\overline{W_{\nu+1}}\right).\nonumber 
\end{alignat}

\begin{prop}[{\cite[Section 6]{bers}}]
Let $f$ is an $m$-massive holomorphic function defined on a ramified
disk $\left[B_{R}(a),a\right]$. The coefficients $A_{\nu}^{1,i}$
of $Z_{\nu}^{1,i}$ in the expansion (\ref{eq:formalpowers}) may
be extracted by the line integrals:
\begin{alignat}{1}
A_{\nu}^{1} & =\frac{\pi}{4\left|m\right|\nu^{2}}\text{Re}\oint_{C}f(z)Z_{-1-\nu}^{i}(z-a)dz,\label{eq:massive-cauchy}\\
A_{\nu}^{i} & =-\frac{\pi}{4\left|m\right|(1+\nu)^{2}}\text{Re}\oint_{C}f(z)Z_{-1-\nu}^{1}(z-a)dz,\nonumber 
\end{alignat}
where the line integral is taken on any smooth curve $C$ going once
around $a$.
\end{prop}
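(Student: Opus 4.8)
The plan is to reduce the formula to an orthogonality relation among the formal powers and then to evaluate the resulting contour integrals explicitly. The starting observation is that for any two $m$-massive holomorphic functions $g,h$ one has $\partial_{\bar{z}}(gh)=h\,\partial_{\bar{z}}g+g\,\partial_{\bar{z}}h=m(h\bar{g}+g\bar{h})=2m\,\text{Re}(g\bar{h})\in\mathbb{R}$, so that by Green--Riemann $\oint_{C}gh\,dz=2i\iint\partial_{\bar{z}}(gh)\,dA\in i\mathbb{R}$ whenever $C$ bounds a region free of singularities. Hence $\text{Re}\oint_{C}gh\,dz$ is path-independent, vanishes on contractible loops, and detects only the singularity enclosed by $C$. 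Applying this with $g=f$ expanded via Corollary \ref{cor:powerseries} and $h=Z_{-1-\nu}^{i}$ (resp.\ $Z_{-1-\nu}^{1}$), and using the uniform convergence of the series to exchange summation and integration, the task reduces to computing the pairings $\text{Re}\oint_{C}Z_{\mu}^{a}(z-a)\,Z_{-1-\nu}^{b}(z-a)\,dz$ for $a,b\in\{1,i\}$. In the spinor case the indices $\mu,\nu$ are half-integers, but $f$ and $Z_{-1-\nu}^{b}$ are both spinors, so their product descends to a single-valued function on the punctured base and $C$ is an ordinary loop around $a$; the angular frequencies occurring below are then integers in either case.

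Next I would compute these pairings on a small circle $|z-a|=r$, which is legitimate since $\text{Re}\oint$ is independent of $C$. Writing $z-a=re^{i\theta}$, $dz=iz\,d\theta$ and inserting the Bessel representation (\ref{eq:ZW}), each pairing becomes a finite sum of terms $e^{ik\theta}I_{\cdot}(2|m|r)I_{\cdot}(2|m|r)$, of which only the frequency $k=0$ survives the $\theta$-integration. Inspecting the four product terms shows the surviving contribution is nonzero only when $\mu=\nu$: the spurious coincidences at $\mu+\nu=-1$ cancel in pairs, because the two mixed $W\overline{W}$ terms enter with opposite signs. For the \emph{same-type} pairings $Z_{\mu}^{1}Z_{-1-\nu}^{1}$ and $Z_{\mu}^{i}Z_{-1-\nu}^{i}$ the surviving bracket is the \emph{sum} $I_{\mu}I_{-1-\mu}+I_{\mu+1}I_{-\mu}$, which is real; multiplied by the factor $i$ from $dz$ it is purely imaginary and so is annihilated by $\text{Re}$. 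For the \emph{mixed} pairings $Z_{\mu}^{1}Z_{-1-\nu}^{i}$ and $Z_{\mu}^{i}Z_{-1-\nu}^{1}$ the surviving bracket is instead the \emph{difference} $I_{\mu}I_{-1-\mu}-I_{\mu+1}I_{-\mu}$, which is exactly the left-hand combination of the modified-Bessel Wronskian (\ref{eq:modified}); this is precisely where that identity enters, and it forces the result to be real and $r$-independent.

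Finally I would assemble the constants. The Wronskian (\ref{eq:modified}) at argument $2|m|r$ converts the difference into a term proportional to $\sin(\mu\pi)/(|m|r)$; the explicit $r$ cancels against the $r$ coming from $dz$, and the residual $\sin(\mu\pi)$ is reconciled with the normalising prefactors $\Gamma(\mu+1)\Gamma(-\mu)$ via the reflection formula $\Gamma(z)\Gamma(1-z)=\pi/\sin(\pi z)$. Together these pin down the numerical constant in the extraction formula, the diagonal relation $\mu=\nu$ selecting exactly the coefficient $A_{\nu}^{1}$ (resp.\ $A_{\nu}^{i}$) while all other coefficients drop out. The main obstacle is exactly this last bookkeeping: tracking the prefactors and sign conventions, the normalisation of $Z_{\nu}^{1,i}$ against the bare $W_{\nu}$, and the degenerate index $\mu=\nu=-\tfrac12$ (where the diagonal term and the would-be cross terms collide at frequency $k=0$ simultaneously, so that the pairwise cancellation of the cross terms must be checked independently of the diagonal contribution) in order to land on the exact constants $\tfrac{\pi}{4|m|\nu^{2}}$ and $-\tfrac{\pi}{4|m|(1+\nu)^{2}}$, the asymmetry between $\nu^{2}$ and $(1+\nu)^{2}$ reflecting which of the two Gamma factors is shifted in each pairing. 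The conceptual content, by contrast, is carried entirely by the path-independence of $\text{Re}\oint$ and the single identity (\ref{eq:modified}).
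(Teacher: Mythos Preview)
Your approach is essentially identical to the paper's: both establish path-independence of $\text{Re}\oint gh\,dz$ via Green--Riemann, reduce to pairings of formal powers on a small circle, observe that the same-type pairings $\text{Re}\oint Z_{\nu}^{1}Z_{\nu'}^{1}dz$ and $\text{Re}\oint Z_{\nu}^{i}Z_{\nu'}^{i}dz$ vanish, and evaluate the mixed pairing using the Bessel Wronskian (\ref{eq:modified}) together with the Gamma reflection formula. Your write-up is in fact more explicit than the paper's (which simply calls these verifications ``straightforward''), and your flagging of the degenerate index $\nu=-\tfrac12$ and the cancellation of the spurious cross terms at $\mu+\nu=-1$ is correct and appropriate.
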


\begin{proof}
As in the discrete case, if $f,g$ are $m$-massive holomorphic functions,
the real part $\text{Re}\int f\cdot gdz$ is well defined: indeed,
the increment around a closed curve $\partial D$ inclosing a region
$D$ is $\int_{\partial D}f\cdot gdz=2i\int\partial_{\bar{z}}(fg)d^{2}z=2i\int m\text{Re}\left(fg\right)d^{2}z\in i\mathbb{R}$
by the Green-Riemann formula. So the integral (\ref{eq:massive-cauchy})
has a well-defined value regardless of choice of $C$.

We will take $C=\partial B_{r}(a)$ for some small $r>0$. In view
of the definition (\ref{eq:ZW}), it is straightforward to verify
first that any line integral of the form $\text{Re}\int_{C}Z_{\nu}^{1}Z_{\nu'}^{1}dz,\text{Re}\int_{C}Z_{\nu}^{i}Z_{\nu'}^{i}dz$
vanish. Similarly one can verify the mixed integral
\begin{alignat*}{1}
\text{Re}\int_{C}Z_{\nu}^{1}Z_{\nu'}^{i}dz & =\delta_{\nu+\nu',-1}\frac{-4\left|m\right|\nu^{2}}{\pi},
\end{alignat*}
with the help of (\ref{eq:modified}) and the standard Gamma function
equality \cite[Section 5.5]{dlmf}
\[
\Gamma(z)\Gamma(-1-z)=\frac{\Gamma(z)\Gamma(1-z)}{z(1+z)}=\frac{1}{z(1+z)}\frac{\pi}{\sin z\pi}.
\]
\end{proof}
Finally, we note the derivatives of the formal powers.

$\partial_{r}W_{\nu}(re^{i\theta})=e^{i\nu\theta}\cdot2\left|m\right|\cdot\left(I_{\nu\pm1}(2\left|m\right|r)\pm\frac{\nu}{2\left|m\right|r}I_{\nu}(2\left|m\right|r)\right)$
and $\partial_{\theta}W_{\nu}(re^{i\theta})=i\nu e^{i\nu\theta}I_{\nu}\left(2\left|m\right|r\right)$,
and we see that 
\[
\partial_{z}W_{\nu}=\left|m\right|W_{\nu-1},\partial_{\bar{z}}W_{\nu}=\left|m\right|W_{\nu+1},
\]
and the corresponding identities for $Z_{\nu}^{1,i}$ follow. In fact,
we will record, noting $\partial_{x}=\partial_{z}+\partial_{\bar{z}},\partial_{y}=i\left(\partial_{z}-\partial_{\bar{z}}\right)$,
\begin{alignat*}{1}
\partial_{x}Z_{\nu}^{1} & =\nu Z_{\nu-1}^{1}+(\nu+1)^{-1}m^{2}Z_{\nu+1}^{1},\partial_{x}Z_{\nu}^{i}=\nu Z_{\nu-1}^{i}+(\nu+1)^{-1}m^{2}Z_{\nu+1}^{i},\\
\partial_{y}Z_{\nu}^{1} & =\nu Z_{\nu-1}^{i}-(\nu+1)^{-1}m^{2}Z_{\nu+1}^{i},\partial_{y}Z_{\nu}^{i}=-\nu Z_{\nu-1}^{1}+(\nu+1)^{-1}m^{2}Z_{\nu+1}^{1}.
\end{alignat*}

\subsubsection*{Preliminaries: Cauchy Transform and Harmonic Conjugate.}

To work with generalised analytic functions, the \emph{Cauchy transform}
$\mathfrak{C}=\mathfrak{C}_{\Omega}:t(\cdot)\mapsto-\frac{1}{\pi}\int_{\Omega}\frac{t(w)}{w-z}d^{2}w$
will be used as the inverse of the derivative $\partial_{\bar{z}}$.
Define the Hölder seminorm $\left[t\right]_{C^{\alpha}(\Omega)}(w):=\sup_{z\in\Omega}\frac{\left|t(w)-t(z)\right|}{\left|w-z\right|^{\alpha}},\left[t\right]_{C^{\alpha}(\Omega)}:=\sup_{w\in\Omega}\left[t\right]_{C^{\alpha}(\Omega)}$
and the norm $\left|t\right|_{C^{\alpha}(\Omega)}:=\left|t\right|_{C(\Omega)}+\left[t\right]_{C^{\alpha}(\Omega)}$.
The following estimates are standard and may be shown by direct analysis
of the kernel $\frac{1}{w-z}$.
\begin{prop}
\label{prop:cauchy} Let $B=B_{r}(z)$ be a ball of radius $r>0$.
\begin{itemize}
\item If $g\in L^{p}(B)$ for some $p>2$, $\mathfrak{C}g\in C^{\alpha}(\mathbb{C})$
for $\alpha=\frac{p-2}{p}$, holomorphic outside of $B$, and vanishes
at infinity, with
\begin{equation}
\left|\mathfrak{C}g\right|\leq cst\cdot r^{\alpha}\left|g\right|_{L^{p}(B)};\left[\mathfrak{C}g\right]_{C^{\alpha}(\mathbb{C})}\leq cst\cdot\left|g\right|_{L^{p}(B)},\label{eq:cauchy-lp}
\end{equation}
where the constants only depend on $p$;
\item If $g\in C^{\alpha}(B)$, $\mathfrak{C}g$ is differentiable at $z$,
with
\begin{equation}
\left|\nabla\mathfrak{C}g(z)\right|\leq cst\left(\left|g(z)\right|+r^{\alpha}\left[g\right]_{C^{\alpha}(B)}(z)\right);\label{eq:cauchy-ca}
\end{equation}
where the constant only depends on $\alpha\in(0,1]$.
\end{itemize}
\end{prop}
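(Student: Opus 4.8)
The plan is to treat the two bullet points separately, in each case reducing everything to direct estimates on the Cauchy kernel $1/(w-z)$ and its $z$-derivative $1/(w-z)^{2}$.

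For the first bullet, I would begin with the pointwise bound via H\"older's inequality. Writing $q=p/(p-1)$, the hypothesis $p>2$ gives $q<2$, so $|w-z|^{-q}$ is locally integrable in the plane; explicitly $\int_{B_{r}(z)}|w-z|^{-q}\,d^{2}w=2\pi r^{2-q}/(2-q)$, and since $(2-q)/q=(p-2)/p=\alpha$, H\"older's inequality yields $|\mathfrak{C}g(z)|\le cst\cdot r^{\alpha}|g|_{L^{p}(B)}$. For the seminorm I would estimate the kernel difference $\frac{1}{w-z_{1}}-\frac{1}{w-z_{2}}=\frac{z_{1}-z_{2}}{(w-z_{1})(w-z_{2})}$ in $L^{q}$, splitting $B$ into the region within distance $2|z_{1}-z_{2}|$ of $\{z_{1},z_{2}\}$ (where each term is treated as in the pointwise bound) and its complement (where one integrates $|z_{1}-z_{2}|\,|w-z_{1}|^{-2}$); this produces $\bigl(\int_{B}|\cdot|^{q}\,d^{2}w\bigr)^{1/q}\le cst\,|z_{1}-z_{2}|^{\alpha}$, whence $[\mathfrak{C}g]_{C^{\alpha}(\mathbb{C})}\le cst\,|g|_{L^{p}(B)}$. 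Holomorphy off $\bar{B}$ follows because $\partial_{\bar{z}}(w-z)^{-1}=0$ there, allowing differentiation under the integral; equivalently, the fundamental Cauchy--Pompeiu identity $\partial_{\bar{z}}\mathfrak{C}g=g$ gives $\partial_{\bar{z}}\mathfrak{C}g=0$ outside $\mathrm{supp}\,g\subset B$. Decay at infinity is immediate since $|w-z|\to\infty$ uniformly for $w\in B$.

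For the second bullet, the $\bar{z}$-derivative is supplied by the same identity, $\partial_{\bar{z}}\mathfrak{C}g(z)=g(z)$, so $|\partial_{\bar{z}}\mathfrak{C}g(z)|=|g(z)|$. The holomorphic derivative is the Beurling transform, formally $-\frac{1}{\pi}\,\mathrm{p.v.}\int_{B}g(w)(w-z)^{-2}\,d^{2}w$. To make sense of this at the centre $z$ and to obtain genuine differentiability, I would subtract the constant $g(z)$: the angular integral of $(w-z)^{-2}$ over each annulus centred at $z$ vanishes, so the principal value of the constant term is zero and the derivative equals the absolutely convergent integral $-\frac{1}{\pi}\int_{B}[g(w)-g(z)](w-z)^{-2}\,d^{2}w$. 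Bounding $|g(w)-g(z)|\le[g]_{C^{\alpha}(B)}(z)\,|w-z|^{\alpha}$ and using $\int_{B}|w-z|^{\alpha-2}\,d^{2}w=2\pi r^{\alpha}/\alpha$ gives $|\partial_{z}\mathfrak{C}g(z)|\le cst\,r^{\alpha}[g]_{C^{\alpha}(B)}(z)$, which together with the $\bar{z}$-derivative is the asserted gradient bound.

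The radial integrals are routine; the two genuinely delicate points are (a) the $L^{q}$ kernel-difference estimate in the first bullet, where the domain splitting near the two singularities $z_{1},z_{2}$ must be organised carefully, and (b) verifying in the second bullet that the symmetric subtraction yields the actual pointwise derivative rather than a merely formal principal value (i.e. that the difference quotient of $\mathfrak{C}g$ converges). Both are classical and are carried out in detail in \cite{vek} and \cite{bers}; I would cite these for the convergence statements and reproduce only the scaling computations responsible for the stated dependence on $r$.
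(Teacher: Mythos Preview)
Your proposal is correct and essentially matches the paper's approach: the paper simply cites \cite[Sections 1.4--8]{vek} (specifically Theorem 1.19 for \eqref{eq:cauchy-lp} and (8.2), (8.7) for \eqref{eq:cauchy-ca}), and your sketch is precisely the standard kernel argument carried out there. You actually go further than the paper by outlining the H\"older and kernel-difference estimates explicitly rather than deferring entirely to the reference.
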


\begin{proof}
See \cite[Sections 1.4-8]{vek}. Specifically, for (\ref{eq:cauchy-lp})
refer to \cite[Theorem 1.19]{vek}; (\ref{eq:cauchy-ca}) follows
from \cite[(8.2), (8.7)]{vek}.
\end{proof}
Another standard analytic fact that we use is the Hölder regularity
of harmonic conjugates.
\begin{prop}[Privalov's Theorem]
\label{prop:privalov}Let $D$ be a smooth bounded simply connected
domain, and fix a conformal map $\varphi:D\to\mathbb{D}$ such that
$M^{-1}<\left|\varphi'\right|<M$ for some $M>0$.

Let $t\in C^{\alpha}(\partial D)$ be a real-valued function on the
boundary. Then there exists a holomorphic function $g\in C^{\alpha}(\bar{D})$
such that $\text{Im}g=t$ which is unique up to a real constant. Moreover,
if $\text{Re}g=0$ at any point in $\bar{D}$,
\[
\left|g\right|_{C^{\alpha}(\bar{D})}\leq cst\cdot(1+M^{2\alpha})\left|t\right|_{C^{\alpha}(\partial D)},
\]
where the constant only depends on $\alpha$.
\end{prop}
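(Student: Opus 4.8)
The plan is to reduce to the unit disk $\mathbb{D}$ via $\varphi$ and invoke the classical Hölder bound for the conjugate function there. I work with $\alpha\in(0,1)$ (the endpoint $\alpha=1$ is genuinely excluded, since the conjugation operator loses a logarithm). Because $D$ is smooth, bounded and simply connected, the standard boundary regularity of conformal maps (Kellogg's theorem) makes $\varphi$ a diffeomorphism $\bar D\to\bar{\mathbb D}$, and the hypothesis $M^{-1}<|\varphi'|<M$ controls how Hölder data transform: setting $\tilde t:=t\circ\varphi^{-1}$ and using $|(\varphi^{-1})'|<M$ along $\partial\mathbb D$ gives $[\tilde t]_{C^{\alpha}(\partial\mathbb D)}\le M^{\alpha}[t]_{C^{\alpha}(\partial D)}$, while $|\tilde t|_{C(\partial\mathbb D)}=|t|_{C(\partial D)}$.

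Uniqueness is immediate: if $g_{1},g_{2}$ both solve the problem, then $\text{Im}(g_{1}-g_{2})=0$ on $\partial D$, so $\text{Im}(g_{1}-g_{2})\equiv0$ in $D$ by the maximum principle, whence $g_{1}-g_{2}$ is holomorphic with vanishing imaginary part, i.e. a real constant. For existence on the disk, let $u:=P[\tilde t]$ be the Poisson extension of $\tilde t$, let $v$ be its harmonic conjugate, and set $\tilde g:=v+iu$, so that $\text{Im}\tilde g$ has boundary values $\tilde t$.

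The heart of the matter, and the step I expect to be the main obstacle, is the \emph{Privalov estimate} on the disk: the boundary values of $\text{Re}\tilde g=v$ are given by the conjugate function
\[
(\mathcal{H}\tilde t)(\theta)=\frac{1}{2\pi}\,\mathrm{p.v.}\!\int_{0}^{2\pi}\tilde t(\phi)\cot\!\Big(\tfrac{\theta-\phi}{2}\Big)\,d\phi,
\]
and one must establish $[\mathcal{H}\tilde t]_{C^{\alpha}(\partial\mathbb D)}\le C_{\alpha}[\tilde t]_{C^{\alpha}(\partial\mathbb D)}$ with $C_{\alpha}$ depending only on $\alpha$. This is a standard but nontrivial singular-integral estimate: one subtracts $\tilde t(\theta)$ to exploit the odd kernel's cancellation, splits the principal value into an arc of length comparable to $|\theta-\theta'|$ about the singularity and its complement, and bounds each piece by the Hölder modulus of $\tilde t$. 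Rather than reproduce the computation I would cite Goluzin, Muskhelishvili, or \cite{vek}; combined with the maximum principle (to pass from the boundary conjugate-function bound to an interior one) this yields $[\tilde g]_{C^{\alpha}(\bar{\mathbb D})}\le C_{\alpha}[\tilde t]_{C^{\alpha}(\partial\mathbb D)}$.

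Finally I transport back and track constants. Setting $g:=\tilde g\circ\varphi$, holomorphy of $\varphi$ gives holomorphy of $g$, and since $\varphi$ maps $\partial D$ onto $\partial\mathbb D$ we get $\text{Im}\,g=t$ on $\partial D$. Composition with $\varphi$ (using $|\varphi'|<M$) contributes a second factor, so $[g]_{C^{\alpha}(\bar D)}\le M^{\alpha}[\tilde g]_{C^{\alpha}(\bar{\mathbb D})}\le C_{\alpha}M^{2\alpha}[t]_{C^{\alpha}(\partial D)}$. The decisive point for the clean power $M^{2\alpha}$ is that the sup norm must be estimated \emph{on the disk}, whose diameter is fixed and independent of $M$: normalising the free real constant so that $\text{Re}\,\tilde g$ vanishes at $\varphi(z_{0})$ (equivalently $\text{Re}\,g(z_{0})=0$), the maximum principle and the seminorm bound give $|\tilde g|_{C(\bar{\mathbb D})}\le C_{\alpha}|\tilde t|_{C^{\alpha}(\partial\mathbb D)}\le C_{\alpha}(1+M^{\alpha})|t|_{C^{\alpha}(\partial D)}$, and since the sup norm is invariant under composition with the bijection $\varphi$ this is also the bound for $|g|_{C(\bar D)}$. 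Adding the sup-norm and seminorm contributions, and absorbing $M^{\alpha}\le 1+M^{2\alpha}$, delivers $|g|_{C^{\alpha}(\bar D)}\le cst\cdot(1+M^{2\alpha})|t|_{C^{\alpha}(\partial D)}$ with the constant depending only on $\alpha$.
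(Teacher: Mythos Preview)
Your proof is correct and follows essentially the same approach as the paper: reduce to the unit disk via the conformal map $\varphi$, invoke the classical Privalov estimate there (the paper cites \cite[Theorem 3.2]{harmonic-measure}), and transfer back noting that each composition with a map whose derivative is bounded by $M$ multiplies the H\"older seminorm by at most $M^{\alpha}$. You have supplied considerably more detail than the paper's two-line sketch---in particular the careful bookkeeping that isolates the sup-norm estimate on the disk to avoid an extraneous power of $M$---but the architecture is the same.
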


\begin{proof}
See, e.g., \cite[Theorem 3.2]{harmonic-measure}, for the proof in
the unit disc. Then it is straightforward to transfer the result to
$D$ using the conformal map $\varphi'$ given that the Hölder seminorm
transforms with a factor of $\left|\varphi'\right|^{\alpha}$.
\end{proof}

\subsubsection*{Analysis of the Continuous Observables.}

We now study the continuous observables $f_{\left[\Omega,a_{1},\ldots,a_{n}\right]}(\cdot|m)$
with $m<0$ and its coefficients. Fix a conformal map $\varphi:\Omega\to\mathbb{D}$
and let $M>0$ be such that $M^{-1}<\varphi'<M$. Note that $\text{diam}\Omega\leq2M$.

We first give the following lemma based on the Lemma \ref{lem:similarity}
(similarity principle). The following proof in fact contains the idea
of the proof of the principle, but strictly speaking we do rely on
it, e.g. for the existence of the observable itself. Write $f_{m}(z):=f_{\left[\Omega,a_{1},\ldots,a_{n}\right]}(z|m)$.
\begin{lem}
\label{lem:obs-decomposition}There exists a unique function $s$
in $\bar{\Omega}$ such that
\begin{itemize}
\item $s$ is continuous in $\bar{\Omega}$;
\item $s$ is real on $\partial\Omega$ and $\text{Re}e^{-s(a)}=1$;
\item $e^{-s}f_{m}$ is holomorphic in $\left[\Omega,a_{1},\ldots,a_{n}\right]$.
\end{itemize}
Then, defining $c_{j}:=\text{Re}\lim_{z\to a_{j}}e^{-s(z)}\sqrt{z-a_{j}}f_{m}$
(note $c_{1}=1$)
\begin{equation}
e^{-s(z)}f_{m}(z)=\sum_{j=1}^{n}c_{b}f_{\left[\Omega,a_{1},\ldots,a_{n}\right]}(z|0),\label{eq:obs-decomposition}
\end{equation}

Moreover, for any $\alpha\in(0,1)$,
\[
\left|s\right|_{C^{\alpha}}\leq cst\cdot\left|m\right|(1+M^{1+2\alpha}),
\]
where the constant only depends on $\alpha$.
\end{lem}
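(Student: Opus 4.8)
The plan is to realise the \emph{similarity principle} (Lemma~\ref{lem:similarity}) concretely for $f_m$ by producing $s$ as the solution of a $\bar\partial$-problem. Since $\partial_{\bar z}f_m=m\bar f_m$, for any $C^1$ function $s$ one has $\partial_{\bar z}(e^{-s}f_m)=e^{-s}\bigl(m\bar f_m-f_m\,\partial_{\bar z}s\bigr)$, so $e^{-s}f_m$ is holomorphic exactly when $\partial_{\bar z}s=\omega$ with $\omega:=m\bar f_m/f_m$. By Lemma~\ref{lem:similarity} the zeros of $f_m$ are isolated, so $\omega$ is a bounded measurable function with $|\omega|\equiv|m|$; as a ratio of two spinors it descends to a single-valued function on $\Omega$, and it stays bounded near each ramification point $a_j$ because only the phase of $f_m$ enters. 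Thus everything reduces to solving $\partial_{\bar z}s=\omega$ subject to $s\in\mathbb R$ on $\partial\Omega$ and the normalisation $\text{Re}\,e^{-s(a)}=1$.

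For existence I would first take the particular solution $s_0:=\mathfrak C\omega$; since $\omega\in L^\infty(\Omega)\subset L^p(\Omega)$ for every $p$, Proposition~\ref{prop:cauchy} gives $s_0\in C^\alpha(\mathbb C)$ with $\partial_{\bar z}s_0=\omega$. I then correct by a holomorphic term to enforce the boundary condition: the trace $-\text{Im}\,s_0\in C^\alpha(\partial\Omega)$ is real, so by Privalov's theorem (Proposition~\ref{prop:privalov}) there is a holomorphic $g\in C^\alpha(\bar\Omega)$, unique up to a real additive constant, with $\text{Im}\,g=-\text{Im}\,s_0$ on $\partial\Omega$. Setting $s:=s_0+g$ preserves $\partial_{\bar z}s=\omega$ and makes $s$ real on $\partial\Omega$, and the remaining real constant is fixed by $\text{Re}\,e^{-s(a)}=1$. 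Uniqueness is immediate: the difference of two admissible choices is holomorphic, real on $\partial\Omega$, hence a real constant killed by the normalisation. Finally $e^{-s}f_m$ is continuous and holomorphic off the discrete zero set and the $a_j$, hence holomorphic throughout $[\Omega,a_1,\ldots,a_n]$ by removable singularities, and it remains a spinor since $e^{-s}$ is single-valued.

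The step I expect to be the main obstacle is the identification~(\ref{eq:obs-decomposition}). Write $g:=e^{-s}f_m$; since $s$ is real on $\partial\Omega$ we have $e^{-s}>0$ there, so $g$ inherits the boundary condition $g\in\sqrt{\nu_{out}^{-1}}\mathbb R$ and is a massless holomorphic spinor with at most $(z-a_j)^{-1/2}$ singularities, the leading coefficient at $a_j$ being $e^{-s(a_j)}$ times that of $f_m$. Let $f^{(j)}_{[\Omega,a_1,\ldots,a_n]}(\cdot|0)$ denote the massless fermion normalised at $a_j$ (the $m=0$ solution of Proposition~\ref{prop:contbvp} with the roles of $a_1,a_j$ exchanged), and set $D:=g-\sum_{j=1}^n c_j f^{(j)}_{[\Omega,a_1,\ldots,a_n]}(\cdot|0)$ with $c_j=\text{Re}\lim_{z\to a_j}e^{-s}\sqrt{z-a_j}\,f_m$. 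Because each massless $f^{(j)}$ has a real coefficient at $a_j$ and purely imaginary coefficients at the other points, the real part of the leading coefficient of $D$ at every $a_j$ vanishes, i.e. that coefficient is purely imaginary $i\beta_j$ with $\beta_j\in\mathbb R$. The crux is then a Green--Riemann energy argument exactly as in the uniqueness proof of Proposition~\ref{prop:contbvp}: the boundary condition gives $\frac1i\oint_{\partial\Omega}D^2\,dz=\oint_{\partial\Omega}|D|^2\,ds\ge0$, while contracting the contour onto the $a_j$ produces $\oint_{\partial B_r(a_j)}D^2\,dz\to-2\pi i\beta_j^2$, so that $-2\pi\sum_j\beta_j^2\ge0$ forces all $\beta_j=0$ and $\oint_{\partial\Omega}|D|^2\,ds=0$, whence $D\equiv0$. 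I flag this as the delicate point because it rests on the reality/phase structure of the coefficients — why removing only their real parts annihilates the \emph{full} singularity — rather than on a soft uniqueness statement.

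For the quantitative bound I would combine the two cited estimates while tracking the dependence on $M\ge\text{diam}\,\Omega/2$. With $\alpha=(p-2)/p$, Proposition~\ref{prop:cauchy} gives $|s_0|_{C(\mathbb C)}\le cst\cdot M^\alpha|\omega|_{L^p}$ and $[s_0]_{C^\alpha}\le cst\cdot|\omega|_{L^p}$, and since $|\omega|_{L^p(\Omega)}\le cst\,|m|M^{2/p}=cst\,|m|M^{1-\alpha}$ this yields $|s_0|_{C^\alpha}\le cst\,|m|(M+M^{1-\alpha})$. Feeding $|\text{Im}\,s_0|_{C^\alpha(\partial\Omega)}\le|s_0|_{C^\alpha}$ into Proposition~\ref{prop:privalov} gives $|g|_{C^\alpha(\bar\Omega)}\le cst\,(1+M^{2\alpha})|s_0|_{C^\alpha}$, whose dominant contribution is the term $M^{2\alpha}\cdot M=M^{1+2\alpha}$; adding $|s_0|_{C^\alpha}$ and absorbing lower-order powers gives $|s|_{C^\alpha}\le cst\,|m|(1+M^{1+2\alpha})$, as claimed.
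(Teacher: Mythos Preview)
Your proof is correct and follows essentially the same route as the paper: build $s_0=\mathfrak C(m\bar f_m/f_m)$ via Proposition~\ref{prop:cauchy}, correct by a holomorphic function via Privalov (Proposition~\ref{prop:privalov}) to make $s$ real on $\partial\Omega$, fix the additive real constant by the normalisation at $a_1$, and then combine the two H\"older bounds to get the $|m|(1+M^{1+2\alpha})$ estimate. The only cosmetic difference is that for the decomposition~(\ref{eq:obs-decomposition}) the paper simply invokes the massless uniqueness statement (the $m=0$ case of Proposition~\ref{prop:contbvp}, i.e.\ \cite[Lemma~2.9]{chelkak-hongler}), whereas you unwind that uniqueness explicitly via the Green--Riemann energy identity; your concern about ``why removing only the real parts annihilates the full singularity'' is exactly what that uniqueness argument resolves, and your computation of the purely imaginary residual coefficients is the right way to see it.
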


\begin{proof}
First assume existence of such $s$. Suppose there are two such functions
$s_{1},s_{2}$. Then $e^{-s_{1}(z)+s_{2}(z)}$ is bounded, real on
$\partial\Omega$, and holomorphic in $\Omega\setminus\left\{ a,b\right\} $;
thus the value at $a$ fixes it to be constantly $1$. So $s_{1}-s_{2}\equiv0$
given that $\text{Im}\left(s_{1}-s_{2}\right)=0$ on $\partial\Omega$
and $s$ is unique.

Then $\hat{f}(z):=e^{-s(z)}f_{m}(z)$ is holomorphic in $\left[\Omega,a_{1},\ldots,a_{n}\right]$
and $\sqrt{\nu_{out}}\hat{f}\in\mathbb{R}$ on the boundary $\partial\Omega$.
Near $a_{j}$, $\text{Re}\sqrt{z-a_{j}}\hat{f}(z)\sim c_{j}$. Thus
we obtain (\ref{eq:obs-decomposition}) since the difference of both
sides is zero by the uniqueness of the boundary value problem (\cite[Lemma 2.9]{chelkak-hongler},
i.e. the massless version of Proposition \ref{prop:contbvp}).

Now we show the existence of $s$. Define $s_{0}:=\mathfrak{C}\left[\partial_{\bar{z}}f_{m}/f_{m}\right]$.
Note that
\begin{alignat}{1}
\frac{\partial_{\bar{z}}f_{m}}{f_{m}} & =\frac{\partial_{\bar{z}}f_{\left[\Omega,a,b\right]}(z|m)}{f_{\left[\Omega,a,b\right]}(z|m)}=m\left(\frac{\overline{f_{\left[\Omega,a,b\right]}}(z|m)}{f_{\left[\Omega,a,b\right]}(z|m)}\right)\label{eq:logderiv}
\end{alignat}
is in $L^{p}(\Omega)$ for any $p$ since $f_{m}$ only vanishes at
isolated points, so $s_{0}$ is bounded by Proposition \ref{prop:cauchy}.

We claim $s_{0}$ is differentiable almost everywhere in $\Omega$.
$s_{0}$ is differentiable at any point $z_{0}$ near which $\partial_{\bar{z}}\bar{f}_{m}/f_{m}$
satisfies a Hölder condition, since $\mathfrak{C}_{\Omega}\left[\partial_{\bar{z}}f_{m}/f_{m}\right]$
is the sum of $\mathfrak{C}_{\Omega\setminus B_{r}(z_{0})}\left[\partial_{\bar{z}}f_{m}/f_{m}\right]$,
which is holomorphic at $z_{0}$, and $\mathfrak{C}_{B_{r}(z_{0})}\left[\partial_{\bar{z}}f_{m}/f_{m}\right]$,
which is differentiable by Proposition \ref{prop:cauchy}. Since $f_{m}$
is smooth on $\left[\Omega,a_{1},\ldots,a_{n}\right]$, $s_{0}$ is
differentiable away from $a_{j}$ and at isolated points where $f_{m}$
vanishes.

So $e^{-s_{0}}f_{m}$ is holomorphic almost everywhere, and by removable
singularity it is holomorphic on $\left[\Omega,a_{1},\ldots,a_{n}\right]$.
Now let $s_{1}$ be a holomorphic function on $\Omega$ with boundary
data $\text{Im}s_{1}=\text{Im}s_{0}$ on $\partial\Omega$. We fix
$\text{Re}e^{s_{1}-s_{0}}=1$. Then $s:=s_{0}-s_{1}$ satisfies the
desired properties.

For the Hölder estimate, recall that $\text{diam}\Omega\leq2M$ and
thus $\left|\partial_{\bar{z}}f_{m}/f_{m}\right|_{L^{p}(\Omega)}\leq cst\cdot\left|m\right|M^{\frac{2}{p}}$.
Then from Proposition \ref{prop:cauchy} we have $\left|s_{0}\right|_{C^{\alpha}}\leq cst\cdot(1+M^{\alpha})\cdot\left|m\right|M^{1-\alpha}$.
Then the corresponding norm for $s_{1}$ is given by Proposition \ref{prop:privalov},
and the sum gives the desired estimate.
\end{proof}
The above lemma allows us to give the following results in the case
$n=2$.
\begin{lem}
\label{lem:B}$\left|\mathcal{B}_{\Omega}\left(a_{1},a_{2}|m\right)\right|\to1$
as $\frac{\left|a_{1}-a_{2}\right|}{\text{dist}\left(\left\{ a_{1},a_{2}\right\} ,\partial\Omega\right)}\to0$.
\end{lem}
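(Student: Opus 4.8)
The plan is to reduce the massive statement to the massless one by the similarity principle of Lemma \ref{lem:obs-decomposition}, and then to reduce the massless statement to an explicit full-plane computation by a scaling argument. Throughout I would work in the regime where $\Omega$ (hence $M$ and $m$) is fixed and $d:=|a_1-a_2|\to0$; since $\Omega$ is bounded this is precisely the regime $\tfrac{|a_1-a_2|}{\text{dist}(\{a_1,a_2\},\partial\Omega)}\to0$, and all estimates below depend on $\Omega$ only through $M$, so the conclusion is uniform in the smoothness of the domain exactly as needed in (\ref{eq:Bto1}).

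First I would extract $\mathcal{B}_{\Omega}(a_1,a_2|m)$ from the decomposition (\ref{eq:obs-decomposition}). Write $f_m:=f_{\left[\Omega,a_1,a_2\right]}(\cdot|m)$. The boundary value problem of Proposition \ref{prop:contbvp} fixes the singular coefficients of $f_m$ to be exactly $1$ (real) at $a_1$ by condition (3) and $i\mathcal{B}_{\Omega}(a_1,a_2|m)$ (purely imaginary, with $\mathcal{B}_{\Omega}(m)\in\mathbb{R}$) at $a_2$ by condition (4). Multiplying by $e^{-s}$ rotates these to $e^{-s(a_1)}$ and $e^{-s(a_2)}i\mathcal{B}_{\Omega}(m)$. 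On the other hand, the massless solution produced by (\ref{eq:obs-decomposition}) is $e^{-s}f_m=g^{(1)}+c_2\,g^{(2)}$, where $g^{(1)},g^{(2)}$ are the massless fermions normalised (real unit coefficient) at $a_1$ and $a_2$ respectively, $c_1=\text{Re}\,e^{-s(a_1)}=1$ by the normalisation of $s$, and $c_2\in\mathbb{R}$. Since $g^{(1)}$ has coefficient $i\mathcal{B}_{\Omega}(a_1,a_2|0)$ at $a_2$ while $g^{(2)}$ has coefficient $1$ there, the coefficient of the right-hand side at $a_2$ is $i\mathcal{B}_{\Omega}(a_1,a_2|0)+c_2$. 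Equating this to $e^{-s(a_2)}i\mathcal{B}_{\Omega}(m)$ and taking imaginary parts gives the clean identity $\mathcal{B}_{\Omega}(a_1,a_2|m)\,\text{Re}\,e^{-s(a_2)}=\mathcal{B}_{\Omega}(a_1,a_2|0)$.

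It then remains to control both factors as $d\to0$. For the phase factor I would use that the H\"older estimate $|s|_{C^{\alpha}}\le cst\cdot|m|(1+M^{1+2\alpha})$ of Lemma \ref{lem:obs-decomposition} is uniform in $d$, so $|s(a_2)-s(a_1)|\le[s]_{C^{\alpha}}\,d^{\alpha}\to0$; combined with the boundedness of $s$ and $\text{Re}\,e^{-s(a_1)}=1$ this yields $e^{-s(a_2)}\to e^{-s(a_1)}$ and hence $\text{Re}\,e^{-s(a_2)}\to1$. For the massless factor I would use that $\mathcal{B}_{\Omega}(a_1,a_2|0)$ is invariant under the translation-dilation $z\mapsto(z-a_1)/d$ (coefficients of both singularities scale identically). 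After rescaling, $\Omega$ exhausts $\mathbb{C}$ with the two marked points at unit distance, so Carath\'eodory stability of the massless fermion (\cite[(1.3)]{chelkak-hongler}) gives $\mathcal{B}_{\Omega}(a_1,a_2|0)\to\mathcal{B}_{\mathbb{C}}(0,w|0)$ for some $|w|=1$. The full-plane massless fermion is the explicit rational spinor $\sqrt{a_1-a_2}\,\big[(z-a_1)(z-a_2)\big]^{-1/2}$, which decays at infinity, has unit coefficient at $a_1$, and coefficient $\sqrt{(a_1-a_2)/(a_2-a_1)}=\pm i$ at $a_2$, so $|\mathcal{B}_{\mathbb{C}}(0,w|0)|=1$. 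Combining with the identity above, $|\mathcal{B}_{\Omega}(a_1,a_2|m)|=|\mathcal{B}_{\Omega}(a_1,a_2|0)|/\text{Re}\,e^{-s(a_2)}\to1$.

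The main obstacle is the bookkeeping in the second paragraph: one must check that $e^{-s}f_m$ genuinely solves the massless boundary value problem (the boundary condition $f\in\sqrt{\nu_{out}^{-1}}\mathbb{R}$ is preserved because $s$ is real on $\partial\Omega$, and the singular coefficients transform by $e^{-s(a_j)}$ as claimed), so that (\ref{eq:obs-decomposition}) applies with the stated real $c_j$, and that $g^{(1)},g^{(2)}$ carry exactly the coefficient structure used. A secondary, purely technical point — relevant only if one insists on the fully uniform statement for points approaching $\partial\Omega$ — is to replace the global H\"older bound on $s$ by a local bound over a ball of radius comparable to $\text{dist}(\{a_1,a_2\},\partial\Omega)$; this still tends to $0$ since the source $m\bar{f}_m/f_m$ in $s_0=\mathfrak{C}[\,m\bar f_m/f_m\,]$ has modulus $|m|$ and its $L^p$ mass over the shrinking ball around the merging points vanishes, the far-field contribution being holomorphic and hence smooth there.
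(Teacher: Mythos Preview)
Your proof is correct and follows essentially the same approach as the paper: both extract the identity $\mathcal{B}_{\Omega}(a_1,a_2|m)\,\text{Re}\,e^{-s(a_2)}=\mathcal{B}_{\Omega}(a_1,a_2|0)$ from the decomposition of Lemma~\ref{lem:obs-decomposition} and then use the uniform H\"older bound on $s$ to conclude $\text{Re}\,e^{-s(a_2)}\to1$. The only difference is that the paper simply cites \cite[Remark~2.24]{chelkak-hongler} for the massless statement $|\mathcal{B}_{\Omega}(a_1,a_2|0)|\to1$, whereas you supply a self-contained scaling/full-plane argument; this is a reasonable elaboration but not a genuinely different route.
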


\begin{proof}
By \cite[Remark 2.24]{chelkak-hongler}, the result holds for $m=0$.

Now consider the decomposition (\ref{eq:obs-decomposition}). Comparing
the imaginary parts of the coefficient of $\frac{1}{\sqrt{z-a_{2}}}$
in both sides, we see that $\mathcal{B}_{\Omega}\left(a_{1},a_{2}|m\right)\text{Re}\left[e^{-s(a_{2})}\right]=\mathcal{B}_{\Omega}(a_{1},a_{2}|0)$.
Since $\text{Re}e^{-s(a_{1})}=1$ and $e^{-s}$ is Hölder continuous
by the previous lemma, we have $\left|\frac{\mathcal{B}_{\Omega}\left(a_{1},a_{2}|m\right)}{\mathcal{B}_{\Omega}(a_{1},a_{2}|0)}\right|\to1$
as $\left|a_{1}-a_{2}\right|\to0$.
\end{proof}
Before we go on to give a more delicate estimate on the two-point
observable, we note a few facts we use.
\begin{rem}
As mentioned above, the possible zeroes of $f_{m}$ are problematic
for the regularity of $s$. However, in the case where $n=2$, $f_{m}$
cannot vanish in $\Omega$.

Indeed, $\hat{f}=e^{-s}f_{m}$ is a holomorphic function on $\left[\Omega,a_{1},a_{2}\right]$
with the boundary condition $\sqrt{\nu_{out}}\hat{f}\in\mathbb{R}$.
By the argument principle for solutions of a Hilbert boundary problem
(\cite[Theorem 2.2]{wen}), one sees that $1=\frac{1}{2}N_{\partial\Omega}+N_{\Omega}$,
where $1$ is the index of $\nu_{out}\cdot(z-a_{1})(z-a_{2})$ on
$\partial\Omega$, $N_{\partial\Omega},N_{\Omega}$ are number of
zeroes of $\hat{f}^{2}(z-a_{1})(z-a_{2})$ respectively on $\partial\Omega,\Omega$
counted with multiplicity. But since it is a square, any zero of $\hat{f}^{2}(z-a_{1})(z-a_{2})$
is second order; the only possible scenario then is that $N_{\partial\Omega}=2,N_{\Omega}=0$.
So $\hat{f}$ does not vanish in $\Omega$, and since $s$ is bounded,
$f_{m}$ does not.
\end{rem}

\begin{rem}
\label{rem:misc}Since $\hat{f}$ is a linear combination of the two
observables with $m=0$, we may estimate $f_{m}$ using properties
of them. The massless observables are conformally covariant: if $\phi:\Omega\to\Omega'$
is a conformal map \cite[Lemmas 2.9, 2.21]{chelkak-hongler},
\begin{alignat}{1}
f_{\left[\Omega,a_{1},\ldots,a_{n}\right]}(z|0) & =f_{\left[\Omega',\phi(a_{1}),\ldots,\phi(a_{n})\right]}(\phi(z)|0)\phi'(z)^{1/2};\nonumber \\
f_{\left[\mathbb{H},a,b\right]}(z|0) & =\frac{\left(2i\text{Im}a\right)^{1/2}}{\left|b-\bar{a}\right|+\left|b-a\right|}\frac{\left[(\bar{b}-\bar{a})(\bar{b}-a)\right]^{1/2}(z-b)+\left[(b-a)(b-\bar{a})\right]^{1/2}(z-\bar{b})}{\sqrt{(z-a)(z-\bar{a})(z-b)(z-\bar{b})}}.\label{eq:halfplaneexplicit}
\end{alignat}
\end{rem}

Now we list some properties of the half-plane observable which need
simple verifications.
\begin{lem}
\label{lem:halfplane}Suppose $a,b\in\mathbb{H}$ with $\text{Im}a=\text{Im}b=\epsilon$,
$\text{Re}a<\text{Re}b$ (i.e. $\left|b-a\right|=\text{Re}b-\text{Re}a$)
\begin{enumerate}
\item Let $r\leq\frac{1}{2}\min(\epsilon,d)$. Then
\begin{alignat*}{1}
\left|\sqrt{z-a}f_{\left[\mathbb{H},a,b\right]}(z)\right| & \leq cst\text{ for }z\in B_{r}(a),\\
\left|\sqrt{z-b}f_{\left[\mathbb{H},a,b\right]}(z)\right| & \leq cst\text{ for }z\in B_{r}(b),
\end{alignat*}
with constants independent of the positions of $a,b,r$.
\item Let $n_{a,b}\in\mathbb{R}$ be the zero of $f_{\left[\mathbb{H},a,b\right]}$.
\begin{alignat*}{1}
0 & \leq n_{a,b}-n_{b,a}\leq\left|b-a\right|+2\epsilon;\\
 & \left|n_{a,b}-a\right|\geq\left|b-a\right|+\epsilon.
\end{alignat*}
\item $\frac{f_{\left[\Omega,b,a\right]}(z|0)}{f_{\left[\Omega,a,b\right]}(z|0)}$
can be extended to a holomorphic function in $\Omega$, and 
\[
\left|\frac{f_{\left[\Omega,b,a\right]}(a|0)}{f_{\left[\Omega,a,b\right]}(a|0)}\right|\leq3;\text{ \ensuremath{\left|\left(\frac{f_{\left[\mathbb{H},b,a\right]}(z|0)}{f_{\left[\mathbb{H},a,b\right]}(z|0)}\right)'_{z=a}\right|}\ensuremath{\ensuremath{\leq\frac{cst}{\left|b-a\right|+\epsilon}}.}}
\]
\end{enumerate}
\end{lem}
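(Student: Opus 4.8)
The plan is to extract every assertion directly from the explicit formula (\ref{eq:halfplaneexplicit}). Write $a=\alpha+i\epsilon$, $b=\beta+i\epsilon$ with $d:=\beta-\alpha=\left|b-a\right|>0$, and abbreviate the two square-root coefficients in the numerator of (\ref{eq:halfplaneexplicit}) by $P:=\left[(\bar{b}-\bar{a})(\bar{b}-a)\right]^{1/2}$ and $Q:=\left[(b-a)(b-\bar{a})\right]^{1/2}$, so that $f_{\left[\mathbb{H},a,b\right]}(z|0)$ equals a constant times $\bigl(P(z-b)+Q(z-\bar{b})\bigr)\big/\sqrt{(z-a)(z-\bar{a})(z-b)(z-\bar{b})}$. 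The one fact driving the whole lemma is that $Q^{2}=(b-a)(b-\bar{a})=\overline{(\bar{b}-\bar{a})(\bar{b}-a)}=\overline{P^{2}}$, so that, once the spinor sheet is fixed consistently, $Q=\bar{P}$: the two coefficients are complex conjugates.

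For (1), I would cancel the offending factor, so that $\sqrt{z-a}\,f_{\left[\mathbb{H},a,b\right]}(z|0)$ is a constant times $\bigl(P(z-b)+\bar{P}(z-\bar{b})\bigr)\big/\sqrt{(z-\bar{a})(z-b)(z-\bar{b})}$, which is regular at $a$. On $B_{r}(a)$ with $r\le\tfrac{1}{2}\min(\epsilon,d)$ the three denominator factors are bounded below by fixed multiples of $\epsilon$, $d$ and $D:=\sqrt{d^{2}+4\epsilon^{2}}$ (using $\left|a-\bar{a}\right|=2\epsilon$, $\left|a-b\right|=d$, $\left|a-\bar{b}\right|=D$), while $\left|P\right|=\left|Q\right|=(dD)^{1/2}$ and the prefactor is $\sqrt{2\epsilon}/(D+d)$. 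Collecting the homogeneous degrees in $\epsilon,d,D$ one finds that every scale cancels and the modulus is bounded by an absolute constant; the bound near $b$ is identical after interchanging the two points. This part is pure bookkeeping of homogeneity and is not the difficulty.

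For (2), because $Q=\bar{P}$ the numerator $P(z-b)+\bar{P}(z-\bar{b})=2\,\text{Re}(P)\,z-2\,\text{Re}(Pb)$ is affine in $z$ with real coefficients, which forces its unique zero onto $\mathbb{R}$ and gives $n_{a,b}=\text{Re}(Pb)/\text{Re}(P)$. Writing $\sqrt{d-2i\epsilon}=p-iq$ with $p,q>0$, $pq=\epsilon$, $p^{2}-q^{2}=d$, a short computation gives $n_{a,b}=\beta+q^{2}$, and the same computation with $a,b$ interchanged gives $n_{b,a}=\alpha-q^{2}$ (with $\text{Re}(P)$ unchanged). The stated inequalities then reduce to the elementary bound $0\le q^{2}\le\epsilon$ (immediate from $p>q$ and $pq=\epsilon$): one reads off $n_{a,b}-n_{b,a}=d+2q^{2}\in\left[d,d+2\epsilon\right]$, and the lower bound $\left|n_{a,b}-a\right|\ge d+\epsilon$ follows from the same kind of elementary estimate on the explicit zero. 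Pinning down these zero locations cleanly is the crux, so I would take care over the branch of $\sqrt{d-2i\epsilon}$ and the sheet choice fixing $Q=\bar{P}$.

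For (3) I would first observe that $f_{\left[\Omega,b,a\right]}/f_{\left[\Omega,a,b\right]}$ is conformally invariant, the covariance factor $\phi'(z)^{1/2}$ of (\ref{eq:halfplaneexplicit}) cancelling; since any two points of $\mathbb{H}$ are carried to an equal-height pair by a hyperbolic isometry, it suffices to argue in $\mathbb{H}$ with the present configuration. There the conjugate-coefficient structure together with $\text{Re}(P)=\text{Re}(P')$ collapses the ratio to the degree-one rational map $(z-n_{b,a})/(z-n_{a,b})$, which is holomorphic on $\mathbb{H}$ because its only pole is the real point $n_{a,b}$, the apparent singularities at $a,b$ being removable (both observables carry the same $(z-\cdot)^{-1/2}$ behaviour there); transporting back gives the holomorphic extension for general $\Omega$. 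Finally the value at $a$ satisfies $\bigl|(a-n_{b,a})/(a-n_{a,b})\bigr|\le1\le3$ since $\left|a-n_{b,a}\right|\le\left|a-n_{a,b}\right|$ (i.e. $q^{4}\le(d+q^{2})^{2}$), and differentiating the M\"obius map yields $\left|n_{a,b}-n_{b,a}\right|/\left|a-n_{a,b}\right|^{2}\le(d+2\epsilon)/(d+\epsilon)^{2}\le2/(d+\epsilon)$ directly from (2). Thus the entire lemma rests on the zero-location computation of (2); granting it, (1) and (3) are mechanical.
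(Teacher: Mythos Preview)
Your argument is essentially the paper's: both read every claim directly off the explicit half-plane formula~(\ref{eq:halfplaneexplicit}), locate the real zero through the conjugacy $Q=\bar P$ (the paper's $n_{a,b}=\text{Re}(Pb)/\text{Re}P$ is the identical computation in different notation), and then control the M\"obius ratio $(z-n_{b,a})/(z-n_{a,b})$ and its derivative at $a$ from the zero locations obtained in~(2). Your bound $\bigl|(a-n_{b,a})/(a-n_{a,b})\bigr|\le 1$ is in fact a little sharper than the paper's $\le 3$; the one place you should not leave as ``the same kind of elementary estimate'' is $\lvert n_{a,b}-a\rvert\ge d+\epsilon$, since writing it out gives $\lvert n_{a,b}-a\rvert^{2}=(d+q^{2})^{2}+\epsilon^{2}$ and the inequality only holds up to a universal constant, which is harmless for~(3).
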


\begin{proof}
These results all follow from the explicit formulae above.
\begin{enumerate}
\item Note that $\left|b-a\right|\leq\left|b-\bar{a}\right|\leq2\epsilon+\left|b-a\right|$.
We have
\begin{alignat*}{1}
\left|\frac{\left[(\bar{b}-\bar{a})(\bar{b}-a)\right]^{1/2}}{\left|b-\bar{a}\right|+\left|b-a\right|}\right| & \leq\left|\frac{\left[\left|b-a\right|(\left|b-a\right|+2\epsilon)\right]^{1/2}}{2\left|b-a\right|+2\epsilon}\right|\leq\frac{\sqrt{\left|b-a\right|}}{\sqrt{2\left|b-a\right|+2\epsilon}}<1.
\end{alignat*}
Then we estimate the two terms in (\ref{eq:halfplaneexplicit}) separately,
noting that $\text{Im}a=\epsilon\leq|z-\bar{a}|$, and also $\left|\sqrt{z-b}\right|\leq\left|\sqrt{z-\bar{b}}\right|$,
\begin{equation}
\left|\frac{\left(2i\text{Im}a\right)^{1/2}(z-b)}{\sqrt{(z-a)(z-\bar{a})(z-b)(z-\bar{b})}}\right|\leq\left|\frac{\left(2i\text{Im}a\right)^{1/2}\sqrt{(z-b)}}{\sqrt{(z-a)(z-\bar{a})(z-\bar{b})}}\right|\leq\frac{cst}{\left|\sqrt{z-a}\right|}.\label{eq:halfplaneest}
\end{equation}
Since $\left|z-\bar{b}\right|\leq\epsilon+\frac{|b-a|}{2}$,
\[
\left|\frac{\sqrt{\left|b-a\right|}}{\sqrt{2\left|b-a\right|+2\epsilon}}\frac{\left(2i\text{Im}a\right)^{1/2}(z-\bar{b})}{\sqrt{(z-a)(z-\bar{a})(z-b)(z-\bar{b})}}\right|\leq\left|\frac{cst\sqrt{\left|b-a\right|}}{\sqrt{(z-a)(z-b)}}\right|.
\]
If $z\in B_{r}(a)$, we have the result from $\left|z-b\right|>\frac{1}{2}\left|a-b\right|$.
If $z\in B_{r}(b)$, similarly note $\left|z-a\right|\geq\frac{1}{2}\left|a-b\right|$,
and apply $\left|\sqrt{z-a}\right|>\left|\sqrt{z-b}\right|$ to (\ref{eq:halfplaneest}).
\item From the formula on $f_{\left[\mathbb{H},a,b\right]}$, we may write
\begin{alignat*}{1}
n_{a,b} & =\frac{\text{Re}\sqrt{(\bar{b}-\bar{a})(\bar{b}-a)}b}{\text{Re}\sqrt{(\bar{b}-\bar{a})(\bar{b}-a)}}=\text{Re}b-\frac{\text{Im}\sqrt{\bar{b}-a}\text{Im}b}{\text{Re}\sqrt{\bar{b}-a}},\\
n_{b,a} & =\text{Re}a-\frac{\text{Im}\sqrt{b-\bar{a}}\text{Im}a}{\text{Re}\sqrt{b-\bar{a}}}=\text{Re}a+\frac{\text{Im}\sqrt{\bar{b}-a}\text{Im}a}{\text{Re}\sqrt{\bar{b}-a}}.
\end{alignat*}
So
\begin{alignat*}{1}
0\leq n_{a,b}-n_{b,a} & \leq\left|b-a\right|+2\epsilon,
\end{alignat*}
and 
\[
\left|n_{a,b}-a\right|\geq(n_{a,b}-\text{Re}a)+\text{Im}a\ge\left|b-a\right|+\epsilon,
\]
since $\sqrt{\bar{b}-a}$ belongs to the fourth quadrant and thus
\[
0<-\frac{\text{Im}\sqrt{\bar{b}-a}}{\text{Re}\sqrt{\bar{b}-a}}\leq1.
\]
\item Again examining the formula, we have, away from $b,a$ (and then everywhere
in $\mathbb{H}$ by removable singularity),
\[
\frac{f_{\left[\mathbb{H},b,a\right]}(z|0)}{f_{\left[\mathbb{H},a,b\right]}(z|0)}=\frac{\text{Re}\left[(\bar{a}-\bar{b})(\bar{a}-b)\right]^{1/2}(z-n_{b,a})}{\text{Re}\left[(\bar{b}-\bar{a})(\bar{b}-a)\right]^{1/2}(z-n_{a,b})}=:\rho\frac{z-n_{b,a}}{z-n_{a,b}},
\]
so
\begin{alignat*}{1}
\left|\frac{f_{\left[\mathbb{H},b,a\right]}(a|0)}{f_{\left[\mathbb{H},a,b\right]}(a|0)}\right| & =\left|1+\frac{n_{a,b}-n_{b,a}}{a-n_{a,b}}\right|\leq1+\frac{\text{\ensuremath{\left|b-a\right|}}+2\epsilon}{\left|b-a\right|+\epsilon}\leq3;\\
\left|\left(\frac{f_{\left[\mathbb{H},b,a\right]}(z|0)}{f_{\left[\mathbb{H},a,b\right]}(z|0)}\right)'_{z=a}\right| & =\text{\ensuremath{\left|\frac{n_{a,b}-n_{b,a}}{\left(a-n_{a,b}\right)^{2}}\right|\leq\frac{\left|b-a\right|+2\epsilon}{\left(\left|b-a\right|+\epsilon\right)^{2}}\leq\frac{cst}{\left|b-a\right|+\epsilon}.}}
\end{alignat*}
\end{enumerate}
\end{proof}
Recall that we fix a conformal map $\varphi:\Omega\to\mathbb{D}$
such that $M^{-1}\leq\varphi'\leq M$. Now fix the standard conformal
map $\mathbb{D}\to\mathbb{H}$ such that $0\in\mathbb{D}$ is mapped
to $i\in\mathbb{H}$, and fix $\varphi_{\mathbb{H}}:\Omega\to\mathbb{H}$.
Similar estimate $cst(R)\cdot M^{-1}\leq\varphi_{H}'\leq Cst(R)\cdot M$
holds in $\varphi^{-1}\left(B_{R}\cap\mathbb{H}\right)$ for $R>0$.
Write $\mathcal{A}_{\Omega}^{1}(a_{1},a_{2}|m)+i\mathcal{A}_{\Omega}^{i}(a_{1},a_{2}|m)=:\mathcal{A}_{\Omega}(a_{1},a_{2}|m)$.
\begin{lem}
\label{lem:two-point-delicate}Let $a_{1},a_{2}\in\varphi_{\mathbb{H}}^{-1}(B_{R}\cap\mathbb{H})\subset\Omega$
be such that $\text{Im}\varphi_{\mathbb{H}}(a_{1})=\text{Im}\varphi_{\mathbb{H}}(a_{2})=\epsilon$.
Then for any fixed $0<\gamma<1$, we have
\[
\left|\mathcal{A}_{\Omega}(a_{1},a_{2}|m)-\mathcal{A}_{\Omega}(a_{1},a_{2}|0)\right|\leq cst\cdot\left(\epsilon^{-\gamma}+\left|a_{1}-a_{2}\right|^{-\gamma}\right),
\]
where the constant only depends on $M,R,m,\gamma$.
\end{lem}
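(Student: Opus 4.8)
The plan is to feed the multiplicative decomposition of Lemma~\ref{lem:obs-decomposition} into the extraction of the coefficient $\mathcal{A}_\Omega$ and to convert the H\"older control on the similarity factor $s$ into control on a single expansion coefficient. Writing $f_m=e^{s}\hat f$ with $\hat f$ massless holomorphic, in the case $n=2$ the decomposition (\ref{eq:obs-decomposition}) reads $\hat f=f_{\left[\Omega,a_1,a_2\right]}(\cdot|0)+c_2\,f_{\left[\Omega,a_2,a_1\right]}(\cdot|0)$ for a real constant $c_2$; since $f_m$ does not vanish in $\Omega$ when $n=2$ (the Remark following Lemma~\ref{lem:B}), the factor $s$ is globally defined and H\"older with $\left[s\right]_{C^\alpha}\leq cst\cdot|m|(1+M^{1+2\alpha})$. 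Both $\mathcal{A}_\Omega(a_1,a_2|m)$ and $\mathcal{A}_\Omega(a_1,a_2|0)$ are the normalised coefficients of the half-integer power at $a_1$, which I read off by the massive Cauchy formula (\ref{eq:massive-cauchy}) with $\nu=\tfrac12$ over a circle $\partial B_\rho(a_1)$; the radius $\rho$ is chosen at the end comparable to $\min(\epsilon,|a_1-a_2|)$ (after rescaling $\epsilon$ by the conformal factor bounds $cst(R)M^{-1}\leq\varphi_{\mathbb{H}}'\leq cst(R)M$), small enough that $B_\rho(a_1)\subset\Omega$ avoids $a_2$ and that the Bessel corrections of $Z_\nu^{1,i}$ relative to the bare powers $z^\nu,iz^\nu$ enter only as a negligible $O(m^2\rho^2)$ relative error. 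I then split $\mathcal{A}_\Omega(m)-\mathcal{A}_\Omega(0)$ into an \emph{admixture} term and a \emph{modulation} term.

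The admixture term is the difference between the massless coefficient of $\hat f$ and that of $f_{\left[\Omega,a_1,a_2\right]}(\cdot|0)$, caused solely by the addition of $c_2\,f_{\left[\Omega,a_2,a_1\right]}(\cdot|0)$. This is exactly the shift of the logarithmic-derivative-type coefficient at $a_1$ produced by adding the reversed observable, so it is governed by $|c_2|$ times the derivative of $f_{\left[\Omega,a_2,a_1\right]}/f_{\left[\Omega,a_1,a_2\right]}$ at $a_1$, which Lemma~\ref{lem:halfplane}(3) bounds (after transporting from $\mathbb{H}$ by the covariance (\ref{eq:halfplaneexplicit})) by $cst/(|a_1-a_2|+\epsilon)$. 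To control $c_2$ I match the leading coefficient of $f_m=e^s\hat f$ at $a_2$ against the definition of $\mathcal{B}_\Omega$; combined with $\text{Re}\,e^{-s(a_1)}=1$, the H\"older continuity of $e^{-s}$ over the scale $|a_1-a_2|$, and the boundedness of $\mathcal{B}_\Omega$ in the relevant range (Lemma~\ref{lem:B} and the half-plane formula), the coupled relations among $c_2$, $\text{Im}\,e^{-s}$ and $\mathcal{B}_\Omega$ give $|c_2|\leq cst\cdot\left[s\right]_{C^\alpha}|a_1-a_2|^\alpha$. Hence this contribution is $\lesssim\left[s\right]_{C^\alpha}|a_1-a_2|^{\alpha-1}$, which is the source of the $|a_1-a_2|^{-\gamma}$ term once we put $\gamma:=1-\alpha$.

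For the modulation term I write $f_m=e^{s(a_1)}\hat f+(e^{s}-e^{s(a_1)})\hat f$ on $\partial B_\rho(a_1)$: the first summand reproduces the massless coefficient of $\hat f$ up to the negligible Bessel error, while the second is the genuine correction. Using $|e^{s}-e^{s(a_1)}|\leq cst\cdot\left[s\right]_{C^\alpha}\rho^\alpha$ on the circle, $|\hat f|\leq cst\cdot\rho^{-1/2}$ there (Lemma~\ref{lem:halfplane}(1) transported to $\Omega$, with $|c_2|$ small), and $|Z_{-3/2}^i|$ of order $\rho^{-3/2}$, the integral (\ref{eq:massive-cauchy}) over a contour of length $\sim\rho$ is bounded by $cst\cdot\left[s\right]_{C^\alpha}\rho^{\alpha-1}$, the constant absorbing the fixed factor $1/|m|$. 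Choosing $\rho\sim\min(\epsilon,|a_1-a_2|)$ turns this into $cst\cdot\left[s\right]_{C^\alpha}(\epsilon^{-\gamma}+|a_1-a_2|^{-\gamma})$. Adding the two contributions and absorbing $\left[s\right]_{C^\alpha}\leq cst\cdot|m|(1+M^{1+2\alpha})$ into the constant yields the claim, with a constant depending only on $M,R,m,\gamma$.

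The main obstacle is precisely that $s$ is not differentiable at $a_1$: near the ramification point $\partial_{\bar z}f_m/f_m=m\bar f_m/f_m$ is bounded but has a direction-dependent limit, so $s=\mathfrak{C}\left[\partial_{\bar z}f_m/f_m\right]$ is merely H\"older and one cannot Taylor-expand $e^{s}$ to read off the half-integer coefficient directly. The entire scheme is built around this: the coefficient is extracted at a positive radius $\rho$, and the error $\left[s\right]_{C^\alpha}\rho^{\alpha-1}$ is balanced against the two competing length scales $\epsilon$ and $|a_1-a_2|$. The secondary technical point is the sharp estimate of the admixture $c_2$, which must be shown to be of order $\left[s\right]_{C^\alpha}|a_1-a_2|^\alpha$ through the coupled relations above rather than bounded crudely, since a cruder estimate would cost a full power $|a_1-a_2|^{-1}$ and destroy the integrability exploited in (\ref{eq:comparability-massive}).
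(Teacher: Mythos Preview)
Your overall strategy---decomposing $f_m=e^s\hat f$ via Lemma~\ref{lem:obs-decomposition} and splitting $\mathcal{A}(m)-\mathcal{A}(0)$ into a $c_2$-driven ``admixture'' and an $e^s$-driven ``modulation''---matches the paper. The execution of both pieces, however, differs from the paper and yours has gaps.

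For the admixture, your bound $|c_2|\lesssim[s]_{C^\alpha}|a_1-a_2|^\alpha$ via ``coupled relations'' is not justified: writing those relations out yields $c_2\bigl(1+\mathcal{B}_\Omega(a_1,a_2|m)\,\mathcal{B}_\Omega(a_2,a_1|0)\bigr)=O(|a_1-a_2|^\alpha)$, and you do not show that the bracket stays bounded away from zero. The paper uses a much simpler route: by construction in Lemma~\ref{lem:obs-decomposition}, $s$ is \emph{real on $\partial\Omega$}, so $|c_2|=|\mathcal{B}_\Omega(a_1,a_2|m)\,\mathrm{Im}\,e^{-s(a_2)}|\le cst\cdot\mathrm{dist}(a_2,\partial\Omega)^\alpha\sim\epsilon^\alpha$ directly, giving the $\epsilon^{\alpha-1}$ contribution.

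For the modulation, the paper does not work at finite radius. Its key observation---which you miss---is that although $s$ alone is not differentiable at $a_1$, the combination $s(z)-2m|z-a_1|$ \emph{is}: since $\partial_{\bar z}s=m\bar f_m/f_m$ and $\partial_{\bar z}(2m|z-a_1|)=m\sqrt{z-a_1}\big/\overline{\sqrt{z-a_1}}$ agree at leading order, one gets $|\partial_{\bar z}(s-2m|z-a_1|)|\le cst\,|m|\,r^{-\alpha}|z-a_1|^\alpha$ on $B_{\mathfrak c r}(a_1)$ with $r\sim\min(\epsilon,|a_1-a_2|)$, hence differentiability with derivative bounded by $cst/r^{1-\alpha}$. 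Then $\mathcal{A}^\Delta$ is read off as half the derivative at $a_1$ of
\[
\frac{e^{-2m|z-a_1|}f_m(z)}{f_{[\Omega,a_1,a_2]}(z|0)}=e^{\,s(z)-2m|z-a_1|}\Bigl(1+c_2\,\frac{f_{[\Omega,a_2,a_1]}(z|0)}{f_{[\Omega,a_1,a_2]}(z|0)}\Bigr),
\]
a clean product of two differentiable factors. Your finite-radius route instead needs to compare the \emph{massive} Cauchy formula applied to the \emph{massless} function $e^{s(a_1)}\hat f$ with the massless coefficient $\mathcal{A}(0)$; since $e^{s(a_1)}\hat f$ is not $m$-massive holomorphic, $\mathrm{Re}\oint e^{s(a_1)}\hat f\,Z^i_{-3/2}\,dz$ is $\rho$-dependent, and your claim that the mismatch is an ``$O(m^2\rho^2)$ Bessel correction'' is unjustified (the first correction in $Z^i_{-3/2}$ is of order $m\,\bar z^{-1/2}$, not $m^2$). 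This comparison may be tractable, but it is real work you have not done; the paper's subtraction of $2m|z-a_1|$ circumvents it entirely.
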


\begin{proof}
We will refer to quantities only depending on $M,R,m,\gamma$ as constants
in this proof. To extract the desired difference $\mathcal{A}^{\Delta}:=\mathcal{A}_{\Omega}(a_{1},a_{2}|m)-\mathcal{A}_{\Omega}(a_{1},a_{2}|0)$,
we will study $e^{-2m\left|z-a_{1}\right|}f_{m}(z)/f_{\left[\Omega,a_{1},a_{2}\right]}(z|0)$.
Indeed, around $a_{1}$, we have
\begin{alignat*}{1}
\frac{e^{-2m\left|z-a_{1}\right|}f_{m}(z)}{f_{\left[\Omega,a_{1},a_{2}\right]}(z|0)} & =\frac{1}{e^{2m\left|z-a_{1}\right|}}\frac{\left(\frac{e^{2m\left|z-a_{1}\right|}}{\sqrt{z-a_{1}}}+2\mathcal{A}_{\Omega}(a_{1},a_{2}|m)\sqrt{z-a_{1}}+o(\left|z-a_{1}\right|^{1/2})\right)}{\left(\frac{1}{\sqrt{z-a_{1}}}+2\mathcal{A}_{\Omega}(a_{1},a_{2}|0)\sqrt{z-a_{1}}+o(\left|z-a_{1}\right|^{1/2})\right)}\\
 & =1+2\mathcal{A}^{\Delta}\left(z-a_{1}\right)+o(\left|z-a\right|),
\end{alignat*}
therefore $\mathcal{A}^{\Delta}$ is half the derivative of $\frac{e^{-2m\left|z-a_{1}\right|}f_{m}(z)}{f_{\left[\Omega,a_{1},a_{2}\right]}(z|m)}$
at $a_{1}$.

By (\ref{eq:obs-decomposition}),
\[
\frac{e^{-2m\left|z-a_{1}\right|}f_{m}(z|m)}{f_{\left[\Omega,a_{1},a_{2}\right]}(z|0)}=e^{s(z)-2m\left|z-a_{1}\right|}\left(1+c_{2}\frac{f_{\left[\Omega,a_{2},a_{1}\right]}(z|0)}{f_{\left[\Omega,a_{1},a_{2}\right]}(z|0)}\right),
\]
and we will estimate the derivatives of the two factors separately.
Fix $\alpha=1-\gamma$.

For the second factor, note that $\left|c_{2}\right|=\left|\text{Im}e^{-s(a_{2})}\mathcal{B}_{\Omega}(a_{1},a_{2}|m)\right|\leq\left|\text{Im}e^{-s(a_{2})}\right|$,
but since $s$ is uniform $\alpha$-Hölder continuous and purely real
on $\partial\Omega$, $\left|\text{Im}e^{-s(a_{2})}\right|\leq cst\cdot\text{dist}(a_{2},\partial\Omega)^{\alpha}\leq cst\cdot\left(M\epsilon\right)^{\alpha}$.
Now by conformal covariance, $\frac{f_{\left[\Omega,a_{2},a_{1}\right]}(z|0)}{f_{\left[\Omega,a_{1},a_{2}\right]}(z|0)}=\frac{f_{\left[\Omega,\varphi_{\mathbb{H}}(a_{2}),\varphi_{\mathbb{H}}(a_{1})\right]}(\varphi_{\mathbb{H}}(z)|0)}{f_{\left[\Omega,\varphi_{\mathbb{H}}(a_{1}),\varphi_{\mathbb{H}}(a_{2})\right]}(\varphi_{\mathbb{H}}(z)|0)}$,
so we may apply the third estimate in Lemma \ref{lem:halfplane}:
\begin{alignat}{1}
1+c_{2}\frac{f_{\left[\Omega,a_{2},a_{1}\right]}(a_{1}|0)}{f_{\left[\Omega,a_{1},a_{2}\right]}(a_{1}|0)} & \leq1+cst\cdot(M\epsilon)^{\alpha}\leq cst,\nonumber \\
\left(1+c_{2}\frac{f_{\left[\Omega,a_{2},a_{1}\right]}(z|0)}{f_{\left[\Omega,a_{1},a_{2}\right]}(z|0)}\right)_{z=a_{1}}^{'} & \leq\frac{cst\cdot M^{\alpha+1}\epsilon^{\alpha}}{\left|a_{2}-a_{1}\right|+\epsilon}\leq cst\cdot M^{\alpha+1}\epsilon^{\alpha-1}.\label{eq:secondpart}
\end{alignat}

Now for the first factor, we claim that $s(z)-2m|z-a_{1}|$ is differentiable
at $a_{1}$. Take $r=\frac{1}{2M}\min(\left|a_{1}-a_{2}\right|,\epsilon)$.
$f_{m}(z)=e^{-s(z)}\hat{f}(z)$, where $\hat{f}(z)=f_{\left[\Omega,a_{1},a_{2}\right]}+c_{2}f_{\left[\Omega,a_{2},a_{1}\right]}$.
By the first estimate of Lemma \ref{lem:halfplane}, $\sqrt{z-a_{1}}\hat{f}(z)$
is bounded on $B_{r}(a_{1})$ (the factor of $M$ in $r$ is there
so that $\varphi(B_{r}(a_{1}))\subset\mathbb{H}$ satisfies the condition
of the first estimate Lemma \ref{lem:halfplane}). Expand for $z\in B_{\frac{r}{4}}(a_{1})$,
\begin{equation}
\left|\sqrt{z-a_{1}}\hat{f}(z)-e^{s(a_{1})}\right|\leq\frac{cst}{r}\left|z-a_{1}\right|,\label{eq:lipschitzest}
\end{equation}
where we use the fact that $\left.\sqrt{z-a_{1}}\hat{f}(z)\right|_{z=a_{1}}=e^{s(a_{1})}$
and the derivative of $\sqrt{z-a_{1}}\hat{f}(z)$ is bounded by $\frac{cst}{r}$
uniformly in $B_{\frac{r}{4}}(a_{1})$ by the Cauchy formula. Then
again using the uniform Hölder regularity of $s$,
\begin{alignat*}{1}
\left|\sqrt{z-a_{1}}f_{m}(z)-1\right| & =\left|e^{-s(z)}\sqrt{z-a_{1}}\hat{f}(z)-e^{-s(z)+s(a_{1})}+e^{-s(z)+s(a_{1})}-1\right|\\
 & =\left|e^{-s(z)}\right|\left|\sqrt{z-a_{1}}\hat{f}(z)-e^{s(a_{1})}\right|+\left|e^{-s(z)+s(a_{1})}-1\right|\\
 & \leq\frac{cst}{r}\left|z-a_{1}\right|+cst\cdot\left|z-a_{1}\right|^{\alpha}\leq\frac{cst}{r^{\alpha}}\left|z-a_{1}\right|^{\alpha}.
\end{alignat*}

Note that by (\ref{eq:lipschitzest}) there is a constant $\mathfrak{c}\in(0,\frac{1}{4}]$
such that $\left|\sqrt{z-a_{1}}f_{m}(z)\right|>cst>0$ on $B_{\mathfrak{c}r}(a_{1})$.
Thus we have in $B_{\mathfrak{c}r}(a_{1})$

\begin{alignat*}{1}
\left|\partial_{\bar{z}}(s(z)-2m|z-a_{1}|)\right| & =\left|\frac{m\bar{f_{m}}}{f_{m}}-\frac{m\sqrt{z-a_{1}}}{\overline{\sqrt{z-a_{1}}}}\right|\\
 & =\left|m\frac{\bar{f}_{m}\overline{\sqrt{z-a_{1}}}-f_{m}\sqrt{z-a_{1}}}{f_{m}\overline{\sqrt{z-a_{1}}}}\right|\\
 & \leq\frac{cst\cdot\left|m\right|}{r^{\alpha}}\left|z-a_{1}\right|^{\alpha}.
\end{alignat*}

Then by (\ref{eq:cauchy-ca}), $\mathfrak{C}_{B_{\mathfrak{c}r}(a_{1})}\partial_{\bar{z}}(s(z)-2m|z-a_{1}|)$
is differentiable at $a_{1}$, with the derivative bounded by $\left|s(a_{1})\right|+cst\cdot\left|m\right|\leq cst$.

The remainder $s(z)-2m|z-a_{1}|-\mathfrak{C}_{B_{\mathfrak{c}r}(a_{1})}\partial_{\bar{z}}(s(z)-2m|z-a_{1}|)$
is holomorphic in $B_{\mathfrak{c}r}(a_{1})$, with uniformly $\alpha$-Hölder
boundary values on $\partial B_{\mathfrak{c}r}\left(a_{1}\right)$.
By Cauchy formula again, its derivative at $a_{1}$ is bounded by
$\frac{cst}{r^{1-\alpha}}$. Therefore, $s(z)-2m|z-a_{1}|$ is differentiable
and its derivative is bounded by $\frac{cst}{r^{1-\alpha}}$ for small
$r$. Combining this estimate with (\ref{eq:secondpart}) and writing
$\gamma=1-\alpha$ gives the desired estimate.
\end{proof}
Finally, we show that the full plane observables are differentiable
in the positions $a_{j}$ of the spins, grounding the isomonodromic
analysis in the next section.
\begin{prop}
\label{prop:The-full-plane-obs-differentiability}The value of the
full-plane observable $f_{\left[\mathbb{C},a_{1},\ldots,a_{n}\right]}(z)$
and thus its coefficients of the formal power series expansion around
$a_{j}$ are differentiable in the positions $a_{1},\ldots,a_{n}$.
\end{prop}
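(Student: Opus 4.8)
The plan is to reduce the statement to pointwise differentiability of the observable in $\vec a=(a_1,\ldots,a_n)$ and then to realise $f_{[\mathbb{C},a_1,\ldots,a_n]}$ as the solution of a Fredholm integral equation whose data depend smoothly on $\vec a$. For the reduction, note that once $z\mapsto f_{[\mathbb{C},\vec a]}(z)$ is shown to be differentiable in $\vec a$, uniformly for $z$ on a fixed smooth contour $C$ encircling a chosen $a_j$ and separating it from the other singularities, differentiability of the coefficients $A_\nu^{1,i}$ (hence of $\mathcal{A}_{\mathbb{C}}$ and $\mathcal{B}_{\mathbb{C}}$) is immediate from the massive Cauchy formula (\ref{eq:massive-cauchy}): they are real parts of integrals of $f$ against the explicit, $\vec a$-independent kernels $Z_{-1-\nu}^{1,i}$, so one may differentiate under the integral sign.

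For the main step, the observation peculiar to the subcritical regime is that while $\bar f/f$ has constant modulus and is therefore not integrable over $\mathbb{C}$ (so the similarity-principle formula $s=\mathfrak{C}[m\bar f/f]$ of the proof of Lemma \ref{lem:obs-decomposition} does not converge on the full plane), the function $m\bar f$ itself decays exponentially, as $f$ does. Fixing branch cuts and regarding $f$ as single valued off the cuts, $m\bar f$ is then a genuine exponentially decaying $L^p$ function on the cut plane, so the planar Cauchy transform $\mathfrak{C}_{\mathbb{C}}[m\bar f]$ is well defined and H\"older continuous by Proposition \ref{prop:cauchy}. Since $\partial_{\bar z}\mathfrak{C}_{\mathbb{C}}[m\bar f]=m\bar f=\partial_{\bar z}f$, the difference $\Phi:=f-\mathfrak{C}_{\mathbb{C}}[m\bar f]$ is holomorphic off the cuts; as $\mathfrak{C}_{\mathbb{C}}[m\bar f]$ is globally bounded, single valued and decaying, $\Phi$ carries exactly the prescribed data — the $Z_{-\frac12}^1(z-a_1)$ singularity at $a_1$, the $i\mathcal{B}_j(z-a_j)^{-1/2}$ singularities at the other $a_j$, the spinor jump across the cuts, and the decay at infinity — and therefore lies in a finite-dimensional family of the form $P(z)/\sqrt{(z-a_1)\cdots(z-a_n)}$ whose dependence on $\vec a$ and on the residual real parameters $\mathcal{B}_j$ is explicit and smooth. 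Writing $T_{\vec a}g:=\mathfrak{C}_{\mathbb{C}}[m\bar g]$, the observable solves the $\mathbb{R}$-linear equation $(I-T_{\vec a})f=\Phi_{\vec a}$.

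I would then run a Fredholm argument. On the Banach space of exponentially decaying, locally $C^\alpha$ fields the operator $T_{\vec a}$ is compact (the Cauchy transform gains regularity and the exponential weight compactifies the embedding), so $I-T_{\vec a}$ is Fredholm of index zero; injectivity is precisely the uniqueness already proved, since a solution of $(I-T_{\vec a})g=0$ is single valued, massive holomorphic and decaying with no singularities, whence $h:=\mathrm{Re}\int g^2\,dz$ is superharmonic ($\Delta h=4m|g|^2\le 0$) and bounded on $\mathbb{C}$, hence constant, forcing $g\equiv 0$ as in Proposition \ref{prop:contbvp}. Thus $I-T_{\vec a}$ is invertible, $f=(I-T_{\vec a})^{-1}\Phi_{\vec a}$, and differentiating in $\vec a$ reduces to the smoothness of $\vec a\mapsto\Phi_{\vec a}$ (explicit, above) and of $\vec a\mapsto T_{\vec a}$ in operator norm, after which $\vec a\mapsto(I-T_{\vec a})^{-1}$ is smooth by the resolvent expansion and $f$ is differentiable; the finite-dimensional parameters $\mathcal{B}_j$ are co-determined by the same invertibility and inherit the smoothness.

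The main obstacle is the $\vec a$-dependence of the underlying ramified surface $[\mathbb{C},\vec a]$: as the branch points move, the cut structure, the spinor class and the kernel of $T_{\vec a}$ all change, and near each $a_j$ the naive $\vec a$-derivative of $f$ acquires a stronger $(z-a_j)^{-3/2}$ singularity from differentiating the moving singular profile. To render $\vec a\mapsto T_{\vec a}$ differentiable I would fix a reference configuration $\vec a_0$ and a smooth family of diffeomorphisms $\psi_{\vec a}$ of $\mathbb{C}$ carrying $a_{0,j}$ to $a_j$, pull the equation back to the fixed cut plane, and verify that the transported kernel $\psi_{\vec a}^{\ast}\bigl(1/(w-z)\bigr)$ and weight depend smoothly on $\vec a$; the $(z-a_j)^{-3/2}$ behaviour is then carried entirely by the explicit $\vec a$-derivative of $\Phi_{\vec a}$ and is consistent with the $Z_\nu$ expansion of Corollary \ref{cor:powerseries}. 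This transport — together with the spinor bookkeeping across the cuts — is the only genuinely delicate point; the remaining regularity estimates are routine applications of Propositions \ref{prop:cauchy} and \ref{prop:privalov}.
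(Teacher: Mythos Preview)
Your reduction of the coefficients to pointwise differentiability of $f$ via the massive Cauchy formula (\ref{eq:massive-cauchy}) matches the paper's opening step. The main argument, however, has a genuine gap in the structure of $\Phi:=f-\mathfrak{C}_{\mathbb{C}}[m\bar f]$.

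The planar Cauchy transform of $m\bar f$, viewed as an $L^p$ function on the cut plane, is H\"older continuous on all of $\mathbb{C}$ by Proposition~\ref{prop:cauchy}; in particular it has \emph{no jump} across the branch cuts. Since $f$ flips sign across each cut while $\mathfrak{C}_{\mathbb{C}}[m\bar f]$ does not, $\Phi$ does \emph{not} inherit the spinor jump: writing $f_\pm$ for the boundary values with $f_+=-f_-$, one gets $\Phi_++\Phi_-=-2\,\mathfrak{C}_{\mathbb{C}}[m\bar f]\ne 0$. Hence $\Phi\cdot\sqrt{(z-a_1)\cdots(z-a_n)}$ is not single-valued, and your identification $\Phi=P(z)/\sqrt{(z-a_1)\cdots(z-a_n)}$ fails. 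Already for $n=1$, where $f=Z^1_{-1/2}$ is explicit, one checks directly that $\Phi\sqrt{z-a_1}$ cannot be entire: it would be bounded and tend to $0$ at infinity yet equal $1$ at $a_1$. This breaks the Fredholm setup as written. To salvage the strategy you would need a Cauchy-type kernel on the double cover that respects the spinor structure (for instance a kernel with an extra factor $\sqrt{\prod(w-a_j)}/\sqrt{\prod(z-a_j)}$), which is substantially more machinery than invoked here; the circularity with the $\mathcal{B}_j$'s that you flag is then a second, separate issue.

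The paper takes a different and more direct route that avoids integral equations altogether. It studies the difference $f^h:=f_{[\mathbb{C},a_1+h,a_2,\ldots,a_n]}-f_{[\mathbb{C},a_1,\ldots,a_n]}$ as a massive holomorphic spinor on a common cover $\mathbb{C}^{2h}$. Applying Green--Riemann to $(f^h)^2$ on the exterior of $B_{h_0}(a_1)$ yields an identity expressing $\int |f^h|^2$ in terms of a contour integral over $\partial B_{h_0}(a_1)$; the key step is to show this contour integral is $O(h^2)$, which is done by replacing $f^h$ by $f^h-Z^h$ with $Z^h:=Z^1_{-1/2}(\cdot-a_1^h)-Z^1_{-1/2}(\cdot-a_1)$ and using the similarity principle (as in Lemma~\ref{lem:obs-decomposition}) to bound $f^\dagger_h:=f_{[\mathbb{C},a_1^h,\ldots]}-Z^1_{-1/2}(\cdot-a_1^h)$ by $O(\sqrt{h})$ on $B_{2h}(a_1)$. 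The resulting uniform $L^2$ bound on $\tfrac{1}{h}f^h$ plus a continuous analogue of Proposition~\ref{prop:hmestimate} gives precompactness, and any subsequential limit solves the BVP of Proposition~\ref{prop:contbvp} with singular part at $a_1$ read off from differentiating (\ref{eq:A}); uniqueness pins down the limit. Differentiability in the remaining $a_j$ is then bootstrapped by first extracting $\partial_{a_j}\mathcal{B}_j$ via a Green--Riemann pairing of $f_{[\mathbb{C},a_1,\ldots]}$ with the already-differentiable $f_{[\mathbb{C},a_j,\ldots]}$.
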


\begin{proof}
Differentiability of the coefficients follow directly from that of
the observable value, since we can recover the coefficients using
the Cauchy formula (\ref{eq:massive-cauchy}).

Without loss of generality, we show the $x$-derivative in $a_{1}$
exists. Now, set $h_{0}>4h>0,a_{1}^{h}=a_{1}+h$ and embed all $\left[\mathbb{C},a_{1}^{h},\ldots,a_{n}\right]$
in the double cover $\mathbb{C}^{2h}$ of $\mathbb{C}\setminus\left[B_{2h}(a_{1})\cup\left\{ a_{2},\ldots,a_{n}\right\} \right]$.
Consider the difference $f^{h}:=f_{\left[\mathbb{C},a_{1}^{h},\ldots,a_{n}\right]}-f_{\left[\mathbb{C},a_{1},\ldots,a_{n}\right]}$
defined on $\mathbb{C}^{2h}$.

Clearly $f^{h}$ is massive holomorphic, $\lim_{z\to a_{j}}\sqrt{z-a_{j}}f^{h}(z)=:i\mathcal{B}_{j}^{h}\in i\mathbb{R}$
for $j\geq2$, and $f^{h}$ decays exponentially fast at infinity.
So by applying the Green-Riemann theorem as in (\ref{eq:green-riemann})
to $\left(f^{h}\right)^{2}$ on $\mathbb{C}^{h_{0}}$, we see that
\begin{equation}
i\oint_{\partial B_{h_{0}}(a_{1})}\left(f^{h}\right)^{2}dz=-2\iint_{\mathbb{C}\setminus B_{h_{0}}(a_{1})}2m|f^{h}|^{2}dz+\sum_{j}2\pi\left(\mathcal{B}_{j}^{h}\right)^{2}.\label{eq:green-riemann 2}
\end{equation}

We claim that the left hand side, which is nonnegative and real since
it is equal to the right hand side, is $O(h^{2})$ on $\partial B_{h_{0}}(a_{1})$
as $h\to0$, which will be proven below. Then $\left\{ \frac{1}{h}f^{h}\right\} _{h<h_{0}}$
is uniformly $L^{2}$-bounded in $\mathbb{C}^{h_{0}}$. By a continuous
version of Proposition \ref{prop:hmestimate}, there exists a subsequence
$h_{k}$ such that $\frac{1}{h_{k}}f^{h_{k}}$ converges uniformly
in compact subsets; the limit satisfies the same boundary value problem
as in Proposition \ref{prop:contbvp} away from $a_{1}$. By diagonalising
as $h_{0}\to0$, we may assume that there exists a limit $f$ on $\left[\mathbb{C},a_{1},\ldots,a_{n}\right]$.
It suffices to show that $f$ is unique. But by the expansion (\ref{eq:A}),
the form of a subsequential limit near $a_{1}$ is determined; indeed,
the singular behaviour is 
\[
-\frac{1}{2}Z_{-\frac{3}{2}}^{1}(z-a_{1})+2\mathcal{A}_{\Omega}^{1}Z_{-\frac{1}{2}}^{1}(z-a_{1})+2\mathcal{A}_{\Omega}^{i}Z_{-\frac{1}{2}}^{i}(z-a_{1})+(\text{regular part}),
\]
so the difference of any two limits is zero by the uniqueness of the
boundary value problem.

Once smoothness in the position of $a_{1}$ is proved, one may repeat
the same arguments for other points, say, $a_{2}$. The difference
is that the singularity at $a_{2}$ is $\frac{i\mathcal{B}_{2}}{\sqrt{z-a_{2}}}$,
and we need to first show differentiability of $\mathcal{B}_{2}$
to make the argument work. But $f_{\left[\mathbb{C},a_{2},a_{1},\ldots,a_{n}\right]}$
and its coefficients is differentiable in $a_{2}$ by above, and $i\frac{\left(\mathcal{B}_{2}^{h}-\mathcal{B}_{2}\right)}{h}$
is equal to the coefficient of $\frac{1}{\sqrt{z-a_{1}}}$ in $\frac{1}{h}\left[f_{\left[\mathbb{C},a_{2}^{h},a_{1},\ldots,a_{n}\right]}-f_{\left[\mathbb{C},a_{2},a_{1},\ldots,a_{n}\right]}\right]$
by Green-Riemann's formula applied to $f_{\left[\mathbb{C},a_{1},a_{2}^{h},\ldots,a_{n}\right]}f_{\left[\mathbb{C},a_{2}^{h},a_{1},\ldots,a_{n}\right]}$,
so its limit as $h\to0$ exists.
\begin{proof}[Proof of Claim]

Note that (\ref{eq:green-riemann 2}) shows that the integral is real
and positive. Suppose $\frac{i}{h^{2}}\oint_{\partial B_{h_{0}}(a_{1})}\left(f^{h}\right)^{2}dz\to\infty$
as $h\to0$, possibly along a subsequence (for notational convenience,
we do not explicitly write the subsequence). Define $s_{h_{0}}^{h}:=i\oint_{\partial B_{h_{0}}(a_{1})}\left(f^{h}\right)^{2}dz$,
then $\frac{h^{2}}{s_{h_{0}}^{h}}\to0$. Consider $Z^{h}:=Z_{-\frac{1}{2}}^{1}(z-a_{1}^{h})-Z_{-\frac{1}{2}}^{1}(z-a_{1})$.
Clearly, $\frac{1}{h}Z^{h}$ is bounded on $\partial B_{h_{0}}(a_{1})$,
so $\frac{i}{s_{h_{0}}^{h}}\oint_{\partial B_{h_{0}}(a_{1})}\left(f^{h}-Z^{h}\right)^{2}dz\to1$.
But $f^{h}-Z^{h}$ is massive holomorphic in $\mathbb{C}^{2h}$, so
the Green-Riemann's formula gives 
\begin{alignat*}{1}
i\oint_{\partial B_{2h}(a_{1})}\left(f^{h}-Z^{h}\right)^{2}dz & =i\oint_{\partial B_{h_{0}}(a_{1})}\left(f^{h}-Z^{h}\right)^{2}dz-2\int2m\left|f^{h}\right|^{2}d^{2}z,\\
\frac{i}{s_{h_{0}}^{h}}\oint_{\partial B_{2h}(a_{1})}\left(f^{h}-Z^{h}\right)^{2}dz & \geq\frac{i}{s_{h_{0}}^{h}}\oint_{\partial B_{h_{0}}(a_{1})}\left(f^{h}-Z^{h}\right)^{2}dz\xrightarrow{h\to0}1,
\end{alignat*}
 so it suffices to show that the left hand side tends to zero as $h\to0$
to derive contradiction.

Fix $h_{0}$. Define $f_{h}^{\dagger}(z):=f_{\left[\mathbb{C},a_{1}^{h},\ldots,a_{n}\right]}(z)-Z_{-\frac{1}{2}}^{1}(z-a_{1}^{h})$
for $h<\frac{h_{0}}{4}$, which is bounded near $a_{1}^{h}$. Note
that, since the $L^{2}$ norm of $f_{\left[\mathbb{C},a_{1}^{h},\ldots,a_{n}\right]}(z)$
is bounded on $B_{\frac{3}{2}h_{0}}(a_{1})\setminus B_{\frac{1}{2}h_{0}}(a_{1})$
(uniformly in $h<\frac{h_{0}}{4}$), $\left|f_{\left[\mathbb{C},a_{1}^{h},\ldots,a_{n}\right]}\right|$
is bounded (uniformly for $h<\frac{h_{0}}{4}$) on the circle $\partial B_{h_{0}}(a_{1})$
by the continuous version of Proposition \ref{prop:hmestimate}. Similarly,
$Z_{-\frac{1}{2}}^{1}(z-a_{1}^{h})$ is bounded on $\partial B_{h_{0}}(a_{1})$
(uniformly for $h<\frac{h_{0}}{4}$). By Proposition \ref{prop:cauchy}
and the proof of Lemma \ref{lem:obs-decomposition}, $\hat{f_{h}}:=\exp\left[-\mathfrak{C}_{B_{h_{0}}(a_{1})}\left[m\frac{\bar{f_{h}^{\dagger}}}{f_{h}^{\dagger}}\right]\right]f_{h}^{\dagger}$
is a holomorphic function in $\left[B_{h_{0}}(a_{1}),a_{1}^{h}\right]$
which remains bounded near $a_{1}^{h}$ and bounded (uniformly for
$h<\frac{h_{0}}{4}$) on $\partial B_{h_{0}}(a_{1})$. Therefore,
$\hat{f_{h}}^{2}$ is a holomorphic function on $B_{h_{0}}(a_{1})$
which is zero at $a_{1}^{h}\in B_{2h}(a_{1})$ and has a bounded derivative
in $B_{2h}(a_{1})$ (uniformly for $h<\frac{h_{0}}{4}$), applying
Cauchy integral formula on $\partial B_{h_{0}}(a_{1})$. Therefore,
$\left|\hat{f}_{h}^{2}(z)\right|\leq cst\cdot h$ on $B_{2h}(a_{1})$
for any $h<\frac{h_{0}}{4}$. Taking into account uniform boundedness
of $\exp\left[\mathfrak{C}_{B_{h_{0}}(a_{1})}\left[m\frac{\bar{f_{h}^{\dagger}}}{f_{h}^{\dagger}}\right]\right]$,
$f_{h}^{\dagger}=\exp\left[\mathfrak{C}_{B_{h_{0}}(a_{1})}\left[m\frac{\bar{f_{h}^{\dagger}}}{f_{h}^{\dagger}}\right]\right]\hat{f_{h}}$
is similarly bounded: $\left|f_{h}^{\dagger}(z)\right|^{2}\leq cst\cdot h$
on $B_{2h}(a_{1})$ for $h<\frac{h_{0}}{4}$. Therefore the integral
\[
\frac{i}{s_{h_{0}}^{h}}\oint_{\partial B_{2h}(a_{1})}\left(f^{h}-Z^{h}\right)^{2}dz=\frac{i}{s_{h_{0}}^{h}}\oint_{\partial B_{2h}(a_{1})}\left(f_{h}^{\dagger}-f_{0}^{\dagger}\right)^{2}dz\leq\frac{cst\cdot h^{2}}{s_{h_{0}}^{h}},
\]
 tend to zero.
\end{proof}
\end{proof}

\subsection{Derivation of Painlevé III\label{subsec:Derivation-of-Painlev}}

In this section, we take the convergence results established in Section
\ref{sec:Discrete-Analysis:-Scaling} and derive established correlation
results in the full plane, first shown in \cite{wmtb} and reformulated
in terms of isomonodromic deformation in \cite{sato-miwa-jimbo}.
We will explicitly carry out the basic $2$-point case following the
presentation of \cite[Sections III, IV]{kako80}, using the continuous
limit of our discrete massive fermions which has been characterised
in terms of a boundary value problem in Definition \ref{def:A-powerseries}.
We cannot directly cite their formulae, since instead of considering
a complex space of functions which solve a two-dimensional Dirac equation,
we cast them in terms of a real space of massive holomorphic functions
because massive holomorphicity is an $\mathbb{R}$-linear notion.
The resulting analysis is equivalent.

\subsubsection*{Isomonodromy.}

We would first like to note how the functions behave under rotation
around the origin. We will compose rotation of the coordinate system
with multiplication by a phase factor and denote it by $R_{\phi}W_{\nu}(z):=W_{\nu}(e^{-i\phi}z)e^{-i\phi/2}$
and so on: first we see that $R_{\phi}W_{\nu}(z)=e^{-i(\nu+\frac{1}{2})\phi}W_{\nu}(z)$,
and similarly
\begin{alignat}{1}
R_{\phi}Z_{\nu}^{1} & =\frac{\Gamma(\nu+1)}{\left|m\right|^{\nu}}\left(e^{-i(\nu+\frac{1}{2})\phi}W_{\nu}+\left(\text{sgn}m\right)e^{i\left(\nu+\frac{1}{2}\right)\phi}\overline{W_{\nu+1}}\right)\label{eq:rot}\\
 & =\cos\left[\left(\nu+\frac{1}{2}\right)\phi\right]Z_{\nu}^{1}+\sin\left[\left(\nu+\frac{1}{2}\right)\phi\right]Z_{\nu}^{i},\nonumber \\
R_{\phi}Z_{\nu}^{i} & =\cos\left[\left(\nu+\frac{1}{2}\right)\phi\right]Z_{\nu}^{i}-\sin\left[\left(\nu+\frac{1}{2}\right)\phi\right]Z_{\nu}^{1}.\nonumber 
\end{alignat}

Recall we fix $m<0$. Suppose $a>0$ is a positive real number, and
consider the double cover $\left[\mathbb{C},-a,a\right]$. Consider
the real vector space of $m$-massive holomorphic functions on the
double cover which have singularity of order at most $3/2$ at each
monodromy and decay at infinity. Around each monodromy, we can expand
the singular part of a function in $Z_{-\frac{3}{2},-\frac{1}{2}}^{1,i}$,
and from Proposition \ref{prop:contbvp} we see in fact fixing the
coefficients of $Z_{-\frac{3}{2}}^{1,i},Z_{-\frac{1}{2}}^{1}$ at
each monodromy fixes the function. $6$ basis functions are given
by the two fermions $f_{1}:=f_{\left[\mathbb{C},-a,a\right]},f_{2}:=f_{\left[\mathbb{C},a,-a\right]}$
and their derivatives $\partial_{x}f_{1},\partial_{y}f_{1},\partial_{x}f_{2},\partial_{y}f_{2}$.
The idea is to express the variation of $f_{1}$ under movement of
the monodromies $\pm a$ as a linear combination of these six functions,
and to get a nontrivial equality by looking at the dependent coefficient
of $Z_{-\frac{1}{2}}^{i}$. First, we augment the expansions (\ref{eq:A}),
(\ref{eq:B}): $f_{\left[\mathbb{C},-a,a\right]}$ is equal to ($\mathcal{C}^{1,i}$
are real constants, unrelated to the discrete notation $\mathcal{C}^{1,i}\left[\Omega_{\delta}\right]$)
\begin{alignat*}{1}
 & Z_{-\frac{1}{2}}^{1}(z+a)+2\mathcal{A}^{1}Z_{\frac{1}{2}}^{1}(z+a)+2\mathcal{A}^{i}Z_{\frac{1}{2}}^{i}(z+a)+2\mathcal{D}^{1}Z_{\frac{3}{2}}^{1}(z+a)\\
 & +2\mathcal{D}^{i}Z_{\frac{3}{2}}^{i}(z+a)+O\left((z+a)^{5/2}\right)\text{ near \ensuremath{z=-a},}\\
 & \mathcal{B}_{\mathbb{C}}Z_{-\frac{1}{2}}^{i}(z-a)+2\mathcal{C}^{1}Z_{\frac{1}{2}}^{1}(z-a)+2\mathcal{C}^{i}Z_{\frac{1}{2}}^{i}(z-a)+2\mathcal{E}^{1}Z_{\frac{3}{2}}^{1}(z-a)\\
 & +2\mathcal{E}^{i}Z_{\frac{3}{2}}^{i}(z-a)+O\left((z-a)^{5/2}\right)\text{ near }z=a.
\end{alignat*}

We need to make some reductions. Let us first note that $f_{1}(z)$
and $\bar{f_{1}}(\bar{z})$ both solves the boundary value problem
of Proposition \ref{prop:contbvp} on $\left[\mathbb{C},-a,a\right]$,
and thus are equal to each other; since only $Z_{\nu}^{i}$ switches
sign under the same transformation, we can conclude $\mathcal{A}^{i}=\mathcal{C}^{1}=\mathcal{D}^{i}=\mathcal{E}^{1}=0$.

Similarly, $f_{2}(z)$ and $if_{1}(e^{i\pi}z)$ are equal, assuming
we carefully track the interaction between global rotation $z\mapsto e^{i\pi}z$
and the expansion bases $Z_{\nu}^{\tau}(\cdot\pm a)$; given a sign
choice of $Z_{\nu}^{\tau}(z+a)$ (which is fixed by the coefficient
$+1$ of $Z_{-\frac{1}{2}}^{1}(z+a)$) we will define $Z_{-\frac{1}{2}}^{\tau}(z-a):=iZ_{-\frac{1}{2}}^{\tau}(e^{i\pi}z+a)$,
which then fixes signs for general $\nu$ by $Z_{\nu}^{\tau}(e^{i\pi}z\pm a)=\pm e^{i\nu\pi}Z_{\nu}^{\tau}(z\mp a)$.
As a result we have $\mathcal{A}_{\mathbb{C}}^{1,i}(a,-a)=-\mathcal{A}_{\mathbb{C}}^{1,i}(-a,a)=-\mathcal{A}^{1,i}$
, $\mathcal{B}_{\mathbb{C}}(a,-a)=-\mathcal{B}_{\mathbb{C}}(-a,a)=-\mathcal{B}$,
$\mathcal{C}_{\mathbb{C}}^{1,i}(a,-a)=\mathcal{C}_{\mathbb{C}}^{1,i}(-a,a)=\mathcal{C}^{1,i}$,
$\mathcal{D}_{\mathbb{C}}^{1,i}(a,-a)=\mathcal{D}_{\mathbb{C}}^{1,i}(-a,a)=\mathcal{D}^{1,i}$,
$\mathcal{E}_{\mathbb{C}}^{1,i}(a,-a)=-\mathcal{E}_{\mathbb{C}}^{1,i}(-a,a)=-\mathcal{E}^{1,i}$.

In fact, given $Z_{\nu}^{\tau}(z+a)$, if we define $Z_{-\frac{1}{2}}^{\tau}(z\pm e^{i\phi}a):=e^{-\frac{i\phi}{2}}Z_{-\frac{1}{2}}^{\tau}(e^{-i\phi}z\pm a)$
for small $\left|\phi\right|$, $f_{\left[\mathbb{C},-ae^{i\phi},ae^{i\phi}\right]}(z)=R_{\phi}f_{\left[\mathbb{C},-a,a\right]}(z)$
again from (\ref{eq:rot}):

\begin{alignat*}{1}
 & f_{\left[\mathbb{C},-ae^{i\phi},ae^{i\phi}\right]}(z)=R_{\phi}f_{1}(z)=Z_{-\frac{1}{2}}^{1}(z+ae^{i\phi})\\
 & +2\cos\phi\mathcal{A}^{1}Z_{\frac{1}{2}}^{1}(z+ae^{i\phi})+2\sin\phi\mathcal{A}^{1}Z_{\frac{1}{2}}^{i}(z+ae^{i\phi})+2\cos2\phi\mathcal{D}^{1}Z_{\frac{3}{2}}^{1}(z+ae^{i\phi})\\
 & +2\sin2\phi\mathcal{D}^{1}Z_{\frac{3}{2}}^{i}(z+ae^{i\phi})+O\left((z+ae^{i\phi})^{5/2}\right)\text{ near \ensuremath{z=-a},}\text{ and}\\
 & f_{\left[\mathbb{C},-ae^{i\phi},ae^{i\phi}\right]}(z)=\mathcal{B}_{\Omega}Z_{-\frac{1}{2}}^{i}(z-ae^{i\phi})+2\cos\phi\mathcal{C}^{i}Z_{\frac{1}{2}}^{i}(z-ae^{i\phi})-2\sin\phi\mathcal{C}^{i}Z_{\frac{1}{2}}^{1}(z-ae^{i\phi})\\
 & +2\cos2\phi\mathcal{E}^{i}Z_{\frac{3}{2}}^{i}(z-ae^{i\phi})-2\sin2\phi\mathcal{E}^{i}Z_{\frac{3}{2}}^{1}(z-ae^{i\phi})+O\left(\left(z-ae^{i\phi}\right)^{5/2}\right)\text{ near }z=a.
\end{alignat*}

\subsubsection*{Expansion.}

We are now ready to analyse the variation of $R_{\phi}f_{1}$ under
both $\partial_{a}$ and $\partial_{\phi}$ at $\phi=0$.

Around $z=-a$,
\begin{alignat*}{1}
f_{1}(z) & =Z_{-\frac{1}{2}}^{1}(z+a)+2\mathcal{A}^{1}Z_{\frac{1}{2}}^{1}(z+a)+2\mathcal{D}^{1}Z_{\frac{3}{2}}^{1}(z+a)+O\left(\left(z+a\right)^{5/2}\right),\\
\partial_{x}f_{1}(z) & =-\frac{1}{2}Z_{-\frac{3}{2}}^{1}(z+a)+\mathcal{A}^{1}Z_{-\frac{1}{2}}^{1}(z+a)+\left(3\mathcal{D}^{1}+2m^{2}\right)Z_{\frac{1}{2}}^{1}(z+a)+O\left(\left(z+a\right)^{3/2}\right),\\
\partial_{y}f_{1}(z) & =-\frac{1}{2}Z_{-\frac{3}{2}}^{i}(z+a)+\mathcal{A}^{1}Z_{-\frac{1}{2}}^{i}(z+a)+\left(3\mathcal{D}^{1}-2m^{2}\right)Z_{\frac{1}{2}}^{i}(z+a)+O\left(\left(z+a\right)^{3/2}\right),
\end{alignat*}
while around $a$,
\begin{alignat*}{1}
f_{1}(z) & =\mathcal{B}Z_{-\frac{1}{2}}^{i}(z-a)+2\mathcal{C}^{i}Z_{\frac{1}{2}}^{i}(z-a)+2\mathcal{E}^{i}Z_{\frac{3}{2}}^{i}(z-a)+O\left((z-a)^{5/2}\right),\\
\partial_{x}f_{1}(z) & =-\frac{\mathcal{B}}{2}Z_{-\frac{3}{2}}^{i}(z-a)+\mathcal{C}^{i}Z_{-\frac{1}{2}}^{i}(z-a)+\left(3\mathcal{E}^{i}+2m^{2}\mathcal{B}\right)Z_{\frac{1}{2}}^{i}(z-a)+O\left(\left(z-a\right)^{3/2}\right),\\
\partial_{y}f_{1}(z) & =\frac{\mathcal{B}}{2}Z_{-\frac{3}{2}}^{1}(z-a)-\mathcal{C}^{i}Z_{-\frac{1}{2}}^{1}(z-a)+\left(2m^{2}\mathcal{B}-3\mathcal{E}^{i}\right)Z_{\frac{1}{2}}^{1}(z-a)+O\left(\left(z-a\right)^{3/2}\right),
\end{alignat*}
and similar formulae hold for $f_{2}$ with $-a$ and $a$ interchanged
and the signs in front of $\mathcal{A},\mathcal{B},\mathcal{E}$ reversed.
As for the varied functions, we have
\begin{alignat*}{1}
\partial_{a}f_{1}(z) & =-\frac{1}{2}Z_{-\frac{3}{2}}^{1}(z+a)+\mathcal{A}^{1}Z_{-\frac{1}{2}}^{1}(z+a)+\left(2\left(\partial_{a}\mathcal{A}^{1}\right)+3\mathcal{D}^{1}+2m^{2}\right)Z_{\frac{1}{2}}^{1}(z+a)\\
 & +O\left((z+a)^{3/2}\right),\\
\partial_{\phi}R_{\phi}f_{1}(z) & =\frac{a}{2}Z_{-\frac{3}{2}}^{i}(z+a)-a\mathcal{A}^{1}Z_{-\frac{1}{2}}^{i}(z+a)+\left(2\mathcal{A}^{1}-3a\mathcal{D}^{1}+2am^{2}\right)Z_{\frac{1}{2}}^{i}(z+a)\\
 & +O\left(\left(z+a\right)^{3/2}\right),
\end{alignat*}
and
\begin{alignat*}{1}
\partial_{a}f_{1}(z) & =\frac{\mathcal{B}}{2}Z_{-\frac{3}{2}}^{i}(z-a)+\left(\partial_{a}\mathcal{B}-\mathcal{C}^{i}\right)Z_{-\frac{1}{2}}^{i}(z-a)+\left(2\left(\partial_{a}\mathcal{C}^{i}\right)-3\mathcal{E}^{i}-2m^{2}\mathcal{B}\right)Z_{\frac{1}{2}}^{i}(z-a)\\
 & +O\left((z-a)^{3/2}\right),\\
\partial_{\phi}R_{\phi}f_{1}(z) & =\frac{a\mathcal{B}}{2}Z_{-\frac{3}{2}}^{1}(z-a)-a\mathcal{C}^{i}Z_{-\frac{1}{2}}^{1}(z-a)+\left(2am^{2}\mathcal{B}-3a\mathcal{E}^{i}-2\mathcal{C}^{i}\right)Z_{\frac{1}{2}}^{1}(z-a)\\
 & +O\left(\left(z-a\right)^{3/2}\right).
\end{alignat*}

The resulting expansions are
\begin{alignat}{1}
\partial_{a}f_{1} & =-\frac{2(\mathcal{A}^{1}\mathcal{B}+\mathcal{C}^{i})\mathcal{B}}{1-\mathcal{B}^{2}}f_{1}+\frac{1+\mathcal{B}^{2}}{1-\mathcal{B}^{2}}\partial_{x}f_{1}-\frac{2\mathcal{B}}{1-\mathcal{B}^{2}}\partial_{y}f_{2},\label{eq:f1}\\
\partial_{\phi}R_{\phi}f_{1} & =-\frac{2a(\mathcal{A}^{1}\mathcal{B}+\mathcal{C}^{i})}{1-\mathcal{B}^{2}}f_{2}-\frac{2a\mathcal{B}}{1-\mathcal{B}^{2}}\partial_{x}f_{2}-a\frac{1+\mathcal{B}^{2}}{1-\mathcal{B}^{2}}\partial_{y}f_{1}.\label{eq:f2}
\end{alignat}

\subsubsection*{Derivation.}

Comparing the coefficients of $Z_{-\frac{1}{2}}^{i},Z_{\frac{1}{2}}^{1},Z_{\frac{1}{2}}^{i}$,
we get from (\ref{eq:f1})
\begin{alignat*}{1}
\partial_{a}\mathcal{B}-\mathcal{C}^{i} & =-\frac{2(\mathcal{A}^{1}\mathcal{B}+\mathcal{C}^{i})\mathcal{B}}{1-\mathcal{B}^{2}}\mathcal{B}+\frac{1+\mathcal{B}^{2}}{1-\mathcal{B}^{2}}\mathcal{C}^{i}+\frac{2\mathcal{B}}{1-\mathcal{B}^{2}}\mathcal{A}^{1},\\
2\left(\partial_{a}\mathcal{A}^{1}\right)+3\mathcal{D}^{1}+\frac{2m^{2}}{3} & =-\frac{2(\mathcal{A}^{1}\mathcal{B}+\mathcal{C}^{i})\mathcal{B}}{1-\mathcal{B}^{2}}2\mathcal{A}^{1}+\frac{1+\mathcal{B}^{2}}{1-\mathcal{B}^{2}}(3\mathcal{D}^{1}+2m^{2})-\frac{2\mathcal{B}}{1-\mathcal{B}^{2}}\left(-2m^{2}\mathcal{B}+3\mathcal{E}^{i}\right),\\
2\left(\partial_{a}\mathcal{C}^{i}\right)-3\mathcal{E}^{i}-\frac{2m^{2}\mathcal{B}}{3} & =-\frac{2(\mathcal{A}^{1}\mathcal{B}+\mathcal{C}^{i})\mathcal{B}}{1-\mathcal{B}^{2}}2\mathcal{C}^{i}+\frac{1+\mathcal{B}^{2}}{1-\mathcal{B}^{2}}\left(3\mathcal{E}^{i}+2m^{2}\mathcal{B}\right)-\frac{2\mathcal{B}}{1-\mathcal{B}^{2}}(3\mathcal{D}^{1}-2m^{2}),
\end{alignat*}

while for (\ref{eq:f2}) we get 
\begin{alignat*}{1}
2\mathcal{A}^{1}-3a\mathcal{D}^{1}+\frac{2am^{2}}{3} & =-\frac{2a(\mathcal{A}^{1}\mathcal{B}+\mathcal{C}^{i})}{1-\mathcal{B}^{2}}2\mathcal{C}^{i}-\frac{2a\mathcal{B}}{1-\mathcal{B}^{2}}\left(-3\mathcal{E}^{i}-2m^{2}\mathcal{B}\right)-a\frac{1+\mathcal{B}^{2}}{1-\mathcal{B}^{2}}(3\mathcal{D}^{1}-2m^{2}),\\
\frac{2am^{2}\mathcal{B}}{3}-3a\mathcal{E}^{i}-2\mathcal{C}^{i} & =\frac{2a(\mathcal{A}^{1}\mathcal{B}+\mathcal{C}^{i})}{1-\mathcal{B}^{2}}2\mathcal{A}^{1}-\frac{2a\mathcal{B}}{1-\mathcal{B}^{2}}(3\mathcal{D}^{1}+2m^{2})-a\frac{1+\mathcal{B}^{2}}{1-\mathcal{B}^{2}}\left(2m^{2}\mathcal{B}-3\mathcal{E}^{i}\right).
\end{alignat*}

We now make the dependence in $m$ explicit. Similarly to above, for
any $k>0$, $f_{\left[\mathbb{C},-ak^{-1},ak^{-1}\right]}(z|mk)=f_{\left[\mathbb{C},-a,a\right]}(kz|m)k^{1/2}$.
Analysing the effect of this dilation, which leaves $r:=am$ fixed,
on the individual coefficients, we can write $\mathcal{A}^{1}(a,m)=:\text{ }m\mathcal{A}_{0}(r)$,
$\mathcal{B}(a,m)=:\text{ }\mathcal{B}_{0}(r)$, $\mathcal{C}^{i}(a,m)=:\text{ }m\mathcal{C}_{0}(r)$.
Then we have $\partial_{a}\mathcal{A}^{1}=m^{2}\mathcal{A}_{0}'$,
$\partial_{a}\mathcal{B}=m\mathcal{B}_{0}'$, $\partial_{a}\mathcal{C}^{1}=m^{2}\mathcal{C}_{0}'$.
In terms of these functions, we have
\begin{alignat}{1}
\mathcal{B}_{0}' & =-\frac{2(\mathcal{A}_{0}\mathcal{B}_{0}+\mathcal{C}_{0})}{1-\mathcal{B}_{0}^{2}}\mathcal{B}_{0}^{2}+\frac{2}{1-\mathcal{B}_{0}^{2}}\mathcal{C}_{0}+\frac{2\mathcal{B}_{0}}{1-\mathcal{B}_{0}^{2}}\mathcal{A}_{0}=2\mathcal{A}_{0}\mathcal{B}_{0}+2\mathcal{C}_{0},\label{eq:1}\\
\mathcal{A}_{0}' & =-\frac{2\mathcal{B}_{0}(\mathcal{A}_{0}\mathcal{B}_{0}+\mathcal{C}_{0})}{1-\mathcal{B}_{0}^{2}}\mathcal{A}_{0}+\frac{4\mathcal{B}_{0}^{2}}{\left(1-\mathcal{B}_{0}^{2}\right)}+\frac{3m^{-2}\mathcal{B}}{1-\mathcal{B}^{2}}(\mathcal{B}\mathcal{D}^{1}-\mathcal{E}^{i}),\label{eq:2}\\
\mathcal{C}_{0}' & =-\frac{2\mathcal{B}_{0}(\mathcal{A}_{0}\mathcal{B}_{0}+\mathcal{C}_{0})}{1-\mathcal{B}_{0}^{2}}\mathcal{C}_{0}+\frac{4\mathcal{B}_{0}}{\left(1-\mathcal{B}_{0}^{2}\right)}-\frac{3m^{-2}}{1-\mathcal{B}^{2}}(\mathcal{B}\mathcal{D}^{1}-\mathcal{E}^{i}),\label{eq:3}\\
\mathcal{A}_{0} & =-\frac{2r(\mathcal{A}_{0}\mathcal{B}_{0}+\mathcal{C}_{0})}{1-\mathcal{B}_{0}^{2}}\mathcal{C}_{0}+\frac{4r\mathcal{B}_{0}^{2}}{\left(1-\mathcal{B}_{0}^{2}\right)}-\frac{3am^{-1}\mathcal{B}}{1-\mathcal{B}^{2}}(\mathcal{B}\mathcal{D}^{1}-\mathcal{E}^{i}),\label{eq:4}\\
\mathcal{C}_{0} & =-\frac{2r(\mathcal{A}_{0}\mathcal{B}_{0}+\mathcal{C}_{0})}{1-\mathcal{B}_{0}^{2}}\mathcal{A}_{0}+\frac{4r\mathcal{B}_{0}}{\left(1-\mathcal{B}_{0}^{2}\right)}+\frac{3am^{-1}}{1-\mathcal{B}^{2}}(\mathcal{B}\mathcal{D}^{1}-\mathcal{E}^{i}).\label{eq:5}
\end{alignat}

Define $\mathcal{B}_{0}=:\text{ }\tanh h_{0}$. Then $\frac{\mathcal{B}_{0}'}{1-\mathcal{B}_{0}^{2}}=h_{0}'$
and $\frac{4\mathcal{B}_{0}^{2}}{\left(1-\mathcal{B}_{0}^{2}\right)^{2}}=\sinh^{2}2h_{0}$.
From (\ref{eq:4}),(\ref{eq:5}),
\begin{alignat}{1}
\mathcal{A}_{0}+\mathcal{B}_{0}\frac{\mathcal{A}_{0}\mathcal{B}_{0}+\mathcal{C}_{0}}{1-\mathcal{B}_{0}^{2}}=\frac{\mathcal{A}_{0}+\mathcal{B}_{0}\mathcal{C}_{0}}{1-\mathcal{B}_{0}^{2}} & =-r\left[\frac{2(\mathcal{A}_{0}\mathcal{B}_{0}+\mathcal{C}_{0})^{2}}{\left(1-\mathcal{B}_{0}^{2}\right)^{2}}-\frac{8\mathcal{B}_{0}^{2}}{\left(1-\mathcal{B}_{0}^{2}\right)^{2}}\right],\label{eq:r1}
\end{alignat}
and noting (\ref{eq:1}),
\begin{equation}
\mathcal{A}_{0}=-\frac{1}{2}\left(\ln\cosh h_{0}\right)'-r\left[\frac{1}{2}\left(h_{0}'\right)^{2}-2\sinh^{2}2h_{0}\right].\label{eq:painleve_A}
\end{equation}

To characterise $h$, first combine (\ref{eq:2}),(\ref{eq:3}) to
get
\begin{alignat*}{1}
\frac{\mathcal{A}_{0}'+\mathcal{C}_{0}'\mathcal{B}_{0}}{1-\mathcal{B}_{0}^{2}} & =-\frac{2(\mathcal{A}_{0}\mathcal{B}_{0}+\mathcal{C}_{0})(\mathcal{A}_{0}\mathcal{B}_{0}+\mathcal{B}_{0}^{2}\mathcal{C}_{0})}{\left(1-\mathcal{B}_{0}^{2}\right)^{2}}+\frac{8\mathcal{B}_{0}^{2}}{\left(1-\mathcal{B}_{0}^{2}\right)^{2}},\\
 & =-\left[\frac{2(\mathcal{A}_{0}\mathcal{B}_{0}+\mathcal{C}_{0})^{2}}{\left(1-\mathcal{B}_{0}^{2}\right)^{2}}-\frac{8\mathcal{B}_{0}^{2}}{\left(1-\mathcal{B}_{0}^{2}\right)^{2}}\right]+\frac{\mathcal{B}_{0}'\mathcal{C}_{0}}{1-\mathcal{B}_{0}^{2}},
\end{alignat*}
then differentiate (\ref{eq:r1}) to get
\begin{alignat*}{1}
 & \frac{\mathcal{A}_{0}'+\mathcal{C}_{0}'\mathcal{B}_{0}+\mathcal{B}_{0}'\mathcal{C}_{0}}{1-\mathcal{B}_{0}^{2}}+\frac{2\mathcal{B}_{0}\mathcal{B}_{0}'\left(\mathcal{A}_{0}+\mathcal{B}_{0}\mathcal{C}_{0}\right)}{\left(1-\mathcal{B}_{0}^{2}\right)^{2}}\\
= & -\left[\frac{2(\mathcal{A}_{0}\mathcal{B}_{0}+\mathcal{C}_{0})^{2}}{\left(1-\mathcal{B}_{0}^{2}\right)^{2}}-\frac{8\mathcal{B}_{0}^{2}}{\left(1-\mathcal{B}_{0}^{2}\right)^{2}}\right]-r\left[\frac{2(\mathcal{A}_{0}\mathcal{B}_{0}+\mathcal{C}_{0})^{2}}{\left(1-\mathcal{B}_{0}^{2}\right)^{2}}-\frac{8\mathcal{B}_{0}^{2}}{\left(1-\mathcal{B}_{0}^{2}\right)^{2}}\right]'.
\end{alignat*}

Then combining the two we finally have
\[
\frac{2\mathcal{B}_{0}'\left(\mathcal{A}_{0}\mathcal{B}_{0}+\mathcal{C}_{0}\right)}{\left(1-\mathcal{B}_{0}^{2}\right)^{2}}=-r\left[\frac{2(\mathcal{A}_{0}\mathcal{B}_{0}+\mathcal{C}_{0})^{2}}{\left(1-\mathcal{B}_{0}^{2}\right)^{2}}-\frac{8\mathcal{B}_{0}^{2}}{\left(1-\mathcal{B}_{0}^{2}\right)^{2}}\right]',
\]
or
\[
\left(h_{0}'\right)^{2}=-r\left[\frac{1}{2}\left(h_{0}'\right)^{2}-2\sinh^{2}2h_{0}\right]'.
\]

Simplifying, we have $h_{0}''+\frac{h_{0}'}{r}=4\sinh4h_{0}(r)$.
This is equivalent to the Painlevé III equation $r\eta_{0}\eta_{0}''=r\left(\eta_{0}'\right)^{2}-\eta_{0}\eta_{0}'-4r+4r\eta_{0}^{4}$
by a change of variables $\eta_{0}=e^{-2h_{0}}$ \cite[(4.12)]{kako80}. 

\appendix

\section{Harmonicity Estimates\label{sec:Appendix:-Discrete-Green's}}

\begin{figure}[b]
\includegraphics[width=0.9\columnwidth]{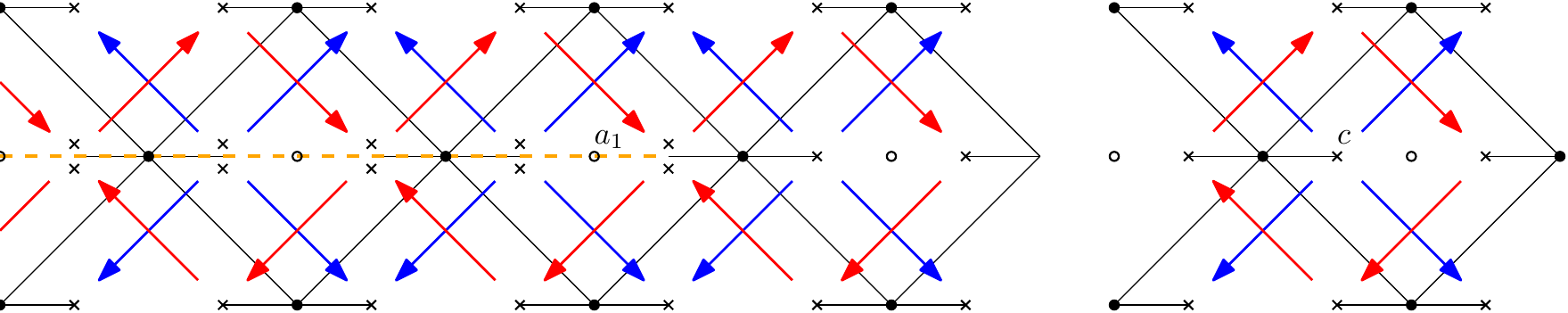}\caption{\label{fig:Using-holomorphicity-to}Using holomorphicity to get harmonicity.
Blue differences (laplacian) are turned into red, which telescope.
Left: in the presence of a branch cut (orange). Right: simple planar
case.}

\end{figure}

In this Appendix, we collect together the discrete analytic calculations
and estimates used in the paper. Fix a discrete simply connected planar
graph $G_{\delta}$, which can be thought of as a subgraph of $\Omega_{\delta}$
or $\left[\Omega_{\delta},a_{1},\ldots,a_{n}\right]$ .
\begin{prop}
\label{prop:Appendix}A massive s-holomorphic function $F:\mathcal{EC}\left[G_{\delta}\right]\to\mathbb{C}$
is \emph{massive discrete holomorphic}, that is to say

\begin{alignat}{1}
\cos\left(\frac{\pi}{4}+2\Theta\right)F(r_{+})-\cos\left(\frac{\pi}{4}-2\Theta\right)F(r_{-}) & =-i\left(\cos\left(\frac{\pi}{4}+2\Theta\right)F(i_{+})-\cos\left(\frac{\pi}{4}-2\Theta\right)F(i_{-})\right),\nonumber \\
\cos\left(\frac{\pi}{4}-2\Theta\right)F(i_{+}')-\cos\left(\frac{\pi}{4}+2\Theta\right)F(i_{-}') & =-i\left(\cos\left(\frac{\pi}{4}-2\Theta\right)F(r_{+}')-\cos\left(\frac{\pi}{4}+2\Theta\right)F(r_{-}')\right),\label{eq:mdhol}
\end{alignat}
if there is a $\lambda$-corner $c$ such that $r_{\pm}=c\pm\frac{\delta+\delta i}{2}$
(real corners) and $i_{\pm}=c\pm\frac{-\delta+\delta i}{2}$ (imaginary
corners), or a $\bar{\lambda}$-corner $c'$ such that $i_{\pm}'=c\pm\frac{\delta+\delta i}{2}$
and $r_{\pm}'=c\pm\frac{-\delta+\delta i}{2}$ (resp. imaginary and
real corners).

It is \emph{massive harmonic}, i.e.
\begin{alignat}{1}
\Delta^{\delta}F(c) & =2\left(\frac{\cos\left(\frac{\pi}{4}-2\Theta\right)}{\cos\left(\frac{\pi}{4}+2\Theta\right)}+\frac{\cos\left(\frac{\pi}{4}+2\Theta\right)}{\cos\left(\frac{\pi}{4}-2\Theta\right)}-2\right)F(c)\label{eq:dmharm}\\
 & =\left(\frac{8\sin^{2}2\Theta}{\cos4\Theta}\right)F(c)=:M_{H}^{2}F(c)\text{ for }c\in\mathcal{C}^{1,i}\left[G_{\delta}\right].\nonumber 
\end{alignat}

In addition, its square satisfies
\begin{equation}
\partial_{\bar{z}}^{\delta}F^{2}\left(x\right)=\begin{cases}
A_{\Theta}\sum_{n=0}^{3}\left|F\left(x+i^{n}\frac{\delta}{2}\right)\right|^{2}+B_{\Theta}\left|\partial_{\bar{z}}\bar{F}\right|^{2}(x) & x\in\mathcal{F}\left[\Omega_{\delta}\right]\setminus\left\{ a_{2},\ldots,a_{n}\right\} \\
-A_{-\Theta}\sum_{n=0}^{3}\left|F\left(x+i^{n}\frac{\delta}{2}\right)\right|^{2}-B_{-\Theta}\left|\partial_{\bar{z}}\bar{F}\right|^{2}(x) & x\in\mathcal{V}\left[\Omega_{\delta}\right]\setminus\left\{ a_{1}+\delta\right\} 
\end{cases}.\label{eq:squareint_dbar}
\end{equation}
where $A_{\Theta}=\frac{2\left(\sqrt{2}\cos\left(\frac{\pi}{4}-2\Theta\right)-1\right)}{\sqrt{2}\cos^{2}\Theta\cos^{2}\left(\frac{\pi}{4}+2\Theta\right)}$,
$B_{\Theta}=\frac{1}{2\sqrt{2}\cos^{2}\Theta}$. 
\end{prop}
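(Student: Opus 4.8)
The three assertions are all local, finite algebraic consequences of the massive s-holomorphicity relation \eqref{eq:s-holom}, which writes each corner value as a rescaled projection $F(c)=e^{\mp i\Theta}\mathrm{Proj}_{e^{\pm i\Theta}\tau(c)\mathbb{R}}F(e)$ of an incident edge value. The plan is to invert these projections and then substitute. Fix an edge $e$ together with the real corner $r$ and imaginary corner $i$ whose projections it feeds: by \eqref{eq:s-holom} the value $F(e)$ has prescribed components along two lines $e^{\pm i\Theta}\mathbb{R}$ and $e^{\pm i\Theta}i\mathbb{R}$ that differ by the angle $\tfrac\pi2\mp 2\Theta$. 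For $|\Theta|$ small these lines are non-orthogonal but independent, so the pair of relations is an invertible $\mathbb{R}$-linear system recovering $F(e)$ explicitly from $F(r)$ and $F(i)$ through the formula $\mathrm{Proj}_{e^{i\theta}\mathbb{R}}x=\tfrac12\bigl(x+e^{2i\theta}\bar x\bigr)$.

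First I would prove discrete holomorphicity \eqref{eq:mdhol}. Expressing each edge adjacent to a $\lambda$- or $\bar\lambda$-corner $c$ in terms of the corner values it projects to, and then eliminating the common edge values, leaves precisely the two stated Cauchy--Riemann identities; this is a direct computation using only the projection formula and the addition theorems for $\cos(\tfrac\pi4\pm2\Theta)$. Given \eqref{eq:mdhol}, massive harmonicity \eqref{eq:dmharm} follows by the classical ``holomorphic implies harmonic'' mechanism: feeding \eqref{eq:mdhol} into $\Delta^\delta F(c)=\sum_m[F(c+i^m e^{i\pi/4}\sqrt2\delta)-F(c)]$ converts each difference of real-corner values into a difference of imaginary-corner values, and arranging the four increments around $c$ so these converted differences telescope --- the cancellation drawn in Figure~\ref{fig:Using-holomorphicity-to} --- collapses the sum to a multiple of $F(c)$. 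Writing $a=\cos(\tfrac\pi4+2\Theta)$ and $b=\cos(\tfrac\pi4-2\Theta)$, the multiplier is $\tfrac{2(a-b)^2}{ab}$, and since $a-b=-\sqrt2\sin2\Theta$ and $ab=\tfrac12\cos4\Theta$ this equals $M_H^2=\tfrac{8\sin^2 2\Theta}{\cos4\Theta}$.

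The remaining identity \eqref{eq:squareint_dbar} is the crux. I would expand $\partial_{\bar z}^\delta F^2(x)=\sum_{m=0}^3 i^m e^{i\pi/4}F^2\bigl(x+i^m e^{i\pi/4}\tfrac{\delta}{\sqrt2}\bigr)$ over the four corners around the face or vertex $x$, substitute each corner value by its edge projection, and square using
\[
\bigl(\mathrm{Proj}_{e^{i\theta}\mathbb{R}}x\bigr)^2=\tfrac14\bigl(x^2+2e^{2i\theta}|x|^2+e^{4i\theta}\bar x^2\bigr).
\]
This separates the sum into a ``holomorphic part'' carried by the $x^2$ and $\bar x^2$ terms and a ``modulus part'' carried by the $|x|^2$ terms. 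After re-expressing edge moduli in terms of corner moduli through \eqref{eq:mdhol}, the modulus part reassembles into $A_\Theta\sum_{n=0}^3\bigl|F(x+i^n\tfrac\delta2)\bigr|^2$, while the holomorphic remainder collapses to $B_\Theta|\partial_{\bar z}\bar F|^2(x)$; carrying the trigonometry to the end yields $A_\Theta=\tfrac{2(\sqrt2\cos(\pi/4-2\Theta)-1)}{\sqrt2\cos^2\Theta\,\cos^2(\pi/4+2\Theta)}$ and $B_\Theta=\tfrac{1}{2\sqrt2\cos^2\Theta}$. The vertex case is obtained from the face case by the substitution $\Theta\mapsto-\Theta$ together with the overall sign reversal coming from exchanging the $e^{+i\Theta}$- and $e^{-i\Theta}$-projections on the two dual corner lattices, which is the source of the opposite sign in \eqref{eq:squareint_dbar}.

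I expect this last step to be the main obstacle: unlike the first two identities, whose massless analogues are classical, here one must propagate the non-orthogonality of the projection lines through a quadratic expression, and the cancellations producing the clean split into the single scalars $A_\Theta,B_\Theta$ are not forced by symmetry and must be verified coefficient by coefficient. The asserted small-$\Theta$ behaviour is then an immediate Taylor expansion of these formulas: at $\Theta=0$ one has $\sqrt2\cos(\tfrac\pi4)=1$, so the numerator of $A_\Theta$ vanishes and $A_0=0$, while $\tfrac{d}{d\Theta}\bigl[2(\sqrt2\cos(\tfrac\pi4-2\Theta)-1)\bigr]_{\Theta=0}=4$ against the value $\tfrac1{\sqrt2}$ of the denominator at $\Theta=0$ gives $A_\Theta\sim4\sqrt2\,\Theta$, and $B_0=\tfrac1{2\sqrt2}$ directly.
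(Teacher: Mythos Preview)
Your plan is essentially the paper's own. For \eqref{eq:mdhol} the paper reconstructs the two edge values adjacent to the $\lambda$-corner from the four surrounding real/imaginary corners and equates their common projection at $c$; for \eqref{eq:dmharm} it sums four copies of \eqref{eq:mdhol} around $c$ with weights chosen so the off-centre terms telescope, exactly as you describe; and the vertex/face duality is handled, as you say, by noting that $iF$ is $(-\Theta)$-massive s-holomorphic on the dual graph.

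One correction on the third identity. The sum defining $\partial_{\bar z}^\delta F^2(x)$ runs over the four \emph{edges} $x+i^n\lambda\delta/\sqrt2$ around $x$, not the four corners, so the squared-projection formula you wrote (which expresses a \emph{corner} value squared in terms of an edge value) is the wrong direction. The paper instead \emph{inverts} the two projection relations at each edge to write
\[
\cos\!\left(\tfrac{\pi}{4}+2\Theta\right)\lambda i^{n+1}F\!\left(x+i^n\tfrac{\lambda\delta}{\sqrt2}\right)
= e^{i\Theta}\bar F\!\left(x+\tfrac{i^n\delta}{2}\right)-e^{-i\Theta}\bar F\!\left(x+\tfrac{i^{n+1}\delta}{2}\right),
\]
squares this to get $F(\text{edge})^2$ as a quadratic in the two adjacent corner values, and sums over $n$. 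Separately it computes $|\partial_{\bar z}^\delta\bar F(x)|^2$ by the same inversion; comparing the two expressions term by term (the cross terms $F(x+\tfrac{i^n\delta}{2})F(x+\tfrac{i^{n+1}\delta}{2})$ match up after noting that opposite-corner products are purely imaginary) yields the stated $A_\Theta,B_\Theta$. So your separation into a ``modulus part'' and a ``holomorphic remainder'' is morally right, but the substitution should go edge~$\to$~corners rather than corner~$\to$~edge.
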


\begin{proof}
For the first line in (\ref{eq:mdhol}), note that by massive s-holomorphicity
we have the edge values $F\left(\frac{r_{+}+i_{-}}{2}\right)=e^{-i\Theta}\left[F(r_{+})+F(i_{-})\right]$
and $F\left(\frac{i_{+}+r_{-}}{2}\right)=e^{i\Theta}\left[F(i_{+})+F(r_{-})\right]$.
Since $F$ is s-holomorphic at the $\lambda$-corner $c$, which is
adjacent to both of them, writing $e^{-i\Theta}\text{Proj}_{e^{i\Theta}\lambda\mathbb{R}}F\left(\frac{r_{+}+i_{-}}{2}\right)=e^{i\Theta}\text{Proj}_{e^{-i\Theta}\lambda\mathbb{R}}F\left(\frac{i_{+}+r_{-}}{2}\right)$,
equivalent to $\frac{1}{2}\left[e^{-2i\Theta}\left(F(r_{+})+F(i_{-})\right)+ie^{2i\Theta}\left(F(r_{+})-F(i_{-})\right)\right]=\frac{1}{2}\left[e^{2i\Theta}\left(F(i_{+})+F(r_{-})\right)+ie^{-2i\Theta}\left(-F(i_{+})+F(r_{-})\right)\right]$,
and rearranging gives the result. For the second line, notice that
$iF$ is $(-\Theta)$-massive s-holomorphic if we move to the dual
graph $G_{\delta}^{*}$ (i.e. $\mathcal{V}(G_{\delta}^{*}):=\mathcal{F}(G_{\delta})$).
Since this duality transformation converts $\bar{\lambda}$-corners
into $\lambda$-corners, we can use the previous calculation.

For (\ref{eq:dmharm}), suppose $c$ is a real corner. Take four copies
of the previous result (\ref{eq:mdhol}) (see Figure \ref{fig:Using-holomorphicity-to})
around for each of the four middle corners $c\pm\frac{\delta\pm i\delta}{2}$.
Each of them involve $c$ and one of the four neighbouring real corners
$c\pm\left(\delta\pm i\delta\right)$; summing the four equations
with scalar factors so that the coefficients of $F\left(c\pm\left(\delta\pm i\delta\right)\right)$
in each equation is $1$, the result is straightforward. The case
where $c$ is imaginary is immediate from duality as above.

For (\ref{eq:squareint_dbar}), take $x\in\mathcal{F}\left[G_{\delta}\right]$
and note that the value at each of the neighbouring edges $x+i^{n}\frac{\lambda\delta}{\sqrt{2}}$
can be reconstructed from two of the four corners $x+\frac{i^{n}\delta}{2}$.
Explicitly, inverting s-holomorphicity projections give
\begin{alignat*}{1}
\cos\left(\frac{\pi}{4}+2\Theta\right)\lambda i^{n+1}F\left(x+i^{n}\frac{\lambda\delta}{\sqrt{2}}\right) & =e^{i\Theta}\bar{F}\left(x+\frac{i^{n}}{2}\delta\right)-e^{-i\Theta}\bar{F}\left(x+\frac{i^{n+1}}{2}\delta\right)\\
=-i^{n} & \left[e^{i\Theta}F\left(x+\frac{i^{n}}{2}\delta\right)-e^{-i\Theta}iF\left(x+\frac{i^{n+1}}{2}\delta\right)\right].
\end{alignat*}
noting that $\bar{F}\left(x+\frac{i^{n}}{2}\delta\right)=-i^{n}F\left(x+\frac{i^{n}}{2}\delta\right)$.

So multiplying the two lines
\begin{alignat*}{1}
 & \cos^{2}\left(\frac{\pi}{4}+2\Theta\right)i^{2n+2}\lambda^{2}F\left(x+i^{n}\frac{\lambda\delta}{\sqrt{2}}\right)^{2}=\\
 & -i^{n}\cdot\left[e^{2i\Theta}\left|F\left(x+\frac{i^{n}}{2}\delta\right)\right|^{2}+ie^{-2i\Theta}\left|F\left(x+\frac{i^{n+1}}{2}\delta\right)\right|^{2}\right.\\
 & \left.-F\left(x+\frac{i^{n}}{2}\delta\right)\bar{F}\left(x+\frac{i^{n+1}}{2}\delta\right)-i\bar{F}\left(x+\frac{i^{n}}{2}\delta\right)F\left(x+\frac{i^{n+1}}{2}\delta\right)\right]\\
 & =-i^{n}\cdot\left[e^{2i\Theta}\left|F\left(x+\frac{i^{n}}{2}\delta\right)\right|^{2}+ie^{-2i\Theta}\left|F\left(x+\frac{i^{n+1}}{2}\delta\right)\right|^{2}\right.\\
 & \left.+2i^{n+1}F\left(x+\frac{i^{n}}{2}\delta\right)F\left(x+\frac{i^{n+1}}{2}\delta\right)\right].
\end{alignat*}

So

\begin{alignat*}{1}
\cos^{2}\left(\frac{\pi}{4}+2\Theta\right)\partial_{\bar{z}}^{\delta}F\left(x\right)^{2} & =\cos^{2}\left(\frac{\pi}{4}+2\Theta\right)\sum_{n=0}^{3}i^{n}\lambda F\left(x+i^{n}\frac{\lambda\delta}{\sqrt{2}}\right)^{2}\\
= & \lambda^{-1}\left(e^{2i\Theta}+ie^{-2i\Theta}\right)\sum_{n=0}^{3}\left|F\left(x+\frac{i^{n}}{2}\delta\right)\right|^{2}\\
 & +2\lambda^{-1}\sum_{n=0}^{3}i^{n+1}F\left(x+\frac{i^{n}}{2}\delta\right)F\left(x+\frac{i^{n+1}}{2}\delta\right)\\
=2\cos\left(\frac{\pi}{4}-2\Theta\right)\sum_{n=0}^{3}\left|F\left(x+\frac{i^{n}}{2}\delta\right)\right|^{2} & +2\lambda^{-1}\sum_{n=0}^{3}i^{n+1}F\left(x+\frac{i^{n}}{2}\delta\right)F\left(x+\frac{i^{n+1}}{2}\delta\right).
\end{alignat*}

Now reuse the first relation
\begin{alignat*}{1}
\cos\left(\frac{\pi}{4}+2\Theta\right)\lambda^{-1}i^{-n}F\left(x+i^{n}\frac{\lambda\delta}{\sqrt{2}}\right) & =-i^{-2n}\left[e^{i\Theta}\bar{F}\left(x+\frac{i^{n}}{2}\delta\right)-e^{-i\Theta}\bar{F}\left(x+\frac{i^{n+1}}{2}\delta\right)\right]\\
\cos\left(\frac{\pi}{4}+2\Theta\right)\lambda i^{n}\bar{F}\left(x+i^{n}\frac{\lambda\delta}{\sqrt{2}}\right) & =(-1)^{n+1}\left[e^{-i\Theta}F\left(x+\frac{i^{n}}{2}\delta\right)-e^{i\Theta}F\left(x+\frac{i^{n+1}}{2}\delta\right)\right]\\
\cos\left(\frac{\pi}{4}+2\Theta\right)\partial_{\bar{z}}\bar{F}\left(x\right) & =\left(e^{i\Theta}+e^{-i\Theta}\right)\sum_{n=0}^{3}(-1)^{n+1}F\left(x+\frac{i^{n}\delta}{2}\right)\\
 & =2\cos\Theta\sum_{n=0}^{3}(-1)^{n+1}F\left(x+\frac{i^{n}\delta}{2}\right).
\end{alignat*}

Taking squares
\begin{alignat*}{1}
\cos^{2}\left(\frac{\pi}{4}+2\Theta\right)\left|\partial_{\bar{z}}\bar{F}\left(x\right)\right|^{2} & =4\cos^{2}\Theta\left[\sum_{n=0}^{3}\left|F\left(x+\frac{i^{n}\delta}{2}\right)\right|^{2}\right.\\
+\text{Re} & \left.\sum_{n\neq n'}(-1)^{n+1}F\left(x+\frac{i^{n}\delta}{2}\right)(-1)^{n'+1}\bar{F}\left(x+\frac{i^{n'}\delta}{2}\right)\right]\\
 & =4\cos^{2}\Theta\left[\sum_{n=0}^{3}\left|F\left(x+\frac{i^{n}\delta}{2}\right)\right|^{2}\right.\\
-\text{Re} & \left.\sum_{n\neq n'}(-1)^{n+n'}i^{n'}F\left(x+\frac{i^{n}\delta}{2}\right)F\left(x+\frac{i^{n'}\delta}{2}\right)\right],
\end{alignat*}
but $i^{n'}F\left(x+\frac{i^{n}\delta}{2}\right)F\left(x+\frac{i^{n'}\delta}{2}\right)\in i\mathbb{R}$
if $|n-n'|=2$. The remaining $8$ combinations of $n,n'$ all give
rise to purely real terms, and resumming gives
\begin{alignat*}{1}
\text{Re}\sum_{n\neq n'}(-1)^{n+n'}i^{n'}F\left(x+\frac{i^{n}\delta}{2}\right)F\left(x+\frac{i^{n'}\delta}{2}\right) & =-\sum_{n=0}^{3}\left(i^{n}+i^{n+1}\right)F\left(x+\frac{i^{n}\delta}{2}\right)F\left(x+\frac{i^{n+1}\delta}{2}\right)\\
\cos^{2}\left(\frac{\pi}{4}+2\Theta\right)\left|\partial_{\bar{z}}\bar{F}\left(x\right)\right|^{2} & =4\cos^{2}\Theta\left[\sum_{n=0}^{3}\left|F\left(x+\frac{i^{n}\delta}{2}\right)\right|^{2}\right.\\
 & \left.+\sum_{n=0}^{3}\sqrt{2}i^{n+1}\lambda^{-1}F\left(x+\frac{i^{n}\delta}{2}\right)F\left(x+\frac{i^{n+1}\delta}{2}\right)\right].
\end{alignat*}

Comparing the two expressions
\begin{alignat*}{1}
\cos^{2}\left(\frac{\pi}{4}+2\Theta\right)\left[2\sqrt{2}\cos^{2}\Theta\cdot\partial_{\bar{z}}^{\delta}F\left(x\right)^{2}-\left|\partial_{\bar{z}}\bar{F}\left(x\right)\right|^{2}\right]\\
=4\left(\sqrt{2}\cos\left(\frac{\pi}{4}-2\Theta\right)-1\right)\sum_{n=0}^{3}\left|F\left(x+\frac{i^{n}}{2}\delta\right)\right|^{2},
\end{alignat*}
we have the full result given duality.
\end{proof}
\begin{rem}
\label{rem:mdhol}(\ref{eq:mdhol}) is equivalent to massive s-holomorphicity
in the sense that if we have such values of $F$ on $\mathcal{C}^{1,i}\left[G_{\delta}\right]$
then it is easy to see from the proof that we have enough data to
extend the values s-holomorphically first to $\mathcal{E}\left[G_{\delta}\right]$
and then the $\lambda,\bar{\lambda}$-corners. In other words, bound
on $\mathcal{C}^{1,i}$ is equivalent to a global bound in an s-holomorphic
function. On $\mathcal{E}\left[G_{\delta}\right]$, (\ref{eq:mdhol})
becomes
\begin{alignat*}{1}
\partial_{\bar{z}}^{\delta}F(x) & :=\sum_{n=0}^{3}i^{n}e^{i\pi/4}F\left(x+i^{n}e^{i\pi/4}\frac{\delta}{\sqrt{2}}\right)\\
 & =\sin\Theta\sec\left(\frac{\pi}{4}+2\Theta\right)\sum_{n=0}^{3}\overline{F\left(x+i^{n}e^{i\pi/4}\frac{\delta}{\sqrt{2}}\right)},
\end{alignat*}
i.e. a discretised version of $\partial_{\bar{z}}f=m\bar{f}$ given
$\Theta\sim\frac{m\delta}{2}$.
\end{rem}

\begin{lem}
\label{lem:harm-boundary}Suppose $\Omega'\subset\Omega$ are smooth
simply connected domains. Any function $H_{0}$ on $\mathcal{V}\left[\left(\Omega\setminus\Omega'\right)_{\delta}\right]$
which is harmonic and takes the boundary value $0$ on $\partial\mathcal{V}\left[\Omega_{\delta}\right]$
and $1$ on $\partial\mathcal{V}\left[\Omega'_{\delta}\right]$ satisfies
$0\leq H_{0}(a_{\text{int}})\leq C(\Omega,\Omega')\delta$ on any
$a_{\text{int}}\in\mathcal{V}\left[\left(\Omega\setminus\Omega'\right)_{\delta}\right]$
adjacent to $\partial\mathcal{V}\left[\Omega_{\delta}\right]$ for
a constant $C(\Omega,\Omega')$ .
\end{lem}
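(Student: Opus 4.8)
The plan is to trap $H_0$ between the trivial lower bound from the maximum principle and an explicit smooth supersolution that decays linearly in the distance to $\partial\Omega$. First I would record that, since $H_0$ is discrete harmonic with boundary values $0$ and $1$, the discrete maximum principle gives $0\le H_0\le 1$ on all of $\mathcal V[(\Omega\setminus\Omega')_\delta]$; in particular $H_0(a_{\text{int}})\ge 0$, which is the easy half of the claim. (Probabilistically, $H_0(v)$ is just the probability that a simple random walk from $v$ reaches $\partial\mathcal V[\Omega'_\delta]$ before $\partial\mathcal V[\Omega_\delta]$.)

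The substance of the proof is a \emph{continuous supersolution}. Because $\Omega,\Omega'$ are smooth, the annular region $\Omega\setminus\overline{\Omega'}$ is a smooth domain; let $u$ be the continuous harmonic measure of $\partial\Omega'$ (so $\Delta u=0$, with $u=0$ on $\partial\Omega$ and $u=1$ on $\partial\Omega'$) and let $\phi$ solve $\Delta\phi=-1$ with $\phi=0$ on $\partial\Omega\cup\partial\Omega'$. By elliptic regularity both are in $C^\infty(\overline{\Omega\setminus\Omega'})$ and extend smoothly a fixed distance past the smooth curve $\partial\Omega$. Setting $\psi:=u+\epsilon\phi$ for a small fixed $\epsilon>0$ produces a smooth function with $\Delta\psi\equiv-\epsilon<0$, $\psi=0$ on $\partial\Omega$, $\psi=1$ on $\partial\Omega'$, and $C^k$ norms depending only on $\Omega,\Omega',\epsilon$. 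Since $\psi$ vanishes on the smooth boundary with bounded gradient, the key linear estimate $|\psi(z)|\le C\,\mathrm{dist}(z,\partial\Omega)$ holds in a neighbourhood of $\partial\Omega$, with $C=C(\Omega,\Omega')$.

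Next I would discretise $\psi$. For a function smooth near a vertex one has $\Delta^\delta\psi(v)=(\sqrt2\delta)^2\Delta\psi(v)+O(\delta^4)$, the error governed by $\|\psi\|_{C^4}$; hence for $\delta$ small enough $\Delta^\delta\psi\le-\tfrac{\epsilon}{2}(\sqrt2\delta)^2<0$ at \emph{every} interior vertex, including those adjacent to $\partial\mathcal V[\Omega_\delta]$ (this is precisely where the smooth extension of $\psi$ across $\partial\Omega$ is used, so that all finite differences entering $\Delta^\delta$ are defined). Thus $\psi$ is discrete superharmonic. To absorb the $O(\delta)$ boundary layer of the lattice domain I pass to $\tilde\psi:=\psi+C'\delta$, which is still discrete superharmonic; choosing $C'$ larger than the Lipschitz constant of $\psi$ ensures $\tilde\psi\ge 0=H_0$ on $\partial\mathcal V[\Omega_\delta]$ and $\tilde\psi\ge 1=H_0$ on $\partial\mathcal V[\Omega'_\delta]$, because $\psi$ equals $0$, respectively $1$, on the continuous boundaries while the discrete boundary vertices lie within $O(\delta)$ of them. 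Applying the discrete maximum principle to $H_0-\tilde\psi$, which is subharmonic in the interior and $\le 0$ on both boundary pieces, gives $H_0\le\tilde\psi$ everywhere. Evaluating at $a_{\text{int}}$ and using $\mathrm{dist}(a_{\text{int}},\partial\Omega)=O(\delta)$ yields $H_0(a_{\text{int}})\le\psi(a_{\text{int}})+C'\delta\le C\,\mathrm{dist}(a_{\text{int}},\partial\Omega)+C'\delta\le C(\Omega,\Omega')\,\delta$, as claimed.

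The main obstacle I anticipate is not the construction of $\psi$ but the uniform control of the discretisation error right at the boundary together with the single-vertex boundary layer of the lattice; both are handled by insisting on \emph{strict} superharmonicity ($\Delta\psi\equiv-\epsilon$, so the $O(\delta^4)$ error cannot spoil the sign) and by the harmless additive shift $+C'\delta$, which leaves the final $O(\delta)$ bound intact.
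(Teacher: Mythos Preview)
Your argument is correct and follows the same overall strategy as the paper: build a smooth, strictly superharmonic continuous barrier with the right boundary values, observe that for small $\delta$ the discrete Laplacian of a smooth function approximates the continuous one so the barrier remains discrete superharmonic, and conclude by the discrete maximum principle together with the linear decay of the barrier near $\partial\Omega$.

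The only real difference is the choice of barrier. The paper conformally maps the annular region $\Omega\setminus\Omega'$ to a round annulus $B_1\setminus B_{r_0}$ and pulls back the explicit radial function $\frac{1-r}{1-r_0}$, whose Laplacian $-\frac{1}{r(1-r_0)}$ is strictly negative; smoothness of the conformal map up to the boundary then gives the needed $C^k$ control. You instead take $\psi=u+\epsilon\phi$ with $u$ the continuous harmonic measure and $\phi$ the torsion function, relying on elliptic regularity rather than an explicit conformal chart. Your route is slightly more elementary (no uniformisation of doubly connected domains) and makes the handling of the $O(\delta)$ boundary mismatch explicit via the additive shift $+C'\delta$; the paper's route gives a concrete formula but leaves that boundary bookkeeping implicit. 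Both lead to the same $O(\delta)$ bound with constants depending only on $\Omega,\Omega'$.
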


\begin{proof}
We believe this lemma is standard. One possible proof would proceed
by mapping $\Omega\setminus\Omega'$ to the annulus $B_{1}\setminus B_{r_{0}}$
for some $r_{0}>0$ using a Riemann map which smoothly extends to
$\overline{B_{1}\setminus B_{r_{0}}}$. The radial function $\frac{1-r}{1-r_{0}}$
on $B_{1}\setminus B_{r_{0}}$ is superharmonic, so its composition
with the Riemann map is (continuous) superharmonic on $\Omega\setminus\Omega'$;
the restriction to $\mathcal{V}\left[\left(\Omega\setminus\Omega'\right)_{\delta}\right]$
is discrete superharmonic for small enough $\delta$ since the discrete
laplacian (suitably renormalised) and continuous laplacian are uniformly
close on smooth functions, and we can use it to upper bound $H_{0}$.
\end{proof}
We frequently have local $L^{2}$-bounds for our function $F$; it
turns out that thanks to massive harmonicity, this is sufficient for
equicontinuity.

We estimate massive harmonic functions by using the \emph{massive
random walk}, a simple random walk which is extinguished at each step
with probability $\left(1+\frac{2\sin^{2}2\Theta}{\cos(4\Theta)}\right)^{-1}\frac{2\sin^{2}2\Theta}{\cos(4\Theta)}$.
Recall that the massive harmonic measure $\text{hm}{}_{A}^{a}(z|\Theta)$
for a discrete domain $A$, $z\in A$, and $a\in\partial A\cup A$
is the probability of a massive random walk started at $z$ hitting
$a$ before $\partial A\setminus\left\{ a\right\} $. It is the unique
$\Theta$-massive harmonic function on $A$ which takes the boundary
value $1$ at $a$ and $0$ on $\partial A\setminus\left\{ a\right\} $.
In the scaling limit $\delta\downarrow0$ and $\Theta\sim\frac{m\delta}{2}$,
the massive random walk is extinguished after an exponential step
of mean $\frac{1}{2m^{2}\delta^{2}}$. Taking into account the square-root
scaling for the random walk, this corresponds to a distance of order
$\sqrt{2}\delta\cdot\frac{1}{\sqrt{2}\left|m\right|\delta}=\frac{1}{\left|m\right|}$.
For more precise asymptotics than below, we refer to \cite{bdtr}.
\begin{prop}
\label{prop:hmestimate}There are constants $C,C',c>0$ such that,
for a real massive harmonic function $F:\mathcal{C}^{1}\left[\left(B_{R}\right)_{\delta}\right]\to\mathbb{C}$
(where $\delta<\frac{R}{4}$) with $\Delta^{\delta}F=M_{H}^{2}(\Theta)F$,
\begin{alignat}{1}
\left|F\left(-\frac{\delta}{2}\right)\right| & \leq Ce^{cmR}\sqrt{\frac{L}{R}},\label{eq:harnack}\\
\delta^{-1}\left|F\left(\frac{\delta}{2}+i\delta\right)-F\left(-\frac{\delta}{2}\right)\right| & \leq C'e^{cmR}\sqrt{\frac{L}{R^{3}}},\nonumber 
\end{alignat}
where $L=\sum_{c\in\mathcal{C}^{1}\left[\left(B_{R}\right)_{\delta}\right]}\left|F(c)\right|^{2}\delta^{2}$.
\end{prop}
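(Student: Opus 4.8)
The plan is to exploit the probabilistic content of massive harmonicity. Rewriting $\Delta^{\delta}F=M_{H}^{2}F$ as $F(x)=(4+M_{H}^{2})^{-1}\sum_{y\sim x}F(y)$ exhibits $F$ as satisfying the mean-value identity of the \emph{massive random walk} on real corners, i.e.\ a simple random walk killed at each step with probability $M_{H}^{2}/(4+M_{H}^{2})=(1+\tfrac{2\sin^{2}2\Theta}{\cos4\Theta})^{-1}\tfrac{2\sin^{2}2\Theta}{\cos4\Theta}$, exactly the extinction rate attached to $\text{hm}$ in the body. Hence for every radius $\rho<R$ one has the representation $F(-\tfrac{\delta}{2})=\sum_{c}\text{hm}^{c}_{(B_{\rho})_{\delta}}(-\tfrac{\delta}{2}\mid\Theta)\,F(c)$, the sum running over the real corners on the discrete circle of radius $\rho$ and the weights being the survival-weighted hitting probabilities of the killed walk started at the centre. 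The total weight $\sum_{c}\text{hm}^{c}$ is precisely the probability that the walk survives long enough to reach radius $\rho$; this is where the exponential factor will come from.

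Next I would isolate two quantitative inputs on these weights: (i) each single-corner weight satisfies $\text{hm}^{c}(-\tfrac{\delta}{2})\le C\tfrac{\delta}{\rho}e^{cm\rho}$, and (ii) the survival probability to travel a distance of order $\rho$ is at most $e^{cm\rho}$. The polynomial factor $\delta/\rho$ is inherited from the classical bound on the \emph{massless} harmonic measure (\cite{kesten,lali2004} and \cite[Lemma 3.4]{chelkak-hongler}), which dominates the massive one since killing only lowers probabilities; the genuinely massive content is the exponential survival bound, for which I would compare with \cite{bdtr} (the massive walk dies after $\sim(2m^{2}\delta^{2})^{-1}$ steps, i.e.\ over a correlation length $\sim|m|^{-1}$, producing Yukawa-type decay $e^{-c|m|\rho}$).

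To pass from the pointwise representation to the $L^{2}$ statement, Cauchy--Schwarz against the probability weights gives, for each fixed $\rho$, $|F(-\tfrac{\delta}{2})|^{2}\le\big(\sum_{c}\text{hm}^{c}\big)\sum_{c}\text{hm}^{c}|F(c)|^{2}\le e^{cm\rho}\sum_{c}\text{hm}^{c}|F(c)|^{2}$. Summing over the $\sim R/\delta$ admissible radii $\rho\in[R/2,R]$, and using that each corner lies on only $O(1)$ of the corresponding discrete circles while its weight there is at most $C\tfrac{\delta}{R}e^{cmR}$, the nested sums telescope into the full square sum $\sum_{c}|F(c)|^{2}=\delta^{-2}L$; the powers of $\delta$ cancel and \eqref{eq:harnack} follows. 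For the gradient estimate I would write the increment as $F(\tfrac{\delta}{2}+i\delta)-F(-\tfrac{\delta}{2})=\sum_{c}\big[\text{hm}^{c}_{1}-\text{hm}^{c}_{2}\big]F(c)$, with the two hitting measures based at the two adjacent corners, and invoke the standard fact that the discrete gradient of a (massive) Poisson kernel carries an extra factor $O(\delta/R)$ relative to the kernel; rerunning the identical Cauchy--Schwarz-and-sum-over-circles argument then yields one further power of $R^{-1}$, i.e.\ the bound with $\sqrt{L/R^{3}}$.

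The main obstacle is input (ii): the sharp \emph{exponential} survival estimate responsible for the factor $e^{cmR}$. Pure domination by simple random walk recovers only the polynomial part and loses the decay, so establishing the Yukawa-type exponential requires genuine control of the killed walk — equivalently of the massive lattice Green's function — and this is the one place where the near-critical scaling $\Theta\sim\tfrac{m\delta}{2}$ enters quantitatively. The remaining bookkeeping (the uniform count of corners per circle, and the comparison of single-point massive and massless harmonic measures) I expect to be routine given the estimates already cited above.
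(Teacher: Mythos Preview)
Your approach is correct and shares the same probabilistic core as the paper's proof, but the execution differs in one structural respect. The paper proceeds in two stages: first it observes that $F^{2}$ is discrete \emph{subharmonic} (equation \eqref{eq:f2-subharm}) and applies the mean-value inequality on concentric circles, summing over $r\in[R/2,R]$ to obtain the polynomial $L^{2}\to L^{\infty}$ bound \eqref{eq:l2-harm} with no exponential factor; then, in a separate step, it converts this to a bound against $\max_{B_{R/2}}|F|$ and uses the maximum principle together with the survival probability $\text{hm}_{B_{R/2}}^{S_{R/2}}$ of the killed walk to introduce the factor $e^{cmR}$. You instead collapse both stages into one by working directly with the Poisson representation $F=\sum_{c}\text{hm}^{c}F(c)$ and Cauchy--Schwarz, carrying the exponential survival weight through the sum over radii. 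Both routes rest on the same two quantitative inputs---the massless harmonic-measure bound $\text{hm}^{c}\le C\delta/\rho$ and the exponential survival estimate---and both handle the gradient in the same way, via the $O(\delta/R)$ improvement for differences of the (massive) Poisson kernel, which the paper justifies by coupling the simple and killed walks with a common exponential clock (and citing \cite[Proposition~2.7]{chsm2011} for the massless input). Your single-stage argument is slightly more streamlined; the paper's two-stage version is more modular and makes the subharmonicity of $F^{2}$ explicit, which is reused elsewhere. The obstacle you flag---the genuine exponential decay---is exactly the point the paper addresses by the one-dimensional projection argument already used in the proof of Proposition~\ref{prop:squareroot}.
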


\begin{proof}
For the first bound, note that $F^{2}\geq0$ is subharmonic: 
\begin{equation}
\Delta^{\delta}F^{2}(c)=\left(2M_{H}^{2}+\frac{M_{H}^{4}}{4}\right)F^{2}(c)+\frac{1}{2}\sum_{c\sim x,y}\left[F(x)-F(y)\right]^{2}.\label{eq:f2-subharm}
\end{equation}
So we can use the mean value property for harmonic functions: for
$0<r<R$, write the discrete circle $S_{r}=\mathcal{C}^{1}\left[\left(B_{R}\right)_{\delta}\right]\cap\left(B_{r+4\delta}\setminus B_{r}\right)$
\[
\left|F\left(-\frac{\delta}{2}\right)\right|^{2}\leq\frac{cst}{r}\sum_{c\in S_{r}}\left|F\left(c\right)\right|^{2}\delta,
\]
multiplying by $\delta$ and summing over the $O(\delta)$ discrete
circles $S_{r}$ such that their union equals $B_{R}\setminus B_{R/2}$
\begin{equation}
\left|F\left(-\frac{\delta}{2}\right)\right|^{2}\leq\frac{cst}{R}\sum_{c\in B_{R}\setminus B_{R/2}}\left|F\left(-\frac{\delta}{2}\right)\right|^{2}\delta^{2}\leq\frac{cst\cdot L}{R}.\label{eq:l2-harm}
\end{equation}

For the desired bounds, note that by first applying (\ref{eq:l2-harm})
to smaller balls of radii $R/2$ we can opt for a bound of the form
\begin{alignat}{1}
\left|F\left(-\frac{\delta}{2}\right)\right| & \leq cst\cdot e^{cmR}\max_{B_{R/2}}\left|F\right|,\label{eq:maxharnack}\\
\delta^{-1}\left|F\left(\frac{\delta}{2}+i\delta\right)-F\left(-\frac{\delta}{2}\right)\right| & \leq cst\cdot e^{cmR}\frac{\max_{B_{R/2}}\left|F\right|}{R/2}.\nonumber 
\end{alignat}

Consider the first estimate. By the maximum and minimum principles,
we may bound
\[
-\max_{B_{R/2}}\left|F\right|\text{hm}_{B_{R/2}}^{S_{R/2}}\leq F\leq\max_{B_{R/2}}\left|F\right|\text{hm}_{B_{R/2}}^{S_{R/2}}.
\]
The massive harmonic measure $\text{hm}_{B_{R/2}}^{S_{R/2}}(c)$ is
the hitting probability of $S_{R/2}$ of the massive random walk started
at $c$. For the bound at $-\frac{\delta}{2}$, simply note that the
probability of a massive random walk reaching a box at distance $d$
decays exponentially fast in $\left|m\right|d$. (see e.g. the projection
argument in the proof of Proposition \ref{prop:squareroot}).

For the second, by decomposing $F(c)=\sum_{c'\in S_{R/2}}\text{hm}_{B_{R/2}}^{\left\{ c'\right\} }(c)F(c')$
for $c\in B_{R/2}$, with $\text{hm}_{B_{R/2}}^{\left\{ c'\right\} }=\text{hm}_{B_{R/2}}^{\left\{ c'\right\} }\left(\cdot|m\right)$
being the massive harmonic function on $B_{R/2}$ whose boundary value
is $0$ on $S_{R/2}\setminus\{c'\}$ and $1$ at $c'\in S_{R/2}$,
it suffices to show $\left|\text{hm}_{B_{R/2}}^{\left\{ c'\right\} }(\frac{\delta}{2}+i\delta)-\text{hm}_{B_{R/2}}^{\left\{ c'\right\} }(-\frac{\delta}{2})\right|\leq cst\cdot\frac{\delta^{2}e^{cmR}}{R^{2}}$.
We know that the hitting probability for the simple random walk (i.e.
the harmonic measure of the point $c'$, with $m=0$) satisfies the
desired estimate (e.g. \cite[Proposition 2.7]{chsm2011}): as $\delta\to0$,
the difference of the probabilities $P_{1},P_{2}$ of simple random
walk started at neighbouring points near $0$ reaching $c'$ before
other points in $S_{R/2}$ is bounded above by $cst\cdot P_{1}\cdot\frac{\delta}{R}\leq cst\cdot\frac{\delta^{2}}{R^{2}}$.
For the massive random walk, these instances (say, coupled with the
same exponential clock) need to survive to contribute to the difference;
therefore the difference decays by an additional exponential factor.
\end{proof}
\begin{rem}
\label{rem:smoothness}The second bound in (\ref{eq:harnack}) is
valid for differences in other directions as well, since massive harmonicity
and the bound are rotationally invariant. By considering smaller balls
within $B_{R}$, we in fact deduce uniform bounds for $F$ and its
discrete derivative in, say, $B_{R/2}$. Then, defining $D_{\lambda}^{\delta}F(c):=F(c+\delta+i\delta)-F(c),D_{\bar{\lambda}}^{\delta}F(c):=F(c+\delta-i\delta)-F(c)$,
which are massive harmonic functions uniformly bounded in $B_{R/2}$,
and using the bound (\ref{eq:maxharnack}) on them, we actually have
bound on discrete derivatives of second order in, say, $B_{R/4}$.
Recursively, we see that derivatives of any order can be locally bounded.
\end{rem}

\end{document}